\documentclass[11pt,reqno]{amsart}

\oddsidemargin-0.3in
\evensidemargin-0.3in
\textwidth7.1in
\topmargin-0.5in
\textheight9.35in

\usepackage{cite,epic,eepic,euscript,verbatim,amsmath,amssymb,amsthm,afterpage,float,bm,color} \usepackage{graphicx,epsfig}
\pagestyle{headings}
\usepackage{comment}

\usepackage{enumerate}
\usepackage[pdftex,breaklinks,colorlinks, citecolor=blue,urlcolor=blue]{hyperref}
\usepackage{verbatim}
\numberwithin{equation}{section}

\newtheorem{theorem}{Theorem}[section]

\usepackage{array,multirow} 

\setcounter{tocdepth}{1}

\newcommand{\R}{\mathbb{R}}
\newcommand{\C}{\mathbb{C}}
\newcommand{\N}{\mathbb{N}}

\DeclareMathOperator{\re}{Re}
\DeclareMathOperator{\im}{Im}

\def \ds  {\displaystyle } 

\def \nn   {\vec{n}}

\def \nnls  {{\rm{NLS$_{\R^2}$ }}}

\def \Nls {{\rm{(NLS$_\Omega$) }}}
\def \NNls {{\rm{NLS}}$_{\Omega}$ }

\def \uu {\bar{u}}
\def \th {\theta}


\title[2D  NLS outside an obstacle]
{Interaction with an obstacle in the 2d focusing\\ nonlinear Schr\"odinger equation}

\author[O. Landoulsi]{Oussama Landoulsi}
\address{LAGA, UMR 7539, Institut Galil\'ee, Universit\'e Sorbonne Paris Nord}
\email{landoulsi@math.univ-paris13.fr}

\author[S. Roudenko]{Svetlana Roudenko}
\address{Department of Mathematics \&  Statistics, Florida International University, Miami, FL 33199, USA}
\email{sroudenko@fiu.edu}

\author[K. Yang]{Kai Yang}
\address{Department of Mathematics \&  Statistics, Florida International University, Miami, FL 33199, USA}
\email{yangk@fiu.edu}

\subjclass[2010]{35Q55, 58J32, 58J37,  65N06, 35B40, 65M06} 


\keywords{Focusing NLS equation, convex obstacle, exterior domain, soliton-obstacle interaction,  scattering, blow-up}

\begin{document}

\begin{abstract}
We present a numerical study of solutions to the $2d$  cubic and quintic focusing nonlinear Schr\"odinger equation in the exterior of a smooth, compact and strictly convex obstacle (a disk) with Dirichlet boundary condition. We first investigate the effect of the obstacle on the behavior of solutions traveling toward the obstacle at different angles and with different velocities directions. We introduce a new concept of weak and strong interactions of the solutions with the obstacle. Next, we study the existence of blow-up solutions depending on the type of the interaction and show how the presence of the obstacle changes the overall behavior of solutions (e.g., from blow-up to global existence), especially in the strong interaction case, as well as how it affects the shape 
of solutions compared to their initial data, (e.g., splitting into transmitted and reflected parts). We also investigate the influence of the size of the obstacle on the eventual existence of blow-up solutions in the strong interaction case in terms of the transmitted and the reflected parts of the mass. Moreover, we show that the sharp threshold for global existence vs. finite time blow-up solutions in the mass critical case in the presence of the obstacle is the same as the one given by Weinstein for {\rm{NLS}} in the whole Euclidean space $\R^d$. Finally, we construct new Wall-type initial data that blows up in finite time after a strong interaction with an obstacle and having a very distinct dynamics compared with all other blow-up scenarios and dynamics for the {\rm{NLS}} in the whole Euclidean space $\R^d$. 

\end{abstract}

\maketitle

{
\hypersetup{linkcolor=black}
\tableofcontents
}

\section{Introduction}

We consider the $2d$ focusing nonlinear Schr\"odinger equation (NLS) outside of a smooth, compact and strictly convex obstacle with Dirichlet boundary conditions:
\begin{equation}
\tag{NLS$_{\Omega}$} 
 \begin{cases}
i\partial_tu+\Delta_{\Omega} u = -|u|^{p-1}u  \qquad   (t,x)\in \R \times\Omega, \\
 u(t_0,x) =u_0(x)   \qquad \qquad  \; \,  \quad \forall x \in \Omega ,  \\
u(t,x)=0   \qquad \qquad \qquad \qquad     (t,x)\in \R \times \partial\Omega ,
\end{cases}
 \end{equation}  
 where $t_0 \in \R$ is the initial time, $\Omega$ is an exterior domain in $\R^2$ and $\Delta_{\Omega}$ is the Dirichlet Laplace operator defined by $\Delta_{\Omega}:= \partial_{x}^2+   \partial_{y}^2, \, (x,y) \in \R^2$. 
Here, $u$ is a complex-valued function,
\begin{align*}
 u:  \R &\times  \Omega \longrightarrow \C \\
 	(t &, x)  \longmapsto  u(t,x) .
 \end{align*}
 We consider $u_0 \in H^1_0(\Omega),$ where the Sobolev space $H^1_0(\Omega)$ is the set of functions in $H^1(\Omega)$ that satisfy Dirichlet boundary conditions, i.e., $u=0$ on $\partial \Omega$. \\
     
 The {\rm{NLS}}$_{\Omega}$ equation is locally well-posed in $H^1_0(\Omega)$ in dimension $d=2,$ see \cite{BuGeTz04a}, \cite{BlairSmithSogge2012} for a non-trapping obstacle and \cite{Ivanovici10}, \cite{Ivanovici07} for a strictly convex obstacle. The solution $u$ can be extended to a maximal time interval $I= (-T_{-},T_{+})$ of existence and the following alternative holds: \\
 
\qquad either $T_+= \infty$ (respectively, $T_{-}=\infty $), or $T_+< \infty$ (respectively, $T_{-}<\infty) $ with 
$$\lim\limits_{t \to T_{+} } \left\|u(t, \cdot) \right\|_{H^1_0(\Omega)}=\infty \quad \bigg(\text{respectively, } \lim\limits_{t \to  T_{-}} \left\|u(t, \cdot) \right\|_{H^1_0(\Omega)}=\infty \bigg).  $$  

During their lifespans, solutions to the nonlinear Schr\"odinger equation outside an obstacle conserve both mass and energy: 
 \begin{align}
 \label{mass-consev}
 M_{\Omega}[u(t)]&:= \int_{\Omega} |u(t,x)|^2 dx = M_{\Omega}[u_0], \\
  \label{energy-consev}
 E_{\Omega}[u(t)]&:= \int_{\Omega} \left| \nabla u (t,x)\right|^2  \,dx - \frac{1}{p+1}\int_{\Omega} |u(t,x)|^{p+1} \, dx =E_{\Omega}[u_0].  
 \end{align}
 
 Unlike the nonlinear Schr\"odinger equation {\rm{NLS}$_{\R^d}$}  posed on the whole Euclidean space $\R^d,$ the \NNls equation does not preserve the momentum $P_{\Omega}[u]= \im \int_{\Omega} \bar{u}(t,x) \nabla u(t,x) dx, $ since the derivative of the momentum $P_{\Omega}$ with respect to the time variable is equal to a non-zero boundary term. \\ 
 
 Furthermore, the \nnls equation, posed on the whole Euclidean space $\R^2,$ is invariant under the scaling transformation, 
 that is, if $u(t,x)$ is a solution to the \nnls equation, then 
$ \lambda^{\frac{2}{p-1}}u(\lambda x,\lambda^2 t)$ is also a solution for $ \lambda >0. $ This scaling identifies the critical Sobolev space $\dot{H}^{s_c}_{x}$, where the critical regularity $s_c$ is given by $$s_c:= \frac{p-3}{p-1}.$$ The equation, when $ s_c=0,$ is referred to as the mass-critical (or the $L^2$-critical), and when $0<s_c<1,$ is called the mass-supercritical (or $L^2$-supercritical) and energy-subcritical (or $H^1$-subcritical). Throughout this paper, we consider the $2d$ cubic ($p=3$) and quintic ($p=5$) \NNls equations. Since the presence of the obstacle does not change the intrinsic dimensionality of the problem, we may regard the cubic \NNls equation as being the mass-critical equation and the quintic one as the mass-supercritical and  energy-subcritical (or intercritical) equation.  \\

The focusing {\rm{NLS}} equation, posed on the whole space, admits soliton solutions that are periodic in time, that is,  $u(t,x)=e^{i t \omega  }Q_{\omega}(x),$ where $\omega>0$ and $Q_{\omega} $ is an $H^1$ smooth solution of the nonlinear elliptic equation,   
 \begin{equation}
 \label{eq_Q}
-\Delta Q_{\omega} +  \omega \, Q_\omega = \left| Q_{\omega} \right|^{p-1} Q_{\omega}.
\end{equation} 
 
In this paper, we denote by $ Q_\omega $ the ground state solution, that is, the unique, positive, vanishing at infinity $H^1$ solution of \eqref{eq_Q}. The ground state solution turns out to be radial, smooth and exponentially decaying function (for $s_c<1$), see  \cite{CharlesVCoffman72}, \cite{GiNiNi81}, \cite{BeLi83a}, \cite{Kwong89}. Moreover, $Q_{\omega}$  characterized as the unique minimizer for the Gagliardo-Nirenberg inequality up to scaling, space translation and phase shift, see \cite{Kwong89}. For simplicity, we denote by $Q$ the ground state solution of \eqref{eq_Q}, when $\omega=1.$ \\ 

The {\rm{NLS}} equation, posed on the whole Euclidean space $\R^d$, also enjoys Galilean invariance: if $u(t,x)$ is a solution, then so is $u(t,x-vt) \, e^{i(\frac{x \cdot v}{2} -   \frac{ | v |^2   }{4} t ) }$, $v \in \R^d .$\\

Applying the Galilean transform to the solution $e^{i  t \omega } Q_{\omega}(x)$ of the {\rm{NLS}} on $\R^d$, we obtain a {\sl{soliton}} solution, moving on the line $x=t v$ with a velocity $v \in \R^d:$ 
\begin{equation}
\label{soliton}
u(t,x) = e^{i(\frac{1}{2} x \cdot v-\frac{1}{4} \left| v\right|^2 t + t \, \omega )}Q_{\omega}(x-t \,v).
\end{equation}

The soliton solution is a global solution of the focusing {\rm{NLS}} equation, but it is {\it not} a soliton solution for the \NNls equation: this soliton solution does not satisfy the Dirichlet boundary conditions. \\

In \cite{Ou19}, the first author constructed a solitary wave solution for the $3d$ focusing $L^2$-supercritical  \NNls equation for large $t,$ which behaves asymptotically as a soliton on the Euclidean space $\R^3$, traveling with a velocity $v,$ and moving away from the obstacle. Indeed, let $T_0>0,\; c_\omega>0$ and let $\Psi$ be a $C^{\infty}$ function such that $\Psi=0$ near the obstacle and $\Psi=1$ for $|x|\gg 1$, then 
 \begin{equation*}
	    \label{solution-u}
	\left\|  u(t,x)- e^{i(\frac{1}{2}(x \cdot v)-\frac{1}{4}|v|^2 t +t \, \omega)} Q_\omega(x-tv) \Psi(x) \right\|_{H^1_0(\Omega)} \leq  e^{- c_\omega |v| t }  \quad \forall (t,x) \in [T_0,+\infty) \times \Omega, 
	 \end{equation*}
	is a solution of the \rm{NLS$_\Omega$} equation. This solution is global in time, however, it does not scatter. For an arbitrary small velocity, this solution proves the optimality of the following threshold for the global existence and scattering given in \cite{KiVisnaZhang16} for the cubic \NNls equation, in dimension $d=3$: let $u_0 \in H^1_0(\Omega)$ satisfy  
\begin{align}
\label{ME-u<ME-u}
E_{\Omega}[u_0] M_{\Omega}[u_0] & < E_{\R^3}[Q] M_{\R^3}[Q], \\
\label{grad-u<grad-Q}
\left\| u_0 \right\|_{L^2(\Omega)} \left\| \nabla u_0 \right\|_{L^2(\Omega)} & <  \left\| Q \right\|_{L^2(\R^3)} \left\| \nabla Q \right\|_{L^2(\R^3)}.
\end{align}

Then the solution $u(t)$ scatters in $H^1_0(\Omega)$ in both time directions.  \\ 
This threshold was first proved for the $3d$ cubic NLS 
equation on the whole space $\R^3$ by the second author with Holmer in \cite{HoRo08} (in the radial setting) and with Duyckaerts and Holmer  in \cite{DuHoRo08} (nonradial case); further generalizations can be found in \cite{FaXiCa11}, \cite{Guevara14}. \\

In \cite{DuOuSvet20}, the first two authors with Duyckaerts studied the dynamics of the focusing $3d$ cubic \NNls equation in the exterior of a strictly convex obstacle at the mass-energy threshold, namely, when $$E_{\Omega}[u_0] M_{\Omega}[u_0] = E_{\R^3}[Q] M_{\R^3}[Q] , $$ with the initial mass-gradient bound on $u_0 \in H^1_0(\Omega)$, $$ \left\| u_0 \right\|_{L^2(\Omega)} \left\| \nabla u_0 \right\|_{L^2(\Omega)} < \left\| Q \right\|_{L^2(\R^3)} \left\| \nabla Q \right\|_{L^2(\R^3)},$$ where $Q$ is the ground state solution of \eqref{eq_Q}, with $\omega=1$. The same problem was studied in the whole Euclidean space by Duyckaerts and the second author in \cite{DuRo10} for the focusing cubic \rm{NLS} equation on $\R^3$. The dynamics of the \rm{NLS} equation on the whole Euclidean space is more involved. Indeed, the authors proved that if the initial datum $u_0 \in H^1(\R^3)$ satisfies the same mass-gradient condition as above, then the solution $u(t)$ scatters or $u(t)$ is a ``special solution" $Q^+$ , up to symmetries, that scatters in negative time and converges to the soliton $e^{it} Q$ (up to symmetries) in positive time. We showed in \cite{DuOuSvet20} that this special solution does not have an analogue for the problem in the exterior of an obstacle and  prove that such solutions are globally defined and scatter in the positive time direction. The existence of blow-up solutions at the mass-energy threshold for the \rm{NLS} equation on the whole space was also proved in \cite{DuRo10} and the behavior of  solutions is related to another special solution $Q^-$. It was proved that if $E_{\R^3}[u_0] M_{\R^3}[u_0] = E_{\R^3}[Q] M_{\R^3}[Q] $ and $ \left\| u_0 \right\|_{L^2(\R^3)} \left\| \nabla u_0 \right\|_{L^2(\R^3)} > \left\| Q \right\|_{L^2(\R^3)} \left\| \nabla Q \right\|_{L^2(\R^3)},$ then the solution $u(t)$ blows up in finite time or $u(t)$ is a special solution $Q^{-},$ up to symmetries. The existence of blow-up solutions at the mass-energy  threshold $E_{\Omega}[u_0] M_{\Omega}[u_0] = E_{\R^3}[Q] M_{\R^3}[Q] $ and $ \left\| u_0 \right\|_{L^2(\Omega)} \left\| \nabla u_0 \right\|_{L^2(\Omega)} > \left\| Q \right\|_{L^2(\R^3)} \left\| \nabla Q \right\|_{L^2(\R^3)},$ for the \NNls equation is currently an open question. \\

All results obtained for the \NNls equation are for the globally existing and scattering solutions, however, the existence of blow-up solutions has been an open question for some time. The classical proof by the convexity argument on the Euclidean space $\R^d$ fails in the exterior of an obstacle due to the appearance of the boundary terms with an unfavorable sign in the second derivative of the variance ${\rm{V}}(u(t))$, that is, if
\begin{equation}
      {\rm{V}}(u(t)):=\int_{\R^d} \left| x \right|^2 |u(t,x)|^2\, dx ,
\end{equation}
then
\begin{equation}
\label{Variance}
  \frac{1}{16}  \frac{d^2}{dt^2}{\rm{V}}(u(t))= E [u]- \frac{1}{2} \left( \frac{d}{2}  - \frac{d+2}{p+1} \right)\int_{\Omega} |u|^{p+1} \, dx - \frac{1}{4 } \int_{\partial \Omega} \left| \nabla u \right|^2 \, (x \cdot \nn) \, d\sigma(x),
\end{equation}
where $\nn$ is the unit outward normal vector. One can see that in the last term 
$$
x \cdot \nn \leq 0,  \,  \text{ for all\,  } x \in  \partial \Omega.
$$

Recently, the first author in \cite{Ouss20} (see also \cite{OL20}) proved the existence of blow-up solution to the \NNls equation in the exterior of a smooth, compact, convex obstacle. 
This was the first step in the study of the existence of blow-up solutions to the {\rm{NLS}$_{\Omega}$} equation. A new modified variance $\mathcal{V}(u(t)),$ which is bounded from below and is strictly concave for the solutions considered, was introduced
\begin{equation} 
\mathcal{V}(u(t)):=\int_{\Omega} \Big( d(x,\Omega^c)+10 \Big)^2 \, |u(t,x)|^2 \, dx ,
\end{equation}
where $d(x,\Omega^c)=|x|-R$ is the distance to the obstacle and $R$ is the radius of the obstacle (a ball in $\R^d$). \\
In \cite{Ouss20} (see also \cite{OL20}), it was shown that solutions with finite variance and negative energy blow up in finite time (for a ball and also any smooth, compact and convex obstacle). Furthermore, it was proved that finite variance solutions to the \NNls equation for $p\geq 1+\frac 4d,$ which satisfy \eqref{ME-u<ME-u}, $\left\| u_0 \right\|_{L^2( \Omega )} \left\| \nabla u_0 \right\|_{L^2(\Omega) } > \left\| Q  \right\|_{L^2{\left(\R^3 \right)}} \left\| \nabla Q \right\|_{L^2{\left(\R^3 \right)}}$ and a certain symmetry condition, will blow up in finite time.  \\

From the above review one notices that an obstacle does influence the behavior of solutions and while some properties and criteria remain robust and almost unchanged (except for the restriction of the whole space to the exterior domain problem $\Omega$),
other properties either get significantly modified or even more, become unclear in the obstacle setting. Further analytical investigations of the interaction between solutions and an obstacle are needed, though the presence of the obstacle breaks down quite a few invariance properties of the equation, creating additional difficulties for theoretical study. \\ 

The purpose of this paper is to investigate this question numerically and to gain further insights of the obstacle influence. We are specifically interested in how solitary wave-type data (e.g., as in \eqref{soliton}) interacts with an obstacle, depending on the distance and the angle to the obstacle as well as the size of the obstacle. \\

In our simulations, we distinguish two types of interaction between a solitary wave solution moving on the line $\vec{x}=t \vec{v}$ ($\vec{v}$ is the velocity vector) and the obstacle:  \\

\textbf{Strong interaction:} We call the interaction \textit{strong}, when a soliton-type solution is moving, towards the obstacle, in the same direction as the outward normal $\vec{n}$ vector of the obstacle, i.e., the velocity vector is collinear to the normal vector, $\vec{v}=\alpha \vec{n}, \; \alpha>0$, (e.g., see Figure \ref{directStrongInterac}). In this case, after the collision  or the shock, the solitary wave solution does not preserve the shape of the initial or the original soliton but the solution splits into several solitons or bumps, with a substantial amount of backward reflected waves. \\ 
 
\textbf{Weak integraction:} We call the interaction \textit{weak}, when the velocity vector of the moving soliton solution, towards the obstacle, is not pointed in the same direction as the outward normal vector, i.e., the solution hits the obstacle at an angle $0<\theta\leq \frac{\pi}{2}$ between the velocity vector and the outward normal vector, see Figure \ref{weakVelocityinteractiondomain}. In this case, after the interaction, the solitary wave solution is transmitted almost with the same shape and with backward reflected waves of insignificant size.  \\ 

The interaction between a solitary wave-type solution and an obstacle does not depend only on the direction of the velocity vector and the angle of the collision, it also depends on the initial distance between the solitary wave solution and the obstacle. For that, we also study the dependence on the distance.  
Throughout this paper, we denote by                                
\begin{equation}
\label{min-distance} 
 \ds d^{*}:=\min_{x \in \, \rm{supp\,}({u_0})} \mbox{dist}(x, \Omega^c),  
\end{equation} 
the distance between the obstacle $\Omega^c$ and the essential support of the initial data $u_0$ 
such that $u_0$ is well-defined, i.e., $u_0$ is smooth and satisfies Dirichlet boundary conditions. Note that, if we consider the initial condition with the distance $d>> d^{*},$ then the presence of the obstacle does not effectively influence the behavior of the solution (provided that $u_0$ has essentially a compact support, for instance, the Gaussian $u_0=Ae^{-x^2}$ will suffice for computational purposes). If we consider $u_0$ with a large mass such that $d>>d^*,$ then the solution will blow-up in finite time before it could reach the obstacle for all velocity directions, see Figures \ref{NointeractTranslVariab} and \ref{NointeraclVariabY0} for such scenarios, in such a case, there is no interaction between the obstacle and the solution. Moreover, the computation of the boundary value terms in \eqref{Variance} vanishes to $0$ when $d>>d^{*}$,  and one can see that the expression of the second derivative of the variance $V(t)$ in \eqref{Variance} is close to the corresponding value of the variance defined on the whole space $\R^2$. In this case, numerically, the soliton-type solution behaves as a solution posed on a computational domain without an obstacle, see Figures \ref{NLScritBlow-up} and \ref{NLScritBlow-up2}. \\

For the purpose of this work, and in order to study the influence and the interaction of a generic solution (a solitary wave-type solution) with an obstacle, we always consider the distance $d$ to be the minimal distance $d^*$ such that even a slight modification of the velocity direction or the translation parameters would produce at least a weak interaction.   \\

In this paper we present our numerical results about the behavior of solutions influenced by an obstacle in the \NNls equation outside of a ball or a disk or radius $r_{\star}$, in dimension $d=2.$ Our goal is to understand the \textit{interaction} between a solitary wave (for example, traveling with a velocity $v$) and the obstacle, as well as the \textit{influence} of the obstacle on the nonlinear dynamics of the \NNls equation. We also study the existence of blow-up solutions to the \NNls equation, in dimension $d=2,$ in particular, we investigate the influence of the obstacle on the \textit{behavior} of finite time blow-up solutions, and its dependence on different types of interaction, which is affected by the direction of the velocity $v$ and the angle at the collision. According to our numerical simulations, the solitary wave amplitudes decrease at the collision or at any interaction (even small) between the soliton and the obstacle. This could be explained by the appearance of reflection, or reflection waves, due to the Dirichlet boundary conditions at the obstacle. After the collision, our numerical results show that, if there is a weak or small interaction, then the solitary wave is transmitted almost completely with little or insignificant backward reflection. If there is a strong interaction, then the solution does not typically preserve the shape of the original solitary wave. First, a single bump will split into two bumps with some substantial backward reflection. After that, the two bumps will start to merge together with a creation of a third bump in the middle and then all of that will continue as a sum of several solitary waves. Typically, the first two bumps will have a dispersive behavior due to a creation of the third middle bump, which will be the main part of the (after-interaction) solitary wave. 

We also observe that the leading reflected wave has a dispersive behavior, which radiates away. The reflection phenomenon, the loss of the amplitude and the change in the shape of the solitary wave, make it very challenging to show (even numerically) the existence of blow-up solutions. Nevertheless, we confirm numerically the existence of blow-up solutions \textit{after the collision} for the $2d$ focusing \NNls equation in several cases of: (i) the weak interaction, depending on the velocity direction, see Section \ref{Weak interaction between soliton and obstacle}; (ii) the strong interaction, depending on  the radius $r_{\star}$ of the obstacle. Moreover, we investigate the influence of the size of the obstacle on the behavior of the solutions in terms of the transmitted and reflected mass: (i) if the radius of the obstacle is very small (e.g., $r_{\star} \approx 0.1$), so that the interaction region of the solution with the obstacle is insignificant or negligible, then the solution is mostly transmitted with a small reflected part and it blows up in finite time after the interaction; (ii)  if the radius of the obstacle is large enough, so that the interaction region is relevant and it is larger than the contour of the solution (e.g., $r_{\star} \approx 2$), then there is almost no transmission of the solution and it blows up in finite time at the boundary of the obstacle, see Section~\ref{sec-Obstacle size dependance}. Furthermore, we construct new \textit{Wall-type} initial data (with a single maximum peak bump), where after a strong interaction with the obstacle, 
the corresponding solutions blow up in finite time in two different locations (this happens even for a larger obstacle size). These specific solutions to the \NNls equation have a very distinct dynamics compared with all other blow-up scenarios and dynamics we observed, since they can produce a blow up not at a single location, see Section \ref{sec-special-solution}; there is also interesting dynamics for certain parameters when the blow-up happens at a single  point after the strong interaction (Figure \ref{Fig-supecial-critic-A0-1.5-r0-10}).  \\

In addition, we study the \textit{sharp threshold} for global existence vs. finite time blow-up solutions (in the $2d$ focusing mass-critical \NNls equation), see Section \ref{Perturbations of the soliton}. This threshold was first obtained by Weinstein in \cite{Weinstein82} for the focusing mass-critical {\rm{NLS}} equation in the whole Euclidean space $\R^d$ (for example, $2d$ cubic {\rm{NLS}}).
He showed a sharp threshold for the global existence using the Gagliardo-Nirenberg inequality combined with the energy conservation, 
$$ \left\| \nabla u \right\|_{L^2}^2 \leq \left(  1- \frac{\left\| u \right\|_{L^2}^2 }{\left\| Q \right\|_{L^2}^2} \right)^{-1} E[u],
$$
which implies that (i) if $ \left\| u_0\right\|_{L^2} < \left\| Q \right\|_{L^2}, $ then an $H^1$ solution exists globally in time and (ii) if $   \left\| u_0\right\|_{L^2} \geq  \left\| Q \right\|_{L^2}, $
then the solution may blow up in finite time. Recently, Dodson proved in \cite{Dodson15} that initial data $u_0 \in L^2(\R^d)$ with $ \left\| u_0\right\|_{L^2} < \left\| Q \right\|_{L^2}$ generates a corresponding solution that is global and scatters in~$L^2(\R^d).$ To confirm this threshold in our setting of the NLS with an obstacle, we consider initial data $u_0$ as a small perturbation of a shifted soliton, $u_0 = A \, Q(x-x_0)$, with either $A<1$ (e.g. $A=0.9$) for global existence or $A>1$ (e.g., $A=1.1$) for a finite time of existence.    \\

 In physics, the study of the reflected, diffracted or scattered (mechanical, electromagnetic or gravitational) waves after encountering an object, an obstacle or a body, is related to the study of boundary value problems. These problems are usually described mathematically as an exterior domain or obstacle problem for the wave-type equations with Dirichlet or Neuman boundary conditions. The study of the wave-type equations in the exterior of an obstacle started in the late 1950s and early 1960s and until now the understanding of the dynamics of the evolution equations on exterior domains has been a widely open area for investigations. Let us mention, some relevant works on the wave-type equation in an exterior domain. H.~W.~Calvin and Morawetz have studied the local-energy decay of the solutions to the linear wave equation in an exterior of a sphere
and star-shape obstacles, with Dirichlet and Neuman boundary conditions, see \cite{Wilcox59} and \cite{Morawetz61}, \cite{Morawetz62}. For later works see \cite{LaxMorawPhillip62}, \cite{LaxMorawPhillip63}. Different results were obtained for almost-star shape, non-trapping and moving obstacles, see \cite{Ivri69},
\cite{MoraTalstonStrauss77}, \cite{MoraRalstonStraussCorec78} and  \cite{CooperStrauss76}. In that period of time, the authors considered a classical solutions with $C^2$ initial data.
In $2004,$ the Cauchy theory in $H^1_0(\Omega)$ for the \NNls equation was initiated by Burq, G\'erard and Tzvetkov in  \cite{BuGeTz04a}, for a non-trapping obstacle. After that, the well-posedness problem for the \NNls equation was investigated by others, see for example, \cite{AnRa08}, \cite{MR2683754}, \cite{PlVe09}, \cite{Ivanovici10}, \cite{BlairSmithSogge2012}. In \cite{Ou19}, the first author proved the local well-posedness for the $3d$ \NNls equation in the critical Sobolev space using the fractional chain rule in the exterior of a compact convex obstacle given in \cite{killip2015riesz}. \\   

This paper is organized as follows: in Section \ref{Numerical-Method} we present the numerical method that we design for this study. In Section \ref{NLS-space}, we show several numerical simulations of scattering and blow-up solutions for the focusing nonlinear Schr\"odinger equation on the whole Euclidean space $\R^2$ for later comparison. In Section \ref{Perturbations of the soliton}, we study the sharp threshold for global existence and blow-up solutions for the critical \NNls equation in terms of the ground state perturbations. In Section \ref{Dependence on the distance}, we fix the radius $r_{\star}$ of the obstacle (e.g., $r_{\star}=0.5$) and study the dependence of the interaction on the initial distance between the solution and the obstacle, depending on the velocity direction. 
In Sections \ref{Weak interaction between soliton and obstacle} and \ref{Strong interaction between soliton and obstacle}, we study the weak and strong interactions between the traveling solutions and the obstacle. In Section \ref{sec-Obstacle size dependance}, we investigate the influence of the size of the obstacle $r_{\star}$ on the behavior of solutions, especially in the strong interaction case. Finally, in Section \ref{sec-special-solution}, we study the existence of  blow-up solutions, with the new \textit{Wall-type} initial data, for a variety of large size obstacles and different initial amplitude of the data. We summarize our findings in conclusions' Section \ref{S:Conclusions}. In all our simulations we consider both the cubic (mass-critical) and quintic (mass-supercritical)  \NNls equations. \\

{\bf Acknowledgments.}
All authors would like to thank Thomas Duyckaerts for fruitful discussions on this problem. 
Most of the research on this project was done while O.L. was visiting
the Department of Mathematics and Statistics at Florida International University, Miami, FL,
during his PhD training. He thanks the department for hospitality and support. The initial numerical investigations started when K.Y. visited T. Duyckaerts at IHP and LAGA, Paris-13. S.R. and K.Y. were partially supported by the NSF grant DMS-1927258, and part of O.L.'s research visit to FIU was funded by the same grant DMS-1927258 (PI: Roudenko).

\section{Numerical approach}
\label{Numerical-Method} 
\subsection{The scheme and Initial data}
Various numerical methods are used in order to approximate the nonlinear Schr\"odinger equation ranging from the explicit and implicit schemes in time to the finite difference or Fourier pseudo-spectral methods in space. There are different methods for the time discretization, for example, the Crank-Nicolson scheme \cite{DelForPay81},  Runge-Kutte type \cite{AgDvAk97}, \cite{Ag93}, \cite{KoAGdDvA93}, symplectic and splitting type, \cite{SsJmvJg86}, \cite{SsJMCMP94} and \cite{WjacHbm86}, \cite{BessBbDs2002} and relaxation methods \cite{BesCris98} and \cite{Besse2004}.  \\ 

We use the well-known Crank-Nicolson scheme for the time discretization of the \NNls equation. The scheme is based on a time centering approximation $u^{n+\frac{1}{2}} \approx \frac{u^{n+1}+u^n}{2}$.  The Crank-Nicolson-type scheme is the 2nd order implicit method. On the plus side, this scheme preserves both the discrete mass and the discrete energy exactly during the time evolution. On the negative side, the schemes have to deal with solving the resulting {\it nonlinear} algebraic system, consequently, the Newton's iterative method \eqref{Newton-iteration} is used for solving the nonlinear system at each time step. \\ 

We consider exponentially decaying data only and we take a large enough computational domain to approximate the convex domain $\Omega$ containing an obstacle (of radius $r^{\star}$). To be specific, we consider the polar coordinates $(r,\theta)$ with $0<r_{\star}<r<R$ and $0 \leq \theta \leq 2\pi$ and use the following domain in our simulations: 
$$  
\Omega=\{ (r,\theta) \in \R^2 : \;  r_{\star} \leq  r \leq R , \; 0\leq \theta \leq 2\pi \}.   \\ 
$$

\begin{figure}[ht]
\centering
 \includegraphics[width=15cm,height=5cm]{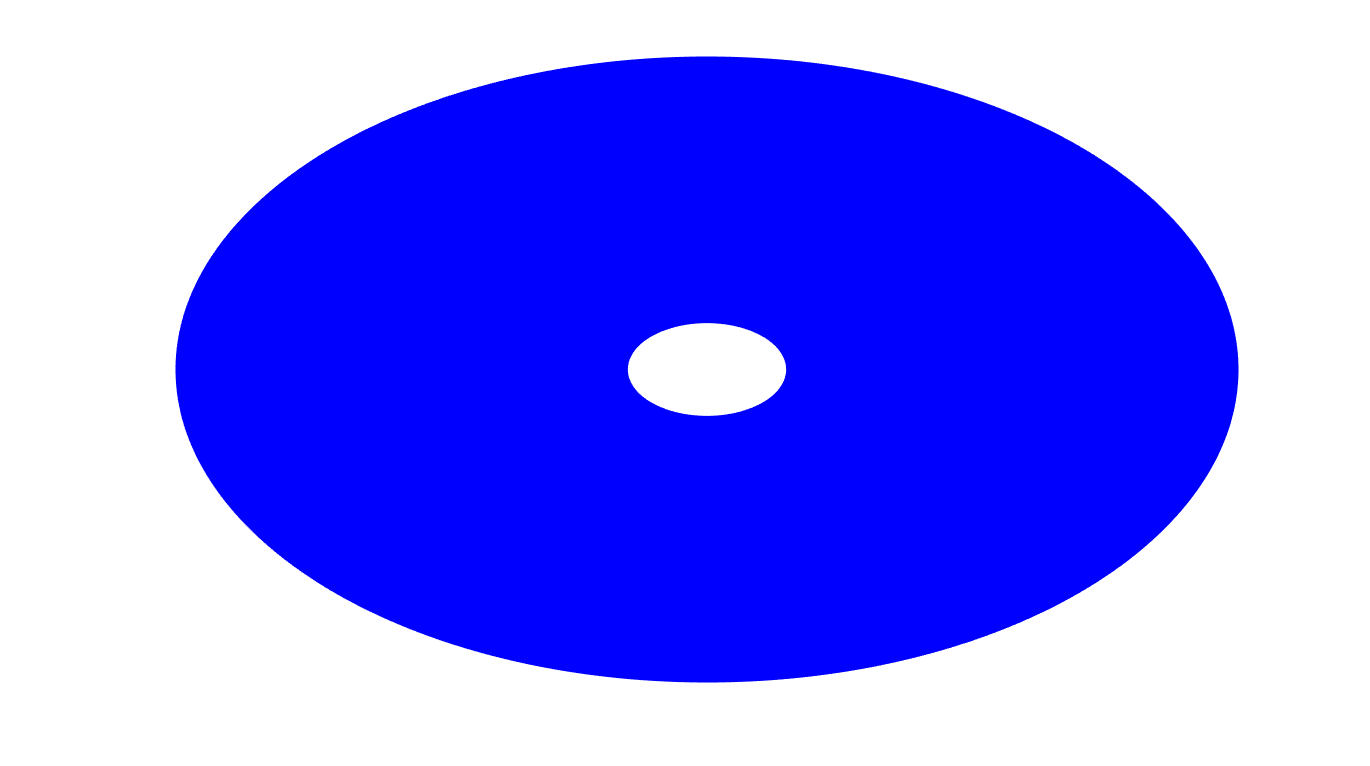}
 \caption{The computational domain $\Omega.$}
 \label{DomainOmega}
 \end{figure}

The obstacle is the white region or the disk $[0,r_{\star}] \times [0,2\pi]$, see Figure \ref{DomainOmega}. We note that the obstacle size is fixed as $r_{\star}=0.5$, unless stated otherwise, for example, a variable parameter or an investigating variable in Sections \ref{sec-Obstacle size dependance} and \ref{sec-special-solution}.  \\ 

It is straightforward to see that to approximate our model, we need to impose the Dirichlet boundary condition on the variable $r$, i.e.,  $u(r_{\star},\theta,t)=u(R,\theta,t)=0$, and the periodic boundary conditions on the variable $\theta$, i.e., $u(r,0,t)=u(r,2\pi,t)$. \\

{\bf{Initial data}\,}:
We consider the following initial data,
\begin{equation}
\label{E:data-1}
u_0(r,\theta)= A_0 \, F( r \cos \theta  - x_c, r \sin \theta -y_c) \, e^{i \, \frac{1}2 (v_x  \cdot r \cos \theta  +v_y \cdot r \sin \theta  )}, 
\end{equation}
where $A_0$ is the initial amplitude, $(x_c,y_c)$ is the translation, $v=(v_x,v_y)$ is the velocity vector,  
and $F$ is the profile of the solutions, which is typically taken as the Gaussian; for example, $F = e^{-r^2/2}$. To be precise, we consider
\begin{equation}
\label{u0NLS}
u_0(r ,\theta )=   A_0 \, e^{- \frac 12[ ( r \cos \theta -x_c)^2+(r \sin \theta -y_c)^2 ] } \; e^{i \, \frac 12 (v_x  \cdot r \cos \theta  + v_y \cdot r \sin \theta )}.
\end{equation}

Throughout  the paper, in most cases we use the same Gaussian initial data \eqref{u0NLS},  
(unless indicated otherwise as in Sections \ref{Perturbations of the soliton} and \ref{sec-special-solution}). The amplitude, translation and velocity parameters vary according to the specific examples considered.

We next describe our algorithm. We first consider the time discretization. Let $T_{max}$ be the existence time of the solution and $T_{\Delta t}$ be the computational time ($T_{\Delta t}<T_{max}$). We use $N$ points for the time discretization, thus, defining a time step $\Delta t=\frac{T_{\Delta t}}{N}$. We discretize the \NNls equation at times $ t_n=n \Delta t,\; n=0,.. , N,$ by  considering the semi-discretization in time $u^{n} \approx u(x,t_n)$ with $u^{0}:=u_0.$ This yields  the following time evolution: 
\begin{equation}
 \label{time-Scheme}
 i \frac{ u^{n+1} - u^{n}}{\Delta t } + \frac 12 \Delta u^{n+1} + \frac 12 \Delta u^{n} =- F(u^{n+1},u^{n}),
\end{equation}
where $F$ is the nonlinear term $|u|^{p-1}u$ approximated by 
 \begin{equation}
 \label{Nonlinear-term-approx}
 F(u^{n+1},u^n):=\frac{2}{p+1} \frac{ \left| u^{n+1} \right|^{p+1} -  \left| u^{n} \right|^{p+1} }{  \left| u^{n+1} \right|^{2} -  \left| u^{n} \right|^{2}    }     \frac{u^{n+1}+ u^{n}}{2} .
 \end{equation}
  
 Note that, $u^{n}$ is the known variable and for $n=0,$ $u^{0}=u_0$ is the given initial condition. We compute the evolution $u^n \longrightarrow u^{n+1}$ by solving the above system \eqref{time-Scheme}. For that, we use the Newton iteration to solve the nonlinear implicit system \eqref{time-Scheme}. We denote $u^{n+1}$ at the iteration $l$ by $u^{n+1,l}$ and assume $u^{n+1}=u^{n+1,\infty}$, which gives
\begin{equation}
\label{Newton-iteration}
\begin{cases}
u^{n+1,l+1}=u^{n+1,l}  - J^{-1} \cdot G(u^{n+1,l}), \\
u^{n+1,0}=1.001 \cdot u^{n},
\end{cases}
\end{equation} 
where $G(u^{n+1})=u^{n+1}-u^{n} + \frac{\Delta t }{2i}  \Delta u^{n+1}+ \frac{\Delta t }{2i}  \Delta u^{n}  + F(u^{n+1},u^{n})$ and $J$ is the Jacobian of $G$ . 

The stopping criterion for \eqref{Newton-iteration} is $\left\|   u^{n+1,l+1} -  u^{n+1,l}  \right\|_{L^{\infty}} <Tol$ for some small constant $Tol$. In our simulation, we take $Tol<10^{-13}$, which is close to the machine precision. In order to reach the blow-up time (or the closest time), we slightly decrease the tolerance according to the examples treated. 
    
We employ the polar transformation in space $x=r \, \cos \theta $ and $y=r\, \sin \theta $ to convert the problem into the polar coordinate setting. Thus, we write the Laplacian in polar coordinates as 
\begin{equation}
\label{laplacian}
 \Delta u (r,\theta)= \frac{\partial }{ \partial r } \left(  r \frac{\partial   }{ \partial r } u(r,\theta) \right)  + \frac{1}{r^2} \frac{\partial^2 }{\partial \theta^2} u(r,\theta), \qquad (r,\theta) \in \Omega.
\end{equation}
We then rewrite the \NNls equations for $  t\in (0,T), \; (r,\theta)\in \Omega, $ as
\begin{align*}
i \frac{\partial}{\partial t} u(t,r,\theta)+      \frac{\partial }{ \partial r } \left(  r \frac{\partial   }{ \partial r } u(t,r,\theta) \right)  + \frac{1}{r^2} \frac{\partial^2 }{\partial \theta^2} u(t,r,\theta)   = -|u(t,r,\theta)|^{p-1}u(t,r,\theta)  
 \end{align*}  
with the periodic boundary condition on $\theta$
\begin{equation}
\label{period-condition-data}
u(t,r,0)=u(t,r,2\pi),  \quad \forall t \in [0,T], \; \forall r \in [r_{\star},R], 
\end{equation}
and the Dirichlet boundary condition on $r$
\begin{equation}
\label{Dirichlet-bound-data}
u(t,r_{\star},\theta)=u(t,R,\theta)=0, \quad \forall t \in [0,T], \; \forall \theta  \in [0,2\pi]. 
\end{equation}

We use $N_r$ and $ N_{\theta}$ to denote the number of points for the space discretization, setting
$$   \Delta r=\frac{R-r_{\star}}{N_r} \qquad   \text{and} \qquad \Delta \theta = \frac{2\pi}{N_{\theta}}  .$$

We denote the full discretization by $u^{n}_{k,j} \approx u(r_k,\theta_j,t_n)$, 
$r_k=r_{\star} + k\Delta r$, $\theta_j=j\Delta \theta$,
for $n=0,...,N$, $k=0, \cdot \cdot \cdot , N_r$ and $j=0,\cdot \cdot \cdot,N_{\theta}.$  

We use the second order finite difference scheme in space, to approximate the \NNls equation:
\begin{equation}
\label{schemelast}
 i \frac{ u^{n+1}_{k,j} - u^{n}_{k,j} }{\Delta t } + \frac 12 \big[ D_{r} + D_{\theta}  \big] u^{n+1}_{k,j} + \frac 12\big[ D_{r} + D_{\theta}  \big]u^{n}_{k,j} =- F(u^{n+1}_{k,j},u^{n}_{k,j}),
\end{equation}
where  $F(u^{n+1}_{k,j},u^{n}_{k,j})$ is defined in \eqref{Nonlinear-term-approx} and
 \begin{align}
\label{D_r}
D_r u^{n}_{k,j}&= \frac{1}{\Delta r} \frac{1}{r_k} \left( r_{k+\frac 12}   \frac{u^{n}_{k+1,j} - u^{n}_{k,j}}{\Delta r}  - r_{k-\frac 12}   \frac{u^n_{k,j} - u^n_{k-1,j}}{\Delta r}    \right) , \\
\label{D_th}
D_{\theta} u^n_{k,j}&=\frac{1}{r_{k}^2} \frac{ u^n_{k,j+1}-2 u^n_{k,j}+u^n_{k,j-1}       }{ (\Delta \theta)^2}, \\ 
\label{Fu}
 F(u^{n+1}_{k,j},u^n_{i,j})&=\frac{2}{p+1} \frac{ \big| u^{n+1}_{k,j} \big|^{p+1} -  \big| u^{n}_{k,j} \big|^{p+1} }{  \big| u^{n+1}_{k,j} \big|^{2} -  \big| u^{n}_{k,j} \big|^{2}    }     \frac{u^{n+1}_{k,j}+ u^{n}_{k,j}}{2} ,
\end{align}
where $r_{k+\frac 12}=\frac{r_k+r_{k+1}}{2}$, and using the convention that $u^n_{0,j}=u^n_{N_{r},j}=0$, $u^n_{k,0}=u^n_{k,N_{\theta}}$ and $u^n_{k,1}=u^n_{k,N_{\theta}+1}$ from the boundary settings. 
 
To solve the above system \eqref{schemelast} with \eqref{D_r} and \eqref{D_th}, we consider the initial condition such that $u_0$ satisfies Dirichlet boundary conditions. We typically consider a shifted Gaussian as an initial condition, therefore, we define the translation parameters $(x_c,y_c)$ such that $u_0$ is smooth and vanishes to 0 near both the obstacle and the boundary of the computational domain.   

\subsection{Mass and Energy conservation}
The Crank-Nicolson scheme \eqref{time-Scheme} conserves the following discretized quantities: 
the discretized $L^2$-norm, or often referred to as the {\sl discrete} mass, and the discretized energy, called the {\sl discrete} energy, which are the discrete analogues of the mass and energy conservations in \eqref{mass-consev} and  \eqref{energy-consev}. 
  
If we consider the rectangular coordinates $(x,y)$, and the discretization of the Laplacian term $\Delta u$ by the standard five points stencil finite difference approximation (e.g., see \cite{DelForPay81}), i.e.,
\begin{align}\label{NLS scheme x y}
\Delta u_{k,j} \approx \frac{u_{k-1,j}+u_{k+1,j}+u_{k,j-1}+u_{k,j+1}-4u_{k,j}}{\Delta x \Delta y},
\end{align}
by assuming $\Delta x=x_{k+1}-x_{k}= \Delta y$, then the conservation of the discrete mass for the scheme \eqref{time-Scheme} with the Laplacian $\Delta u$ approximated by \eqref{NLS scheme x y} on the rectangular domain is given by 
\begin{equation} 
M[u^{n}]=    \sum_{k =0}^{N_{x}} \sum_{j =0}^{N_{y}}  |u^n_{k,j}|^2  \Delta x\, \Delta y =M[u^0], \quad \text{for} \;  n \geq 0,
\end{equation} 
which can be proved by multiplying the equation \eqref{time-Scheme} by $(\bar{u}_{k,j}^{n+1}+\bar{u}_{k,j}^{n}) \Delta x \Delta y$ and summing from $k=0$ to $N_x$, and  $j=0$ to $N_y$ for each $k,j$.

Similarly, the conservation of the discrete energy in rectangular coordinates $(x,y)$ is obtained by multiplying \eqref{time-Scheme} with 
$(\bar{u}_{k,j}^{n+1}-\bar{u}_{k,j}^{n}) \Delta x \Delta y$, summing from $k=0$ to $N_x$ and  $j=0$ to $N_y$ for each $k,j$ and taking the real part:

\begin{equation}
E[u^{n}]= \frac 12 \sum_{k =0}^{N_{x}-1} \sum_{j =0}^{N_{y}-1} \left(  \left| \frac{u^n_{k+1,j}-u^n_{k,j}}{\Delta x} \right|^{2}+ \left| \frac{u^n_{k,j+1}-u^n_{k,j}}{\Delta y} \right|^{2}  - \frac{2}{p+1} |u^n_{k,j}|^{p+1} \right) \Delta x \Delta y  =E[u_0] ,  \quad \text{for} \;  n \geq 0.
\end{equation}
For brevity, we omit the above standard proofs. 

In polar coordinates $(r, \theta),$ the scheme \eqref{schemelast} also conserves the discrete mass and energy exactly, similarly to the above. More specifically, we define the discrete mass at $t=t_n$ by
\begin{equation}
\label{mass-conserv}
M[u^{n}]= \sum_{k=0}^{N_{r}} \sum_{j =0}^{N_{\th}}  |u^n_{k,j}|^2 \; r_k  \;  \Delta r \; \Delta  \theta , \quad \text{for} \;  n \geq 0.   
\end{equation}
One can see that the definition \eqref{mass-conserv} is an analog to the mass in \eqref{mass-consev}. In the same spirit, we define the discrete energy as
\begin{multline}
\label{energy-conserv}
E[u^{n}]=  \frac 12 \sum_{k=0}^{N_r} \sum_{j=0}^{N_\theta} \left( \left|  \frac{ u^{n}_{k+1,j} -  u^{n}_{k,j} }{\Delta r} \right|^{2} r_{k+\frac 12} \,  \Delta r \; \Delta \theta + \frac 12 \frac{1}{r_k} \left| \frac{ u^{n}_{k,j+1} -  u^{n}_{k,j} }{\Delta \theta} \right|^{2} \Delta r \; \Delta \theta \right)  \\  -  \frac{1}{p+1}  \sum_{k=0}^{N_r} \sum_{j=0}^{N_\theta} |u^n_{k,j}|^{p+1} \; r_k  \;  \Delta r \; \Delta \theta ,  \quad \text{for} \;  n \geq 0,
\end{multline}
which is an analog of the energy conservation in \eqref{energy-consev}.  
 
We have the following theorem:
\begin{theorem}
The numerical scheme \eqref{schemelast} conserves the discrete mass \eqref{mass-conserv} and the discrete energy \eqref{energy-conserv} for all $n\in \N$, i.e., 
$$M[u^n]= M[u_0] \quad \quad \text{and} \quad \quad E[u^n]= E[u_0].$$
\end{theorem}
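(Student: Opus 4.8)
The plan is to adapt to the polar scheme \eqref{schemelast} the two energy-method computations already sketched for the rectangular scheme. For the mass I would test \eqref{schemelast} against $(\overline{u^{n+1}_{k,j}}+\overline{u^{n}_{k,j}})\, r_k\,\Delta r\,\Delta\theta$, sum over all $k,j$, and extract the imaginary part; for the energy I would instead test against $(\overline{u^{n+1}_{k,j}}-\overline{u^{n}_{k,j}})\, r_k\,\Delta r\,\Delta\theta$, sum, and extract the real part. The weight $r_k\,\Delta r\,\Delta\theta$ is exactly the discrete polar volume element, so the quadratic quantities produced are precisely the discrete mass \eqref{mass-conserv} and energy \eqref{energy-conserv}.

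For the mass, write $\delta=u^{n+1}_{k,j}-u^{n}_{k,j}$ and $w=u^{n+1}_{k,j}+u^{n}_{k,j}$. The discrete time derivative contributes $\tfrac{i}{\Delta t}\,\delta\bar w$; since $\mathrm{Re}(\delta\bar w)=|u^{n+1}_{k,j}|^2-|u^{n}_{k,j}|^2$, multiplying by $i$ and taking the imaginary part yields exactly $\tfrac1{\Delta t}\big(|u^{n+1}_{k,j}|^2-|u^n_{k,j}|^2\big)$, the increment of the mass density. The nonlinear contribution $F\,\bar w$ is real, because $\tfrac{w}{2}\,\bar w=\tfrac12|w|^2$ and the prefactor in \eqref{Fu} is real, so it drops out of the imaginary part. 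The Laplacian terms $\tfrac12[D_r+D_\theta]w\cdot\bar w\, r_k$ turn out to be real after a discrete summation by parts (discussed below), hence they too drop out. What remains is $M[u^{n+1}]=M[u^n]$.

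For the energy, the time term becomes $\tfrac{i}{\Delta t}|\delta|^2 r_k$, which is purely imaginary and disappears from the real part. The crux is the nonlinear term: using $\mathrm{Re}\big(w\,\bar\delta\big)=|u^{n+1}_{k,j}|^2-|u^n_{k,j}|^2$ together with the special difference-quotient form of $F$ in \eqref{Fu}, the product $\mathrm{Re}\big(F\,\bar\delta\big)$ telescopes exactly to $\tfrac{1}{p+1}\big(|u^{n+1}_{k,j}|^{p+1}-|u^n_{k,j}|^{p+1}\big)$ --- this is the very reason the nonlinearity is discretized as a difference quotient. Likewise, the Laplacian terms tested against $\bar\delta$ telescope, via summation by parts, to the increment of the discrete gradient energy in \eqref{energy-conserv}. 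Collecting the pieces gives $E[u^{n+1}]=E[u^n]$.

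The main technical point, common to both computations, is the discrete Green identity in polar coordinates. For the radial operator I would perform Abel summation in $k$ on $\sum_k r_k\,(D_r u_{k,j})\,\bar\phi_{k,j}$: the factor $r_k$ cancels the $\tfrac1{r_k}$ in \eqref{D_r}, the telescoping divergence structure produces $-\sum_k r_{k+\frac12}\big(\tfrac{u_{k+1,j}-u_{k,j}}{\Delta r}\big)\big(\tfrac{\bar\phi_{k+1,j}-\bar\phi_{k,j}}{\Delta r}\big)\Delta r$, and the boundary contributions at $k=0$ and $k=N_r$ vanish thanks to the Dirichlet convention $u^n_{0,j}=u^n_{N_r,j}=0$. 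The staggered weights $r_{k\pm\frac12}$ are chosen precisely so that this quadratic form coincides with the radial gradient term $r_{k+\frac12}\big|\tfrac{u_{k+1,j}-u_{k,j}}{\Delta r}\big|^2$ of \eqref{energy-conserv}. For the angular operator \eqref{D_th} the summation by parts in $j$ has no boundary term at all, by the periodicity $u^n_{k,0}=u^n_{k,N_\theta}$, and reproduces the angular gradient term. With $\phi=w$ the resulting form is manifestly real (giving the mass statement), and with $\phi=\bar\delta$ it telescopes in $n$ (giving the energy statement). The only delicate part is bookkeeping the staggered $r_{k\pm\frac12}$ weights so that the summation by parts lands exactly on the gradient terms as written in \eqref{energy-conserv}; everything else is routine.
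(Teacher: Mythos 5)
Your proposal follows essentially the same route as the paper's proof: testing the scheme against $(\bar{u}^{n+1}_{k,j}+\bar{u}^{n}_{k,j})\,r_k\,\Delta r\,\Delta\theta$ and taking the imaginary part for the mass, testing against $(\bar{u}^{n+1}_{k,j}-\bar{u}^{n}_{k,j})\,r_k\,\Delta r\,\Delta\theta$ and taking the real part for the energy, with the same discrete summation by parts (using the Dirichlet and periodic conventions) and the same telescoping of the difference-quotient nonlinearity to $\tfrac{1}{p+1}\bigl(|u^{n+1}_{k,j}|^{p+1}-|u^{n}_{k,j}|^{p+1}\bigr)$. The argument is correct, and your explicit identification of the staggered weights $r_{k\pm\frac12}$ as what makes the radial quadratic form land exactly on the gradient term of the discrete energy is precisely the point the paper's computation exploits.
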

\begin{proof}
The proof of the mass conservation is similar to the proof in the case of the rectangular coordinates $(x,y), $  it suffices to multiply \eqref{schemelast} with $(\bar{u}^{n+1}_{k,j}+\bar{u}^{n}_{k,j}) \, r_k \Delta r \Delta \theta ,$ sum up over $k$ and $j$ from $0$ to $N_r$, $0$ to $N_{\theta}$, respectively, and then take the imaginary part.  

For the energy-conservation, the proof is slightly different than the one in the rectangular coordinates $(x,y)$, due to the space discretization of the Laplacian in \eqref{D_r}. 
For that, we write the scheme \eqref{schemelast} for $u_{k,j}^{n}$, using \eqref{D_r}, \eqref{D_th} and \eqref{Fu}, to obtain
\begin{multline}
\label{2d-NLS-Scheme}
  \underbrace{  i \frac{u_{k,j}^{n+1} - u_{k,j}^{n}}{ \Delta t}  }_{(I_1)_{k,j}}+ \underbrace{\frac 12 \frac{1}{r_k}  \frac{1}{\Delta r} 
    \left(  r_{k+ \frac 12} \frac{u_{k+1,j}^{n+1}-u_{k,j}^{n+1}}{\Delta r}  - r_{k - \frac 12} \frac{u_{k,j}^{n+1} - u_{k-1,j}^{n+1}}{\Delta r}  \right)  }_{(I_{2,1})_{k,j}}  \\ \underbrace{ + \frac 12 \frac{1}{r_k}  \frac{1}{\Delta r} 
    \left(  r_{k+ \frac 12} \frac{u_{k+1,j}^{n}-u_{k,j}^{n}}{\Delta r} - r_{k - \frac 12} \frac{u_{k,j}^{n} - u_{k-1,j}^{n}}{\Delta r}     \right)  }_{(I_{2,2})_{k,j}}  
   +   \underbrace{ \frac 12 \frac{1}{(r_k)^2} \frac{u_{k,j+1}^{n+1}-2u_{k,j}^{n+1}+u_{k,j-1}^{n+1}}{ (\Delta \th)^2} }_{(I_{3,1})_{k,j}} \\ + \underbrace{\frac 12 \frac{1}{(r_k)^2}\frac{u_{k,j+1}^{n}-2u_{k,j}^{n}+u_{k,j-1}^{n}}{ (\Delta \th)^2}  }_{(I_{3,2})_{k,j}}     = \underbrace{-\frac{1}{p+1}
    \frac{|u_{k,j}^{n+1}|^{p+1}- |u^{n}_{k,j}|^{p+1}}{|u^{n+1}_{k,j}|^2-|u^{n}_{k,j}|^2}(u_{k,j}^{n+1}+u_{k,j}^{n}) }_{(I_4)_{k,j}}
\end{multline}

Multiplying \eqref{2d-NLS-Scheme} with $(\bar{u}^{n+1}_{k,j}-\bar{u}^{n}_{k,j} ) \, r_k \Delta r \Delta \theta $, taking the real part and summing up over $k$ and $j$, yields 
\begin{equation}
\label{I_1}
\re \left[ \sum_{k=0}^{N_r} \sum_{j=0}^{N_{\th}}  (I_1)_{k,j} \times (\uu^{n+1}_{k,j} - \uu^n_{k,j})   r_k \Delta r \Delta \theta \right]=0.  
\end{equation}
  
\begin{multline}
\label{I_{2,1}+I_{2,2}}
\re \left[ \sum_{k=0}^{N_r}  \sum_{j=0}^{N_{\th}}  ( (I_{2,1})_{k,j}+(I_{2,2})_{k,j})  \times (\uu^{n+1}_{k,j} - \uu^n_{k,j})   r_k \Delta r \Delta \theta\right]= \sum_{k=0}^{N_r} \sum_{j=0}^{N_{\th}} 
- \frac 12 r_{k + \frac 12}{\left|   \frac{u^{n+1}_{k+1,j}-u^{n+1}_{k-1,j}}{\Delta r} \right|^2 }    \Delta r \Delta \theta  \\ +   \sum_{k=0}^{N_{r}} \sum_{j=0}^{N_{\th}} 
  \frac12 r_{k+\frac 12}  {\left|   \frac{u^{n}_{k+1,j}-u^{n}_{k,j}}{\Delta r} \right|^2 }   \Delta r \Delta \theta. 
\end{multline}
  
Using that $ \uu^{n}_{k,N_{\th}}=\uu^{n}_{k,0}, \; u^{n}_{k,N_{\th}+1}=u^{n}_{k,1}, \; u^{n}_{k,N_{\th}}=u^{n}_{k,0}$ and $u^{n}_{0,j}=u^{n}_{N_{r},j}=0,$ for all $n\in \N,$ we get
  
\begin{multline}
\label{I_{3,1}}
\re \left[\sum_{k=0}^{N_r} \sum_{j=0}^{N_{\th}}  (I_{3,1})_{k,j} \times (\uu^{n+1}_{k,j}-\uu^{n}_{k,j})  r_k \Delta r \Delta \theta \right]= - \frac 12 \frac{1}{r_k} \re \sum_{k=0}^{N_r}  \sum_{j=0}^{N_{\th}} \left| \frac{u^{n+1}_{k,j}-u^{n+1}_{k,j-1}}{\Delta \th} \right|^{2} \Delta r \Delta \th \\ -\frac 12 \frac{1}{r_k } \frac{1}{ (\Delta \th)^2} \re \bigg[\sum_{k=0}^{N_r}  \sum_{j=0}^{N_\th} {u_{k,j+1}^{n+1} \uu^n_{k,j}}
+ {2u_{k,j}^{n+1} \uu^n_{k,j}} - {u_{k,j-1}^{n+1} \uu^n_{k,j}}  
\bigg]\Delta r \Delta \th . 
\end{multline}

Similarly, we deduce 
\begin{multline}
\label{I_{3,2}}
\re \left[\sum_{k=0}^{N_r} \sum_{j=0}^{N_{\th}}  (I_{3,2})_{k,j} \times (\uu^{n+1}_{k,j}-\uu^{n}_{k,j})  r_k \Delta r \Delta \theta \right]= \frac 12 \frac{1}{r_k} \re \sum_{k=0}^{Nr} \sum_{j=0}^{N_{\th}} \left| \frac{u^{n}_{k,j}-u^{n}_{k,j-1}}{\Delta \th} \right|^{2} \Delta r \Delta \th \\ -\frac 12 \frac{1}{r_k} \frac{1}{  (\Delta \th)^2} \re \sum_{k=0}^{Nr} \sum_{j=0}^{N_\th} \bigg[ -\uu_{k,j+1}^{n+1} u^n_{k,j} -2\uu_{k,j}^{n+1} u^n_{k,j} + \uu_{k,j-1}^{n+1} u^n_{k,j}  \bigg]\Delta r \Delta \th .
\end{multline}

By \eqref{I_{3,2}} and \eqref{I_{3,1}}, we obtain

\begin{multline}
\label{I_{3,1}+I_{3,2}}
\re \left[ \sum_{k=0}^{N_r} \sum_{j=0}^{N_{\th} } (   (I_{3,1})_{k,j} + (I_{3,2})_{k ,j }  )    \times ( \uu^{n+1}_{k,j} - \uu^{n}_{k,j} )  r_k \Delta r \Delta \theta  \right]=- \frac 12 \frac{1}{r_k} \re \sum_{k=0}^{Nr}   \sum_{j=0}^{N_{\th}} \left| \frac{u^{n+1}_{k,j}-u^{n+1}_{k,j-1}}{\Delta \th} \right|^{2} \Delta r \Delta \th \\ +  \frac 12 \frac{1}{r_k} \re \sum_{k=0}^{Nr} \sum_{j=0}^{N_{\th}} \left| \frac{u^{n}_{k,j}-u^{n}_{k,j-1}}{\Delta \th} \right|^{2} \Delta r \Delta \th,
\end{multline}

\begin{equation}
\label{I_4}
\re \left[ \sum_{k=0}^{Nr} \sum_{j=0}^{N_{\th}} (I_4)_{k,j} \times (\uu^{n+1}_{k,j} - \uu^n_{k,j})  r_k \Delta r \Delta \theta \right]     = -\frac{1}{p+1} \sum_{k=0}^{Nr}  \sum_{j=0}^{N_{\th}} 
\left(  {|u^{n+1}_{k,j}|^{p+1}- |u^{n}_{k,j}|^{p+1}}   \right)  r_k \Delta r \Delta \theta.
\end{equation}

Summing up \eqref{I_1}, \eqref{I_{2,1}+I_{2,2}}, \eqref{I_{3,1}+I_{3,2}} and \eqref{I_4}, we finally arrive at  
\begin{align*}\label{energy-2d}
E[u^{n}]&=\frac12 \sum_{k=0}^{N_{r}}  \sum_{j=0}^{N_\th} 
  r_{k+\frac 12}  {\left|   \frac{u^{n}_{k+1,j}-u^{n}_{k,j}}{\Delta r} \right|^2 }  \Delta r \Delta \theta+ \frac12  \frac{1}{r_k}   {\left|   \frac{u^{n}_{k,j+1}-u^{n}_{k,j}}{\Delta \th} \right|^2 }  \Delta r \Delta \th \\ 
  & -\frac{1}{p+1}  \sum_{k=0}^{N_{r}} \sum_{j=0}^{N_{\th}}  |u^{n}_{k,j}|^{p+1} r_k \Delta r \Delta \theta \\ 
  & =  \frac 12 \sum_{k=0}^{N_{r} }\sum_{j=0}^{N_{\th}} r_{k + \frac 12} {\left|   \frac{u^{n+1}_{k+1}-u^{n+1}_{k}}{\Delta r} \right|^2 }  \Delta r \Delta \th+ \frac12  \frac{1}{r_k}  {\left|   \frac{u^{n+1}_{k,j+1}-u^{n+1}_{k,j}}{\Delta \th} \right|^2 }  \Delta r \Delta \theta       \\
   & - \frac{1}{p+1} \sum_{k=0}^{N_{r}} \sum_{j=0}^{N_{\th}}  |u^{n+1}_{k,j}|^{p+1}  r_k \Delta r \Delta \theta 
   =E[u^{n+1}].
\end{align*}
\end{proof}

We note that in our simulations the mass and the energy are well preserved: the relative mass-error and energy-error are bounded by at least $10^{-14}$, at the end of the simulations at $T=20$ with the time step $10^{-2},$ as shown in Figure~\ref{Mass-Energy-Error1}. The evolution of the relative mass and energy errors can be tracked by 
\begin{equation}\label{Error_M}
\max_{0 \leq m  \leq n}( M[u^m])-\min_{0 \leq m \leq n}(M[u^m])  \qquad \text{and} \qquad  \max_{0 \leq m \leq n}( E[u^m])-\min_{0 \leq m \leq n}(E[u^m]),
\end{equation}
or
\begin{equation}\label{ErrorM_1}
\frac{ M[u^{n}] -M[u_0]}{M[u_0]}  \qquad \text{and} \qquad     \frac{ E[u^{n}] -E[u_0]}{E[u_0]} .
\end{equation}

\begin{figure}[!h]
\includegraphics[width=8.5cm,height=4.5cm]{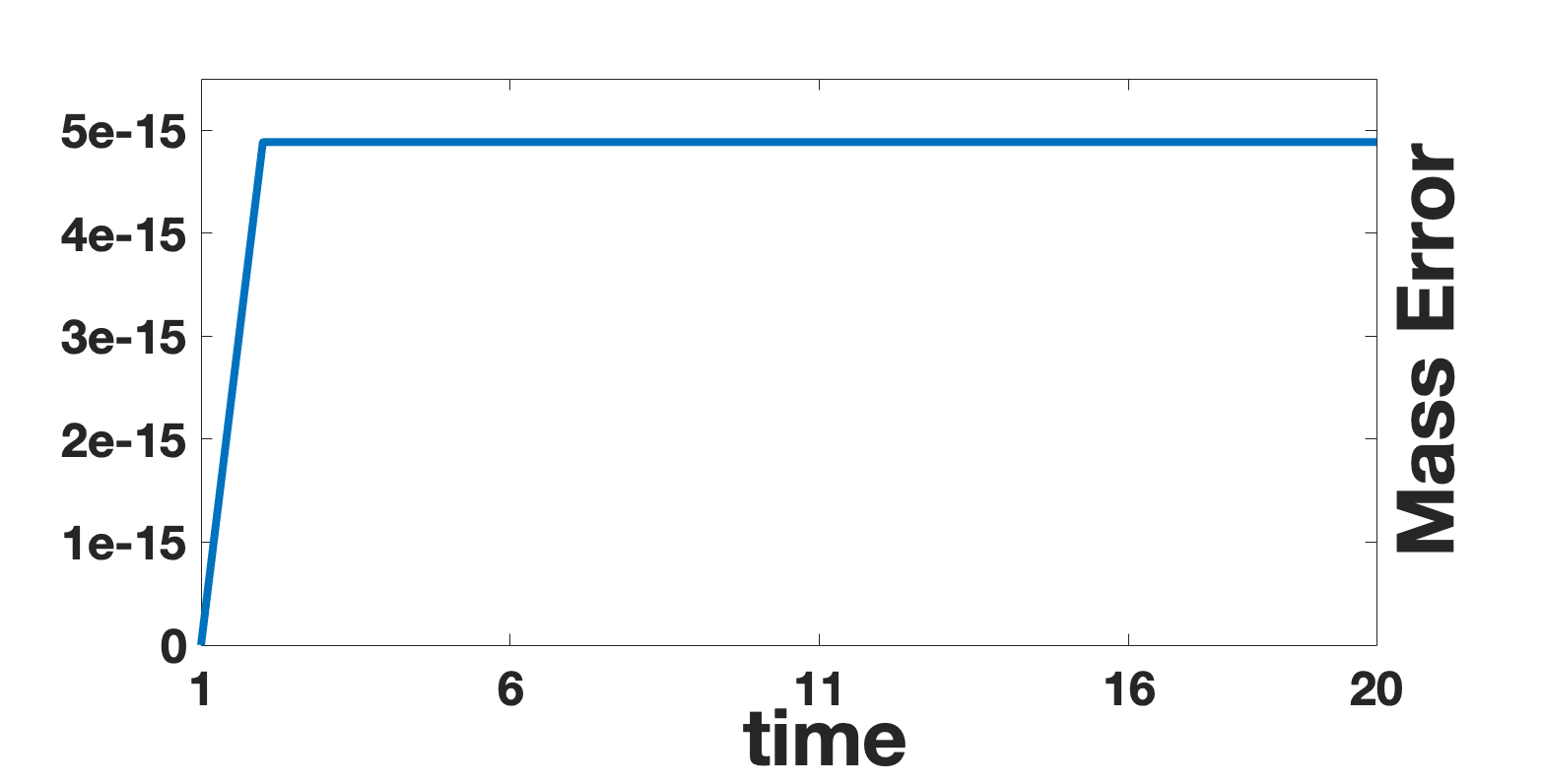}
\includegraphics[width=8.5cm,height=4.5cm]{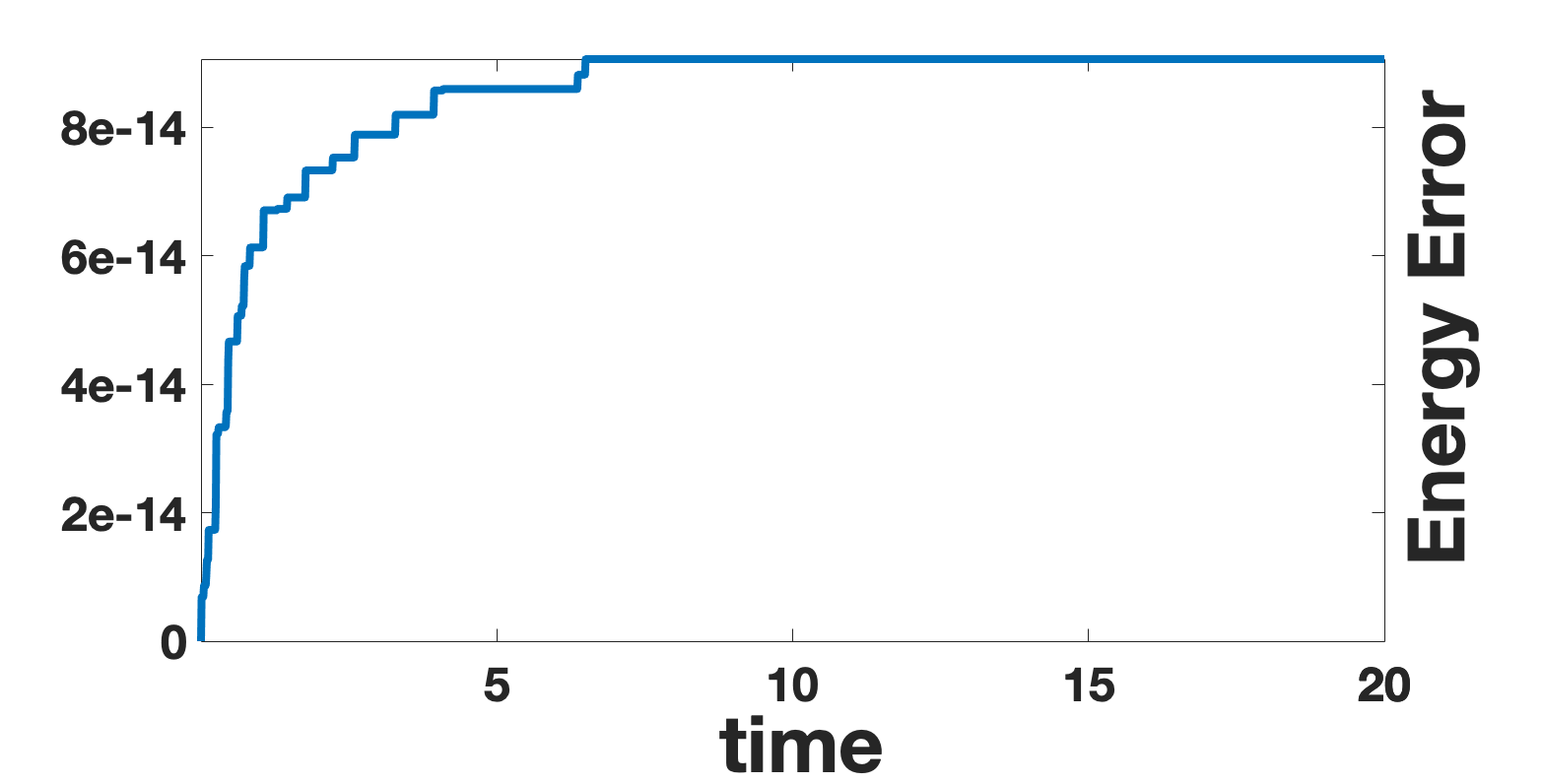}
\caption{Evolution of the relative mass and energy errors \eqref{Error_M} in the scheme \eqref{schemelast} for $d=2$ and $p=3.$}
\label{Mass-Energy-Error1}
\end{figure}
    
In Figure \ref{Mass-Energy-Error1}, we show that the error in \eqref{Error_M} for the discrete mass and energy are on the order of $10^{-15}$ and $10^{-14},$ correspondingly.  The errors of mass and energy in the case for $p=5$ are on the same order. 

\section{The NLS equation on the whole plane $\mathbb R^2$}
\label{NLS-space}
In this section, we show different numerical solutions of the focusing nonlinear Schr\"odinger equation on the whole Euclidean space $\R^2.$ We give several single bump solitary wave-type examples (and their evolution on the whole $\mathbb R^2$ plane) that we later consider in the following sections when investigating the influence of the obstacle on the behavior of the NLS$_{\Omega}$ solutions with the same initial data for comparison. 

Here, we consider a sufficiently large regular bounded rectangular domain without an obstacle and impose the Dirichlet boundary conditions on the boundary. In order to approximate the \nnls equation, the Laplacian term $\Delta u$ is discretized in the way of \eqref{NLS scheme x y}, and the temporal discretization is achieved by the Crank-Nicolson scheme in \eqref{time-Scheme}, with the Newton's iteration solving the resulting fully discretized nonlinear algebraic system. 

\subsection{The $L^2$-critical case: $2d$ cubic NLS on $\mathbb R^2$} 
We consider the focusing cubic \nnls equation with initial data as in \eqref{u0NLS}, moving on the line $y=x$ with the following parameters: 
\begin{equation} \label{para:NLS-critical}  
A_0=2.25, \quad v=(15,15) \quad \mbox{and} \quad (x_c,y_c)=(-4.5,-4.5).  
\end{equation}

\begin{figure}[!ht]
\includegraphics[width=5cm,height=5.6cm]{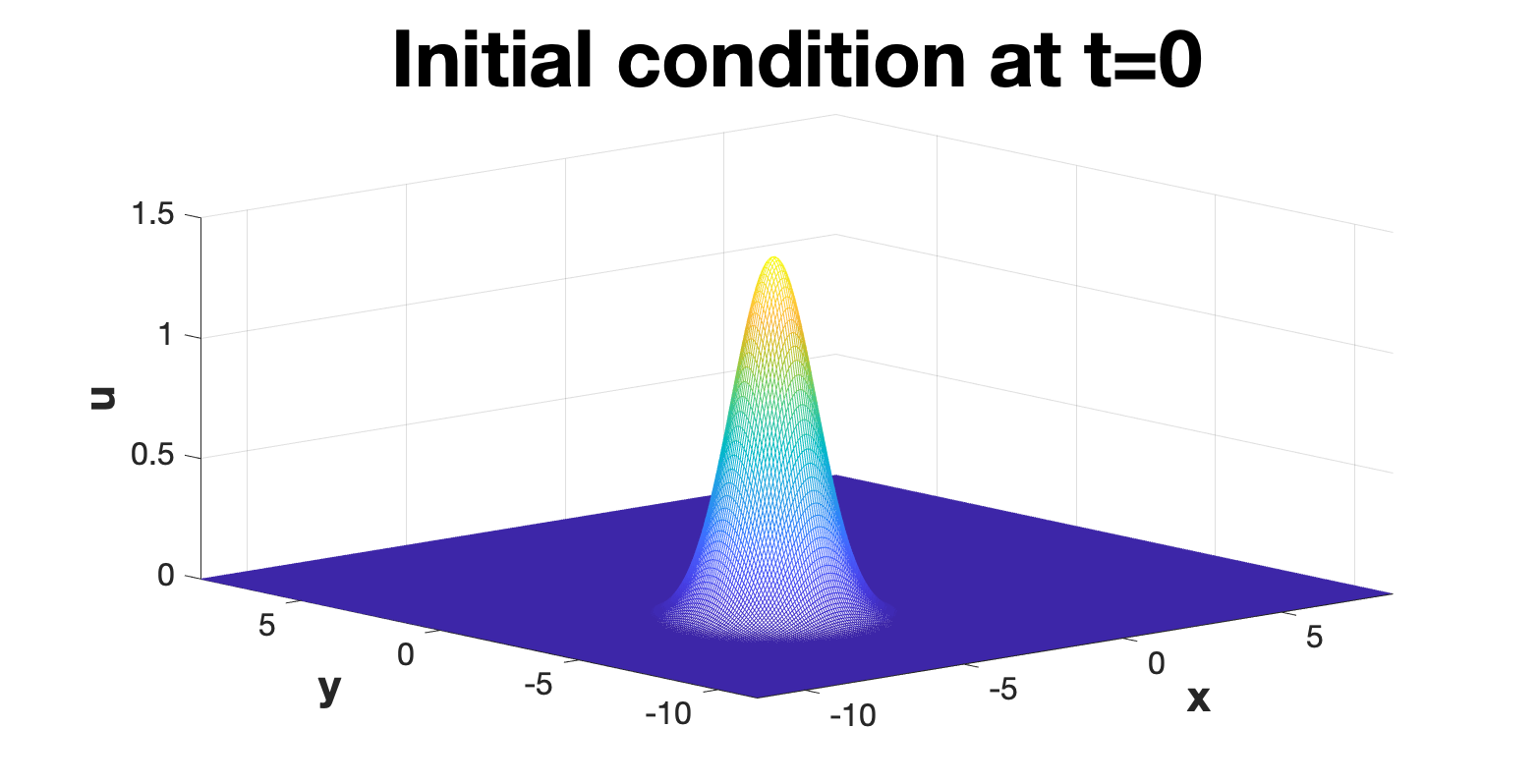} 
\includegraphics[width=5.1cm,height=5.6cm]{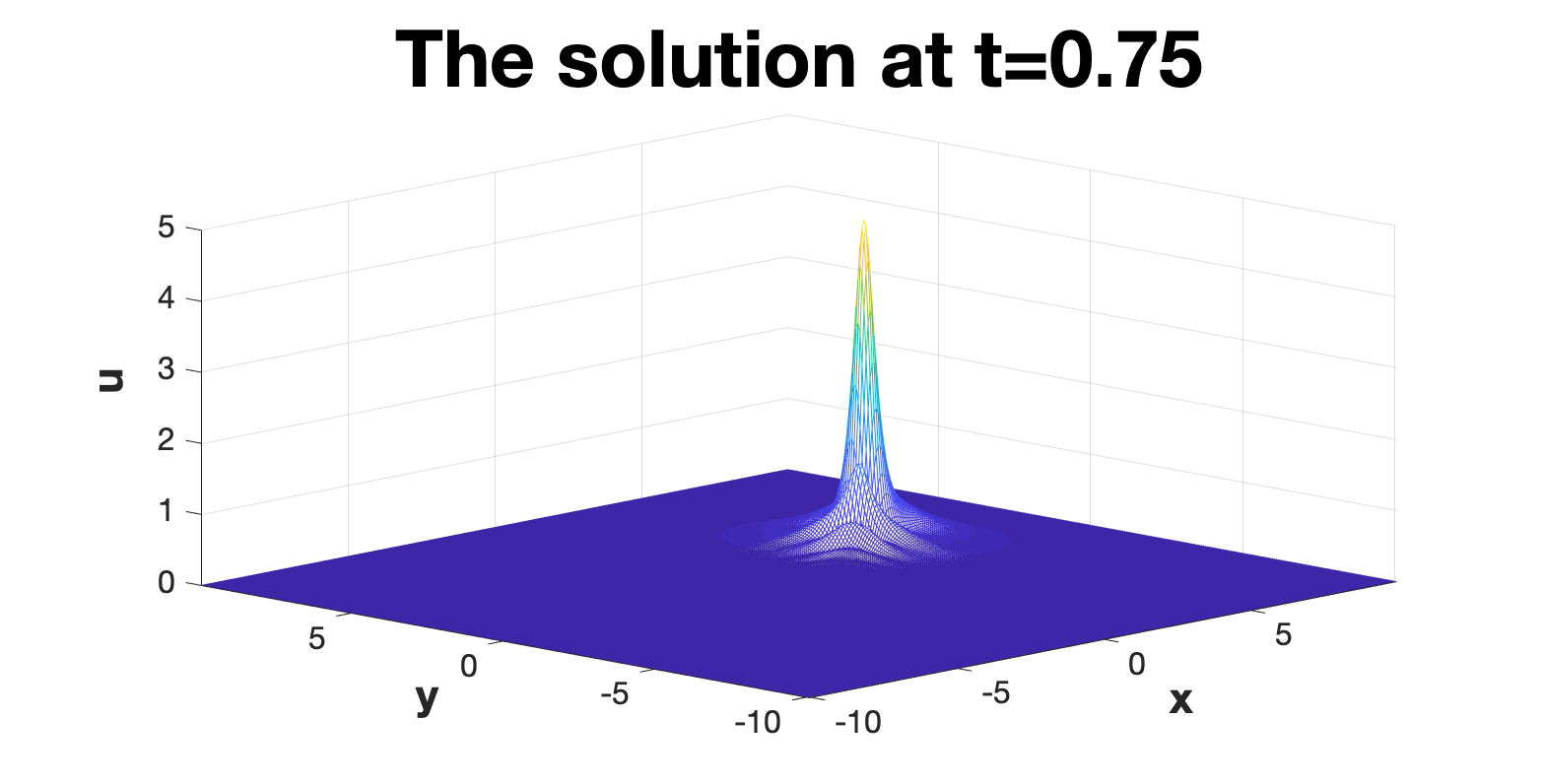}
\includegraphics[width=5cm,height=5cm]{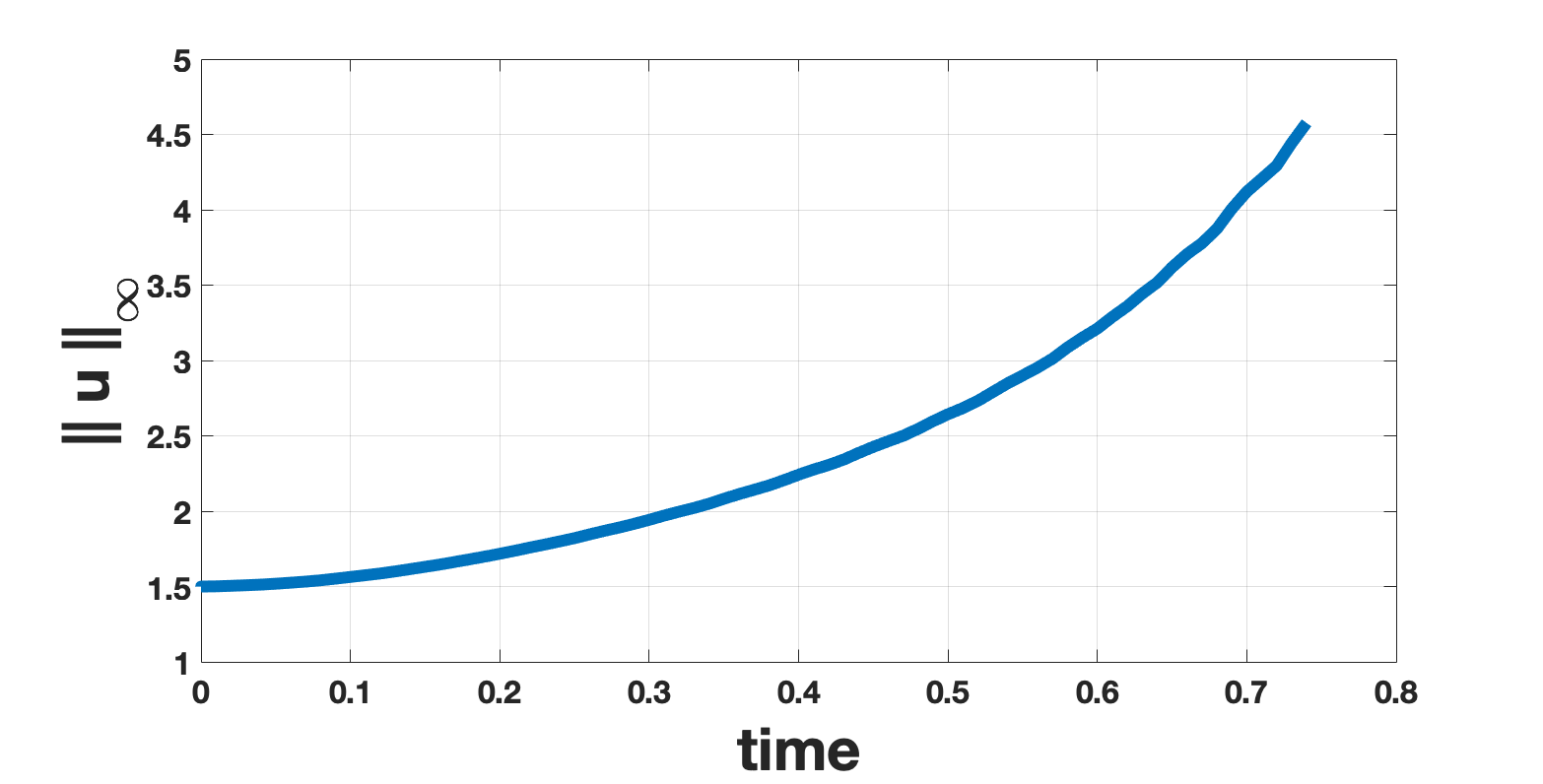} 
\caption {Solution to the $2d$ cubic NLS 
with initial data $u_0$ as in \eqref{u0NLS} with $A_0=2.25, v=(15,15)$ and $(x_c,y_c)=(-4.5,-4.5)$ at times $t=0$ and $t=0.75$ moving along the line $y=x$ (left and middle); time dependence of the $L^{\infty}$-norm of this solution (right).}
\label{NLScritBlow-up2}
\end{figure}

Snapshots of the initial data and the solution $u(t)$ to \nnls equation, which blows up in finite time (shortly after $t= 0.75$), is plotted in Figure \ref{NLScritBlow-up2} on the left ($t=0$) and middle ($t=0.75,$ the last computational time before the blow-up) subplots. The right subplot is the $L^{\infty}$-norm of the solution, which increases toward $\infty,$ indicating the blow-up.  We note that the same initial condition with exactly the same parameters \eqref{para:NLS-critical}, will have a different evolution when an obstacle is present, see Figures \ref{Solution-directionVelocityStrong}, \ref{Snapshots-Solution-directionVelocityStrong}, see also  Figures \ref{test3Critique-weak}, \ref{test4Critique-weak}, for similar initial condition with $v=(12,15)$ and $v=(15,12).$ Note that, in the presence of the obstacle, we also impose Dirichlet boundary condition for the initial data on the obstacle boundary, see \eqref{Dirichlet-bound-data}.


\subsection{The $L^2$-supercritical case: $2d$ quintic NLS on $\mathbb R^2$}
We consider the focusing quintic \nnls equation with initial data $u_0$ as in \eqref{u0NLS} with 
$$ A_0=1.25, \quad v=(15,0) \quad \mbox{and} \quad (x_c,y_c) =(-4.5,0).$$ 

When there is no obstacle present, the solution $u(t)$ to the \nnls equation, which is moving along the line $y=0$, blows up in finite time slightly after $t=0.64$, see Figure \ref{NLScritBlow-up} snapshots of the solution at $t=0$ and $t=0.64$ (initial time and the last computational time before the blow-up) on the left and middle subplots and the $L^\infty$-norm on the right subplot. We observe later that the solution to the \NNls equation with the same initial data, when the obstacle is present, does not blow-up in finite time and has a completely different dynamics. Furthermore, the solitary wave does not preserve its shape or profile after the collision or interaction with the obstacle (see Figure \ref{F:28}, \ref{F:29}, and Table \ref{T:2}). 
\begin{figure}[!ht]
\includegraphics[width=5cm,height=5.8cm]{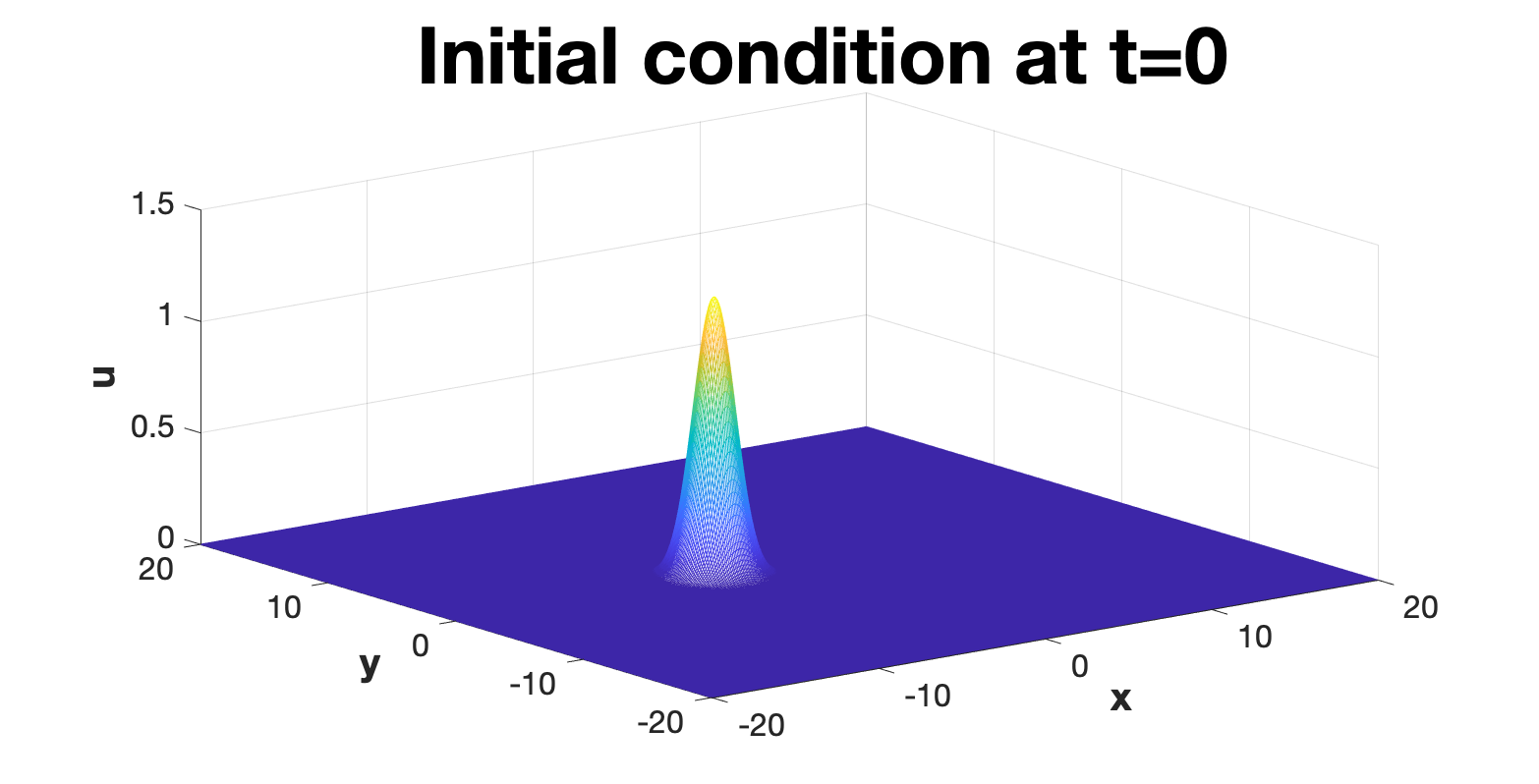} 
\includegraphics[width=5cm,height=5.8cm]{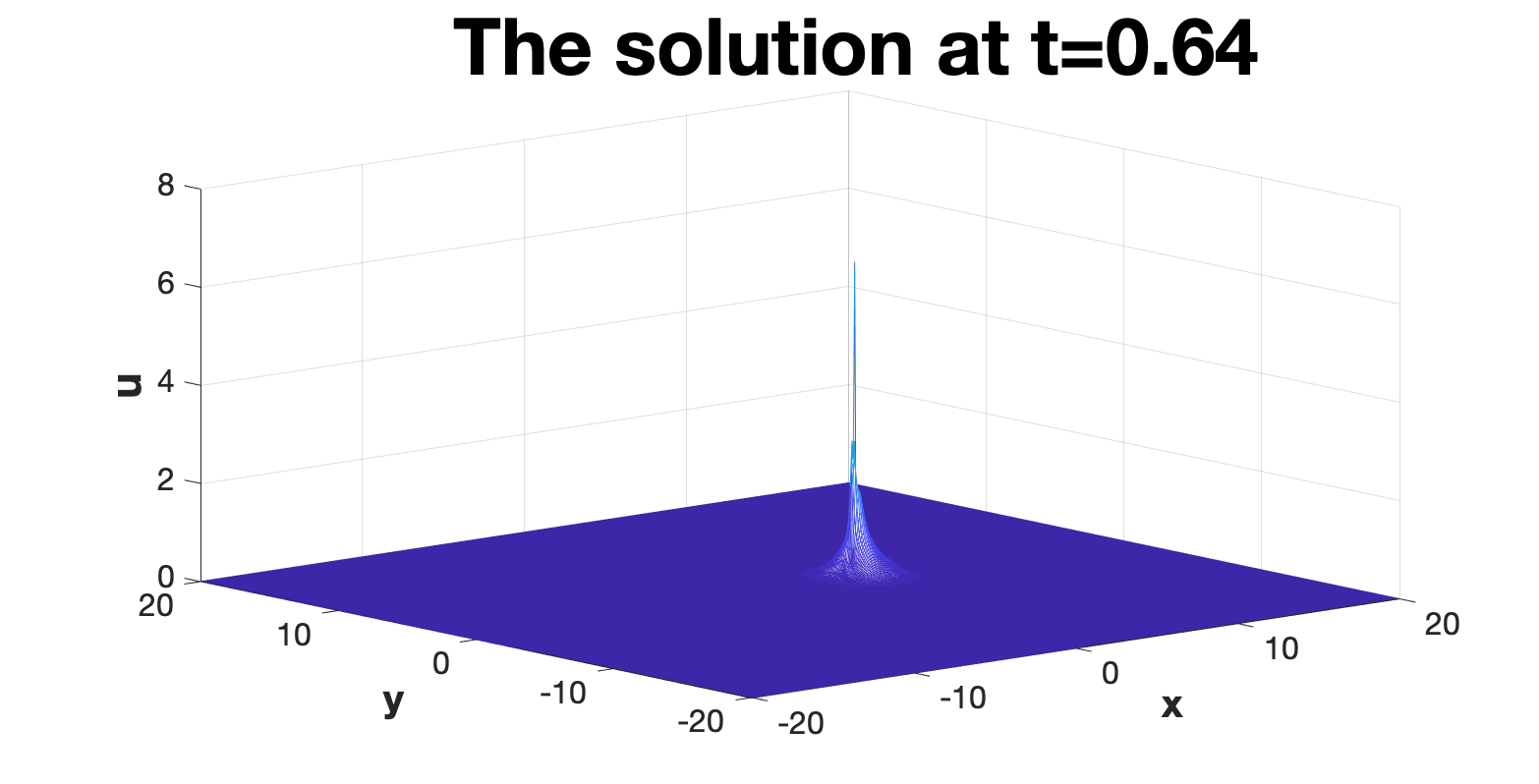}
\includegraphics[width=5cm,height=5cm]{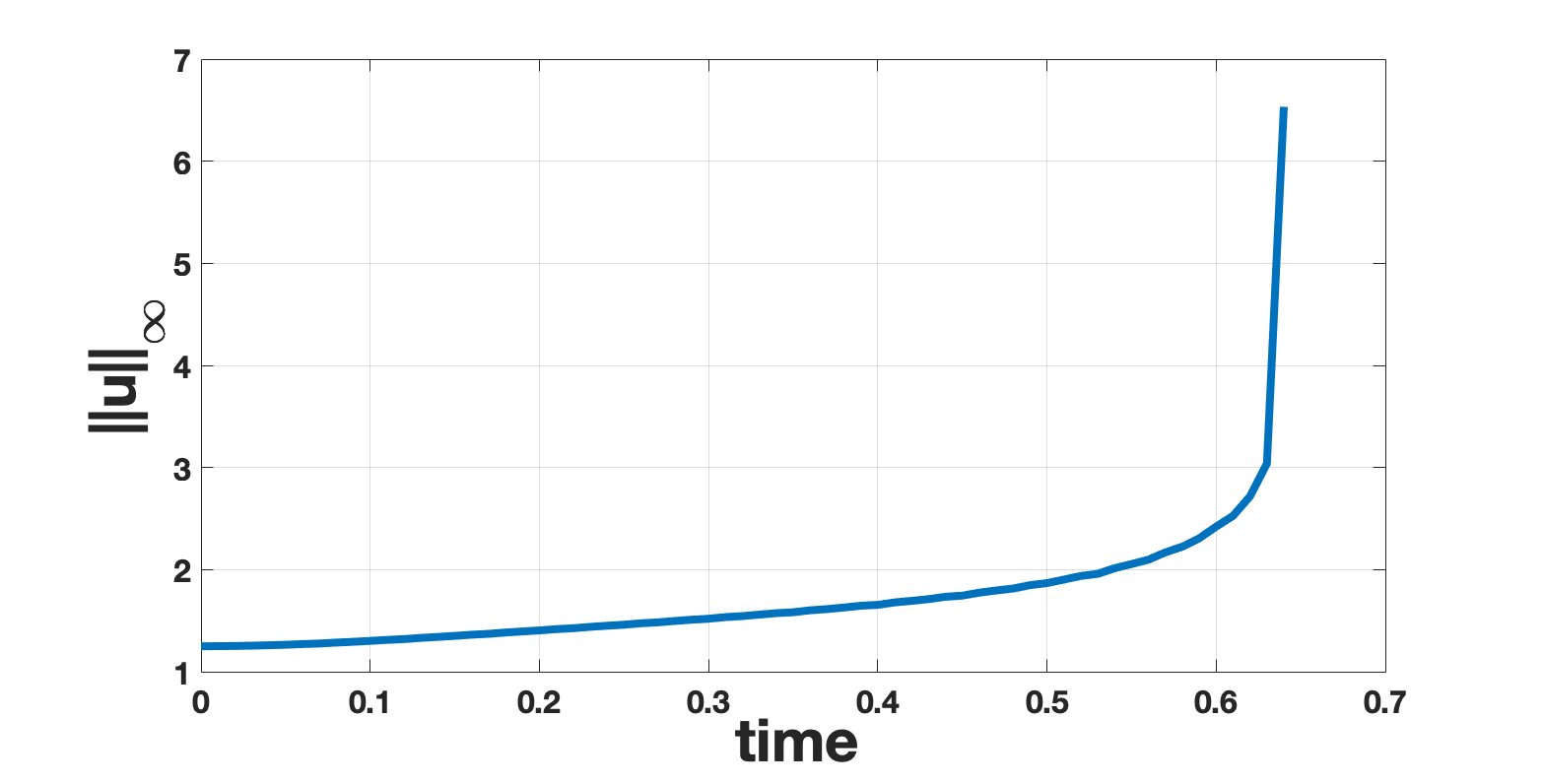} 
\caption {Solution to the $2d$ {quintic} NLS 
with initial data $u_0$ as in \eqref{u0NLS} with $A_0=1.25, v=(15,0)$ and $(x_c,y_c)=(-4.5,0)$ at times $t=0$ and $t=0.64$ moving on the line $y=0$ (left and middle); time dependence of the $L^{\infty}$-norm of this solution (right).}
\label{NLScritBlow-up} 
\end{figure}

Let us mention that, for a sufficiently large amplitude $A_0$, the solution for the \nnls equation, considered on the whole Euclidean space $\R^2,$ blows up in finite time (for more details about thresholds values of $A_0$ for scattering and blow-up in the focusing NLS, see \cite{HPR2010}). We also mention that if $A_0$ is sufficiently small (for example, in the $L^2$-critical case when $\|u_0\|_{L^2} < \|Q\|_{L^2}$, or in the $L^2$-supercritical case with initial data under the so-called mass-energy threshold, see for example \cite{HoRo08}), then the solution will scatter (in both cases with or without an obstacle), and while there maybe some small reflecting waves formed in the latter (obstacle) case, we omit this case here, and point out reflecting waves later.

\section{Perturbations of the soliton in the NLS with obstacle}
\label{Perturbations of the soliton}    
We now start investigating the $2d$ NLS on the domain with an obstacle, \NNls. In this section we consider a multiple of the ground state that is shifted by $(x_c,y_c)$ as in \eqref{E:data-1}, we refer to this evolution as a perturbed soliton, since the initial data have the form
$$
u_0(r,\theta)=\lambda \, Q(r \cos \theta  -x_c, r \sin \theta -y_c), \quad  \lambda \in \R,
$$
where $Q$ is the ground state solution to \eqref{eq_Q}. This ground state solution is obtained numerically via the Petviashvili's iteration, for example, see \cite{Petviashvili1976}, \cite{PS2004}, \cite{OSSS2016} or the work of the last two authors in \cite{RWY2021} or \cite{YRZ2018, YRZ2019}.

Perturbations of the soliton solution to the \NNls equation with a `large mass' initial condition, for example, $\lambda =1.1$, lead to blow-up solutions. For example, 
$$
u_0(r,\theta)=1.1 \, Q(r \cos \theta +4.5, r \sin \theta )
$$
blows up  at time $t=0.84$ with the diverging $L^{\infty}$ norm as shown in Figure \ref{Q1.1}. We use the Newton iteration to solve the implicit scheme \eqref{time-Scheme} and to reach the desired accuracy. It is quite challenging to approach the blow-up time while maintaining the convergence of the Newton iteration \eqref{Newton-iteration}. 
To address this issue, we run the scheme with a more refined mesh in order to maintain the convergence of \eqref{Newton-iteration}. This is not a simple task to perform in the $2d$ non-radial case and can become computationally prohibitive. However for our results of identifying the blow-up and the type of interaction, it suffices to investigate the $L^\infty$-norm (or the height) and label the solution as `the blow-up' if, for example, the amplitude becomes 3 times higher than the initial one (e.g., see the right graph in Figure \ref{Q1.1}).  
\begin{figure}[!ht]
\includegraphics[width=9.7cm,height=6.7cm]{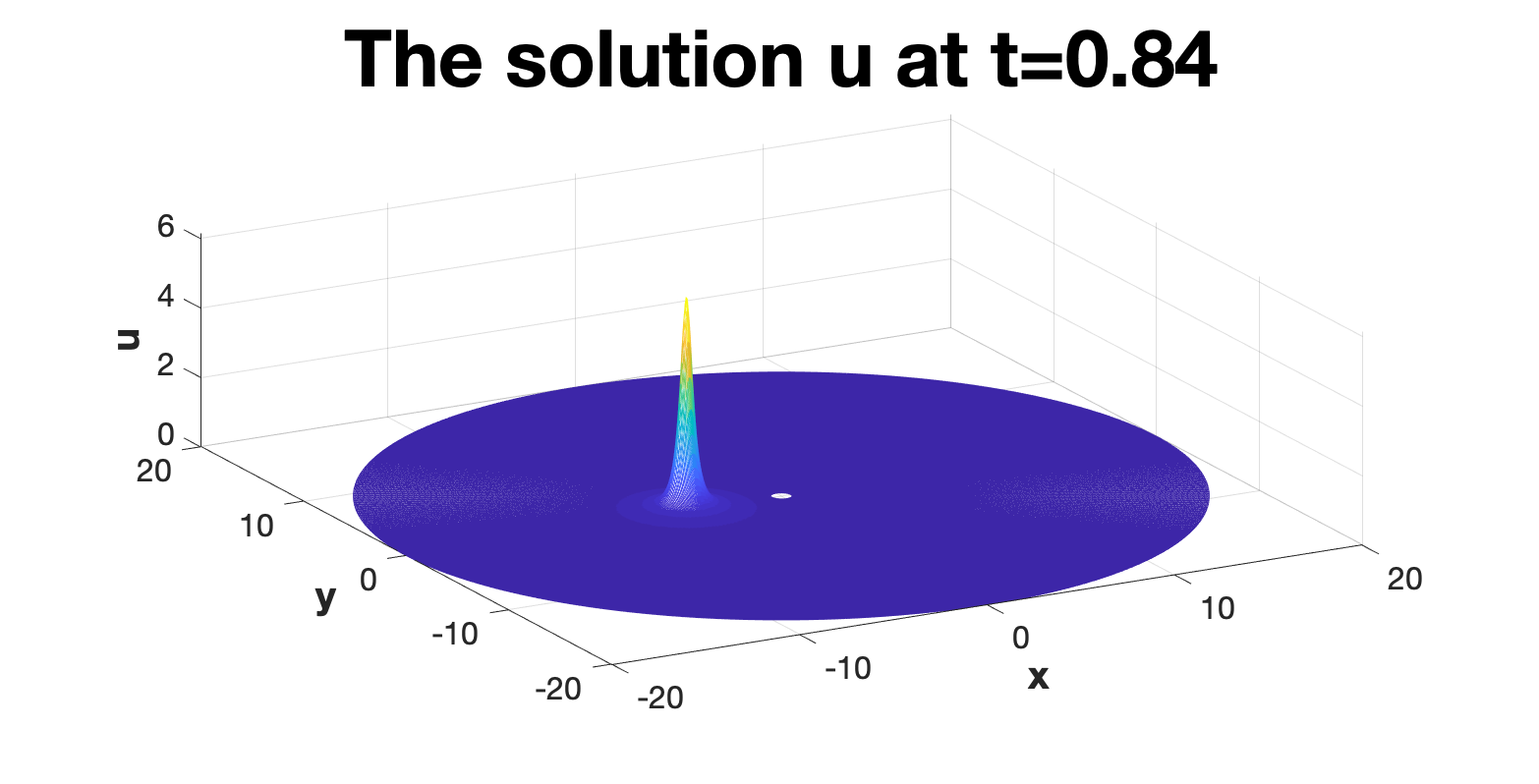}
\includegraphics[width=6.3cm,height=5.7cm]{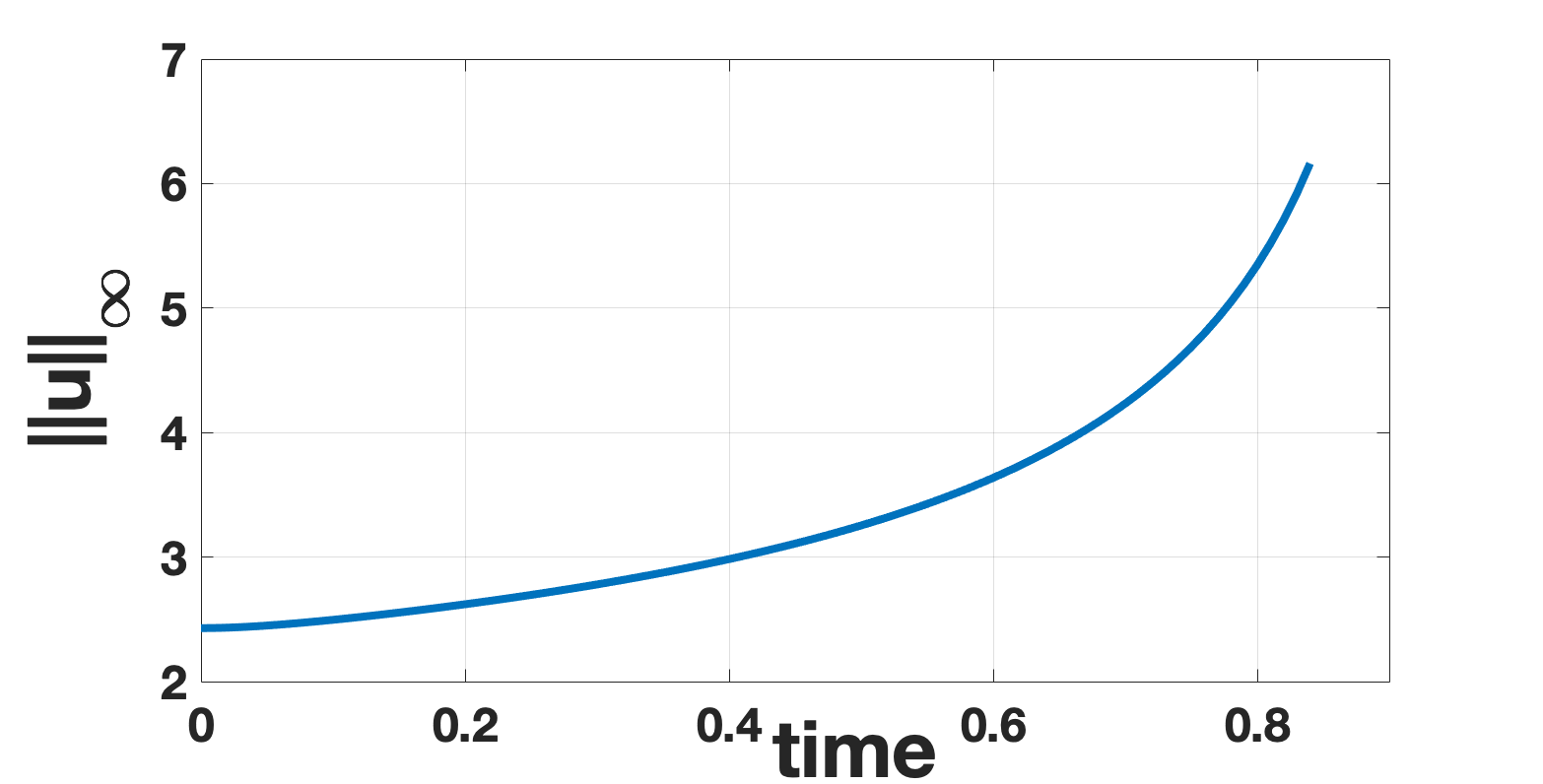}
\caption{Ground state solution to the $2d$ cubic \NNls equation with $u_0(x,y) = 1.1\, Q(x+4.5,y) $ at $t= 0.84$ (left) and its $L^{\infty} $ norm depending on time (right). }
\label{Q1.1}
\end{figure}

We next examine the initial condition of the perturbed soliton with the mass smaller than that of the ground state (i.e., $\|u_0\|_{L^2} < \|Q\|_{L^2}$), for example, 
$$
u_0(r,\theta)= 0.9 \,  Q(r\cos \theta +4.5, r \sin \theta ).
$$ 

\begin{figure}[!ht]
\includegraphics[width=9.7cm,height=6.7cm]{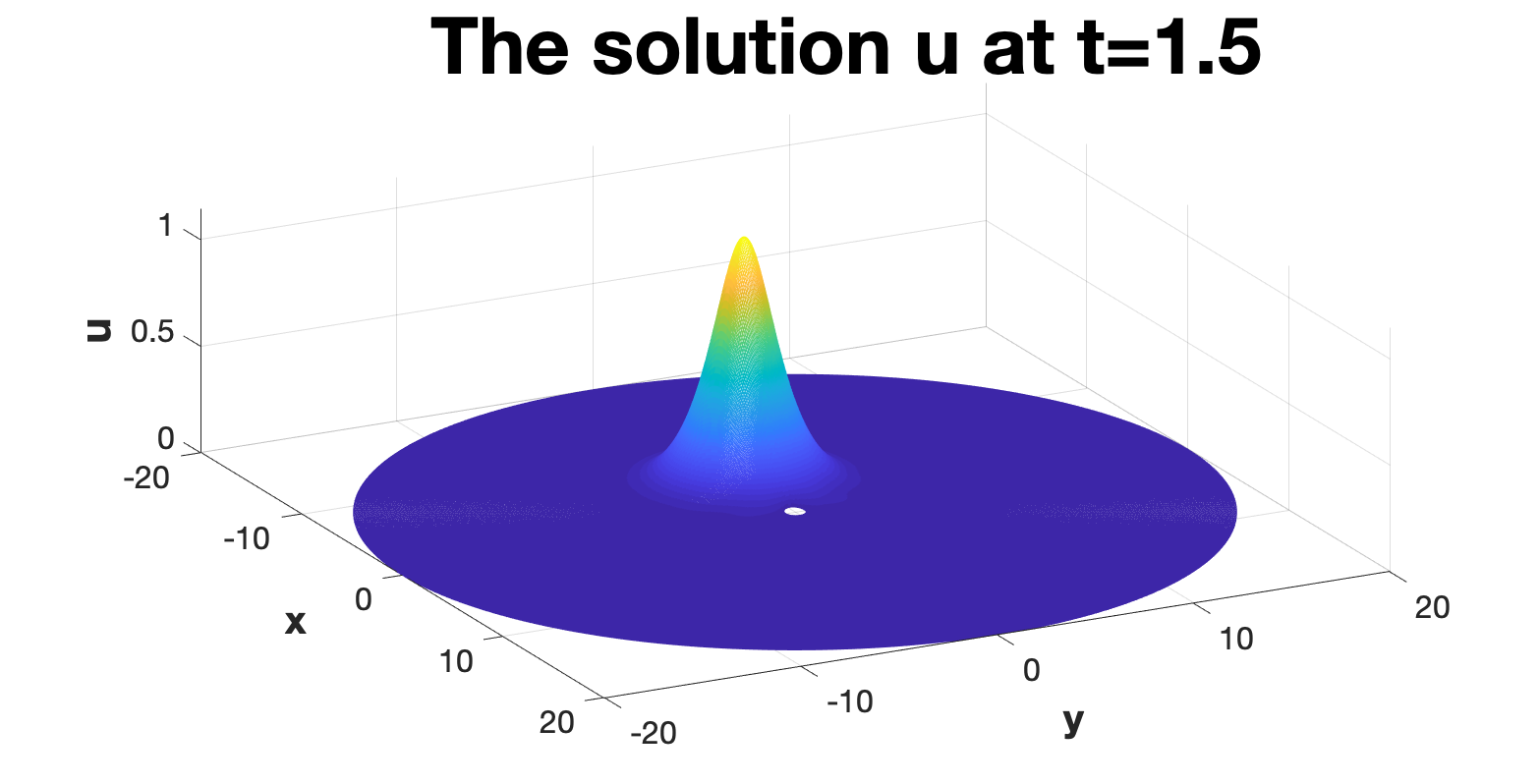}
\includegraphics[width=6.3cm,height=5.7cm]{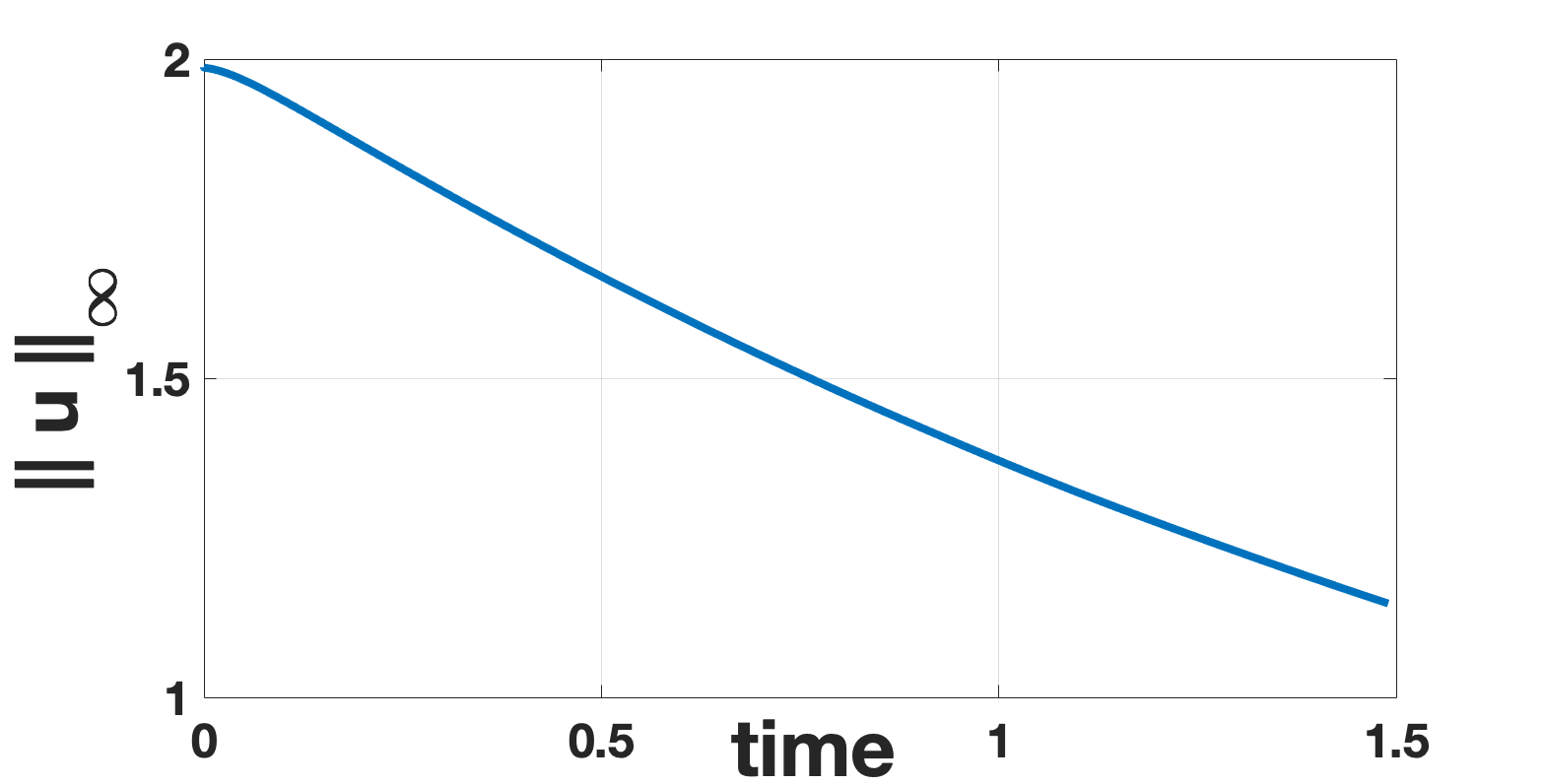}
\caption{Ground state solution to the $2d$ cubic \NNls with $u(x,y,0) = 0.9 \, Q(x+4.5,y) $ at $t= 1.5$ (left) and the $L^{\infty} $ norm for $0<t<1.5$ (right).} 
\label{Q0.9}
\end{figure}
A snapshot of the corresponding solution to the \NNls equation at time $t=1.5$ and the time dependence of the $L^{\infty}$ norm are shown in Figure \ref{Q0.9}. 
In this example, one can see that the $L^{\infty}$ norm is monotonously decreasing with a definite negative slope. It is plausible to conclude that this solution disperses in a long run, as expected in the $L^2$-critical case for the perturbations with smaller mass than that of the soliton (note that $\lambda = 0.9<1$). To further confirm our expectations we run this example with the same initial condition for longer times and the next Figure \ref{Linftylongyime} shows that the $L^{\infty}$ norm indeed keeps decreasing to $0.$

\begin{figure}[!h]
\centering
\includegraphics[width=9cm,height=5.5cm]{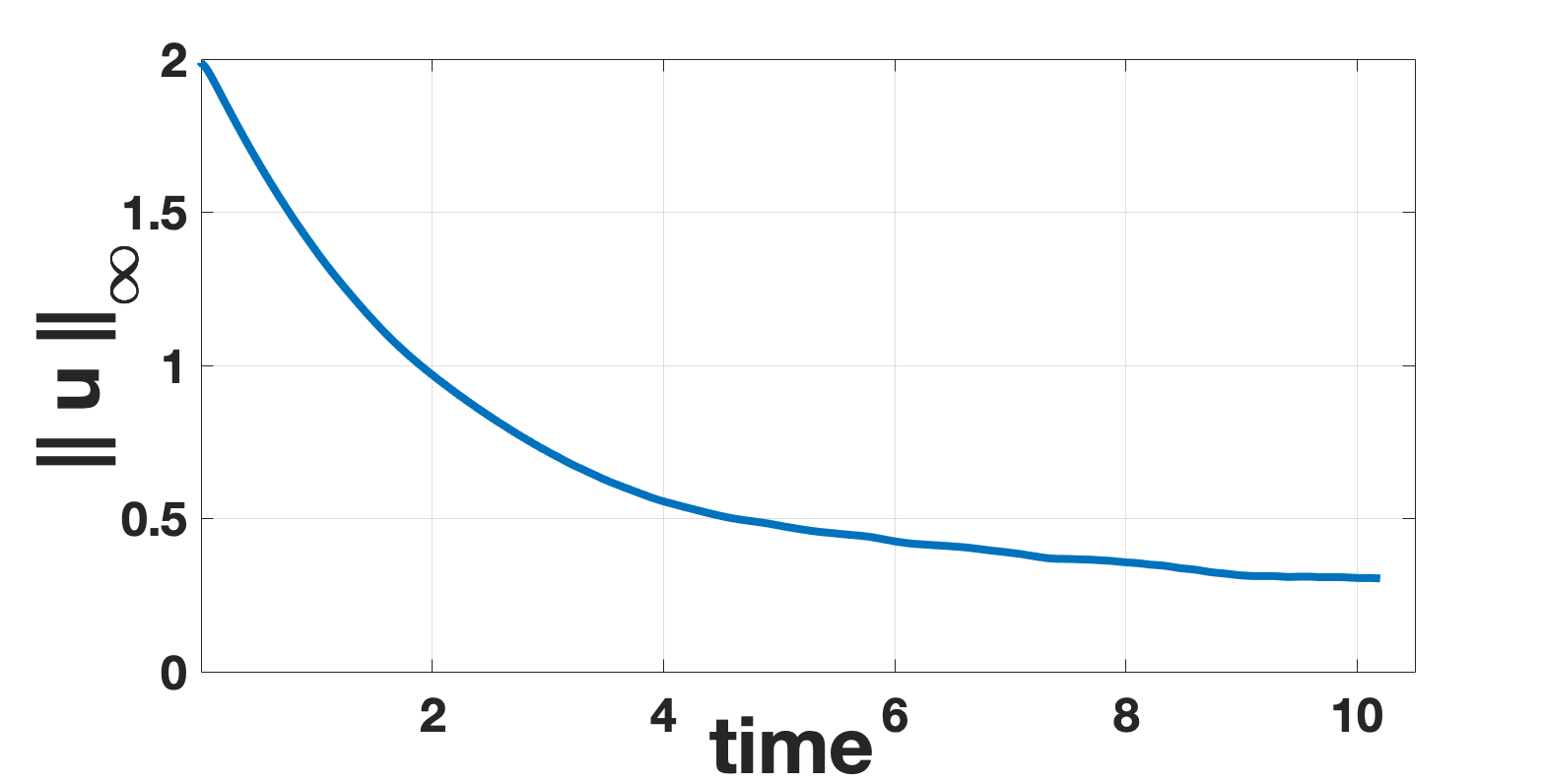}
\caption{Time dependence of the $L^{\infty}$-norm for the solution in Figure \ref{Q0.9}.} 
\label{Linftylongyime}
\end{figure}

We have tried other perturbations of $Q$ with various shifts and observed similar behavior. Therefore, we confirm in this section that the threshold for blow-up vs. scattering for the perturbed soliton data is indeed given by the ground state. 

{\bf Remark.} In the rest of the paper, except Section \ref{sec-special-solution}, we consider a shifted Gaussian initial data $u_0$ as in \eqref{u0NLS}. One of the reasons is that it has a faster decay than the ground state (though both decay exponentially), which ensures that the simulations close to the obstacle satisfy Dirichlet boundary condition (even a slightly faster exponential decay makes computations easier). Another reason is that in order to  study various interactions with an obstacle, we consider initial data $u_0$ with the minimal possible distance $d^{\star}$  to the obstacle (as defined in \eqref{min-distance}) so that $u_0$ is smooth and still satisfies Dirichlet boundary condition. 
 
\section{Dependence on the distance} 
\label{Dependence on the distance}
From now on we study both the $2d$ cubic and quintic  \NNls equations $(p=3,5)$ with  the radius of the obstacle $r_{\star}=0.5$. Our goal in this section is to consider solutions with data $u_0$ such that the distance $d$ between the obstacle and the initial condition is larger than the minimal distance $d^*$.
\medskip

We take \eqref{u0NLS} with $x_c$ and $y_c$ such that $d>>d^{*}$.
As before $v=(v_x,v_y)$ is the velocity vector, which governs the moving direction of the initial bump. Figure \ref{NointeractTranslVariab} shows different directions of propagation for this solitary wave-type data depending on  the velocity $\vec{v}$.  
\begin{figure}[!ht]
\centering
\includegraphics[width=15cm,height=5.5cm]{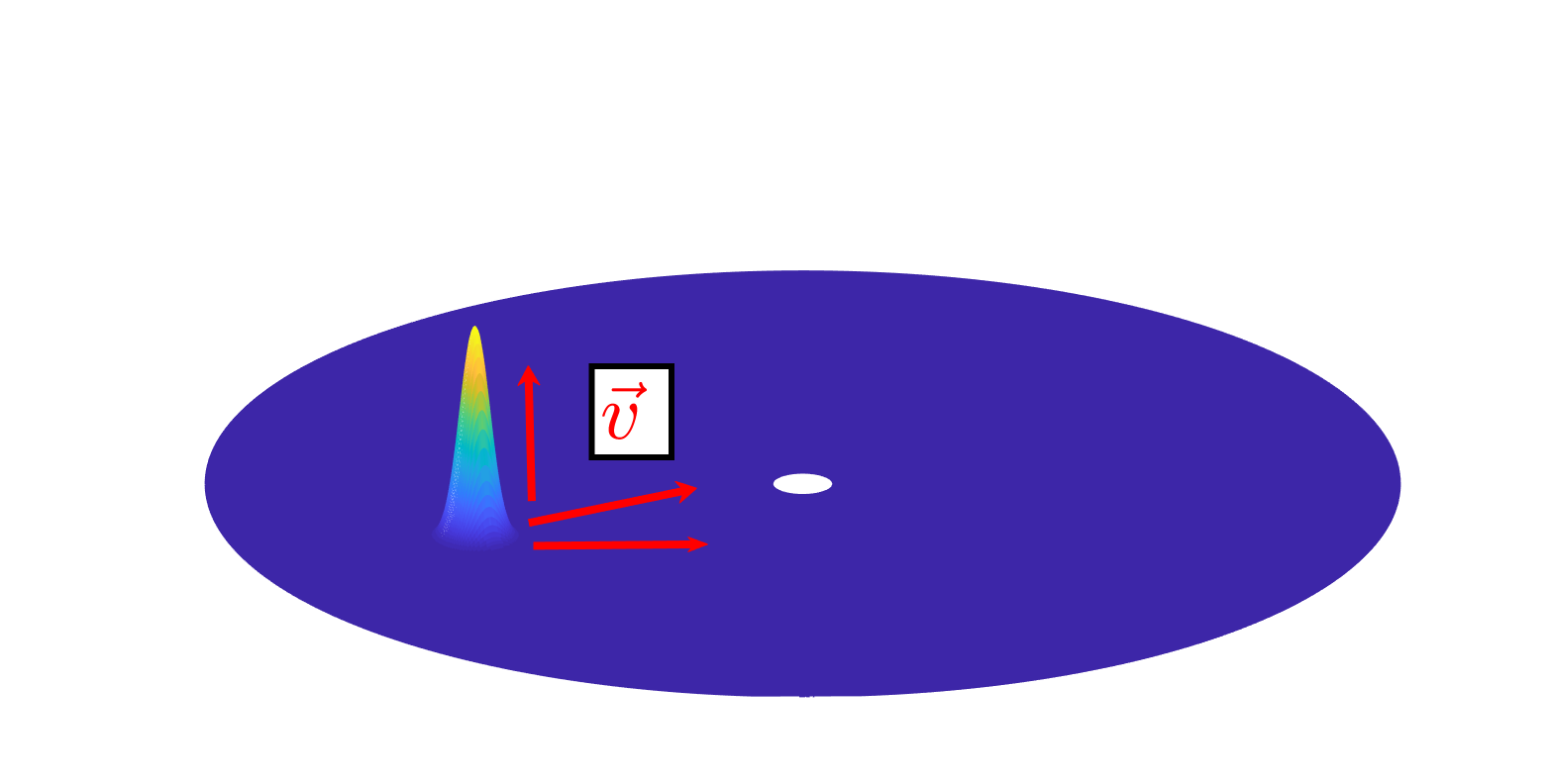}
\caption {Directions of the velocity $\vec{v}$ of the initial data (single bump) relative to an obstacle (the white region represents the location of the obstacle). If the initial bump is relatively far from the obstacle, $d>>d^{*}$, then the blow-up occurs in any direction of the initial velocity shown on the picture, for example, as it is shown in Figure \ref{NointVeloDirec}.}
\label{NointeractTranslVariab}
\end{figure}

\subsection{The $L^2$-critical case}
For the $2d$ cubic \NNls equation we take the initial data  \eqref{u0NLS} with large enough mass and $d >>d^{*}.$ Then the corresponding solution to \eqref{time-Scheme} blows up in finite time before reaching or interacting with an obstacle in any direction of the velocity vector $v$, see Figure \ref{NointVeloDirec}. 
\begin{figure}[ht]
\centering
\includegraphics[width=6.2cm,height=5.5cm]{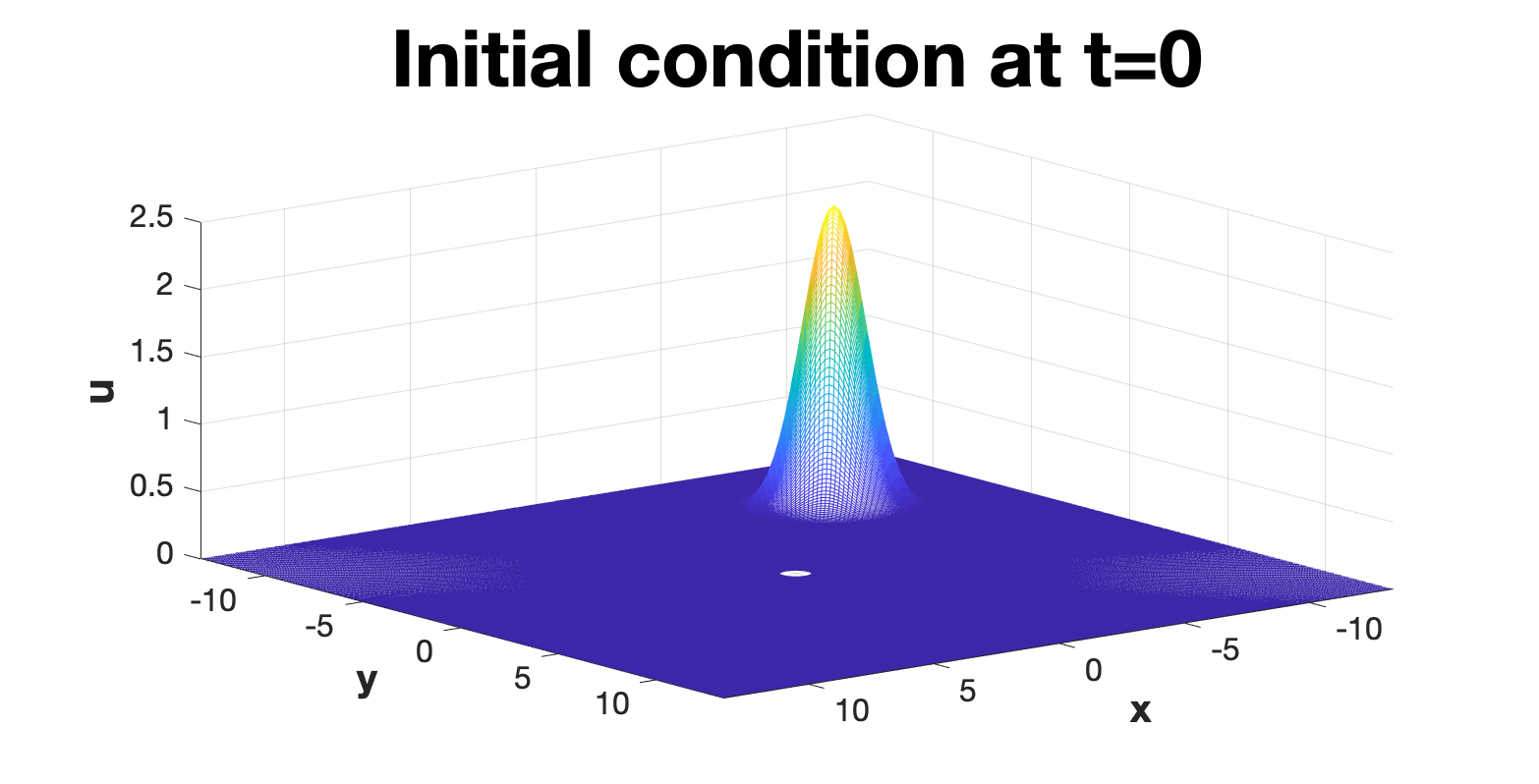}    
\includegraphics[width=6.3cm,height=5.5cm]{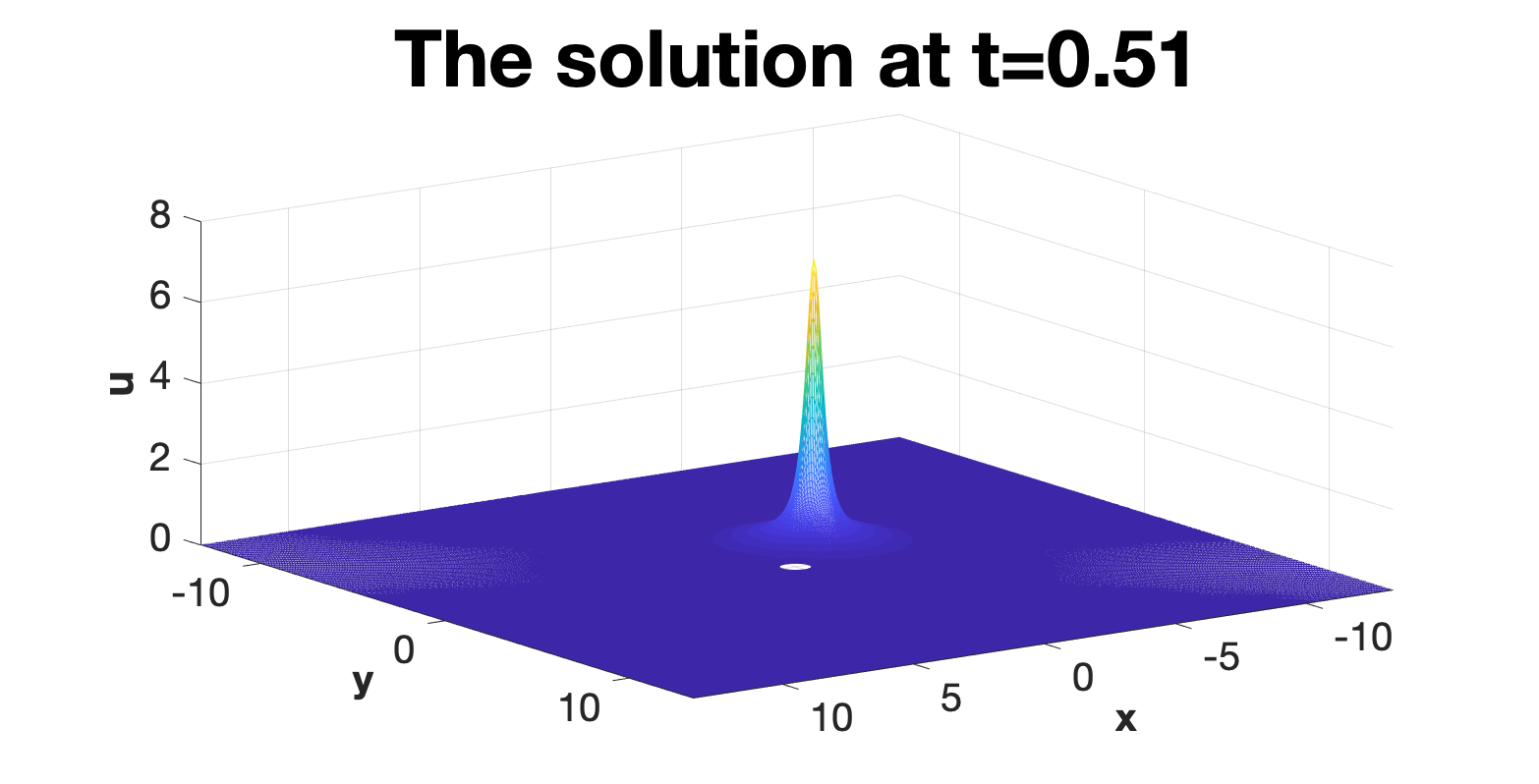}
\includegraphics[width=5.2cm,height=5.1cm]{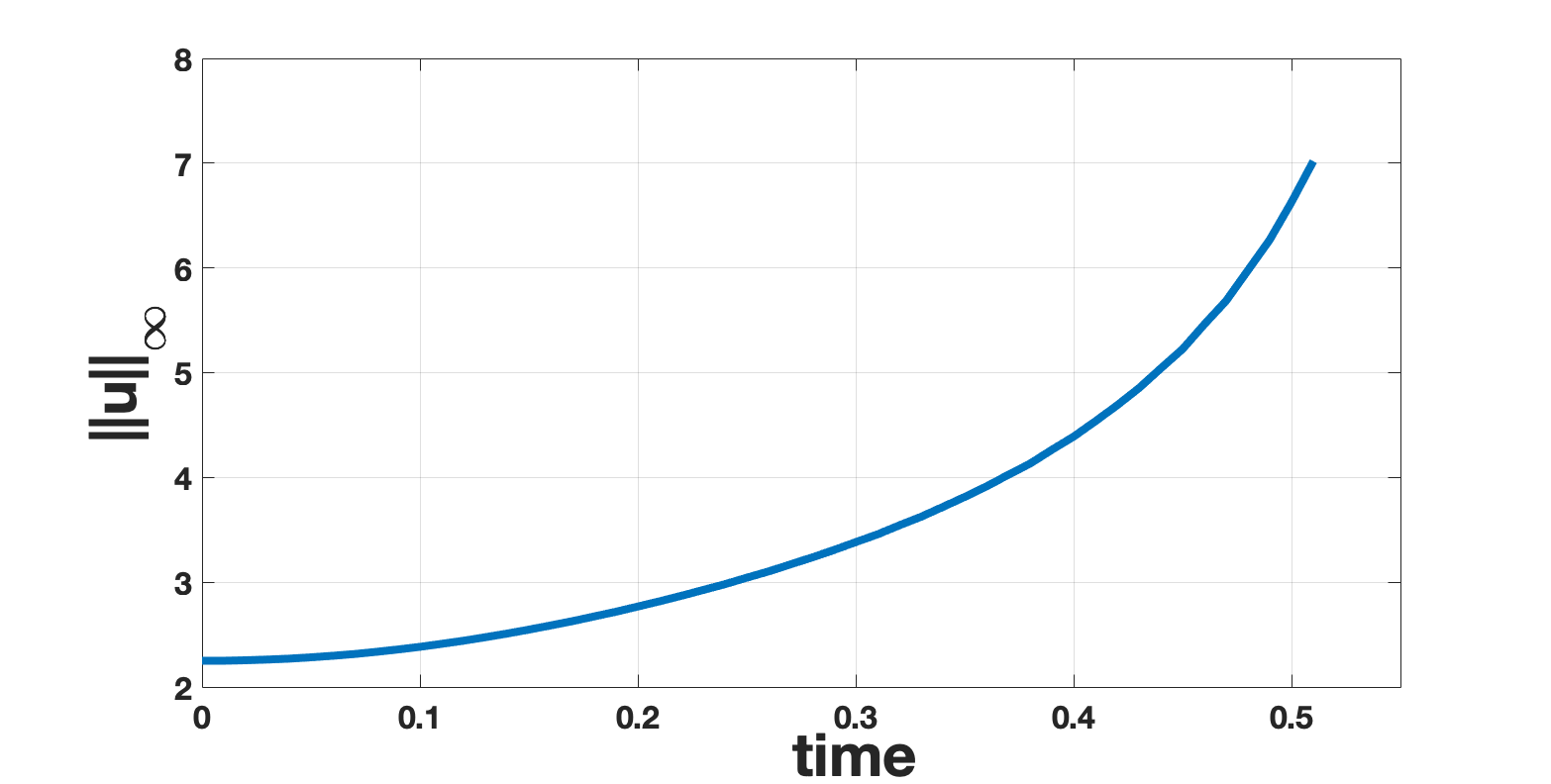}
\caption {Snapshots of the initial data $u_0$ as in \eqref{u0NLS} with large enough mass and the solution $u(t)$ to the $2d$ cubic \NNls at $t=0$ and $t=0.51$ with $d >>d^{*}$ (left and middle); the $L^{\infty}$-norm depending on time (right).}
\label{NointVeloDirec}
\end{figure}

Later we study the case when $d \equiv d^{*}$ and the solution concentrates in its (blow-up) core after the obstacle, for the same initial data but with a different velocity direction. We also investigate the influence of the obstacle when there is an interaction between the traveling wave and the obstacle. In Section \ref{L2critWeak}, we consider the weak interaction for the $2d$ cubic \NNls equation ($L^2$-critical case) and in Section \ref{L2critStrong} we study the strong interaction. We observe that in those cases the solution exhibits a different behavior on a longer time interval.  

\subsection{The $L^2$-supercritical case}
Next, we consider the $2d$ quintic \NNls equation and take the initial condition \eqref{u0NLS} with a large mass and $d >>d^{*}$. In the following scenario, we fix parameters $A_0$ and $v=(v_x,0)$ and vary 
the translation parameter $y_c$ in the translation $(x_c,y_c)$ 
as shown in Figure \ref{NointeraclVariabY0}.   

\begin{figure}[ht] 
\centering
\includegraphics[width=15cm,height=6.6cm]{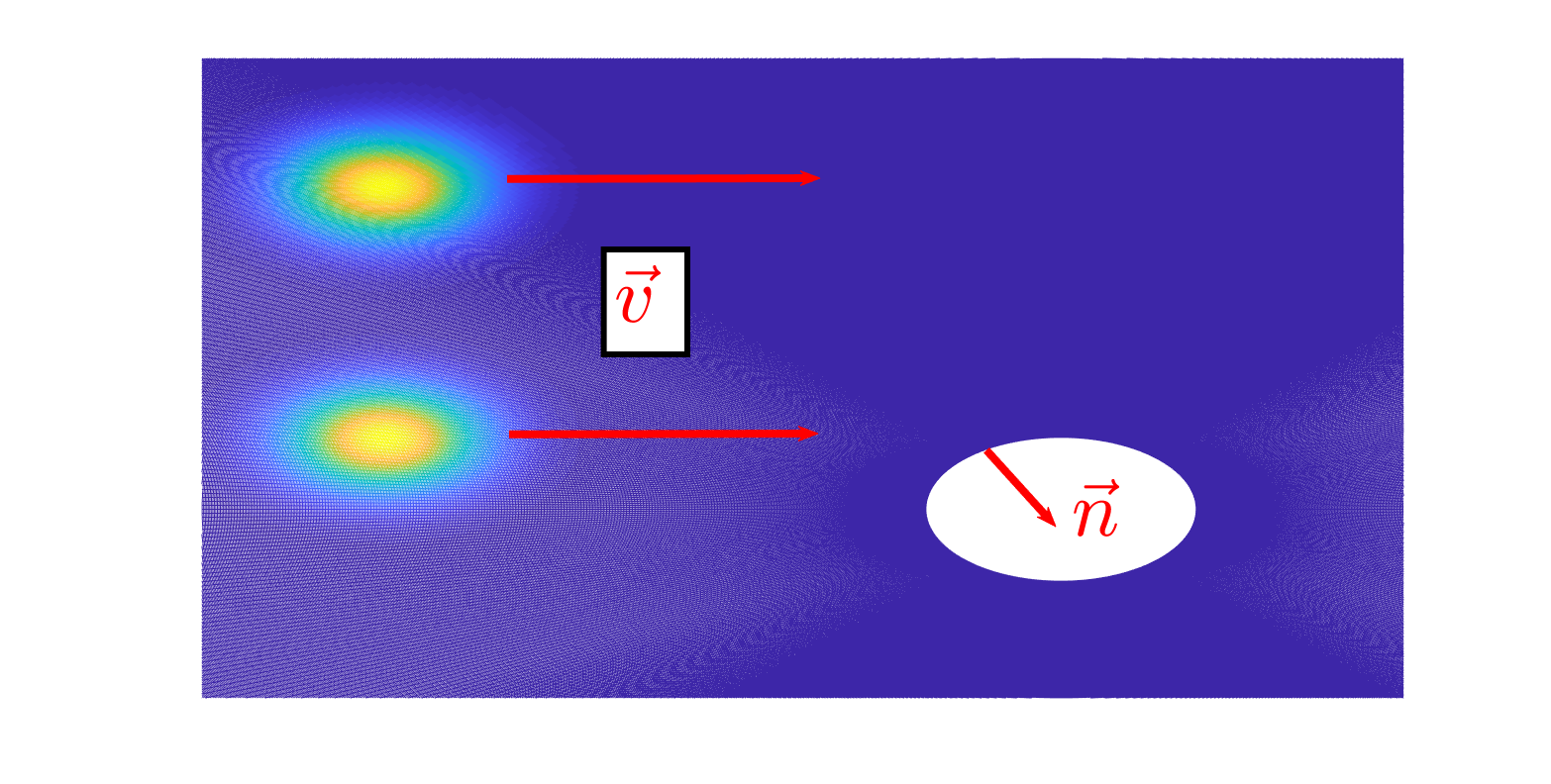}
\caption {Directions of the velocity $\vec{v}$ of the initial two single bumps that  move along either the line $y=5$ or $y=2$ with $d >>d^{*}$.}
\label{NointeraclVariabY0}
\end{figure}
 
 
Snapshots of the corresponding solution to the $2d$ quintic \NNls equation are plotted in Figure \ref{SolutionNointeraclVariabY0}. As in the previous example, the solution blows up in finite time before the obstacle (for large $x_c$). \\

\begin{figure}[ht]
\includegraphics[width=6.2cm,height=5.6cm]{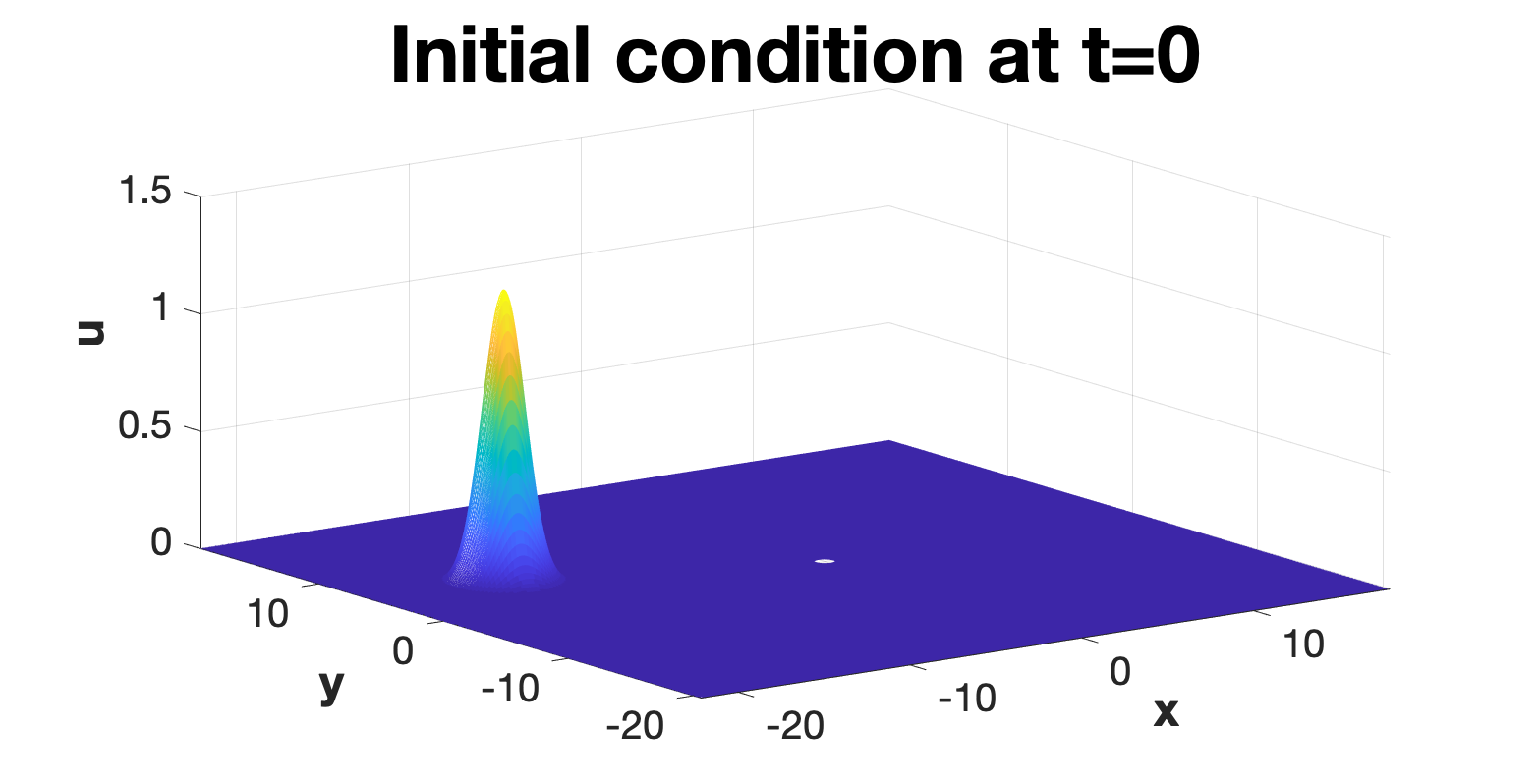}
\includegraphics[width=6.3cm,height=5.6cm]{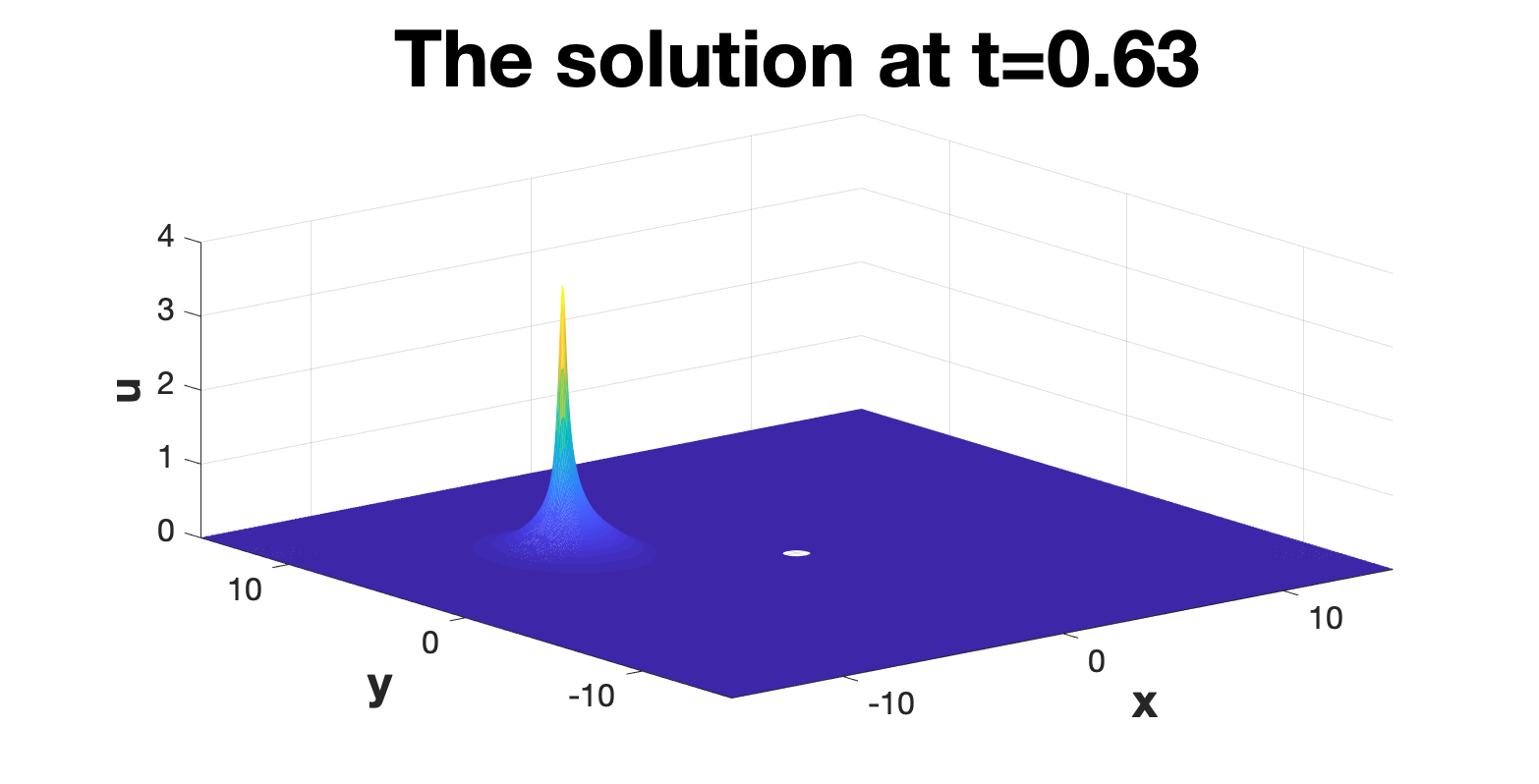}
\includegraphics[width=5.2cm,height=5cm]{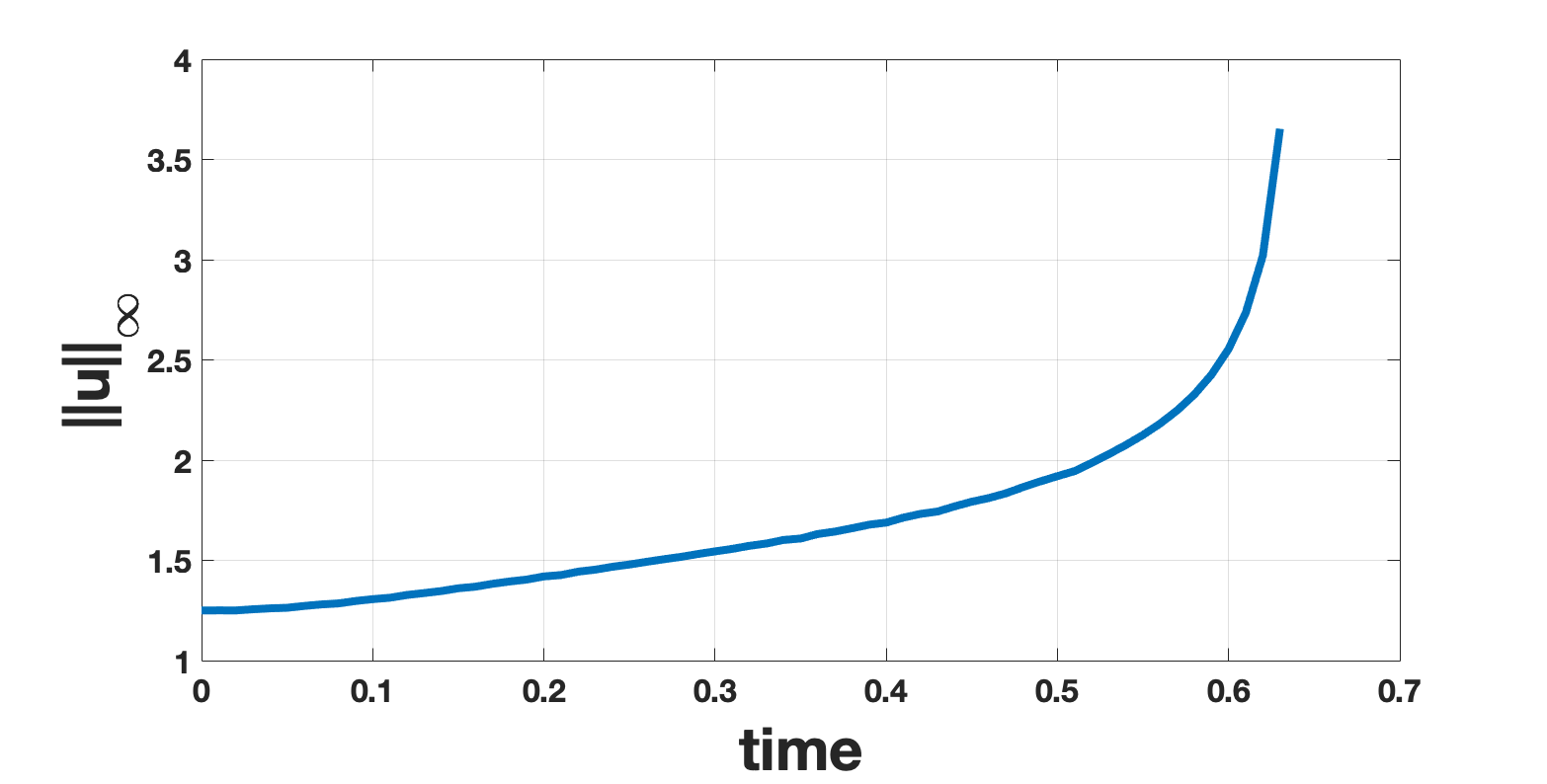}
\caption {A blow-up solution to the $2d$ quintic \NNls equation with the initial condition \eqref{time-Scheme} and $d >>d^{*}.$ The initial profile (left), a snapshot of the solution at $t=0.63$ (middle), the time dependence of the $L^{\infty}$-norm (right).} 
\label{SolutionNointeraclVariabY0}
\end{figure}

We later investigate the case when the solution blows up in finite time, after the obstacle and when $d \equiv d^{*}$, with the same initial data $u_0$ as in \eqref{u0NLS} for a fixed amplitude $A_0=1.25$ and velocity direction $v=(15,0),$ and $x_c=-4.5,$ but for different space translation $y_c,$ see Table \ref{T:2}. This will lead to the weak or strong interaction for the $2d$ quintic \NNls equation ($L^2$-supercritical case), see Sections \ref{L2supercritWeak} and  \ref{L2supercritStrong}.

\section{Weak interaction with an obstacle} 
\label{Weak interaction between soliton and obstacle}
\subsection{The $L^2$-critical case}
\label{L2critWeak}
We return to the cubic \NNls setting and consider initial data \eqref{u0NLS} with $A_0=2.25$, $x_c=-4.5,$  $y_c=-4.5$ and we vary the direction of the velocity vector. Considering the two scenarios, as shown in Figure \ref{veloctyNointeraction}.  
\begin{figure}[ht]
\centering
\includegraphics[width=15cm,height=5.5cm]{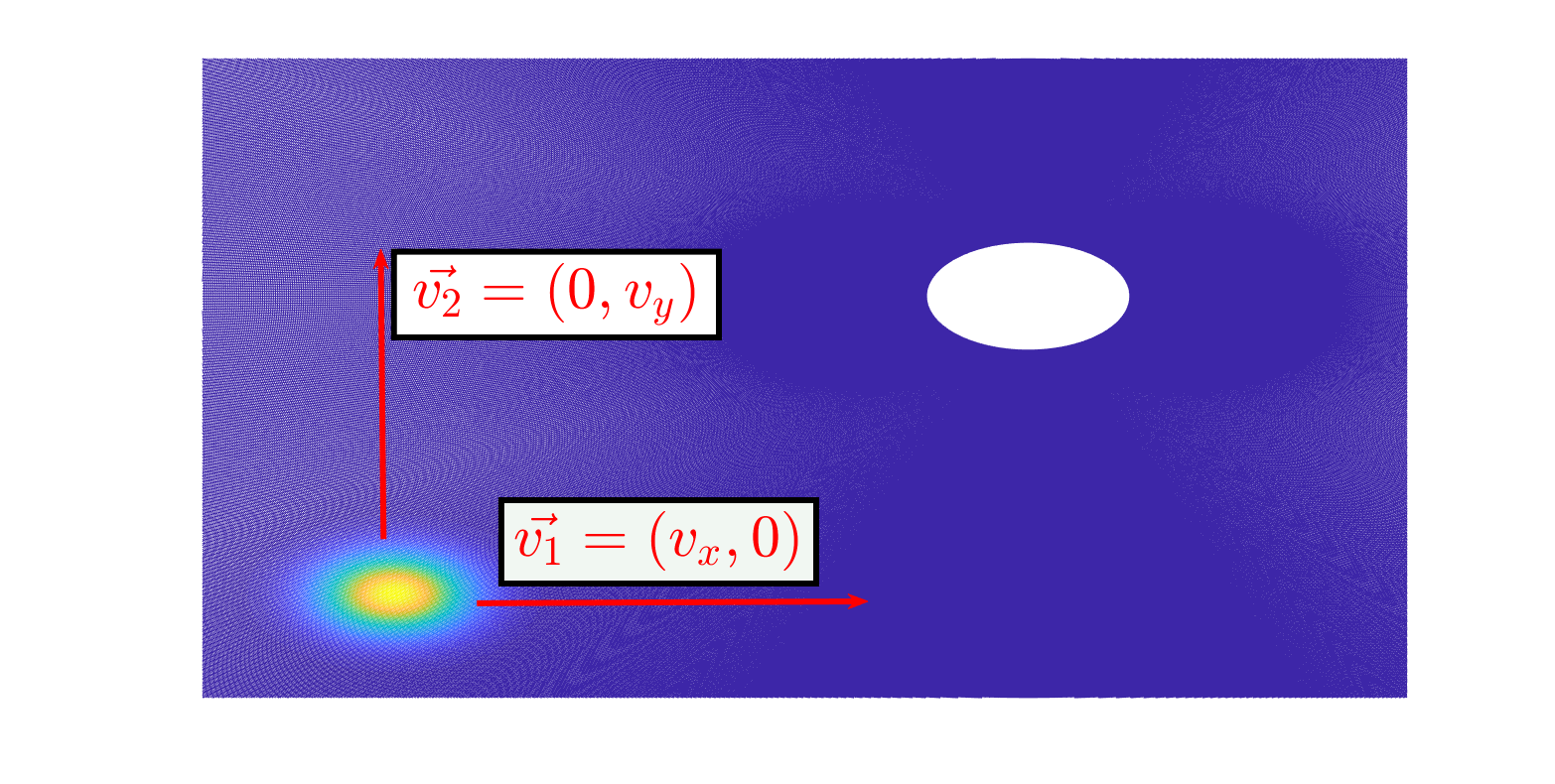}
\caption{Directions of the velocity for the examples in Section \ref{L2critWeak}.}
\label{veloctyNointeraction}
\end{figure}

We start with ${v_1}=(v_x,0)$, $v_x=15$, and observe that the solution blows up at time $t=0.52$. It does not interact with the obstacle; its behavior is the same as it would be of a solitary wave on the whole space, see Figure \ref{test1Criti}.

\begin{figure}[!ht]
\centering
\includegraphics[width=6.2cm,height=7.5cm]{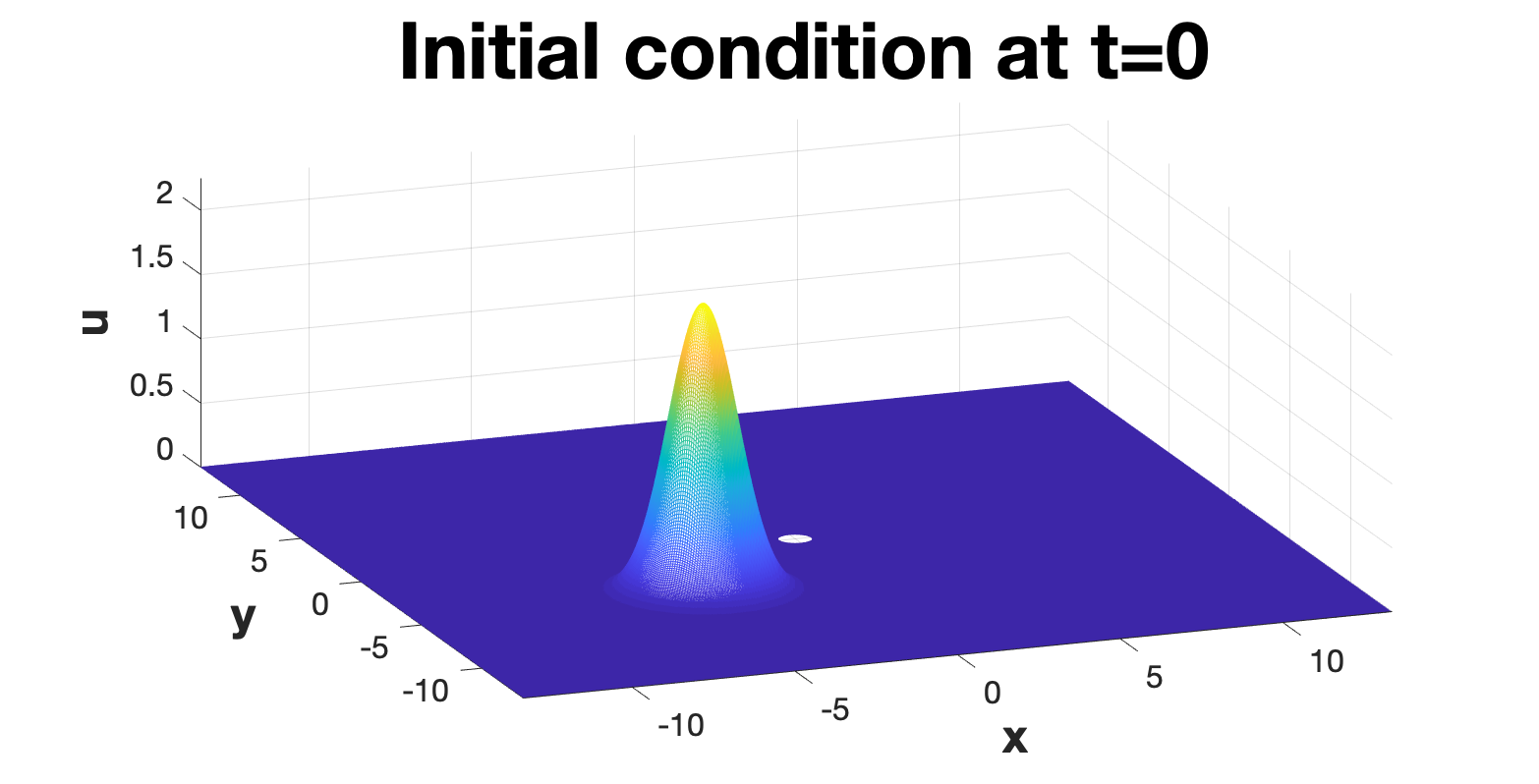}
\includegraphics[width=6.4cm,height=7.5cm]{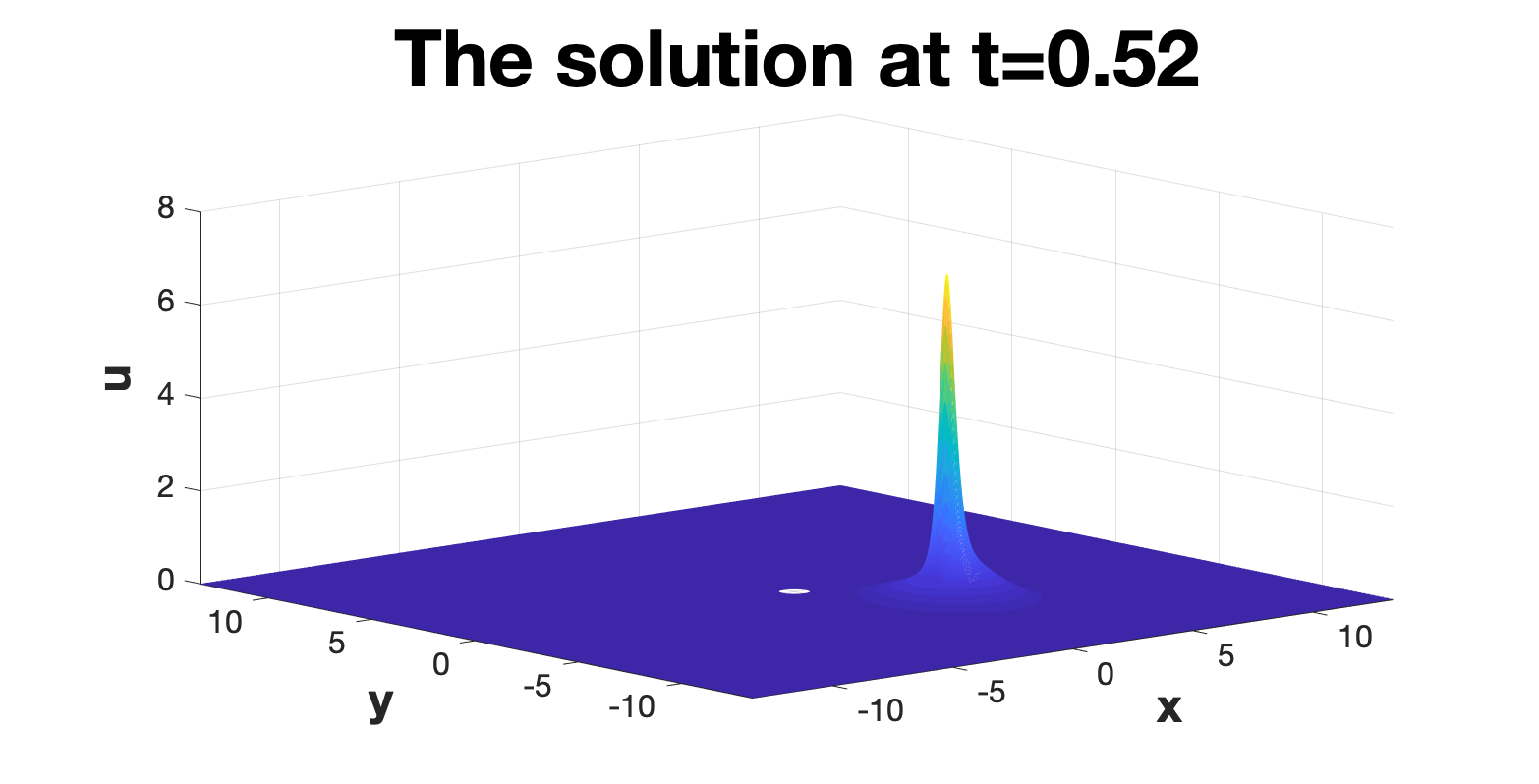}
\includegraphics[width=5.2cm,height=6.2cm]{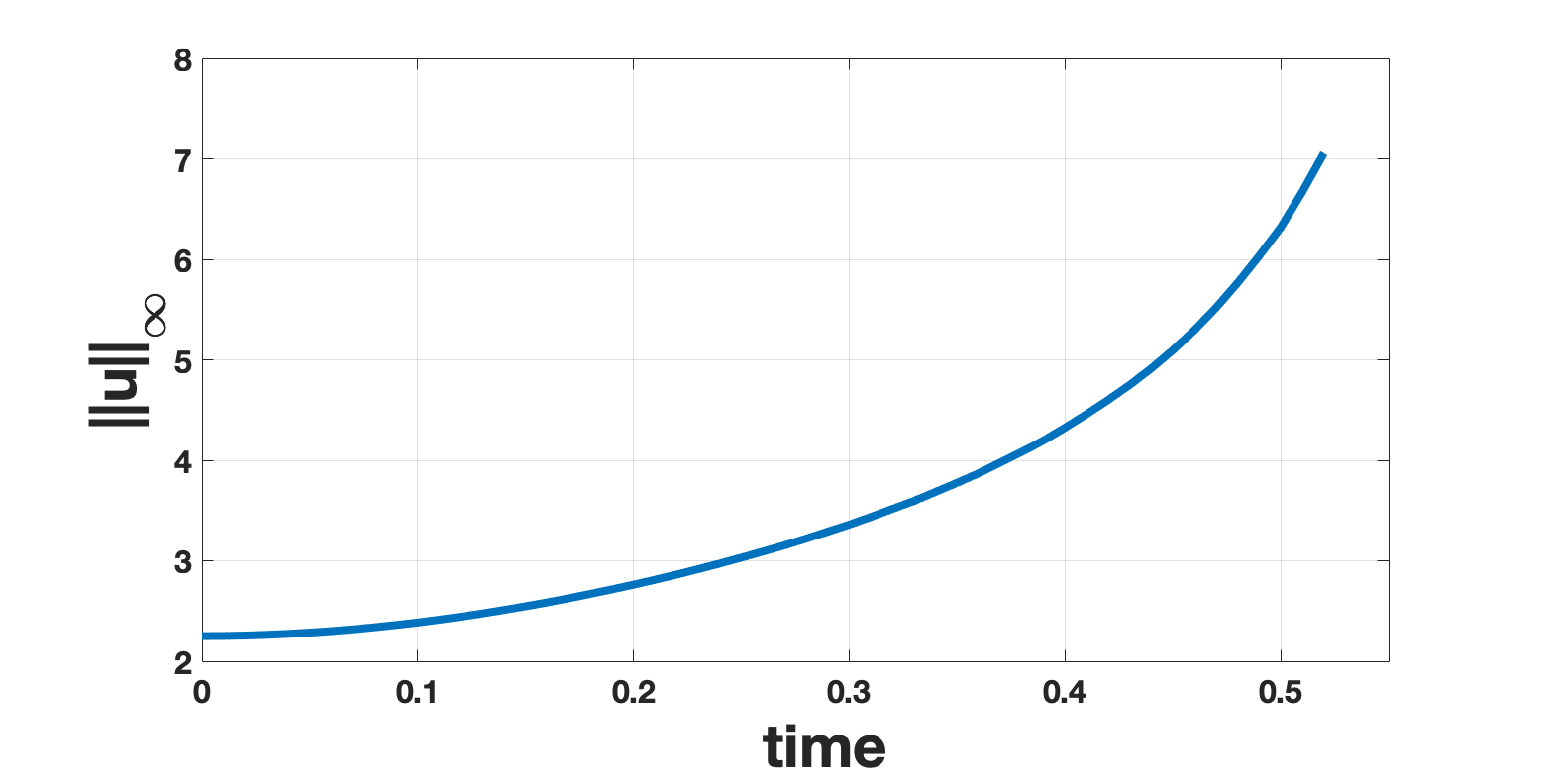}
\caption{The $2d$ cubic \NNls with $u_0$ from \eqref{u0NLS},  $A_0=2.25,$ $x_c=-4.5, \; y_c=-4.5$ and ${v_1}=(15,0)$ (left); the time evolution at $t=0.52$ (middle); the time dependence of the $L^{\infty}$-norm (right).}
\label{test1Criti}
\end{figure}

\newpage
Next, we take the same initial condition but with the velocity vector $v_2=(0,v_y)=(0,15)$, that is, perpendicular to the direction used in the previous example, as shown in Figure \ref{veloctyNointeraction}. We observe, see Figure \ref{test2Criti}, that the solution blows up at the same time.  \\

\begin{figure}[!h]
\centering
\includegraphics[width=6.2cm,height=7.5cm]{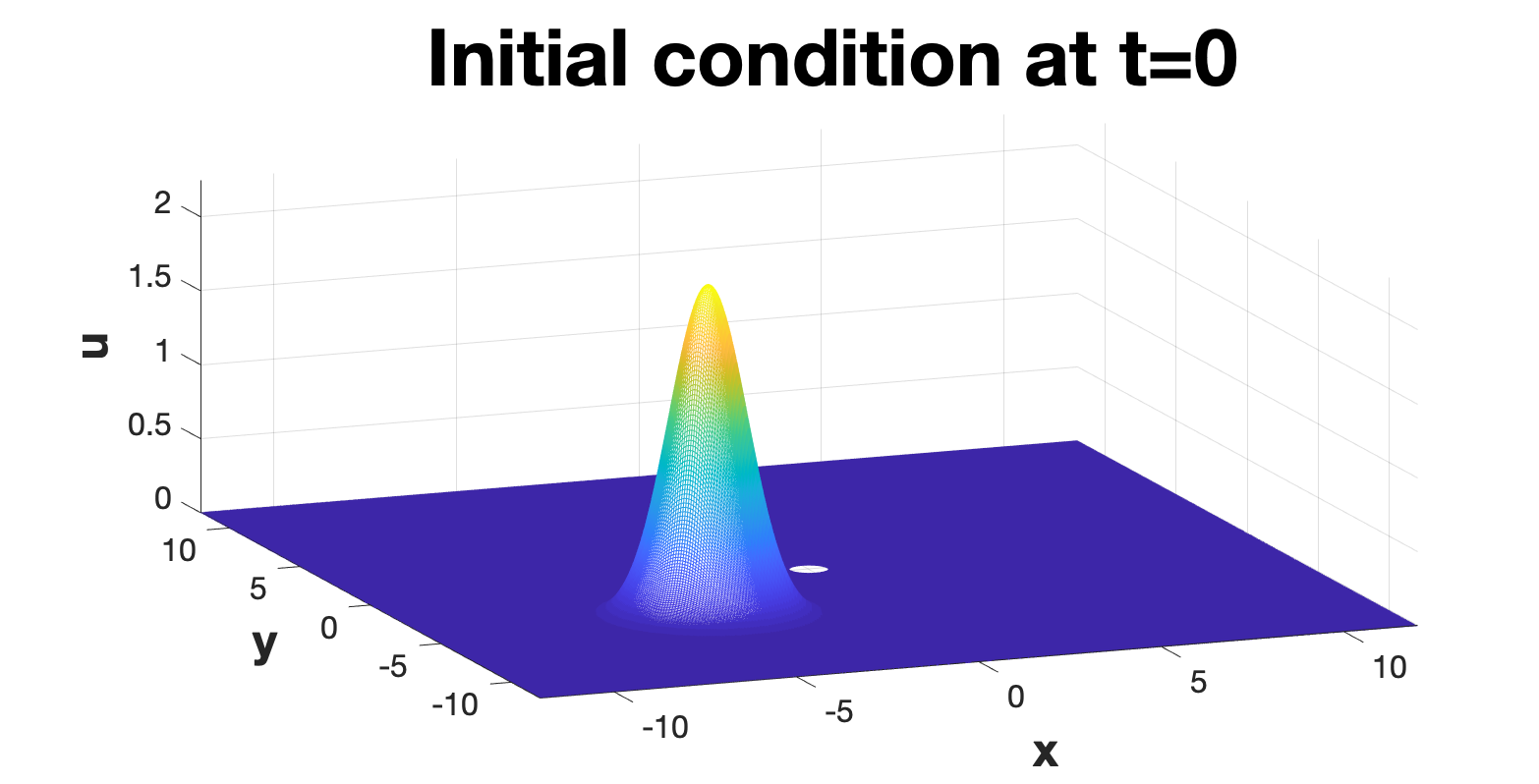}
\includegraphics[width=6.4cm,height=7.5cm]{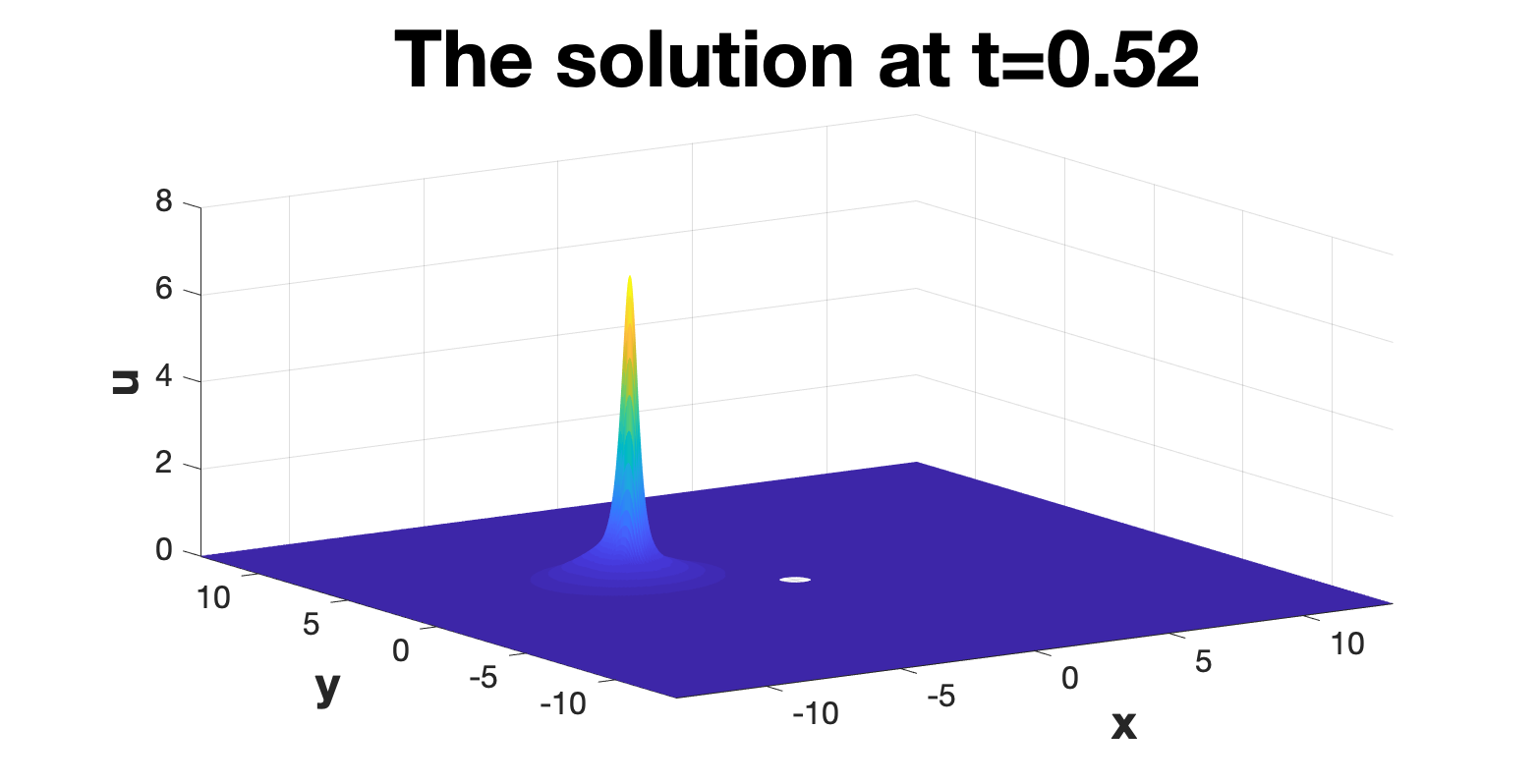}
\includegraphics[width=5.2cm,height=6.2cm]{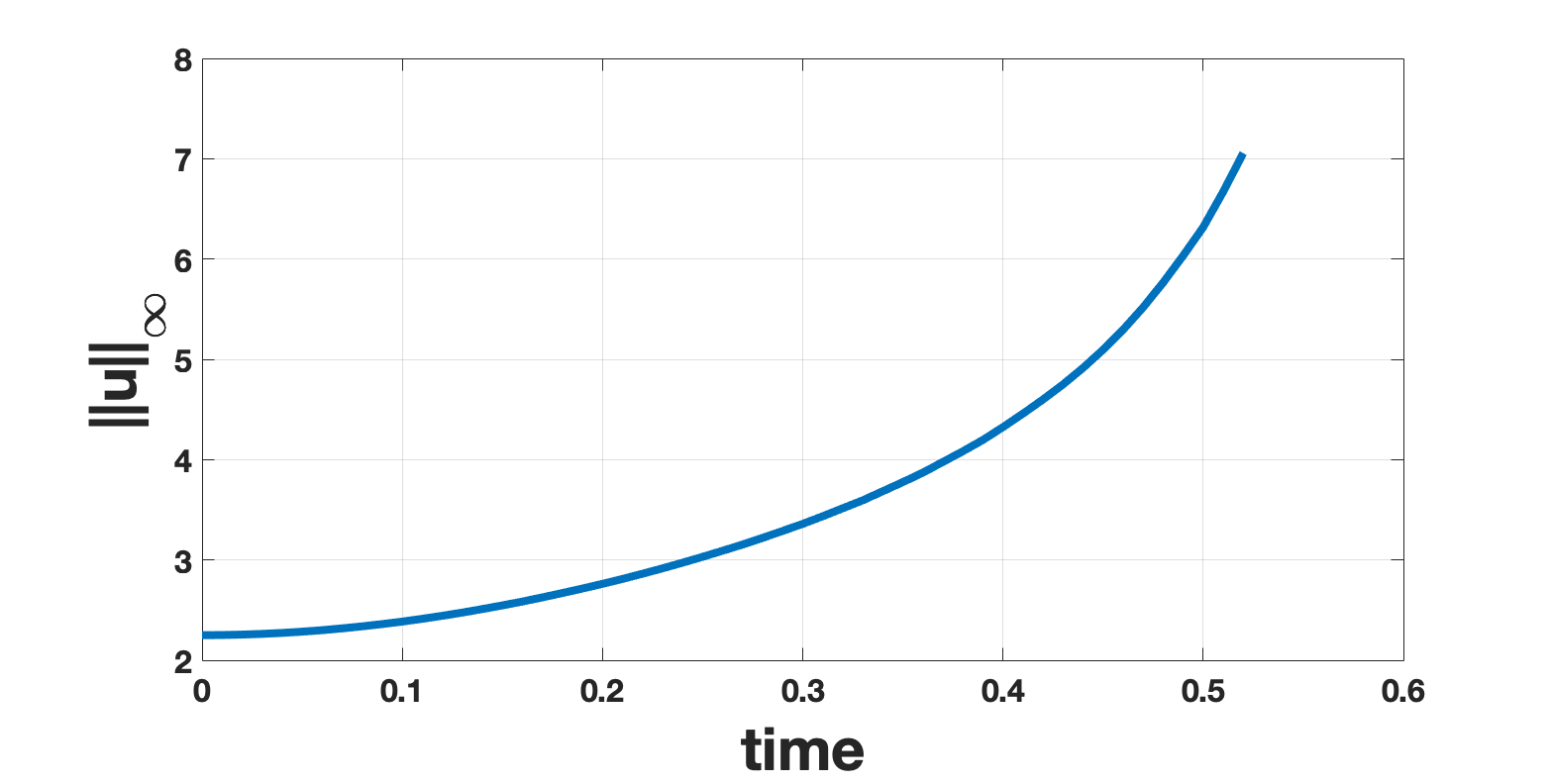}
\caption{The $2d$ cubic \NNls equation with $u_0$ from \eqref{u0NLS}, $A_0=2.25,$ $x_c=-4.5, \; y_c=-4.5$ and ${v_2}=(0,15)$ (left); its time evolution  at $t=0.52$ (middle); the time dependence of the $L^{\infty}$-norm (right).}
\label{test2Criti}
\end{figure}

In our third example, we take the initial condition $u_0$ from \eqref{u0NLS} with $A_0=2.25,\; x_c=-4.5,\; y_c=-4.5$ and the velocity $\vec{v}$ that has a different direction but has  the same magnitude $|\vec{v}|,$ as in the previous two examples: we choose $v_1=(v_x, v_y)$ and $v_2=(v_y,v_x)$ as shown on Figure \ref{weakVelocityinteractiondomain}.     

\begin{figure}[!h]
\centering
\includegraphics[width=15cm,height=7cm]{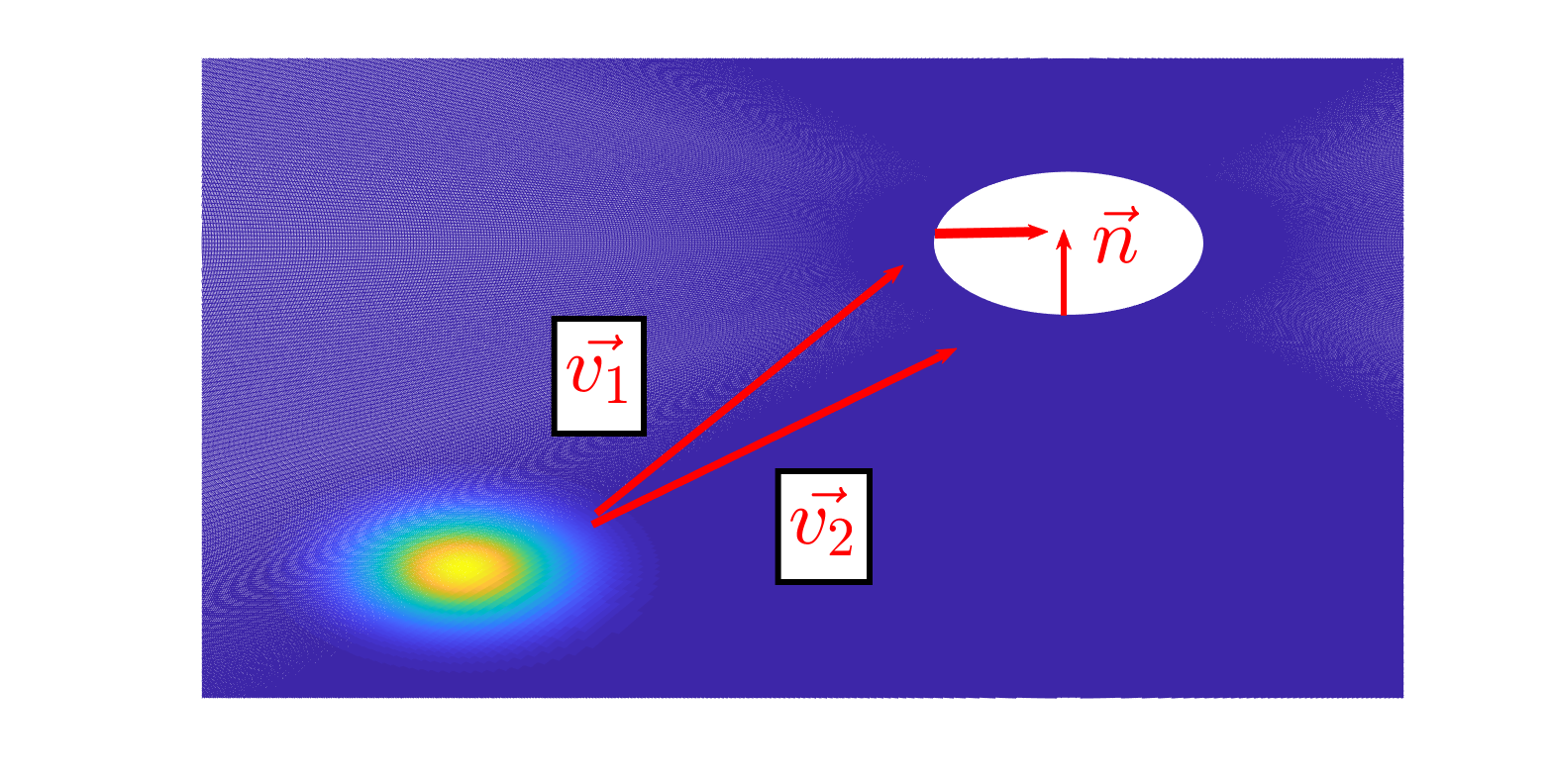}
\caption{The directions of the velocity for the examples in Figures \ref{test5Critique-weak} to \ref{test4Critique-weak}.}
\label{weakVelocityinteractiondomain}
\end{figure}

We choose $v_1=(15,9)$ such that the solution has a small interaction with the obstacle. After the collision, we observe that 
the solution has almost the same behavior (as in Figures \ref{test1Criti}, \ref{test2Criti}), i.e., it blows up but with slightly dispersive reflection part, preserving the shape of the soliton, similar to the two previous cases. The solution blows up in finite time $t=0.57$ after the interaction with the obstacle, see Figure \ref{test5Critique-weak}. Moreover, we see that at the collision time the $L^{\infty}$-norm has a slight perturbation (or a small oscillation), however, afterwards it continues to increase: such perturbation it not sufficient to prevent the overall growing of the $L^{\infty}$-norm and the occurrence of the blow-up, however, it affects the blow-up time compared to the previous examples. Besides Figure \ref{test5Critique-weak} with the $L^\infty$ norm dependence, we also provide snapshots of the behavior of the solution for different time steps for $v_2=(9,15)$ in Figure \ref{test6Critique-weak}.

\begin{figure}[ht]
\centering
\includegraphics[width=8.6cm,height=6.5cm]{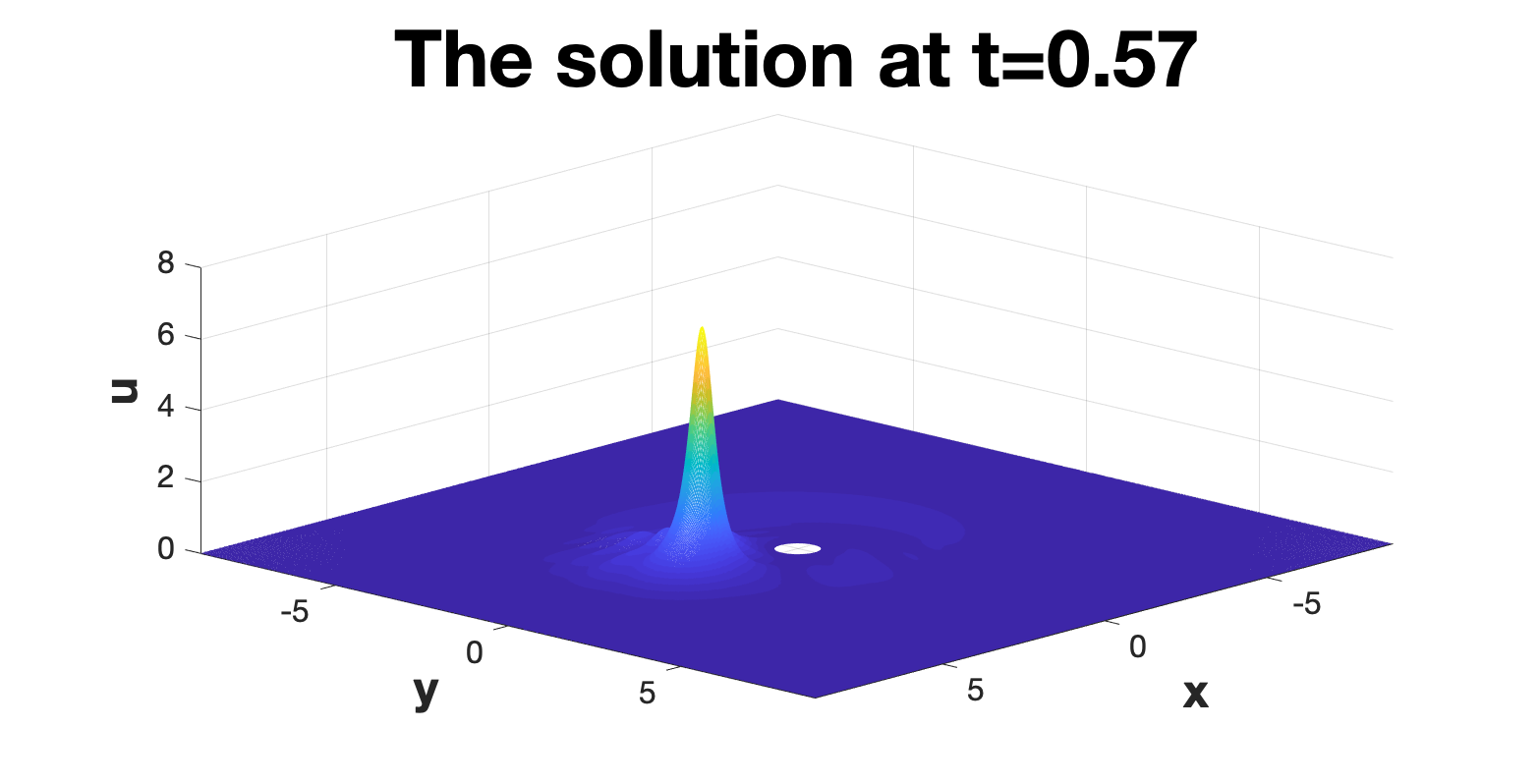}
\includegraphics[width=7.6cm,height=5.7cm]{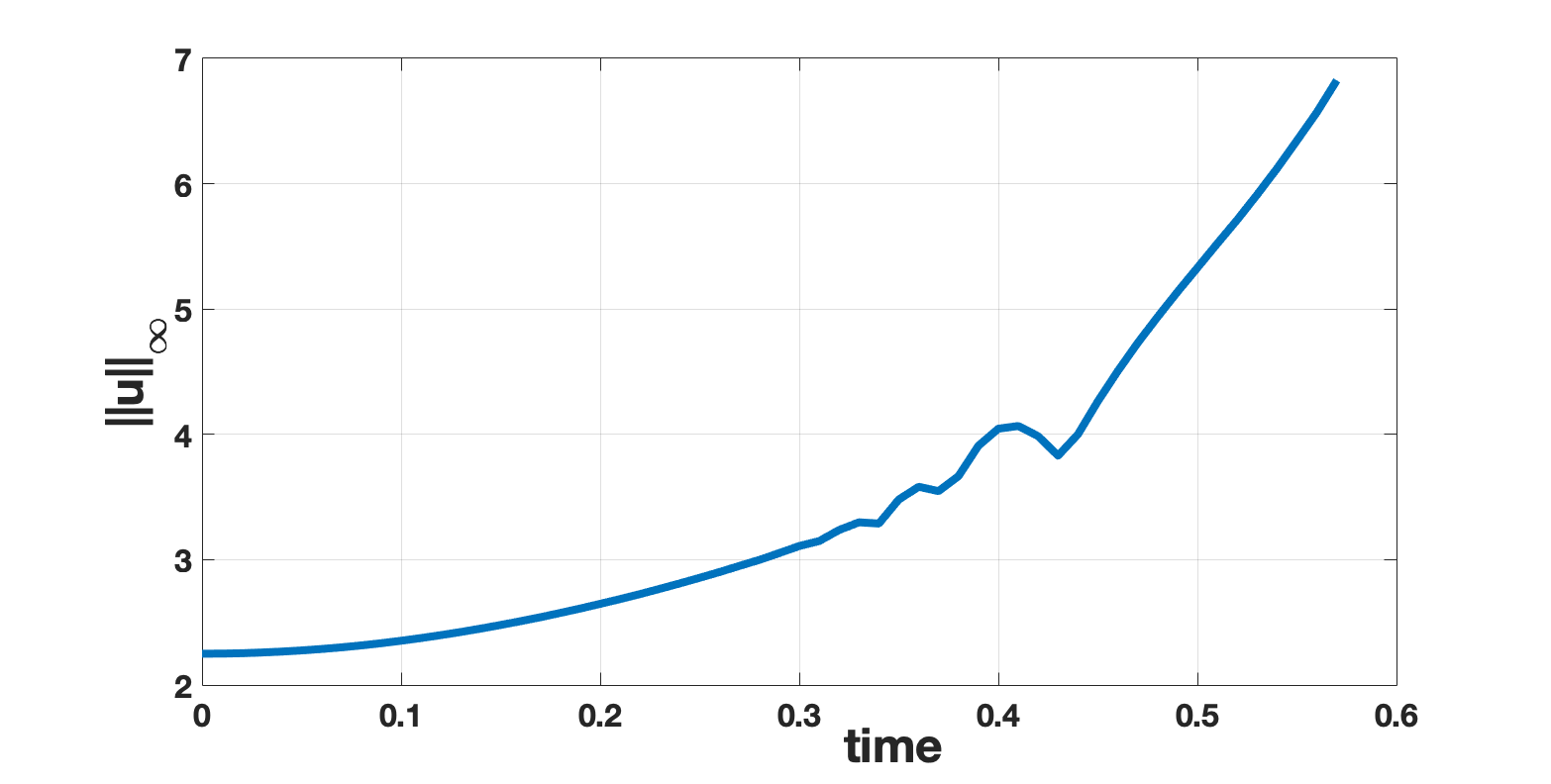}
\caption{Solution to the $2d$ cubic \NNls equation with the initial condition $u_0$ as in \eqref{u0NLS},  $A_0=2.25,\; x_c=-4.5,\; y_c=-4.5,$ and $v_1=(15,9)$ at time $t=0.57,$ moving on the line $y=\frac 35 x$ (left); the time dependence of $L^{\infty}$-norm (right). }
\label{test5Critique-weak}
\end{figure}

\begin{figure}[ht]
\centering
\includegraphics[width=5.3cm,height=5.6cm]{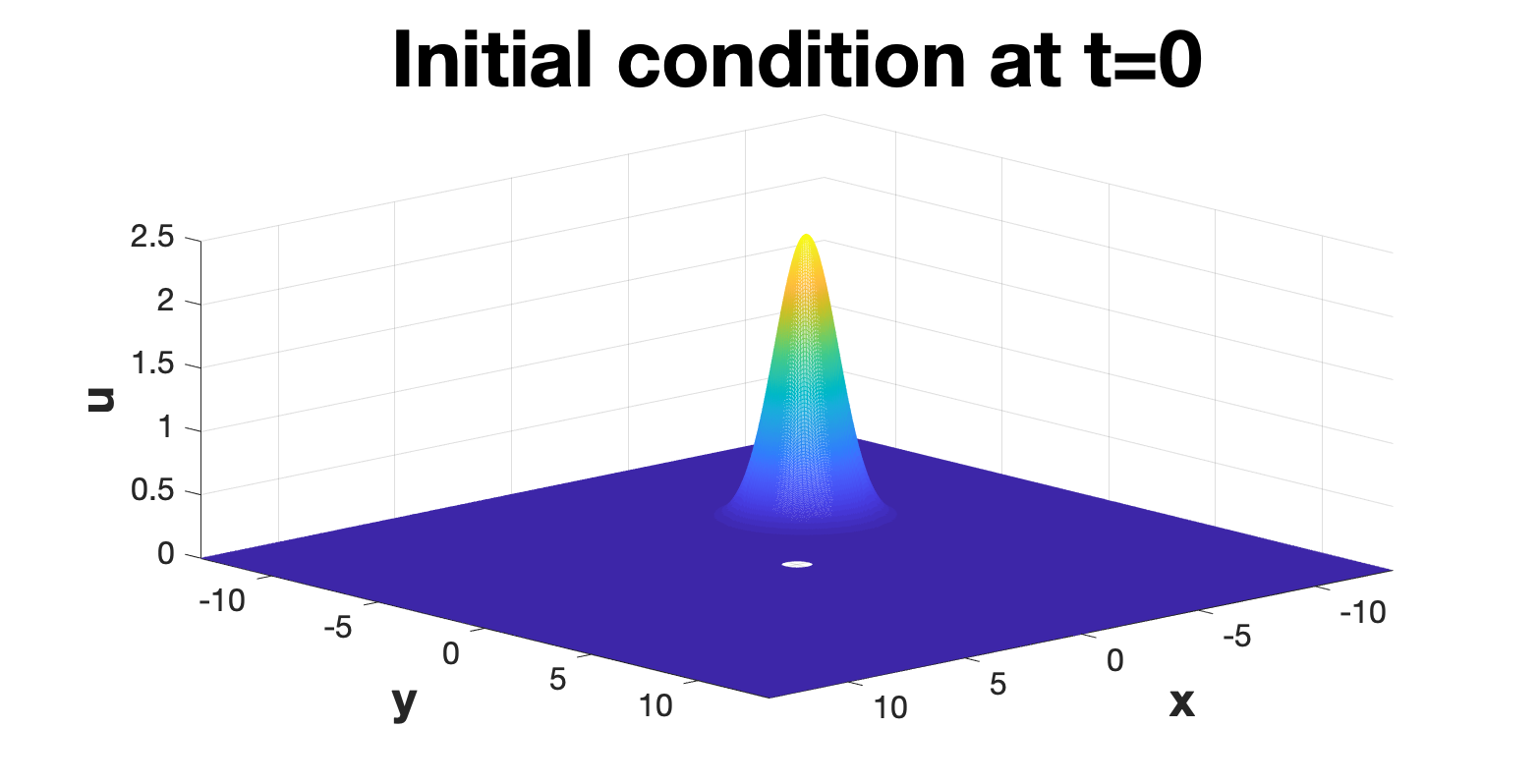}
\includegraphics[width=5.3cm,height=5.4cm]{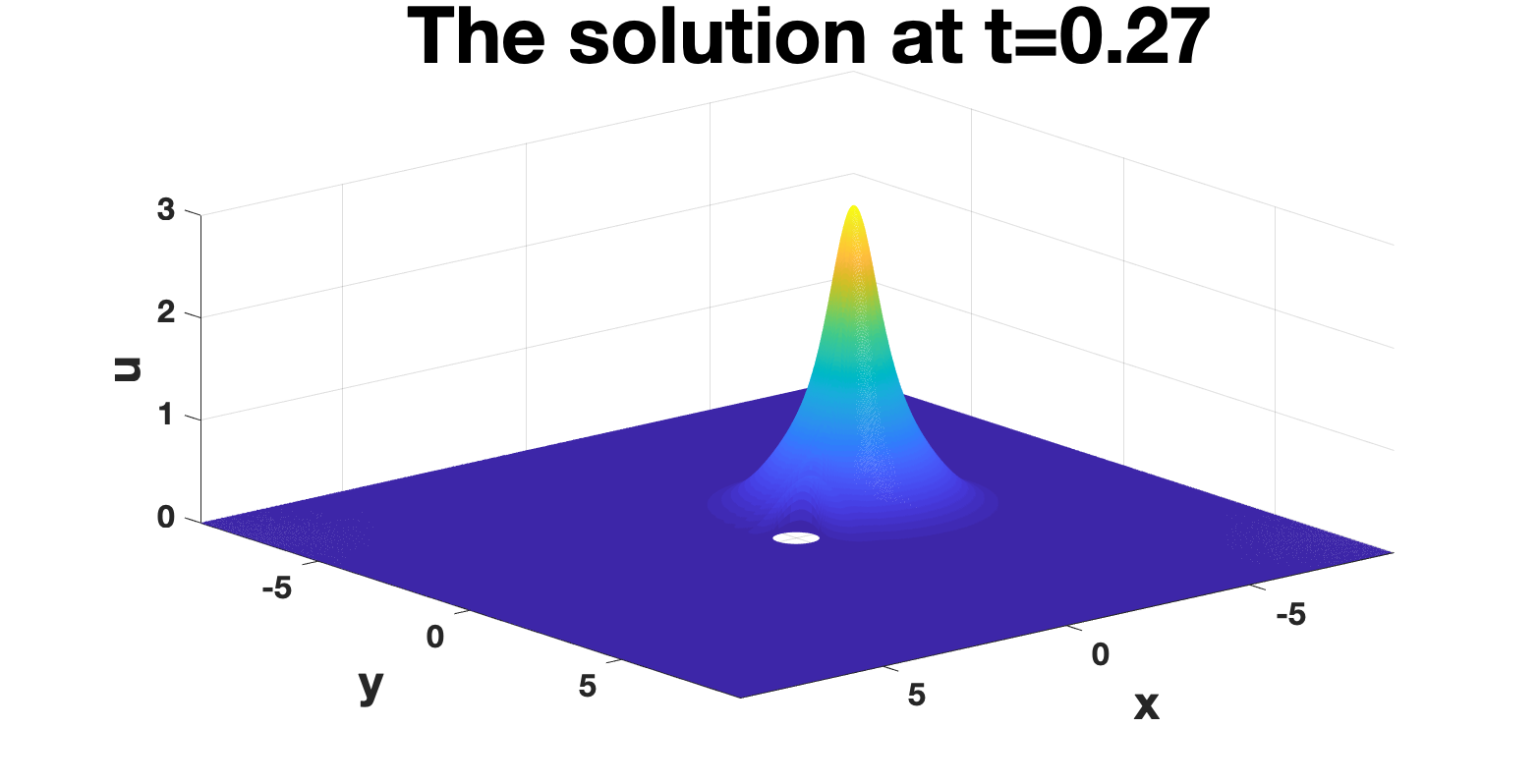}
\includegraphics[width=5.3cm,height=5.6cm]{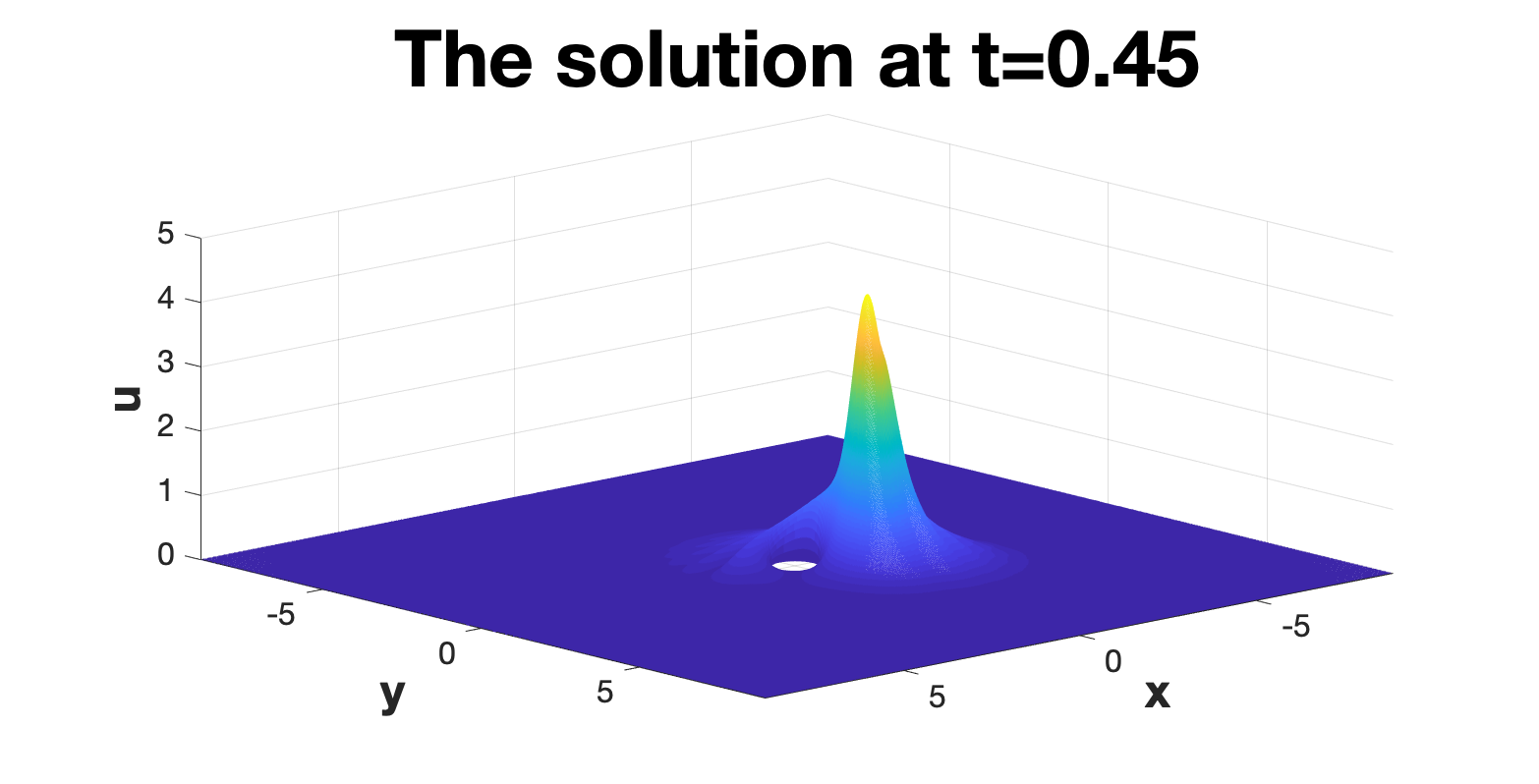}
\includegraphics[width=5.3cm,height=5.6cm]{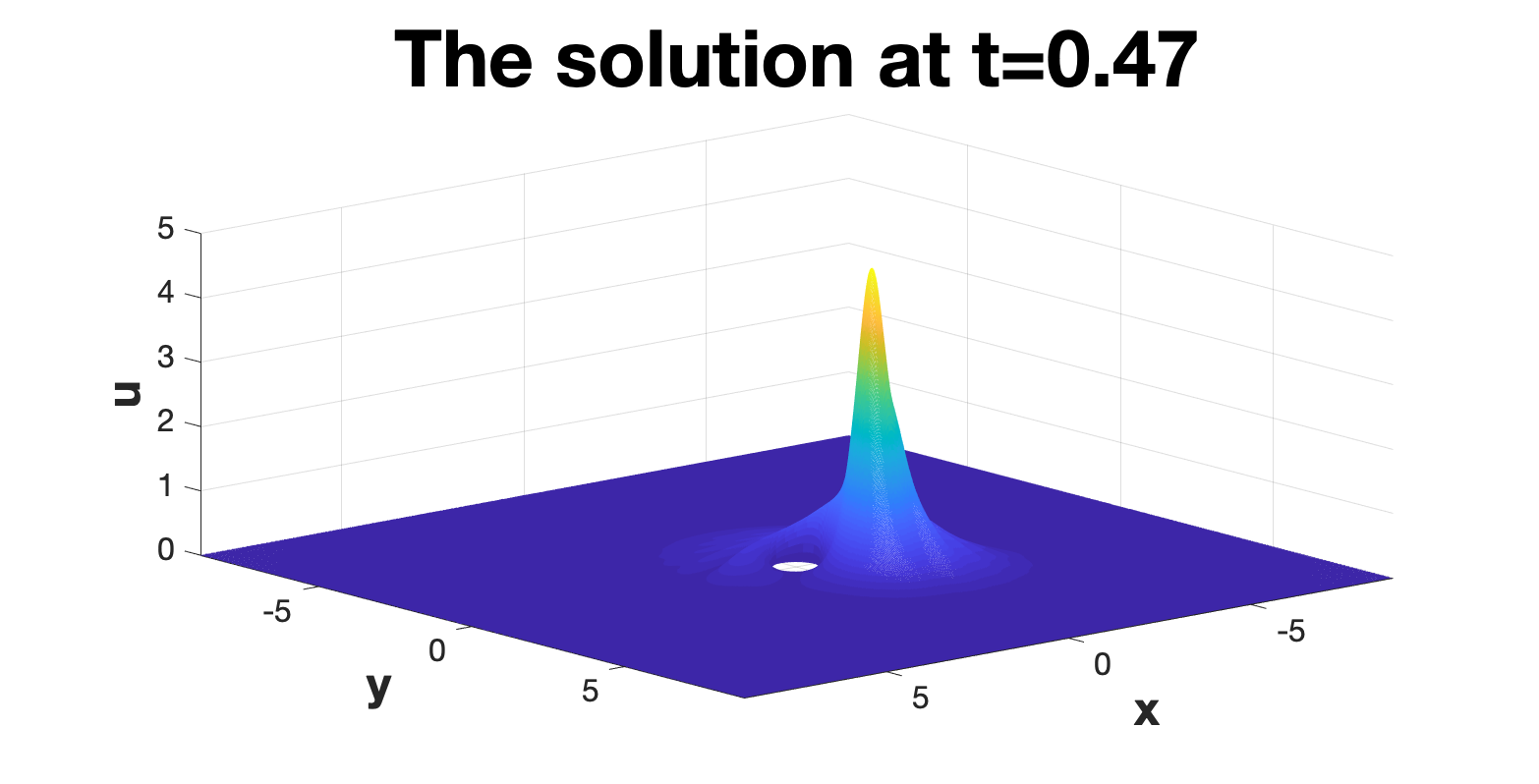}
\includegraphics[width=5.3cm,height=5.6cm]{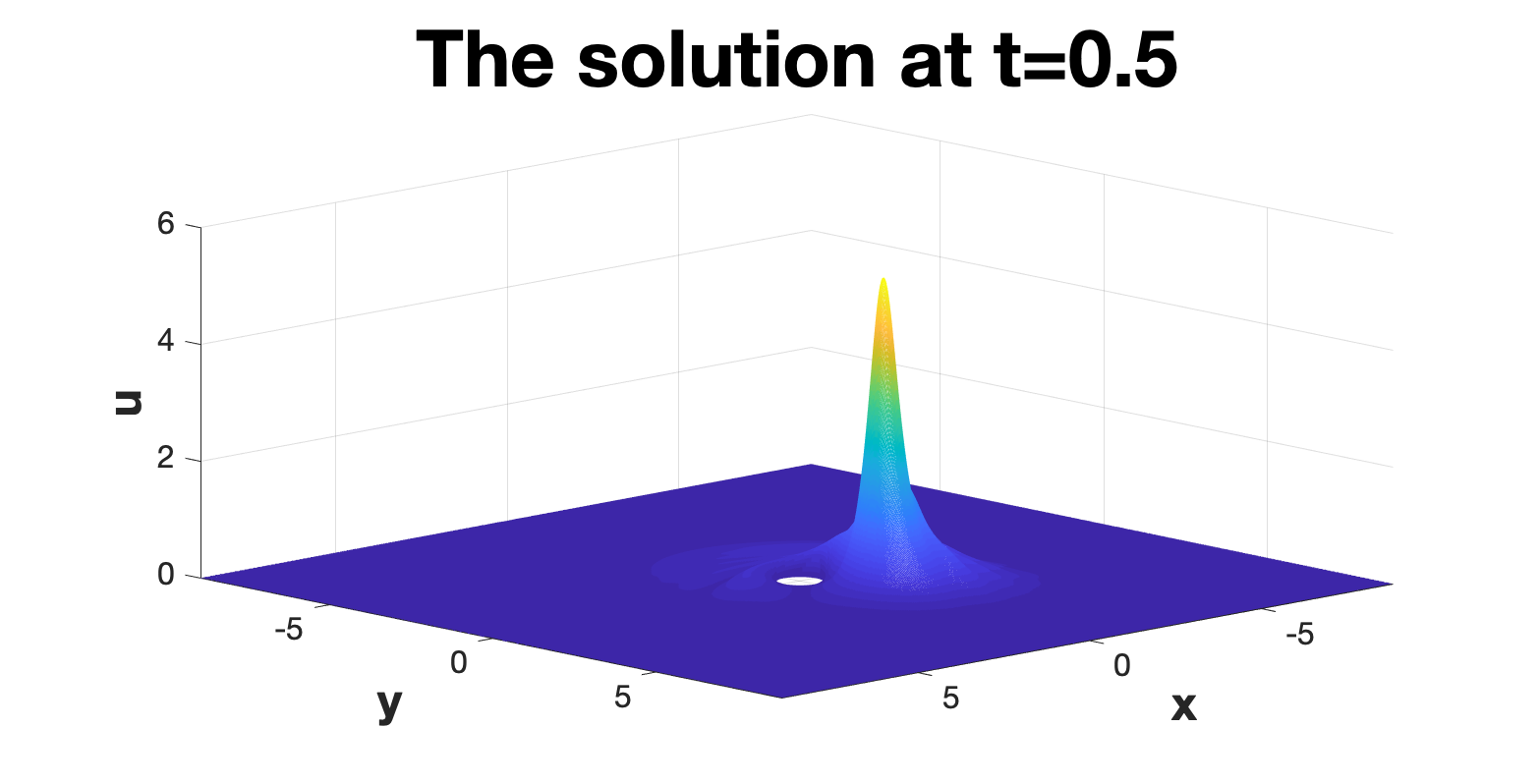}
\includegraphics[width=5.3cm,height=5.6cm]{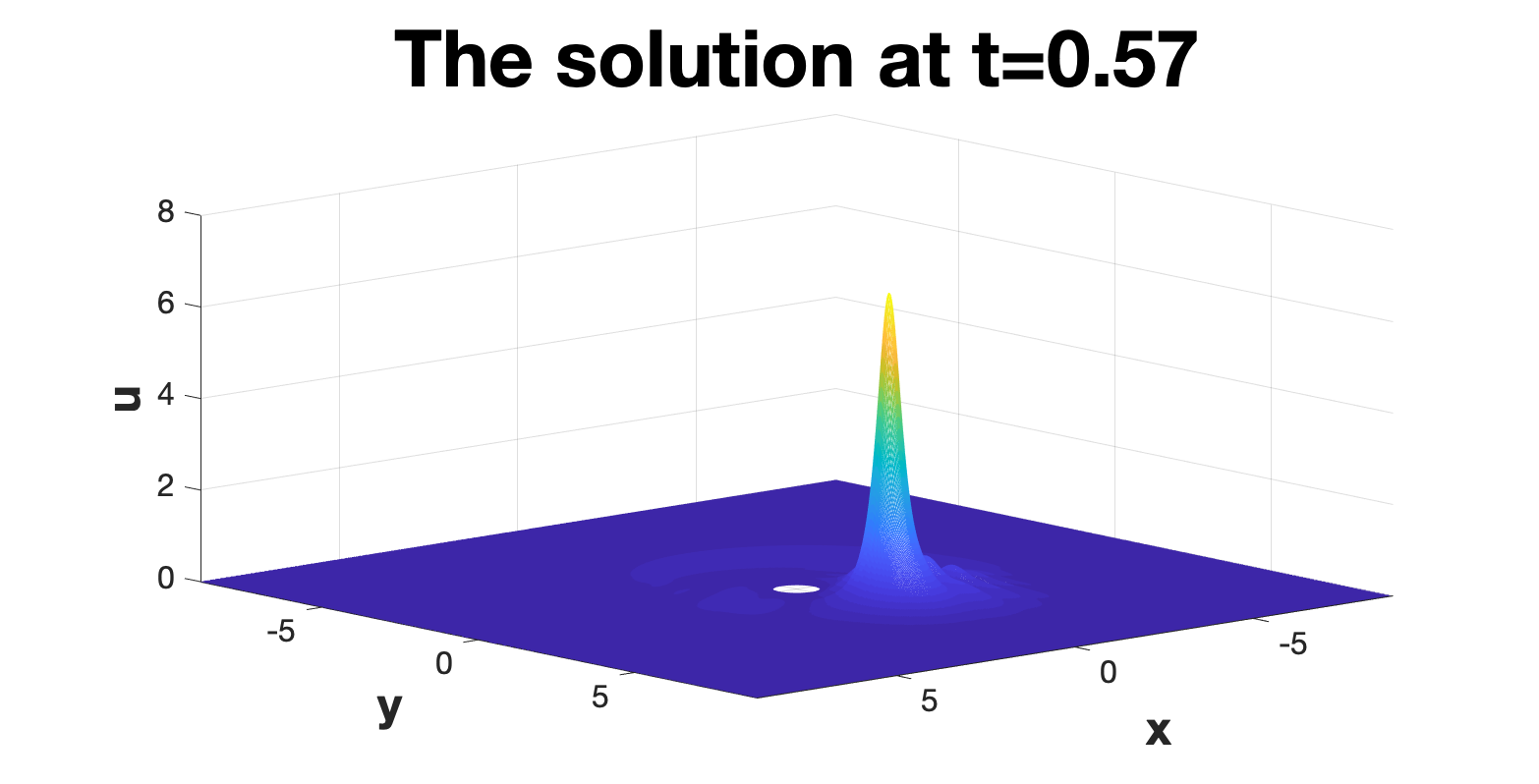}
\caption{Snapshots of the solution $u(t)$ to the $2d$ cubic \NNls equation with the initial condition $u_0$ as in \eqref{u0NLS}, $A_0=2.25, \; x_c=-4.5, \; y_c=-4.5$ and $v_2=(9,15),$ moving on the line $y= \frac 53 x$.}
\label{test6Critique-weak}
\end{figure}


\newpage
In our fourth example here, we consider the initial condition  $u_0$ from \eqref{u0NLS} with $A_0=2.25, \, x_c=-4.5,$ $y_c=-4.5$ and the velocity vector $v_1=( 12,15)$ and $v_2=(15,12).$ For $v_1=( 12,15),$ a snapshot of the solution at time $t = 1.2$  is plotted on the left of Figure \ref{test3Critique-weak}. 
\begin{figure}[ht]
\centering
\includegraphics[width=7.5cm,height=6.2cm]{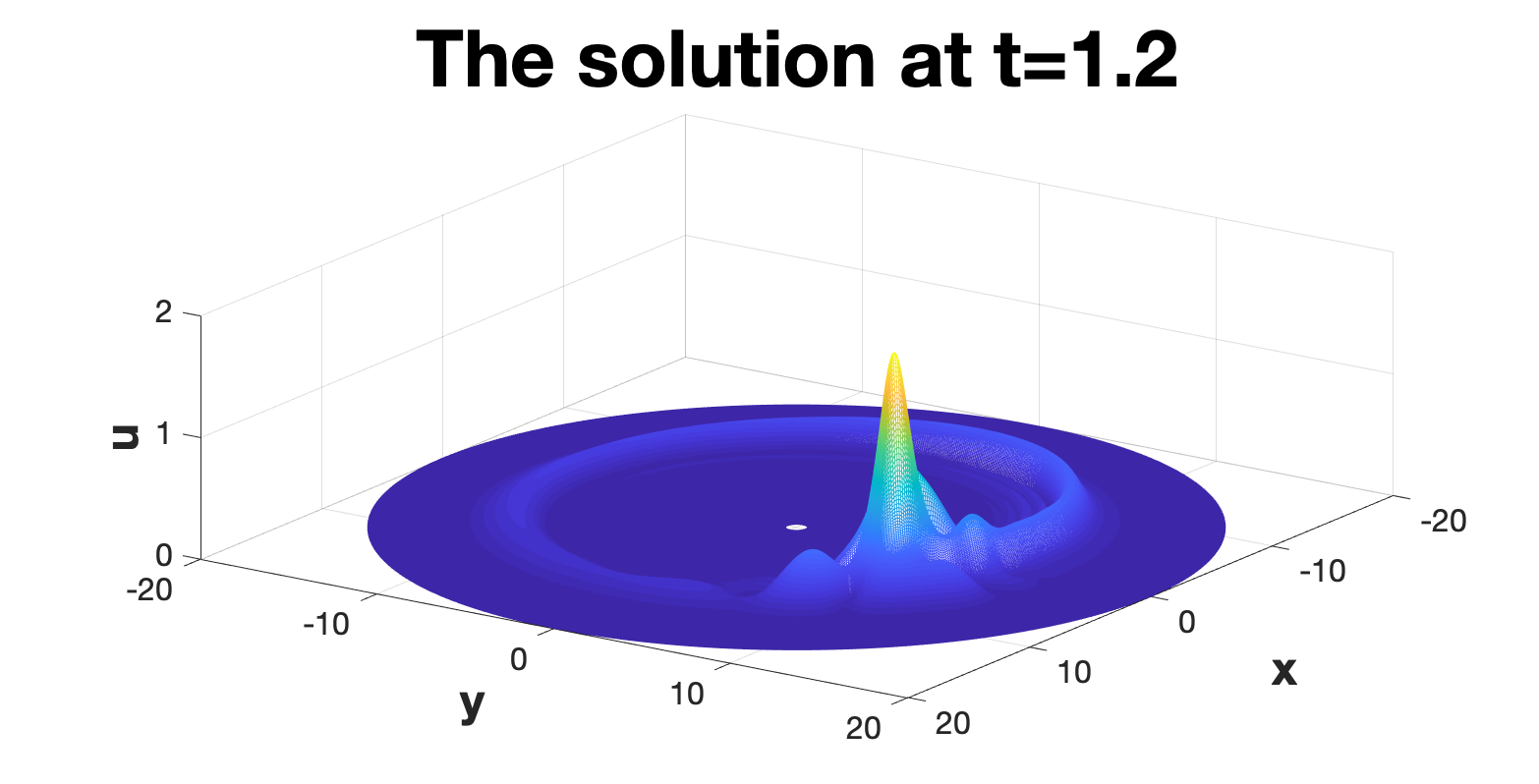}
\includegraphics[width=7.5cm,height=5.0cm]{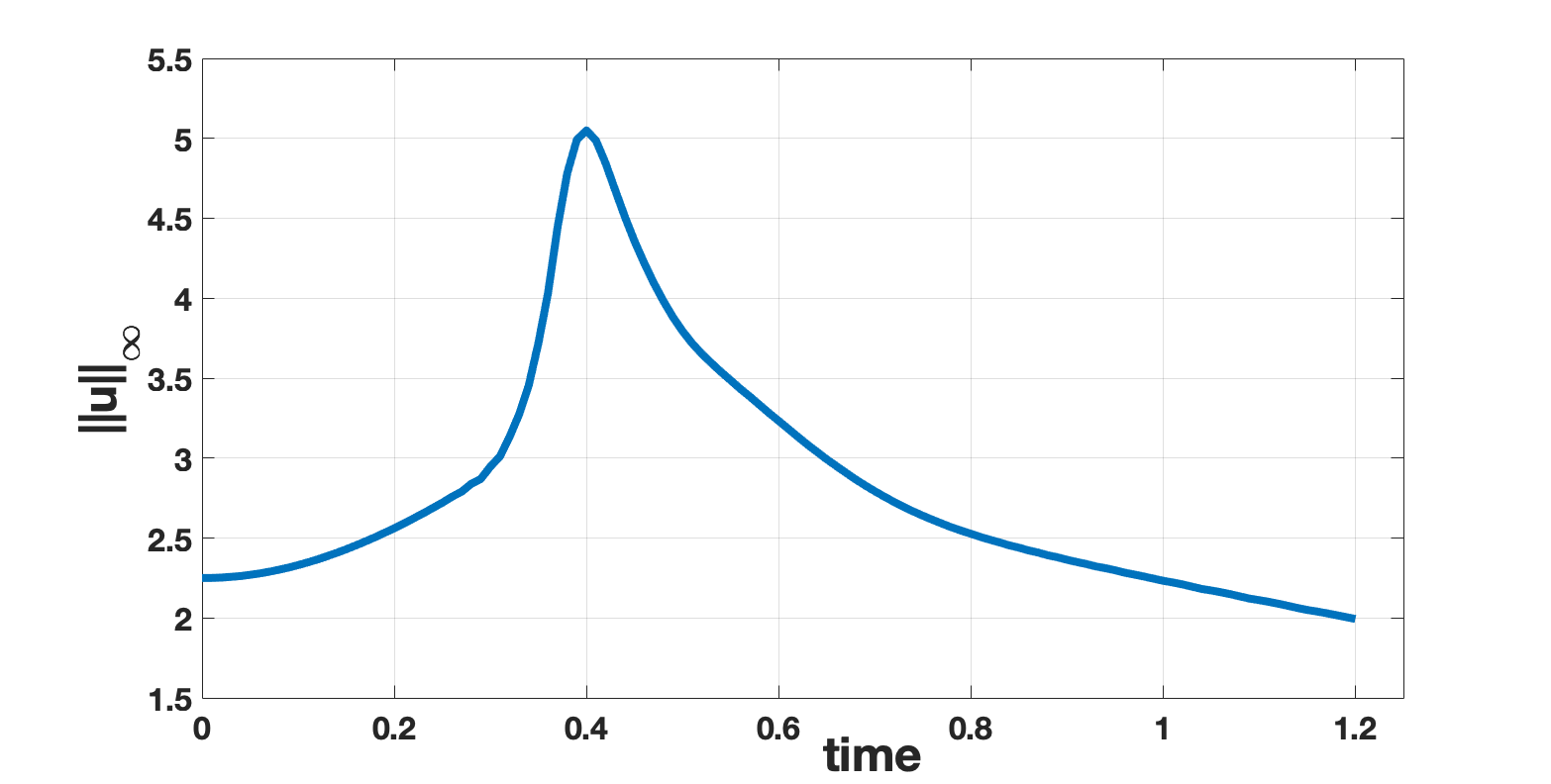}
\caption{Solution to the $2d$ cubic \NNls equation with the initial condition $u_0$ from \eqref{u0NLS}, $A_0=2.25, \, (x_c,y_c)=(-4.5,-4.5)$
and $v_1=(12,15)$ at time $t=1.2$ moving on the line $y=\frac 54 x$ (left), the time dependence of the $L^{\infty}$-norm of the solution (right). }
\label{test3Critique-weak}
\end{figure}
The right subplot shows the $L^{\infty}$-norm depending on time, which appears to grow quite fast in the beginning of the simulation, but after the collision it starts to decrease  monotonically. This solution disperses, or in other words, it becomes a scattering solution. Thus, the obstacle arrests the blow-up. This is a \textbf{different behavior} compared to the previous examples, where the solutions were transmitted almost with the same shape after the interaction and the soliton core was preserved. Unlike the previous examples, the collision of the solution with the obstacle here creates reflected waves, which then disperse the solution. The reflection causes the loss of the mass in the main part of the solution, arresting the blow-up in finite time unlike the examples above, where the reflection does not affect the blow-up of the solution and only delays the blow-up time. In this case the interaction between the soliton and the obstacle has a substantial influence on the behavior of the solution, which is a completely new dynamics compared to the dynamics on the whole space. For better understanding of this dynamics, we provide snapshots of the behavior of the solution for different time steps for $v_2=(15,12),$ see Figure \ref{test4Critique-weak}. This is an example where the solution has a behavior close to the \textit{strong} interaction case. 


\begin{figure}[!h]
\centering
\includegraphics[width=5.4cm,height=6cm]{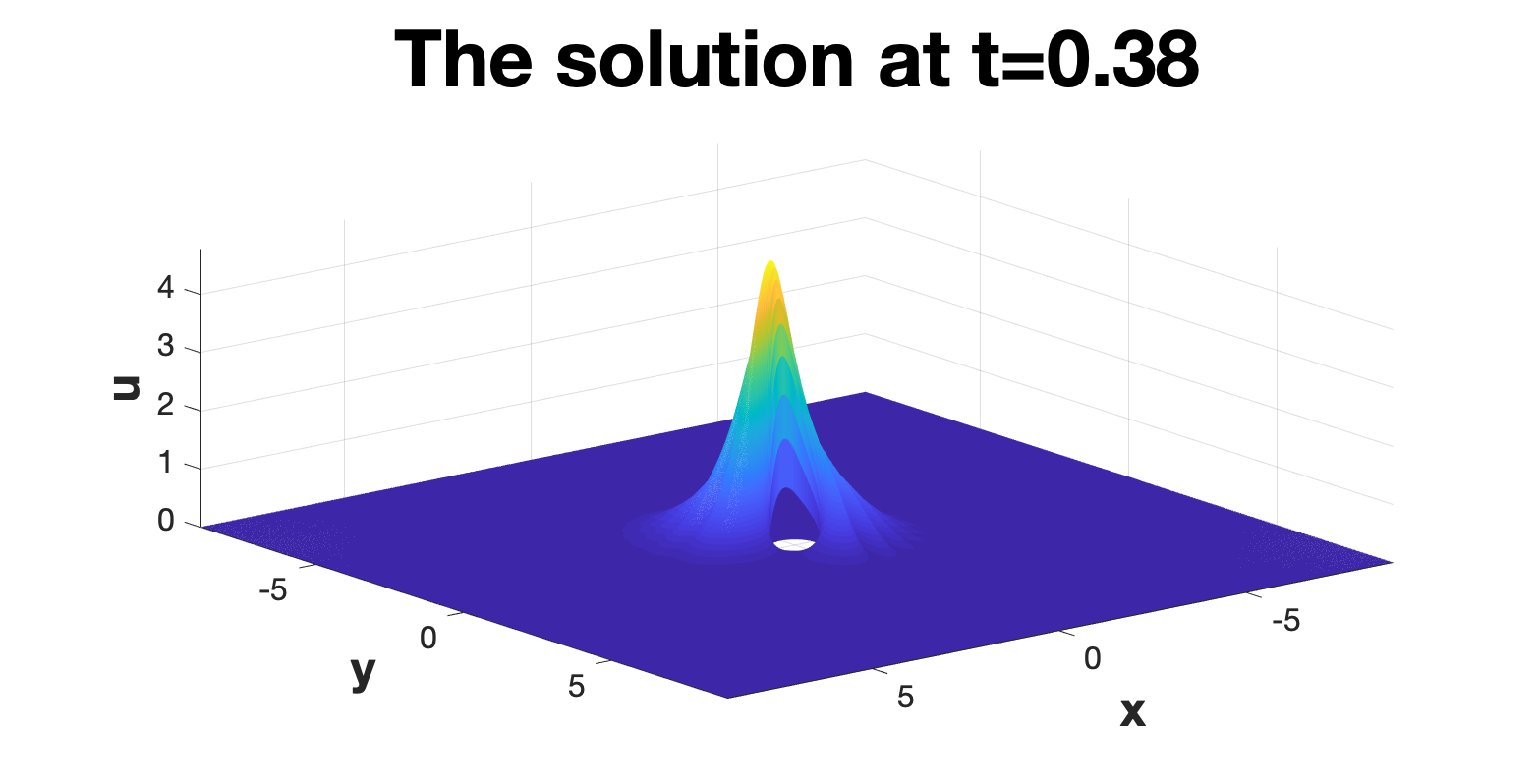}
\includegraphics[width=5.4cm,height=6cm]{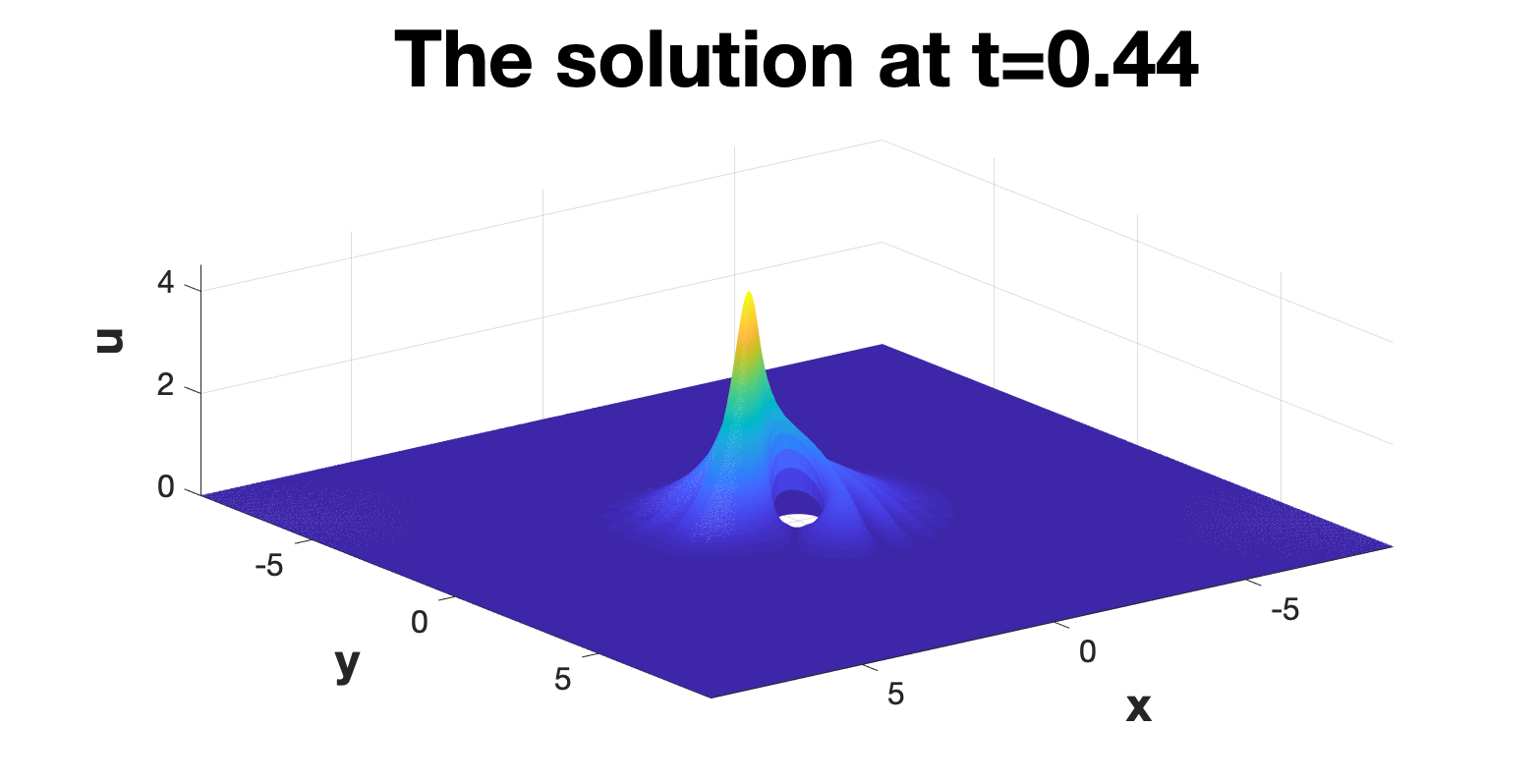}
\includegraphics[width=5.4cm,height=6cm]{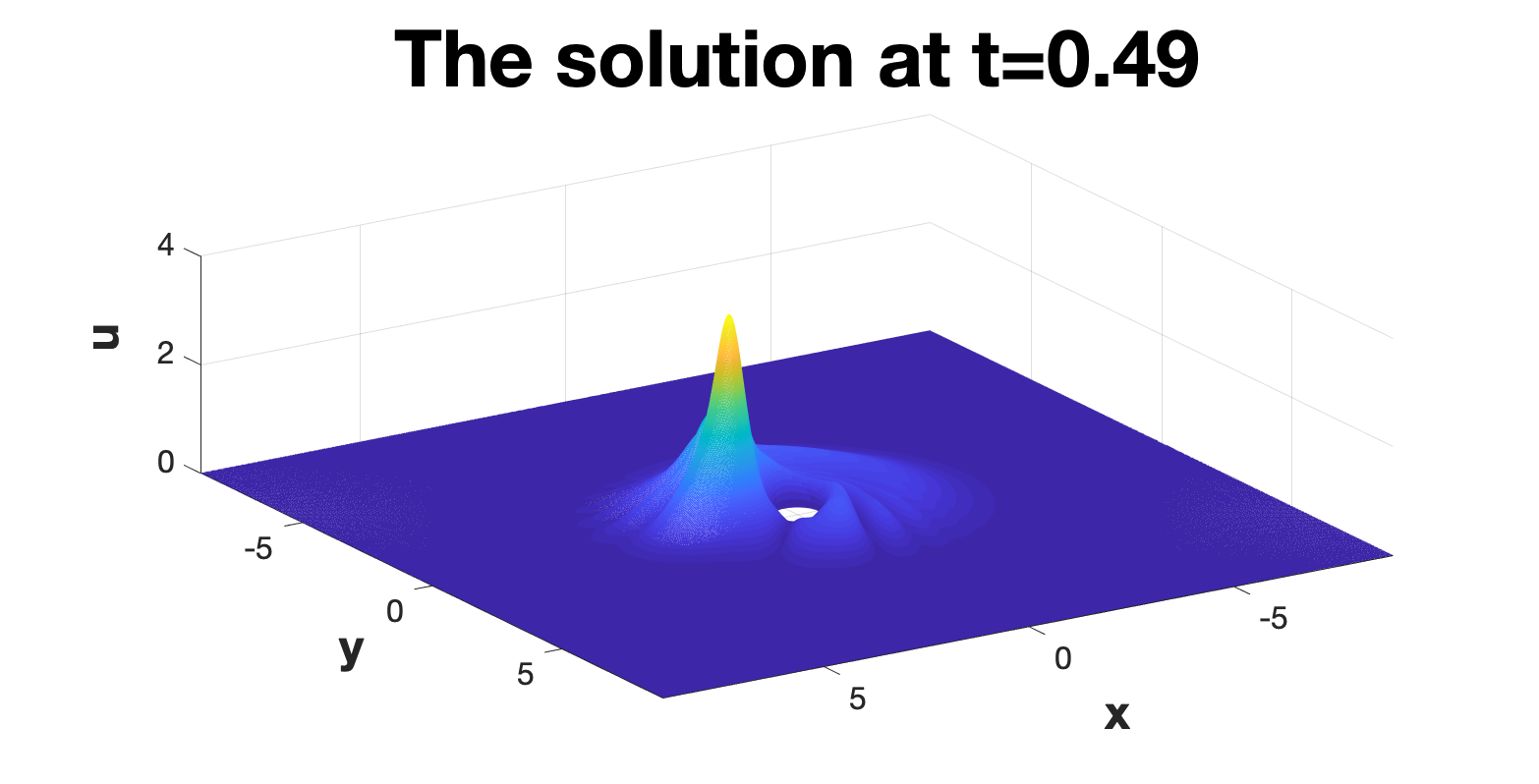}
 \includegraphics[width=5.4cm,height=6cm]{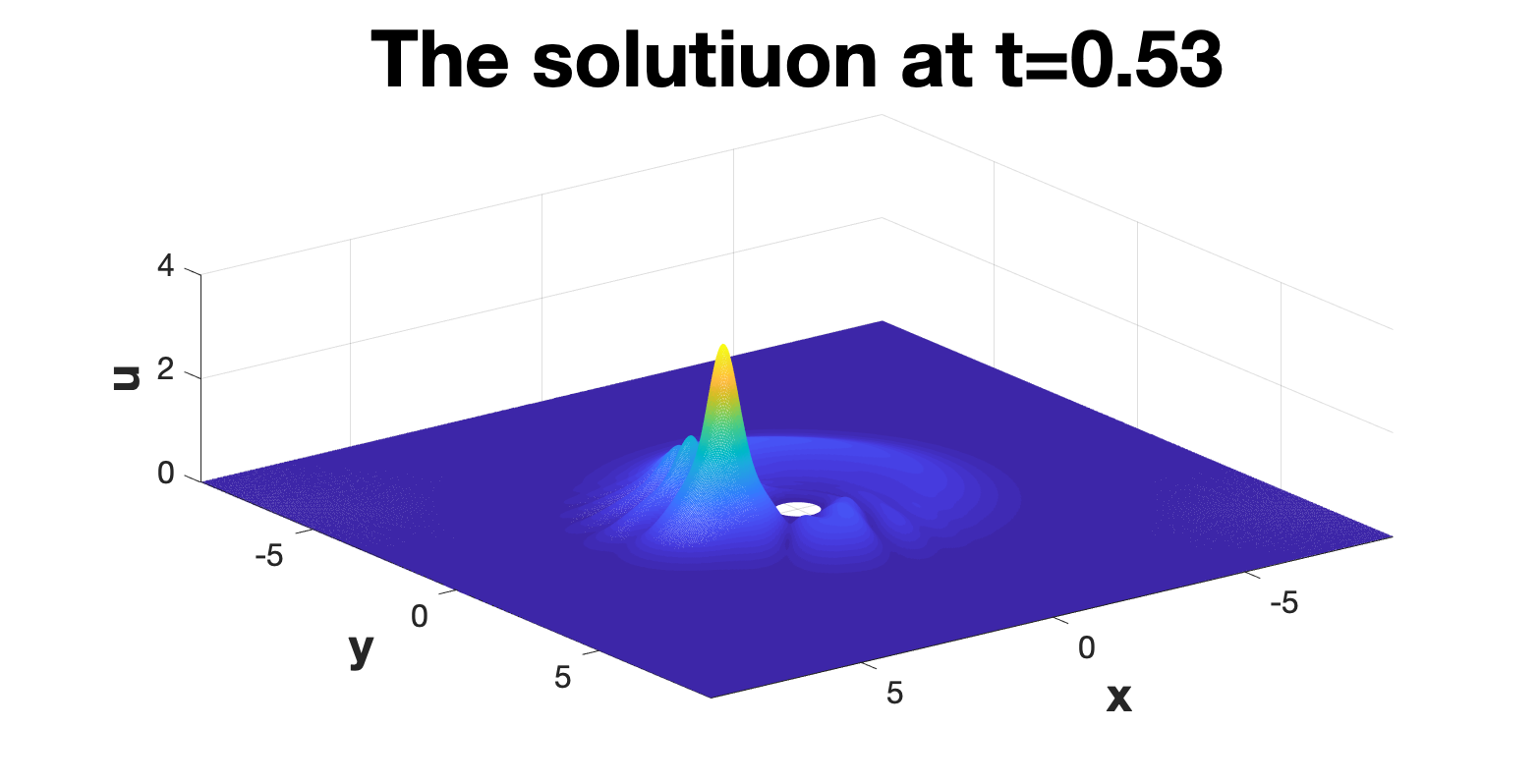}
\includegraphics[width=5.4cm,height=6cm]{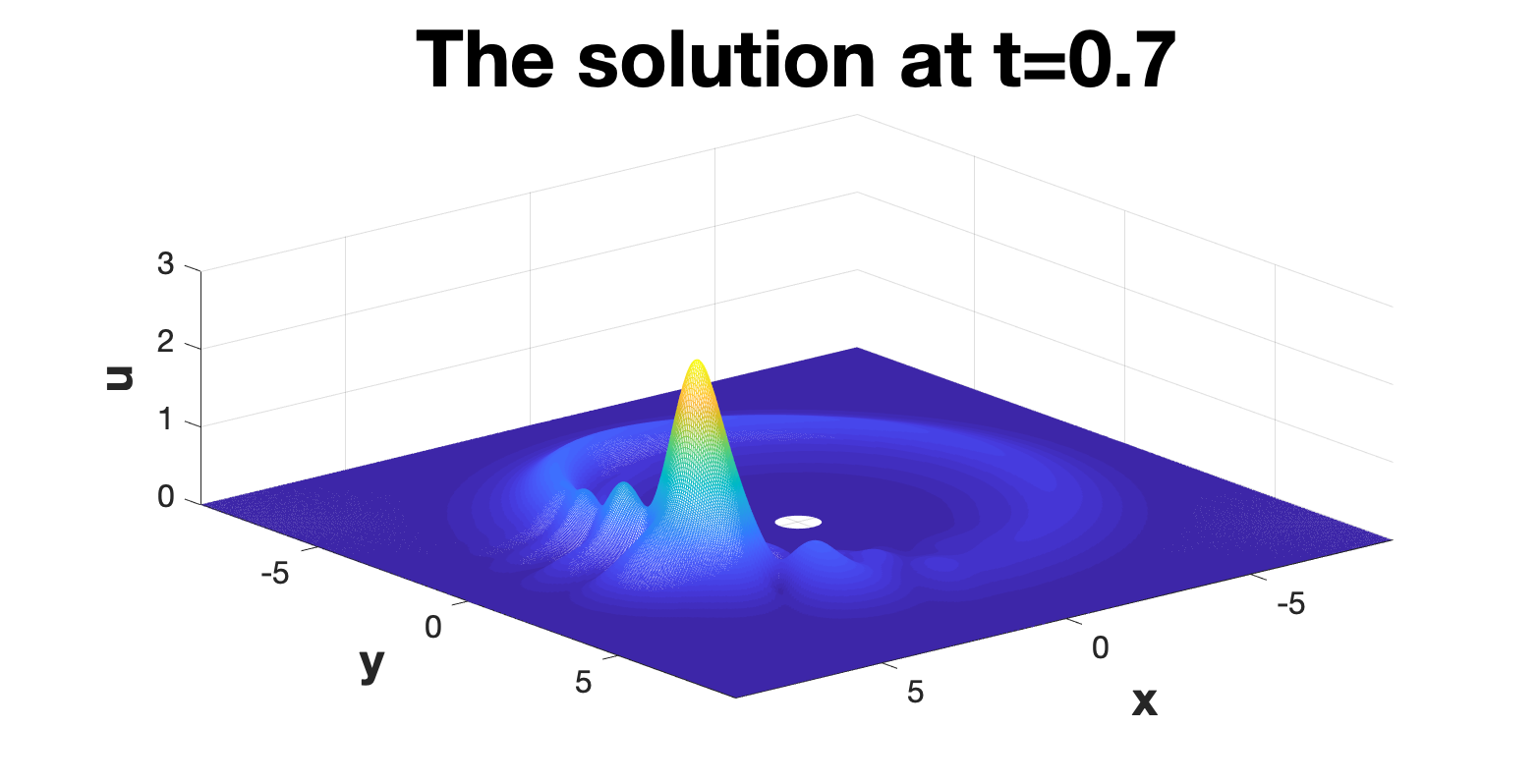}
\includegraphics[width=5.4cm,height=6cm]{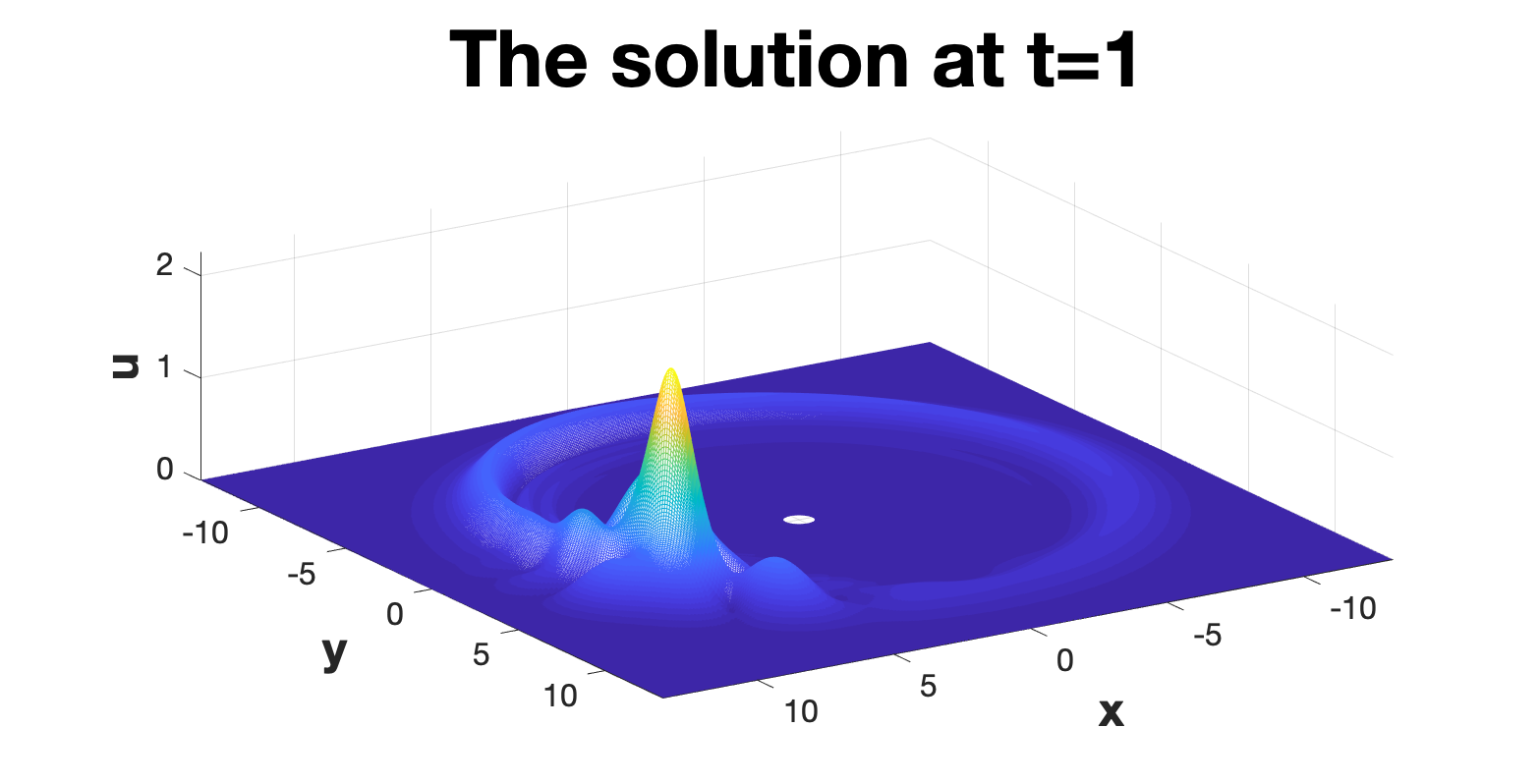}
\caption{Snapshots of the solution $u(t)$ to the $2d$ cubic \NNls equation, with the initial condition $u_0$ from \eqref{u0NLS}, $A_0=2.25, \, (x_c,y_c)=(-4.5,-4.5)$ and 
$v_2=(15,12)$ moving on the line $y= \frac 45 x$.}
\label{test4Critique-weak}
\end{figure}



In the following simulations, we study the behavior of various examples of solutions to the cubic \NNls equation, depending on the interaction between the solution and the obstacle. 
We consider the same initial condition as in \eqref{u0NLS}, we fix the following parameters as follows: 
 $ A_0=2.25, \; x_c=-4.5, \; y_c=-4.5$, and vary the direction of the velocity vector $(v_x,v_y). $  
 
We record the results of our simulations for this $L^2$-critical case in Table \ref{T:1}.  

{\small
 \begin{table}[h!]
 \centering 
\begin{tabular}{|c|c| c|l|c|} 
  \hline
   $(v_x, v_y)$     & Discrete mass &Discrete energy   &  Behavior of the solution                           & Type of interaction  \\
  \hline
  \hline
  $(15 ,0 )$          & 15.9043                       &  442.9353                    &  Blow-up at $t \approx  0.52\; \;$              &  no-interaction  \\  
  \hline
 $( 0,15 )$           &  15.9043                      &  442.9353                    &  Blow-up at $t \approx 0.52\;\;$                & no-interaction                           \\  
  \hline
 $( 15,8 )$          & 15.9043                        &  570.4814                     &  Blow-up at $t \approx 0.56\;\;$                &  weak-interaction       \\  
  \hline
  $(8,15 )$          & 15.9043                        &  570.4814                     &  Blow-up at $t \approx  0.56\;\; $                 & weak-interaction    \\    
  \hline 
$(9,15 )$            & 15.9043                        &   604.0182                     &  Blow-up up $t \approx 0.57\; $               & weak-interaction    \\  
  \hline
 $(15,9 )$           &  15.9043                      &   604.0182                    &  Blow-up at $t \approx 0.57\; \;$              & weak-interaction        \\  
  \hline
 $(10,15)$          &  15.9043                       &   641.4737                     &  Blow-up at $t \approx 0.63\; \;$         & weak-interaction   \\  
  \hline
 $(15,10)$           &  15.9043                    &  641.4737                       &  Blow-up at $t \approx 0.63\; \;$         & weak-interaction \\  
  \hline
  $(15,12)$           &  15.9043                    & 728.1404                   &  Scattering $\qquad \quad \quad \,$        & weak-interaction \\  
  \hline
 $(12,15)$           &  15.9043                    & 728.1404                   &  Scattering $\qquad \quad \quad \,$        & weak-interaction \\  
  \hline
   $(15 , 15 )$        & 15.9043                       &  887.5015                &  Scattering $\qquad \quad \quad \,$       & strong-interaction                   \\
   \hline
\end{tabular}
\caption{Different velocity directions $\vec{v}=(v_x,v_y)$ and the corresponding behavior of the solution $u(t)$ to the $2d$ cubic \Nls with the indicated discrete mass and energy (the value of energy differs due to the phase).}
\label{T:1}
\end{table}
}

\subsection{The $L^2$-supercritical case}
\label{L2supercritWeak}
We now consider the $2d$ quintic \NNls equation $(p=5).$ Again, we try to carefully examine the interaction between the obstacle and the solution.  In the following simulations, we fix $A_0$ and the velocity $v$, but vary  the translation parameters. 
We start with an example, where there is no interaction in order to compare the behavior of the solution for different scenarios later, especially when there will be a strong interaction. For that, we consider the initial data $u_0$ from \eqref{u0NLS} with 
\begin{equation}
\label{A_0=1.25+y_c=5}
A_0=1.25, \quad  x_c=-4.5 , \quad  y_c=5 ,\quad   \text{ and} \quad v=(15,0),
\end{equation} 
\begin{figure}[!ht]
\centering
\includegraphics[width=5.4cm,height=6cm]{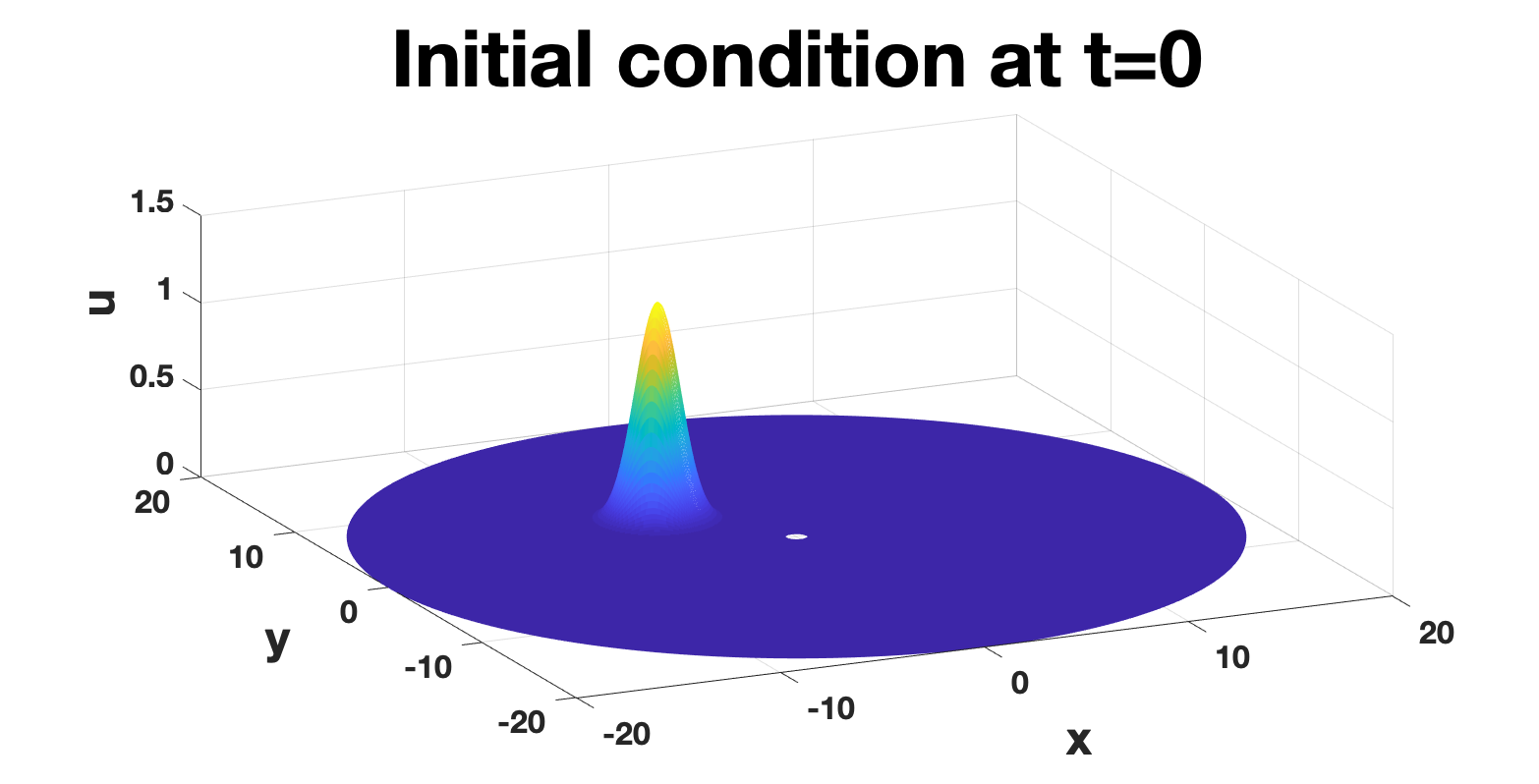}
  \includegraphics[width=5.4cm,height=6cm]{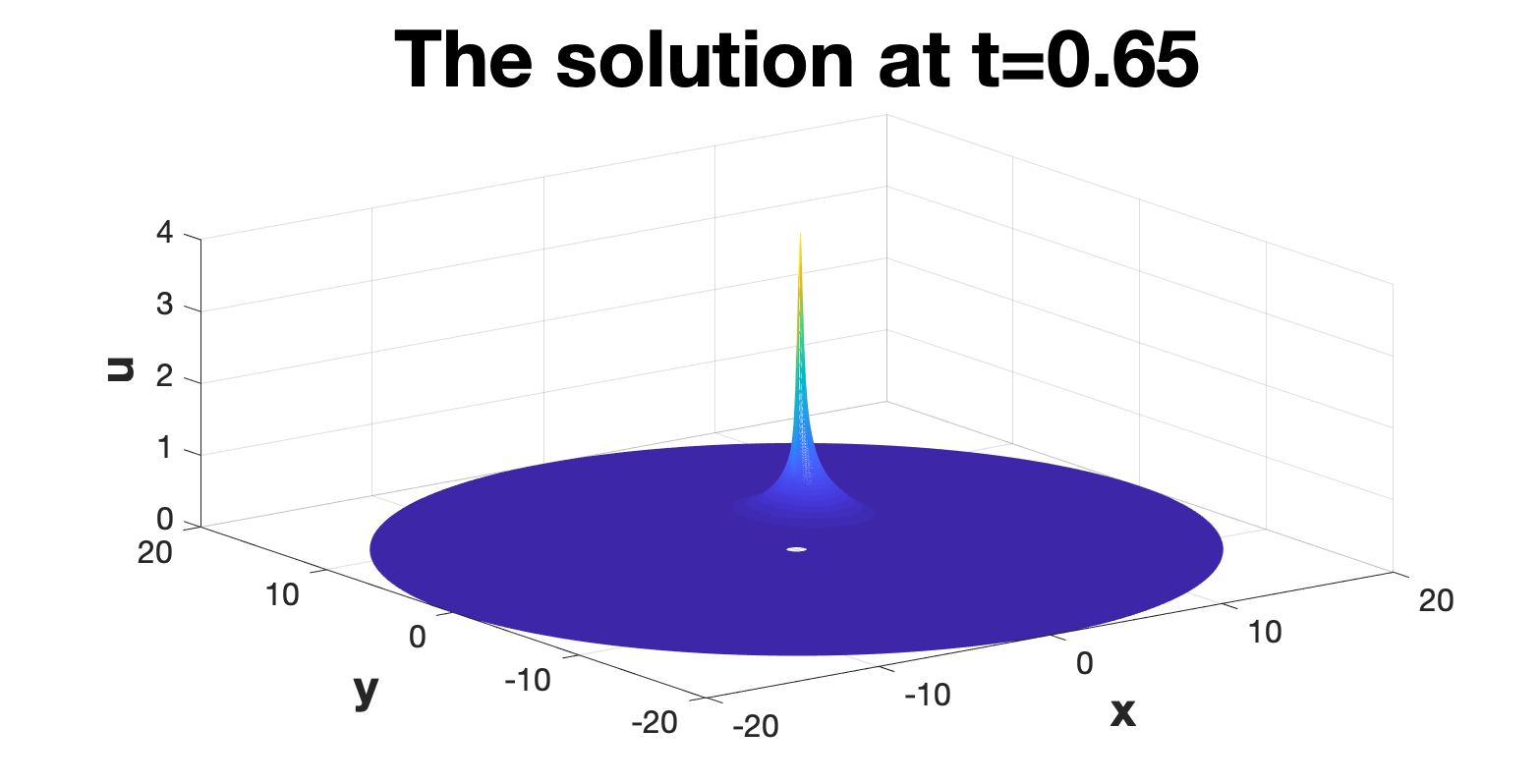}
   \includegraphics[width=5.4cm,height=5cm]{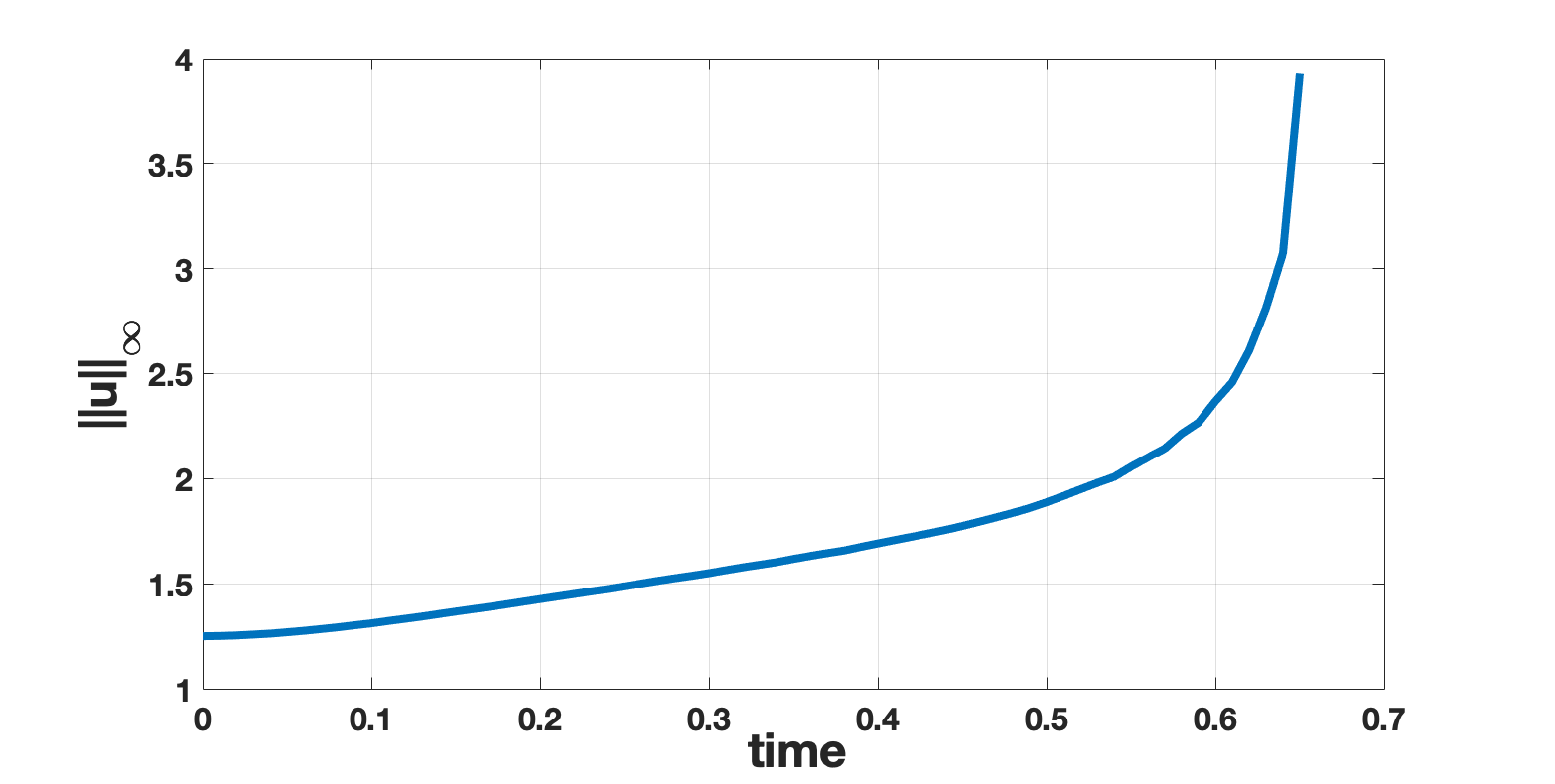}
\caption{Solution to the $2d$ quintic \NNls equation with $u_0$ from \eqref{u0NLS} and \eqref{A_0=1.25+y_c=5} (left) close to blow-up time (middle); time dependence of the $L^{\infty}$-norm (right).}
\label{u1-no-interac}
\end{figure}
which can be seen on the left of Figure \ref{u1-no-interac}. The middle subplot shows that the corresponding solution to the $2d$ quintic \NNls equation
blows up in finite time at $t=0.65$ with the diverging $L^{\infty}$-norm. Snapshots of the solution in time (top view onto the $xy$-plane) are plotted in Figure \ref{u1-no-interac-90}. We observe that the solution blows up in finite time and there is no interaction between the solution and the obstacle.  

\begin{figure}[!h]
\centering
\includegraphics[width=5.4cm,height=5.4cm]{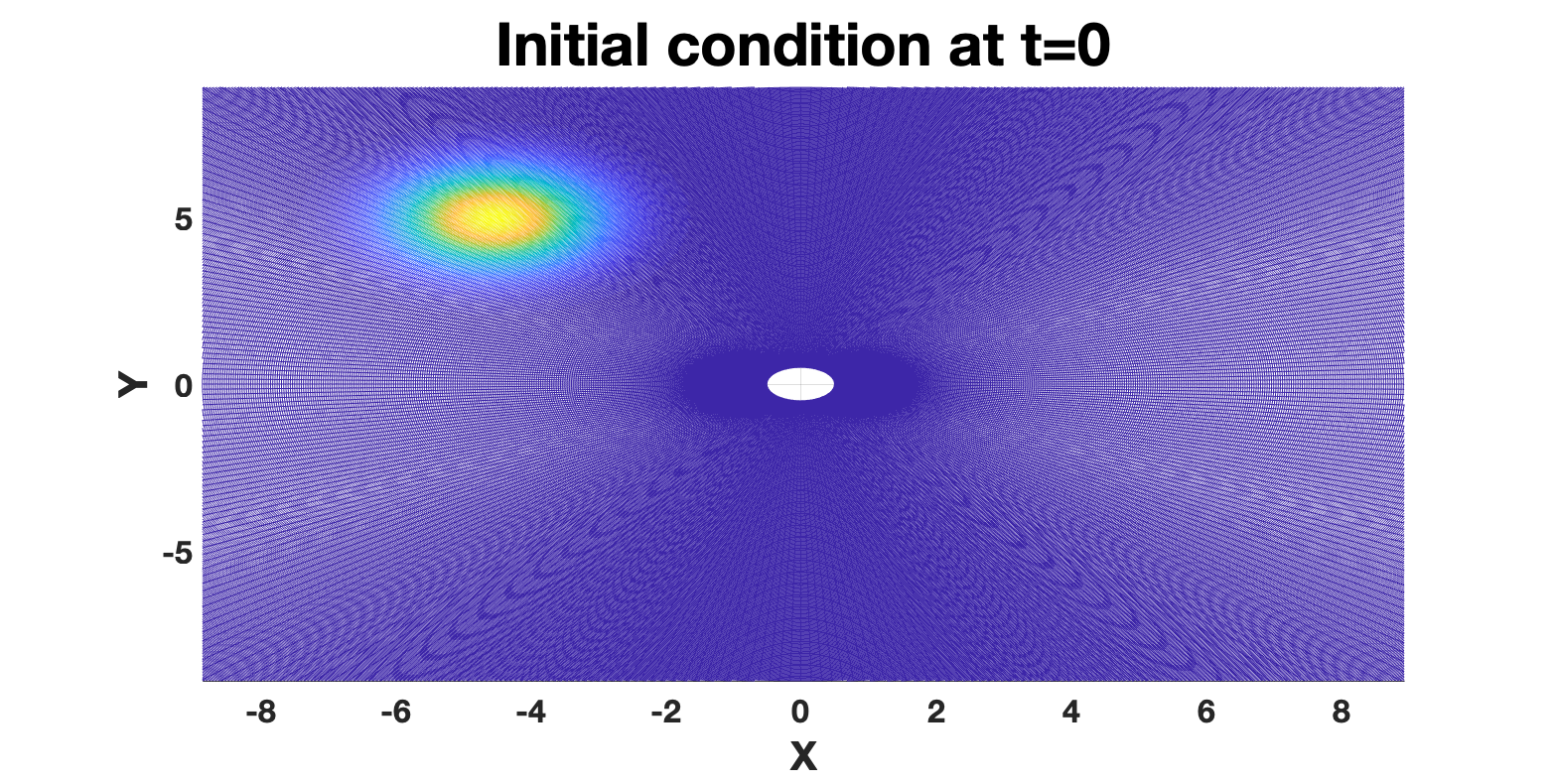}
\includegraphics[width=5.4cm,height=5.4cm]{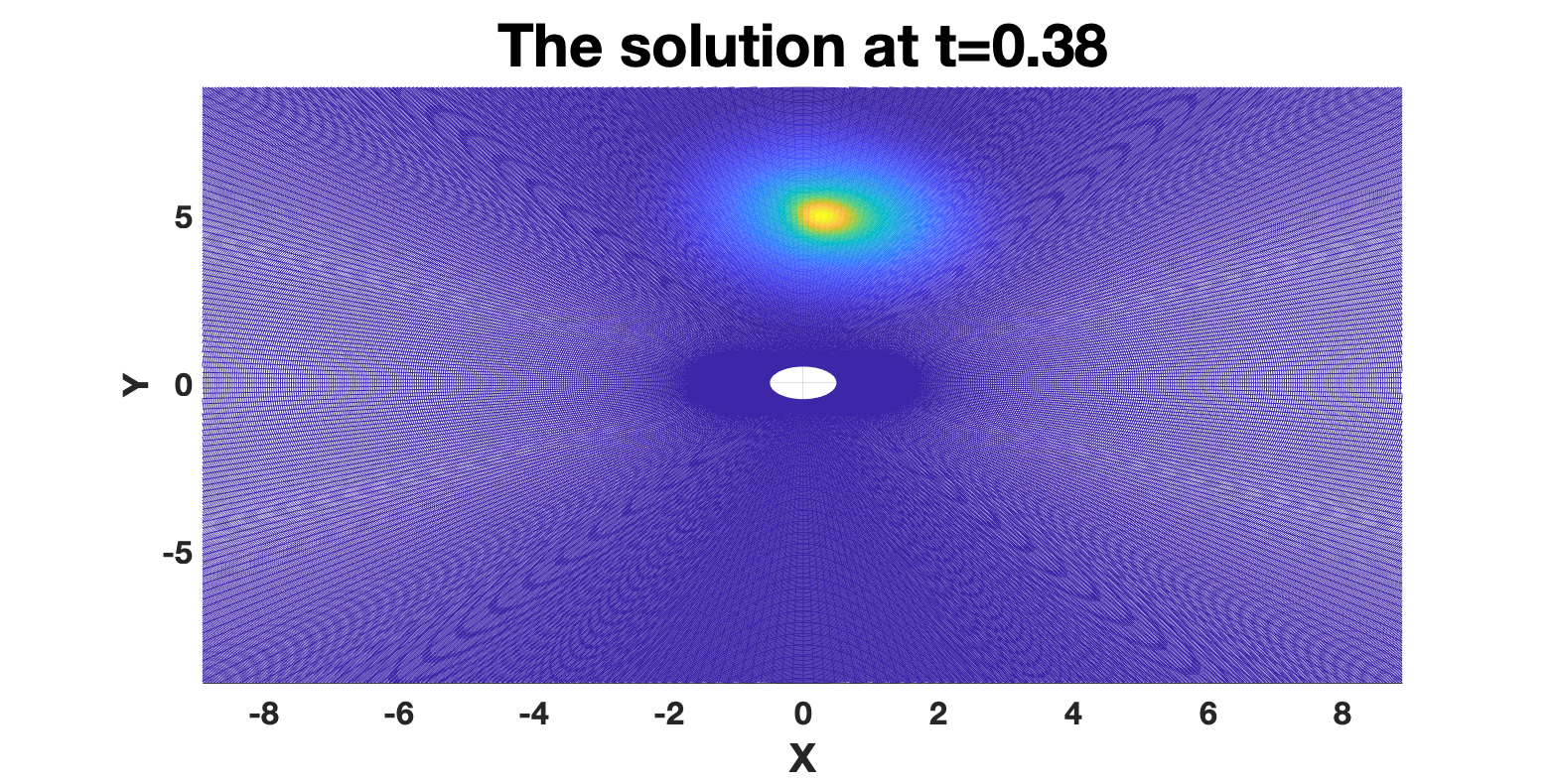}
\includegraphics[width=5.5cm,height=5.2cm]{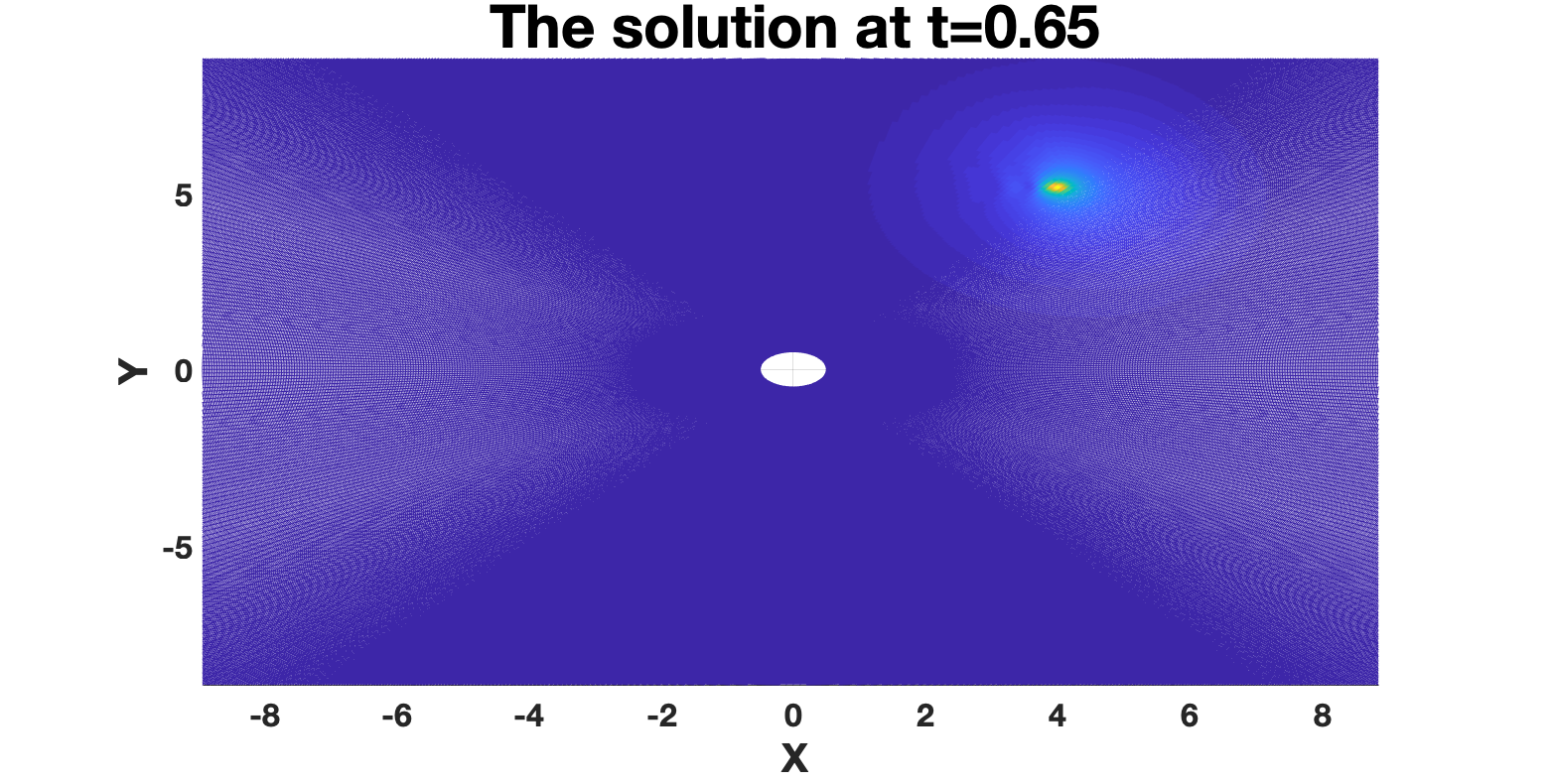}
\caption{Snapshots of the evolution of $u_0$ from \eqref{u0NLS} with \eqref{A_0=1.25+y_c=5} at $t=0,\;  t=0.38$ and $t=0.65.$ }
\label{u1-no-interac-90}
\end{figure}

\newpage

Next, we take the same initial data $u_0$ as in the previous example (i.e., $A_0=1.25, \, v=(15,0),\, x_c=-4.5$) except for the $y_c$ value we choose $y_c=2$ as shown in Figure \ref{NointeraclVariabY0}. In this case, we expect that the traveling wave solution has some weak interaction with the obstacle, see Figure \ref{Blow-up-t=0.66-WI}.     

\begin{figure}[!h]
\centering
\includegraphics[width=8.7cm,height=6.6cm]{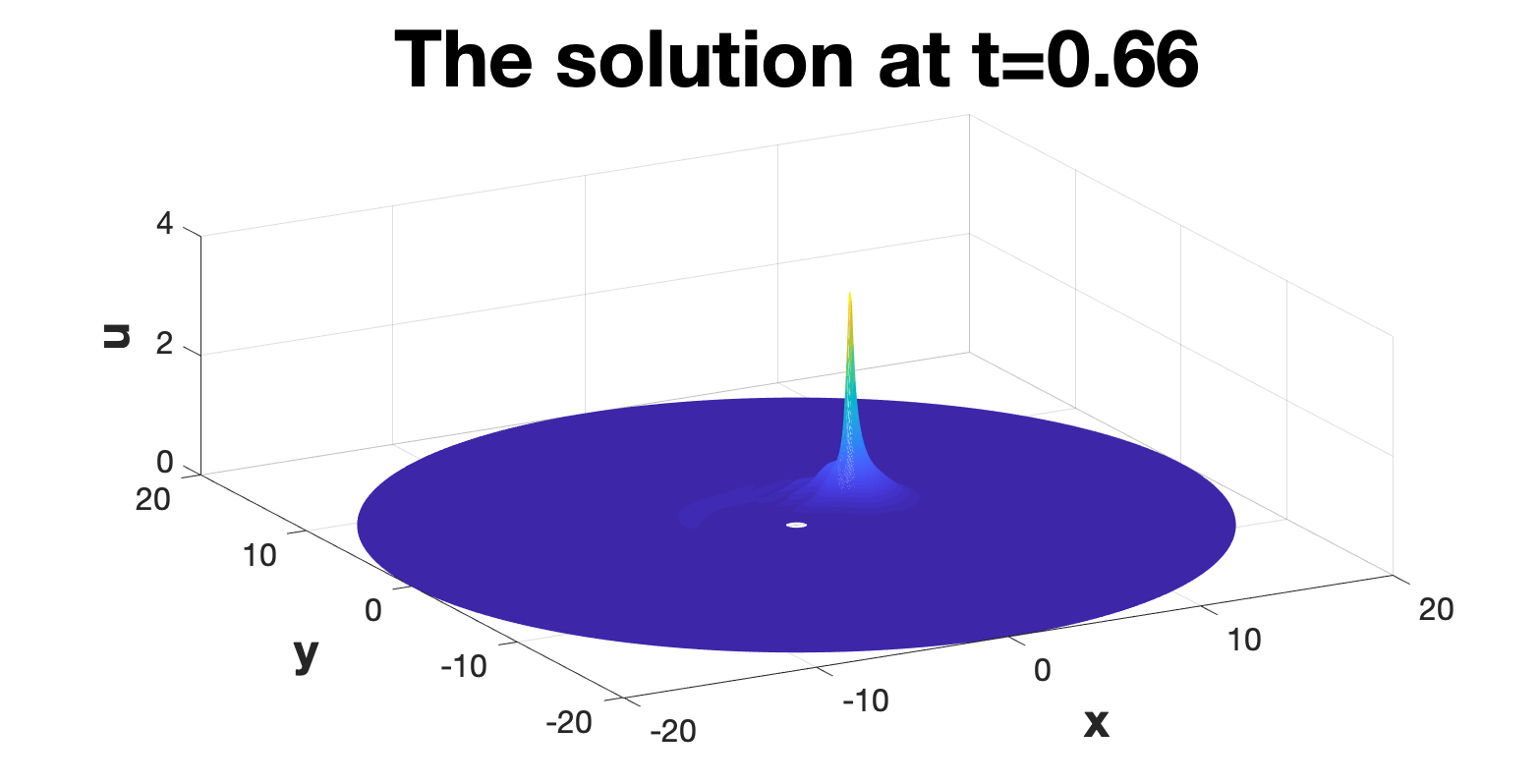}
\includegraphics[width=6.4cm,height=5.2cm]{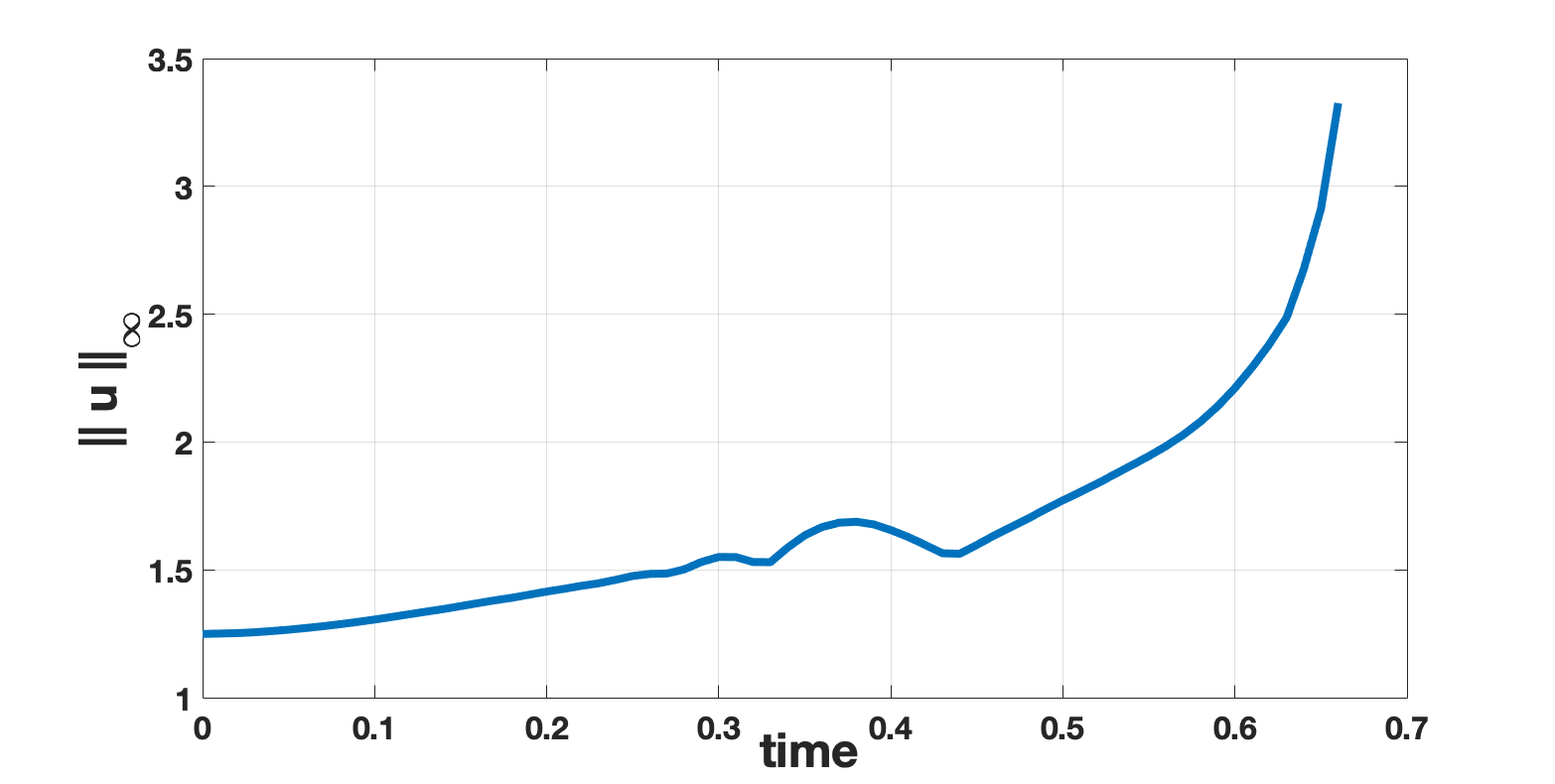}
\caption{Solution $u(t)$ to the $2d$ quintic \NNls equation with initial condition $u_0$ from \eqref{u0NLS}, where $A_0=1.25$, $v=(15,0)$ and $(x_c,y_c)=(-4.5,2)$. A snapshot of $u(t)$ at $t=0.66$ (left), the time dependence of the $L^{\infty}$-norm (right).}
\label{Blow-up-t=0.66-WI}
\end{figure}

We observe that with this weak interaction, the solution still blows up in finite time at $t=0.66$, but the blow-up time is delayed compared to the case, where there was no interaction between the solution and the obstacle, compare Figures \ref{SolutionNointeraclVariabY0} and \ref{Blow-up-t=0.66-WI}.  Moreover, we observe a slight perturbation of the growth in the $L^{\infty}$-norm: at the collision, the amplitude of the solution starts decreasing but after the weak interaction, the solution is back to the concentration leading to the blow-up. This can be explained by the appearance of small reflected waves after the collision, which scatter at the end of the simulation. They can be seen in the snapshots of the solution in Figure \ref{Snap-of-Blow-up-t=0.66-WI} with the view onto the $xy$-plane and zooming near the obstacle.      

\begin{figure}[!ht]
\centering
\includegraphics[width=5.4cm,height=5.4cm]{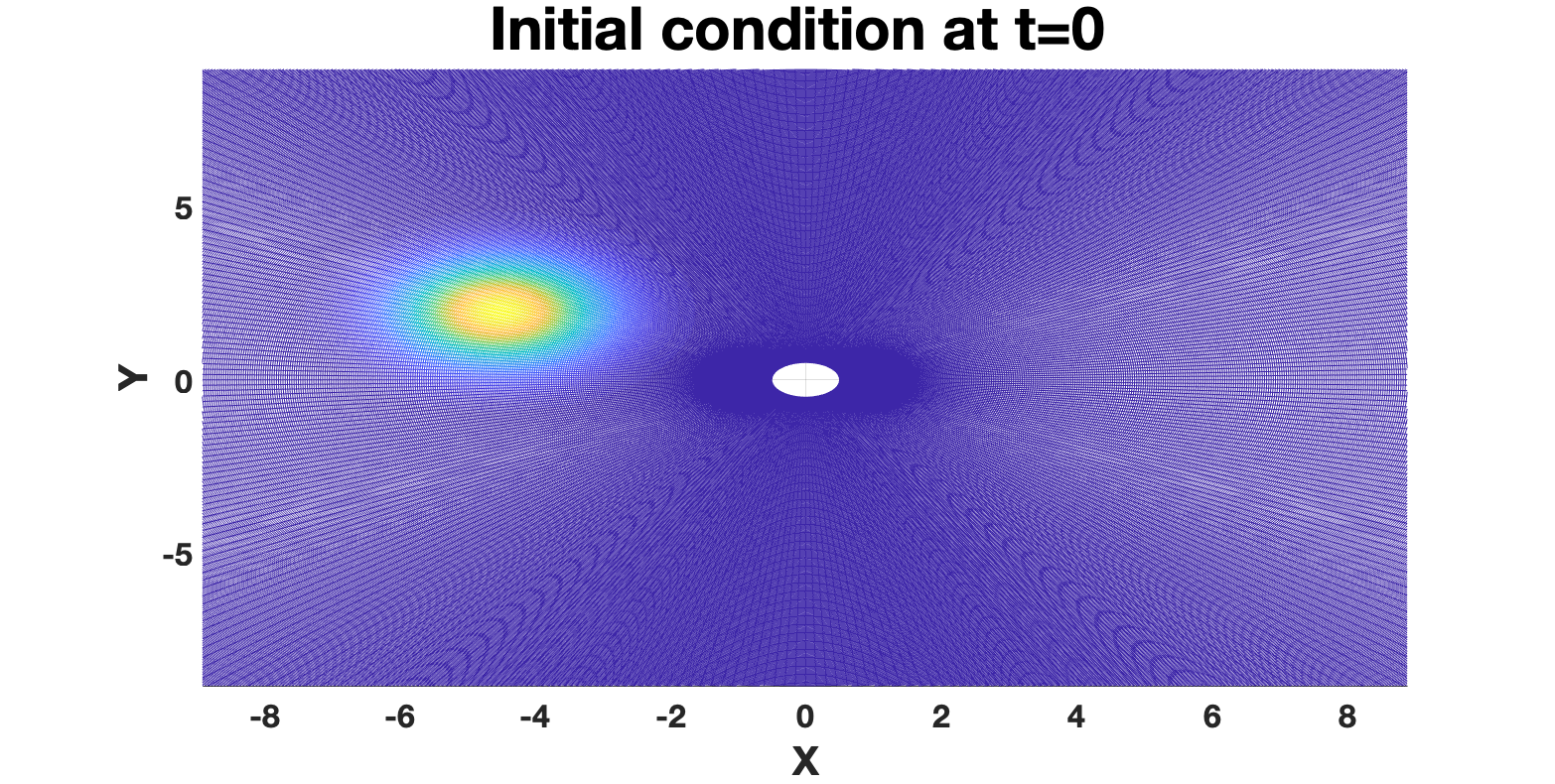}
\includegraphics[width=5.4cm,height=5.4cm]{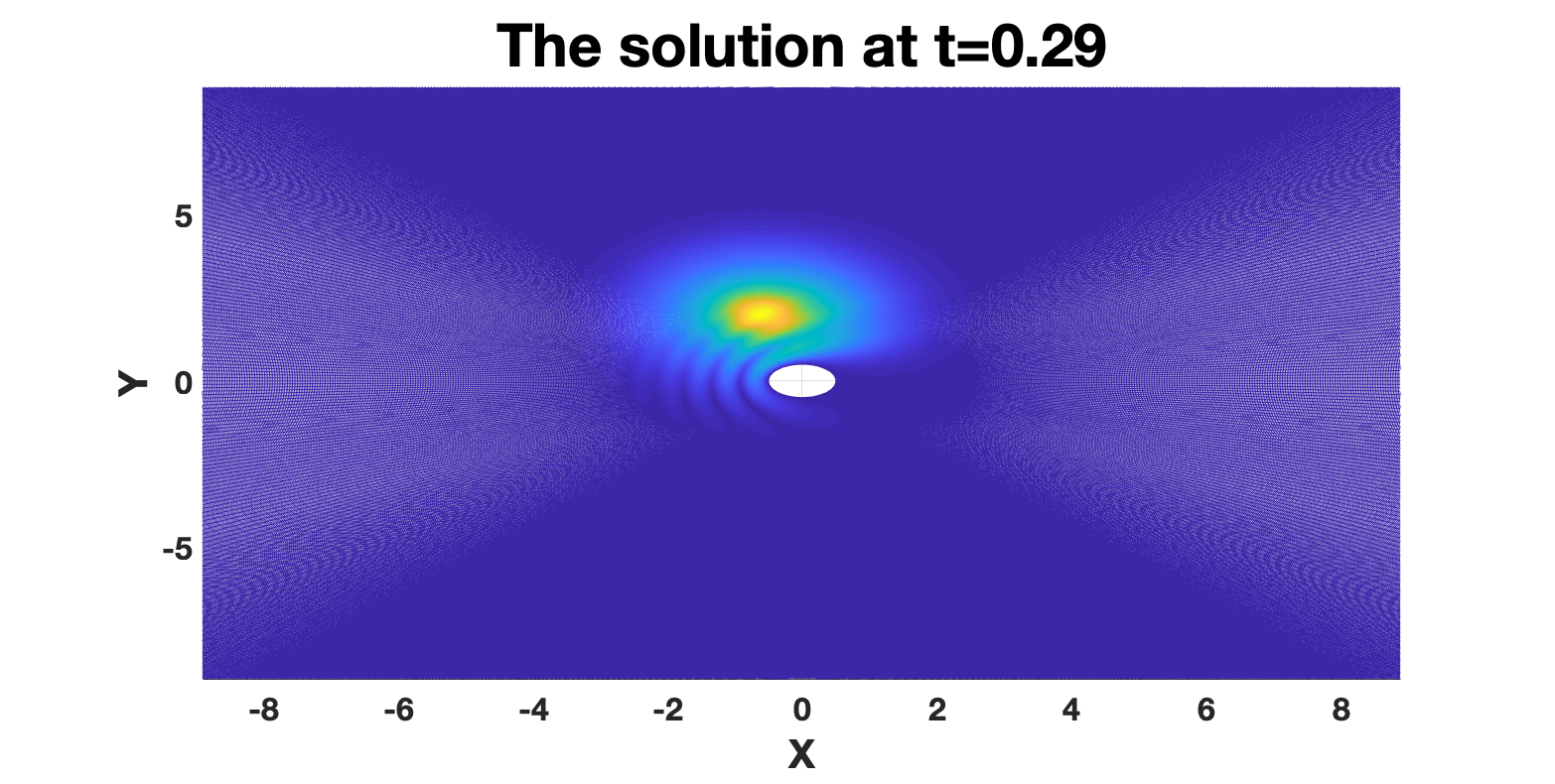}
\includegraphics[width=5.4cm,height=5.4cm]{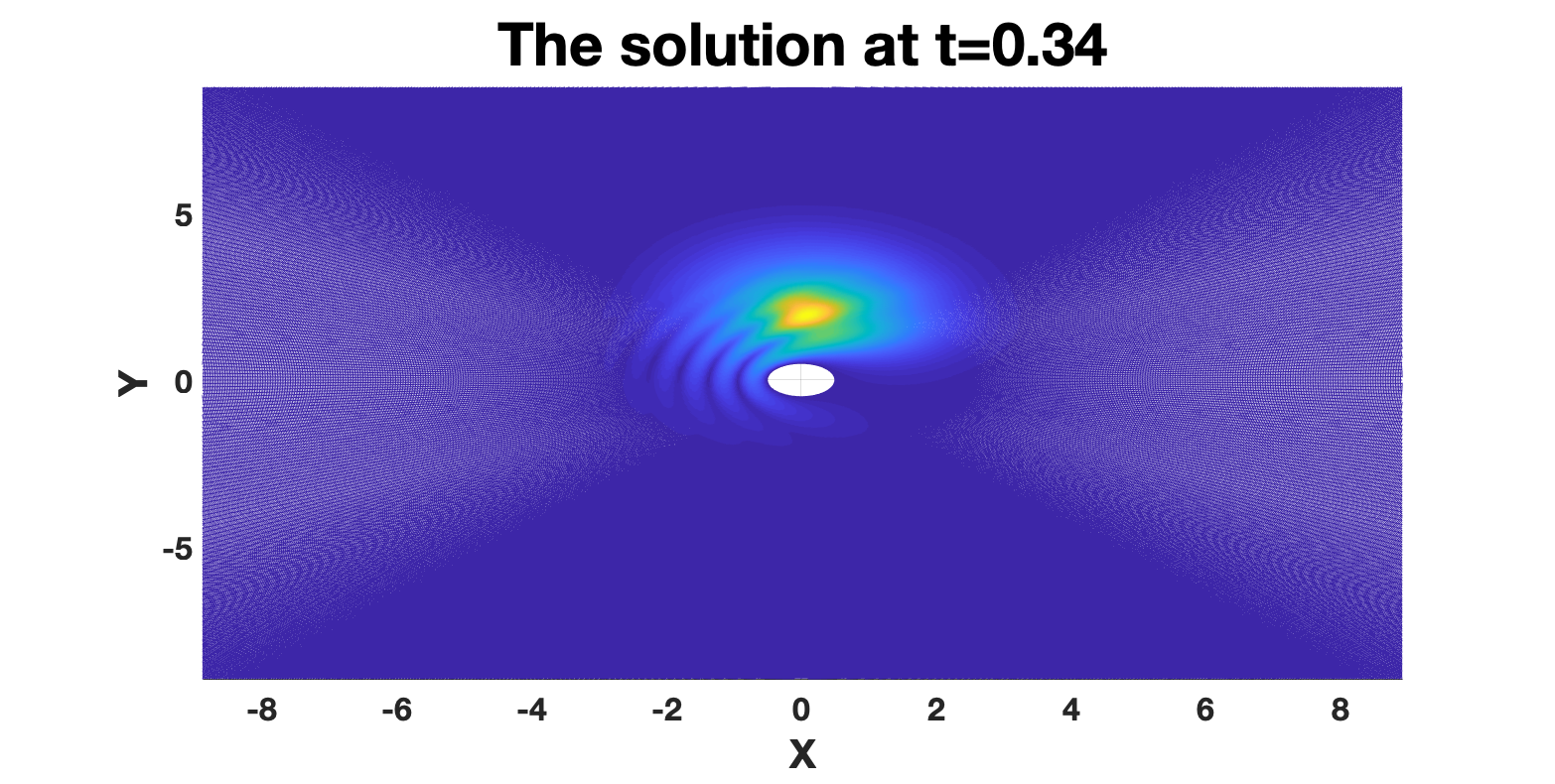}
\includegraphics[width=5.4cm,height=5.3cm]{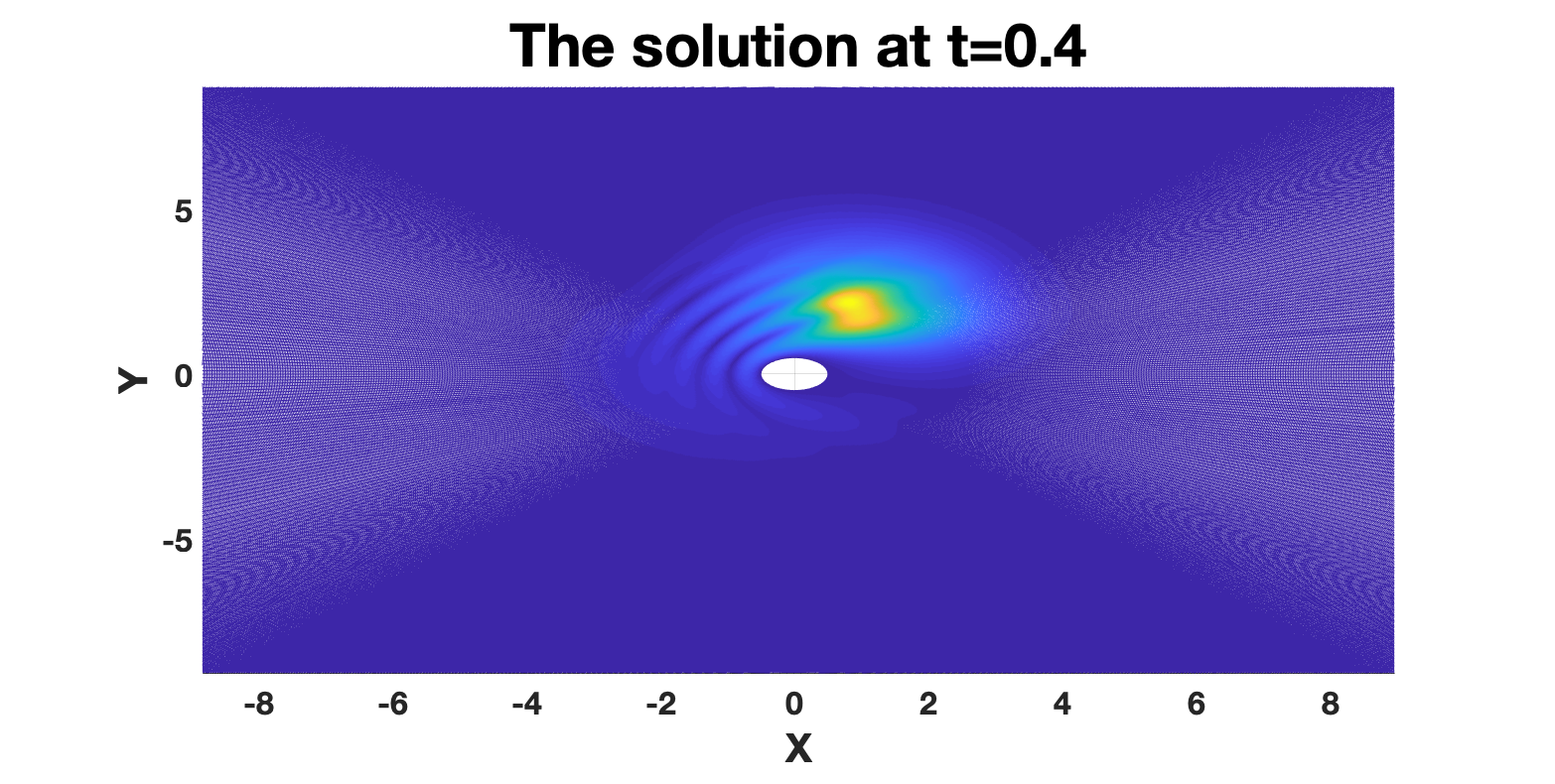}
\includegraphics[width=5.4cm,height=5.3cm]{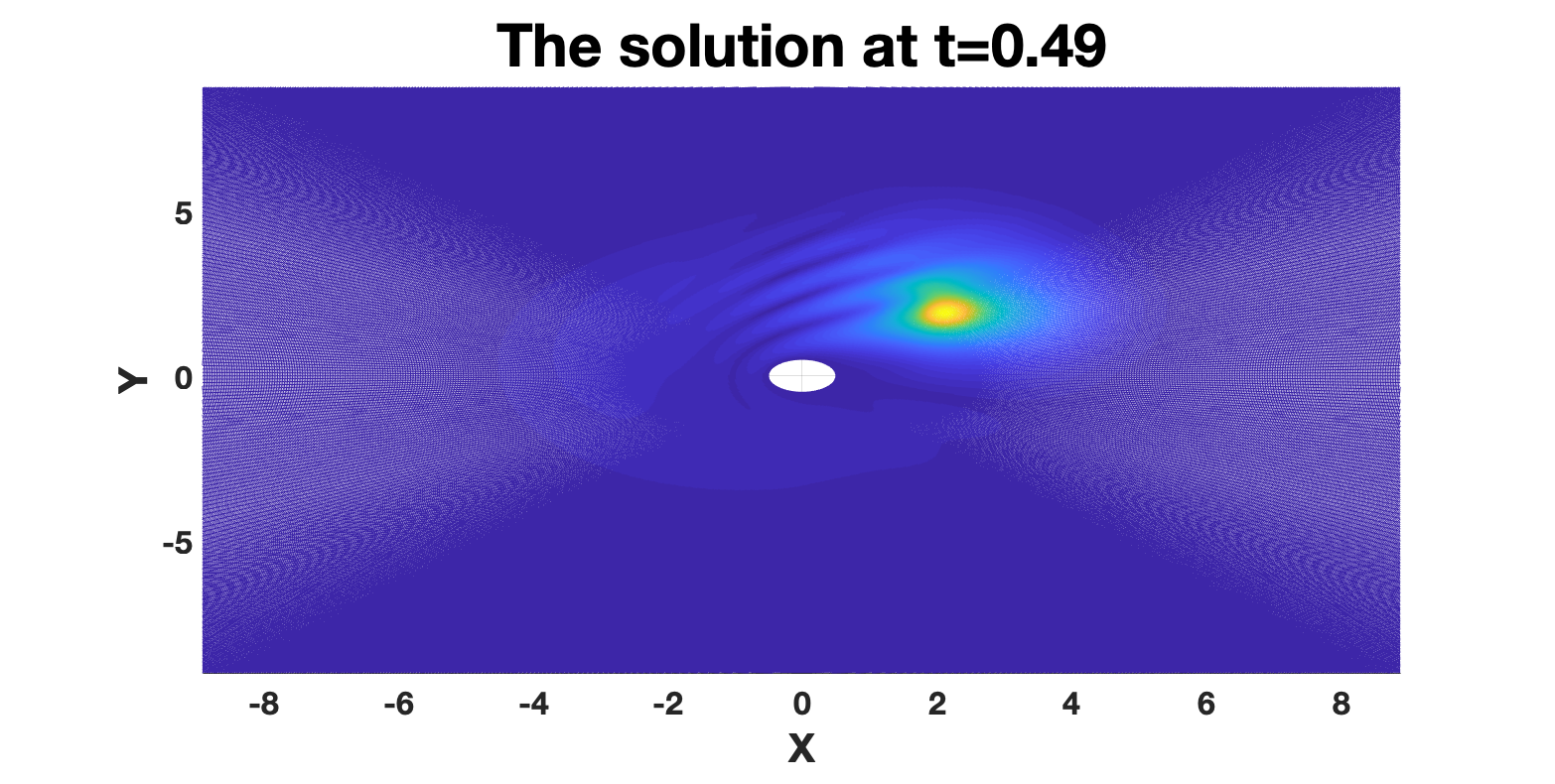}
\includegraphics[width=5.4cm,height=5.3cm]{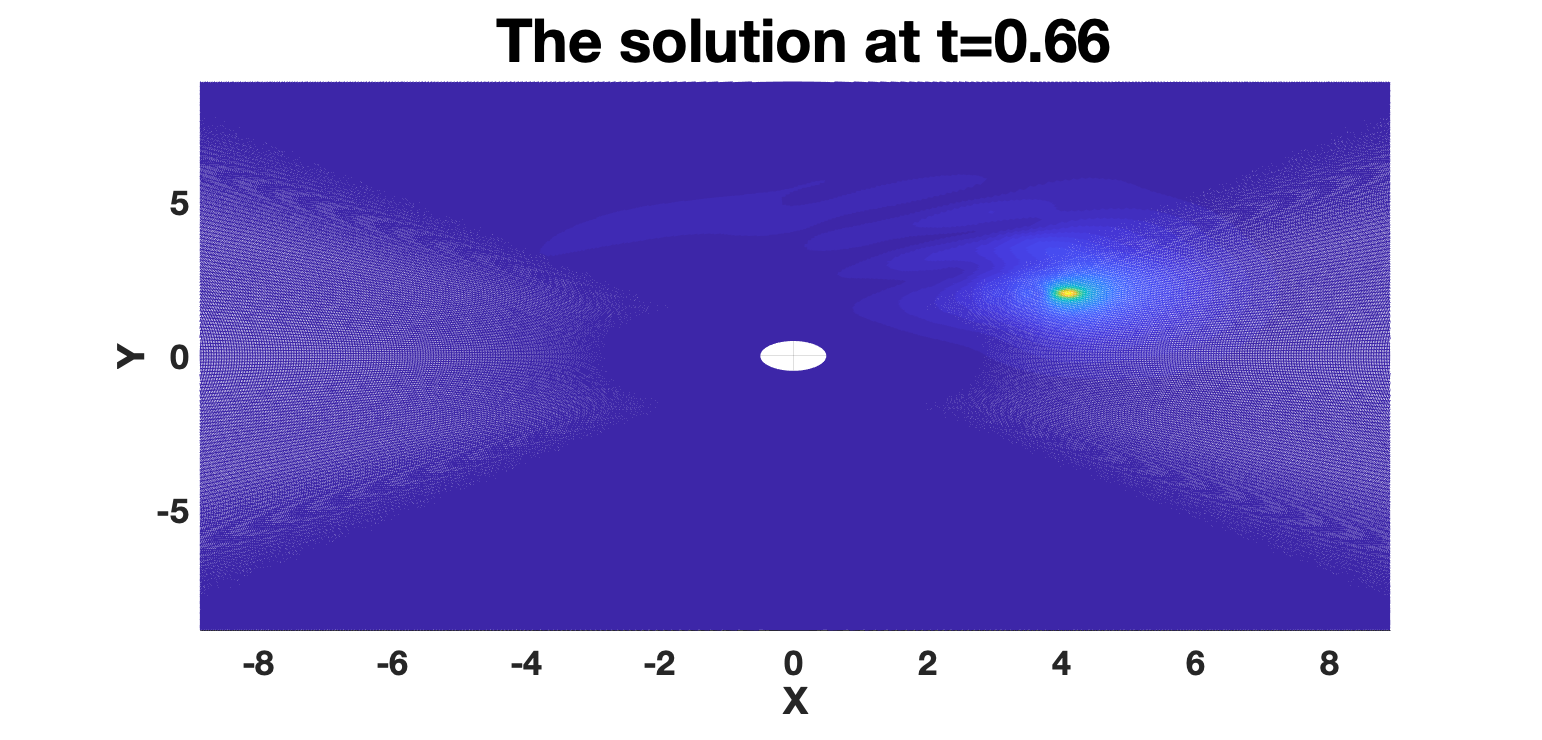}
\caption{Snapshots of the time evolution of $u(t)$ with initial condition $u_0$ from \eqref{u0NLS}, $A_0=1.25$, $v=(15,0)$ and $(x_c,y_c)=(-4.5,2)$, which eventually blows up in finite time.}
\label{Snap-of-Blow-up-t=0.66-WI}
\end{figure}


Next, we summarize the behavior of the solution to the $2d$ quintic \NNls equation, depending on the initial parameters. We take different values for the space translation $y_c$ in the initial condition \eqref{u0NLS} and fix the following parameters:
$$ 
A_0=1.25, \quad v_x=15, \quad v_y=0, \quad x_c=-4.5 .
$$ 
The results are given in Table \ref{T:2}. \\

{\footnotesize
\begin{table}[!ht]
\centering 
\begin{tabular}{|c|c|c|l|c|} 
  \hline
   $(x_c ,y_c)$           & Discrete mass      & Discrete energy   &  Behavior of the solution                                &Type of interaction    \\
  \hline \hline
 $(-4.5,  5 )$              &  4.9087                           &  138.9766                   & Blow-up at $t \approx 0.65$                         & no interaction \\
   \hline
   $ (-4.5  ,4)$            &  4.9087                           &   139.2859                  &    Blow-up at $t \approx 0.68\; $                           & no interaction\\  
  \hline
   $(-4.5  ,3)$             &  4.9087                            &   139.4553                 &    Blow-up  at  $t \approx 0.65$                       & weak-interaction  \\  
  \hline
  $(-4.5 ,2)$               &  4.9087                            &  139.5022                 &  Blow-up at $t \approx 0.66$                                &weak-interaction     \\  
  \hline
  $(-4.5 ,1.5)$            &  4.9087                            &  139.4946                  & Blow-up at $t \approx 0.51 \;$                                 &weak-interaction    \\  
  \hline  
    $(   -4.5 ,1)$           &  4.9087                           &  139.4784                   &  Blow-up at $t \approx 0.41$                                 &weak-interaction \\  
  \hline
  $(   -4.5 ,0.5)$          &  4.9087                            &  139.4636                   & Scattering       \qquad  \qquad  \qquad                   &weak-interaction \\  
  \hline 
  $(  -4.5 ,0 )$             &  4.9087                             & 139.4578                    & Scattering         \qquad        \qquad  \qquad           & strong-interaction  \\  
  \hline                                         
 $(  -4.5 ,-0.5)$           &  4.9087                             & 139.4636                   &  Scattering        \qquad         \qquad  \qquad          &weak-interaction \\  
  \hline
  $( -4.5 ,-1)$              &  4.9087                              & 139.4784                       &       Blow-up at $t \approx 0.4 $                      & weak-interaction  \\  
  \hline  
 $(  -4.5 ,-1.5)$            &  4.9087                             &  139.4946                &   Blow- up at $t \approx 0.5$                              & weak-interaction \\  
  \hline
   $( -4.5 ,-2)$               &  4.9087                             &  139.0924                 &  Blow-up at $t \approx 0.63$                                  &weak-interaction \\  
  \hline 
\end{tabular}
\caption{Influence of the translation parameter $y_c$ on the behavior of the solution $u(t)$ with initial data \eqref{u0NLS}, $A_0=1.25$, $v=(15,0)$.
Note that a tiny difference in the values of the discrete energy for different $y_c$ results from the varying density of the mesh grid: this is due to the fact that the polar coordinates $(r,\theta)$ form a uniform mesh, however, $(x,y)=(r\cos \theta, r \sin \theta)$ will not be uniform. Nevertheless, the discrete energy is conserved in each case from the start, i.e., $E[u^n]=E[u^0]$ in each simulation.}
  \label{T:2}
\end{table}
}
 

\section{Strong interaction with an obstacle}
\label{Strong interaction between soliton and obstacle}
\subsection{The $L^2$-critical case}
\label{L2critStrong}

We now consider a direct interaction of the solution with an obstacle, which we term as a {\it strong} interaction, starting with the $L^2$-critical case. The depiction of the velocity direction and the initial location is in Figure \ref{directionVelocityStrong}. 

\begin{figure}[!ht]
\centering
\includegraphics[width=16cm,height=7cm]{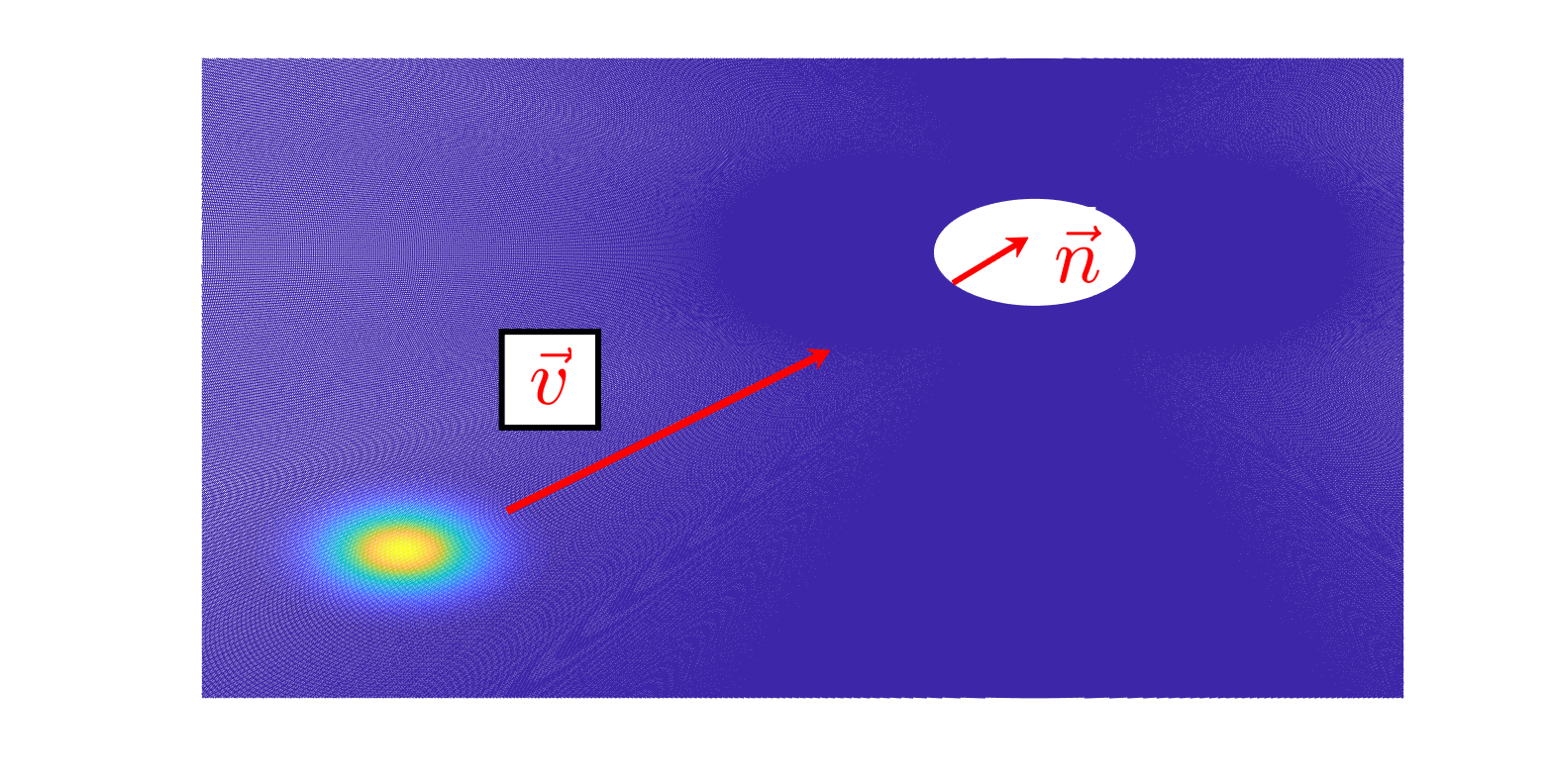}
\caption{The directions of the velocity of the solution $u(t)$ on the line $y=x$ and the same direction of the outward normal vector $\vec{n}$.}
\label{directionVelocityStrong}
\end{figure}

We consider the cubic \NNls equation with the same initial data \eqref{u0NLS}, with the same amplitude and space translation ($A_0=2.25,\,(x_c,y_c)=(-4.5,-4.5)$), as in Subsection \ref{L2critWeak} but in this case we take the velocity directly pointed at the obstacle $v=(15,15)$, meaning that the solution $u(t)$ is moving along the line $y=x,$ i.e., in the same direction as the outward normal vector $\vec{n}$ as shown in Figure \ref{directionVelocityStrong}.   \\ 

\begin{figure}[ht]
\centering
\includegraphics[width=5.9cm,height=5.7cm]{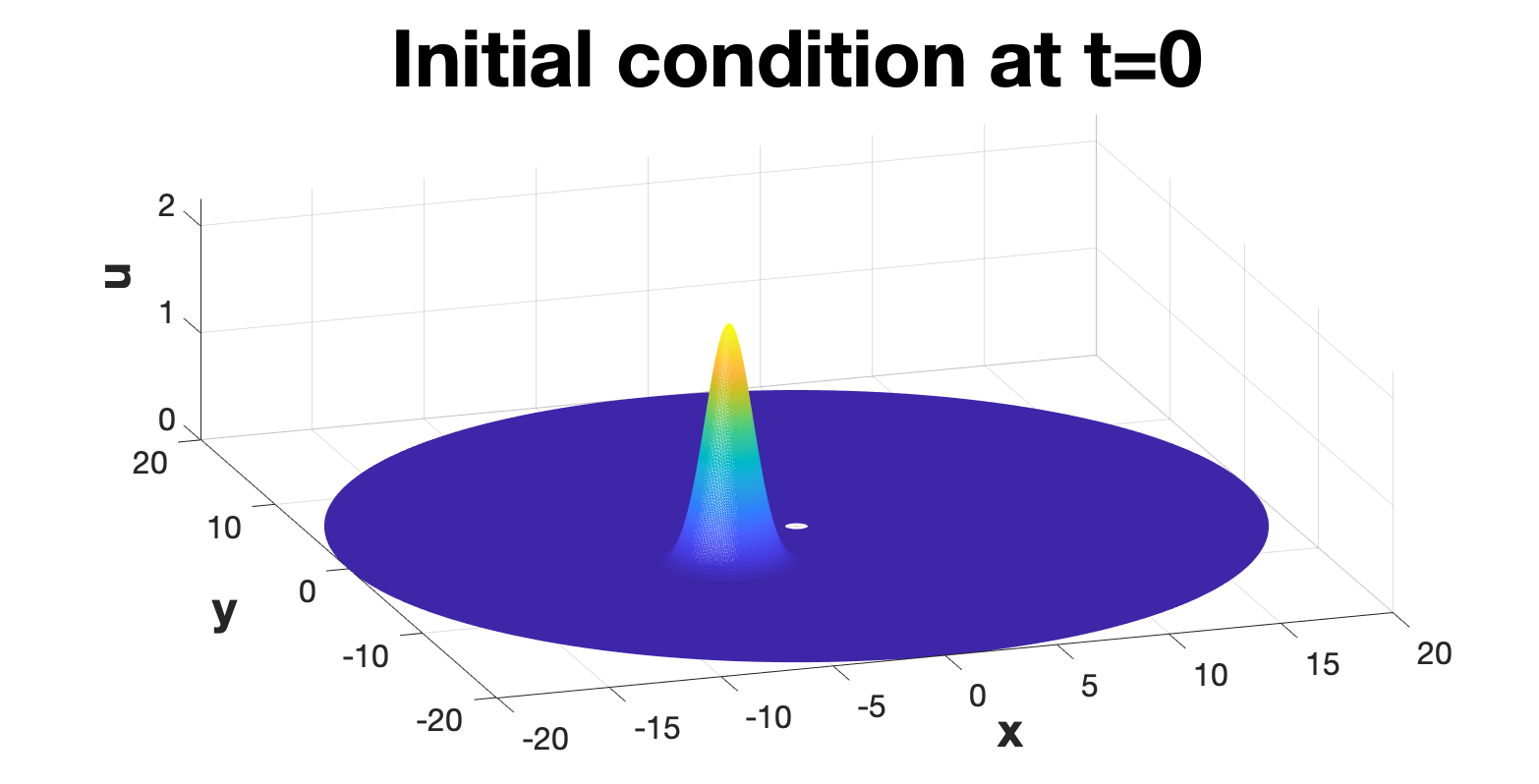}
\includegraphics[width=6.4cm,height=5.7cm]{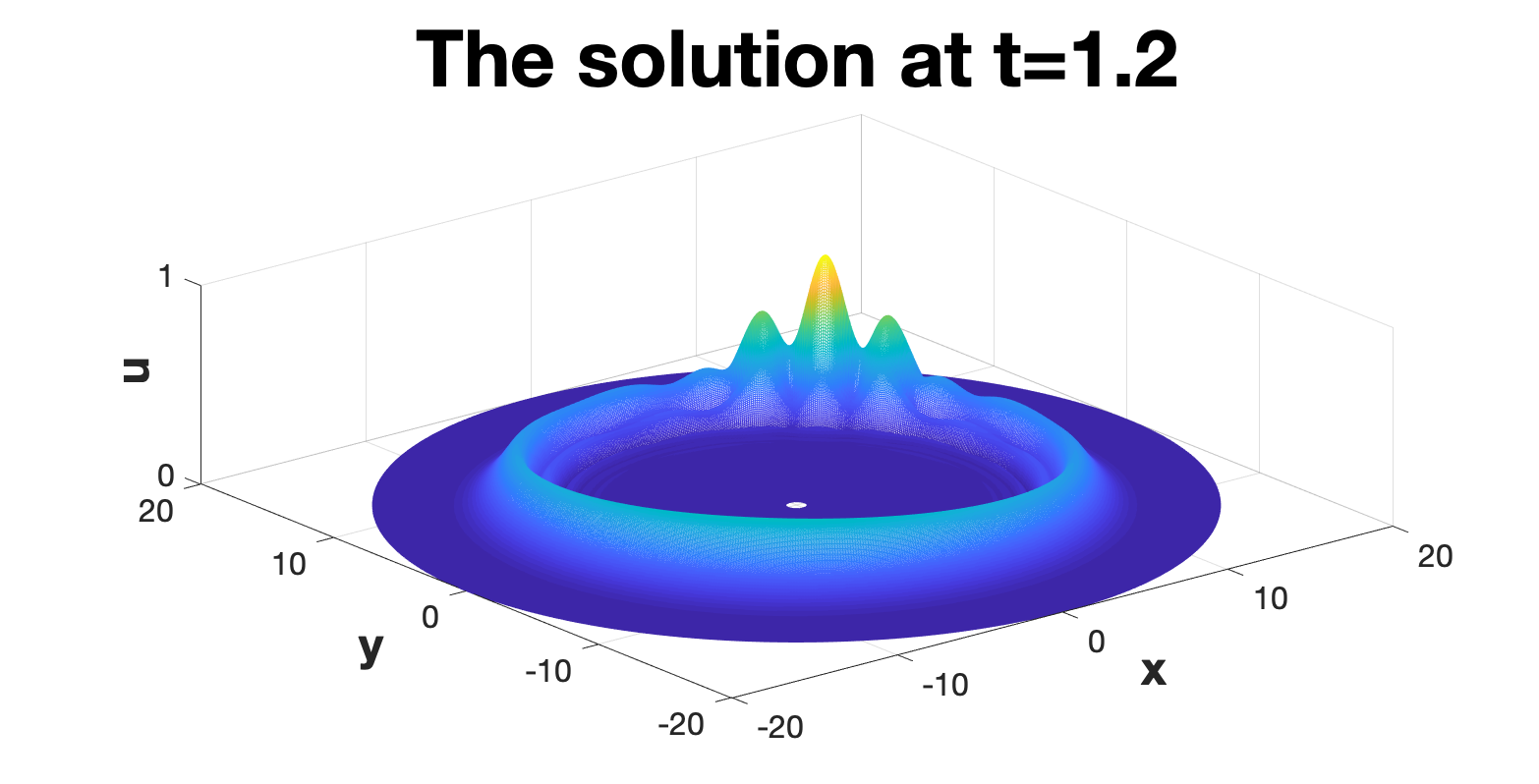} 
\includegraphics[width=5.5cm,height=5.6cm]{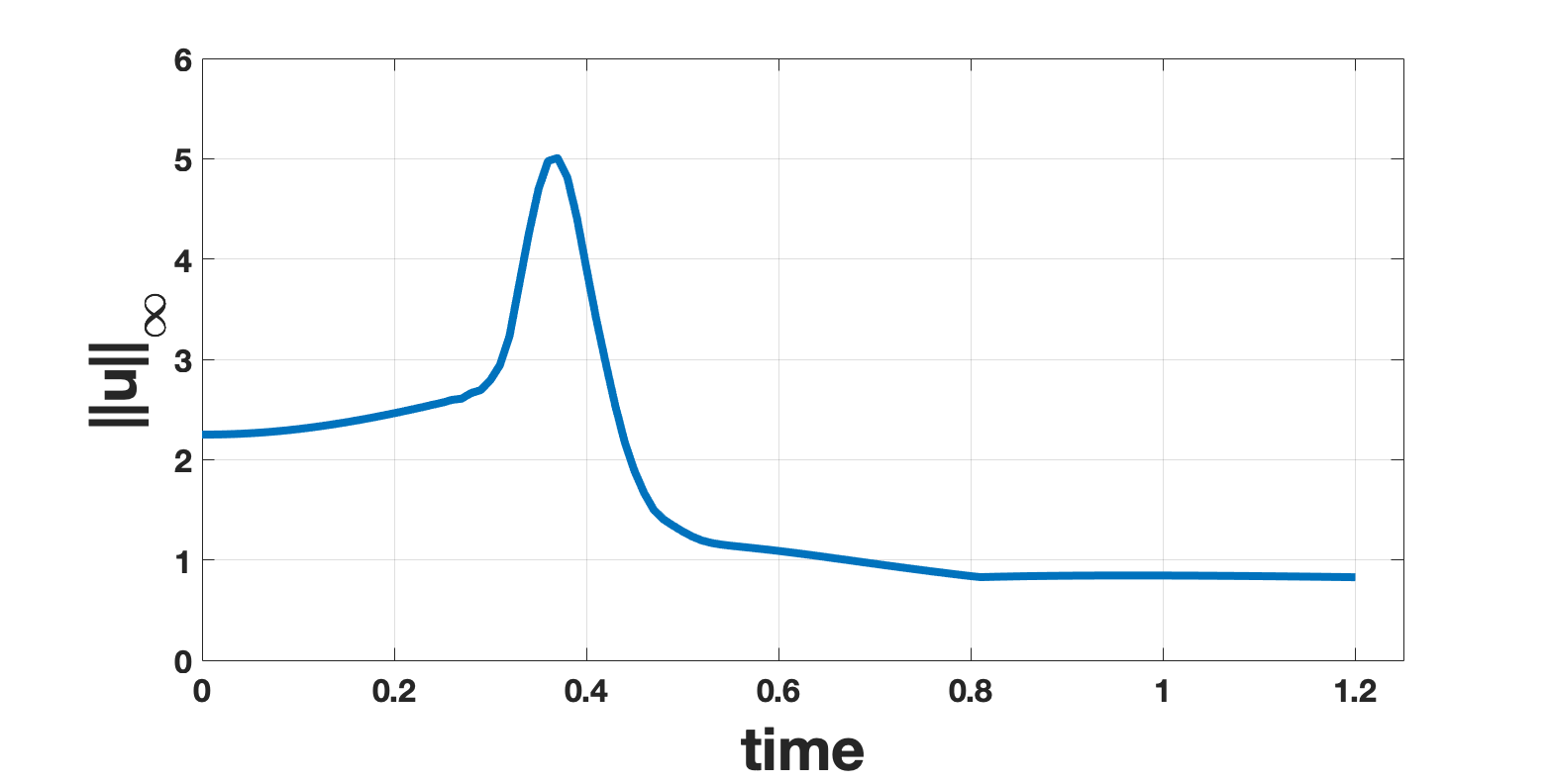}
\caption{The initial condition $u_0$ at $t=0$ from \eqref{u0NLS}, $A_0=2.25,\,(x_c,y_c)=(-4.5,-4.5)$ and $v=(15,15)$ (left); the corresponding solution $u(t)$ to the $2d$ cubic \NNls equation at $t=1.2$ (middle); time dependence of $L^\infty$ norm (right).}
\label{Solution-directionVelocityStrong}
\end{figure}

In this scenario, we observe that the solution has a scattering behavior and does not conserve the same profile or shape of the initial form of the solitary wave, see Figure \ref{Solution-directionVelocityStrong}, thus, exhibits the strong interaction.  
Snapshots of the time evolution of this solution $u(t)$ to show the strong interaction are given in Figure \ref{Snapshots-Solution-directionVelocityStrong}: the solution hugs the obstacle while transferring the mass forward, then it forms the two main bumps in front of the obstacle, later they connect together, which creates a third (middle) bump and shifts more and more mass into this central lump while propagating it forward along the main velocity line ($y=x$ in this case). The circle of dispersive reflective waves (including backward reflective waves) forms and expands, radiating out all of the reflective waves. \\

\begin{figure}[ht]      
\centering
\includegraphics[width=5.5cm,height=5.2cm]{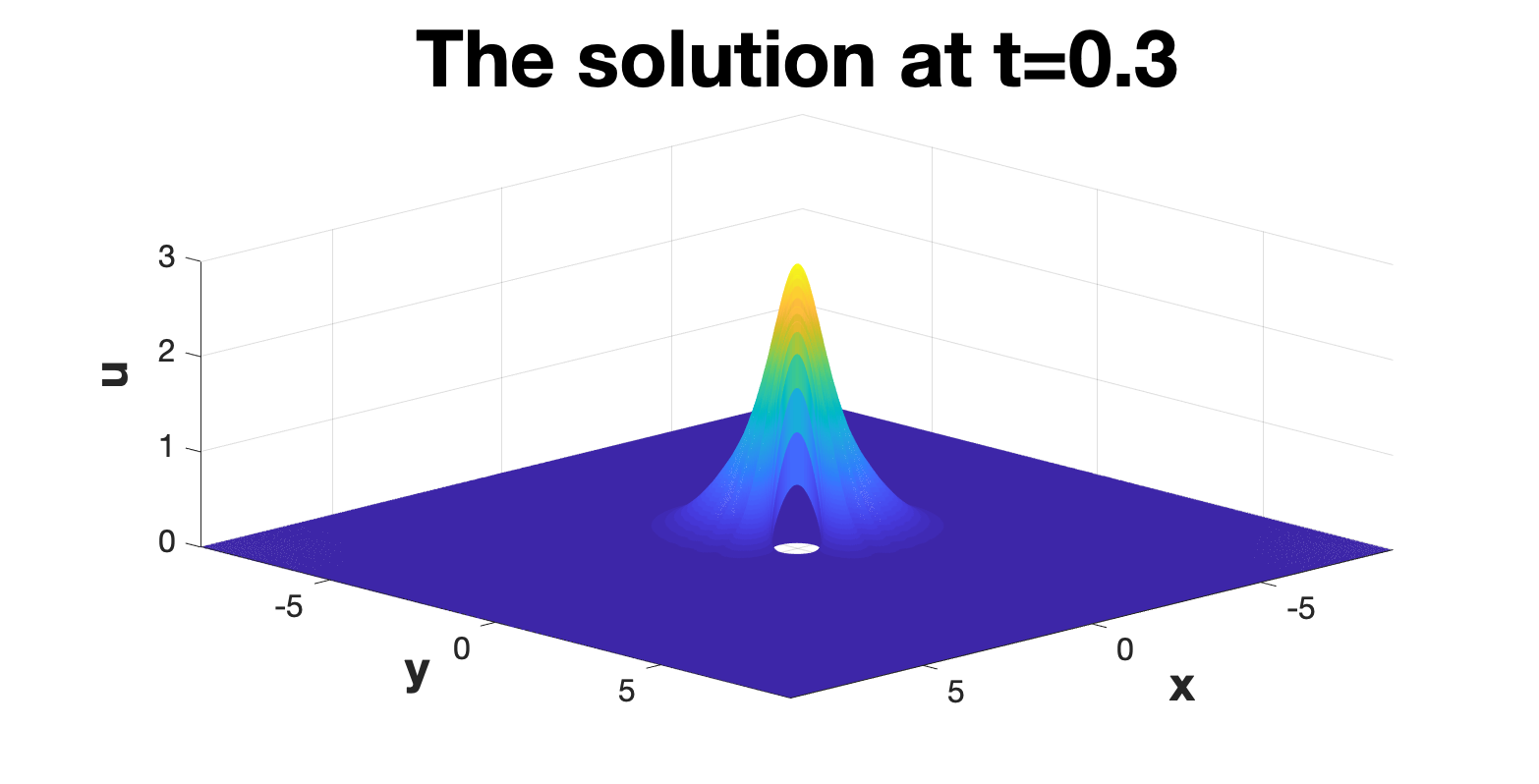}
\includegraphics[width=5.5cm,height=5.2cm]{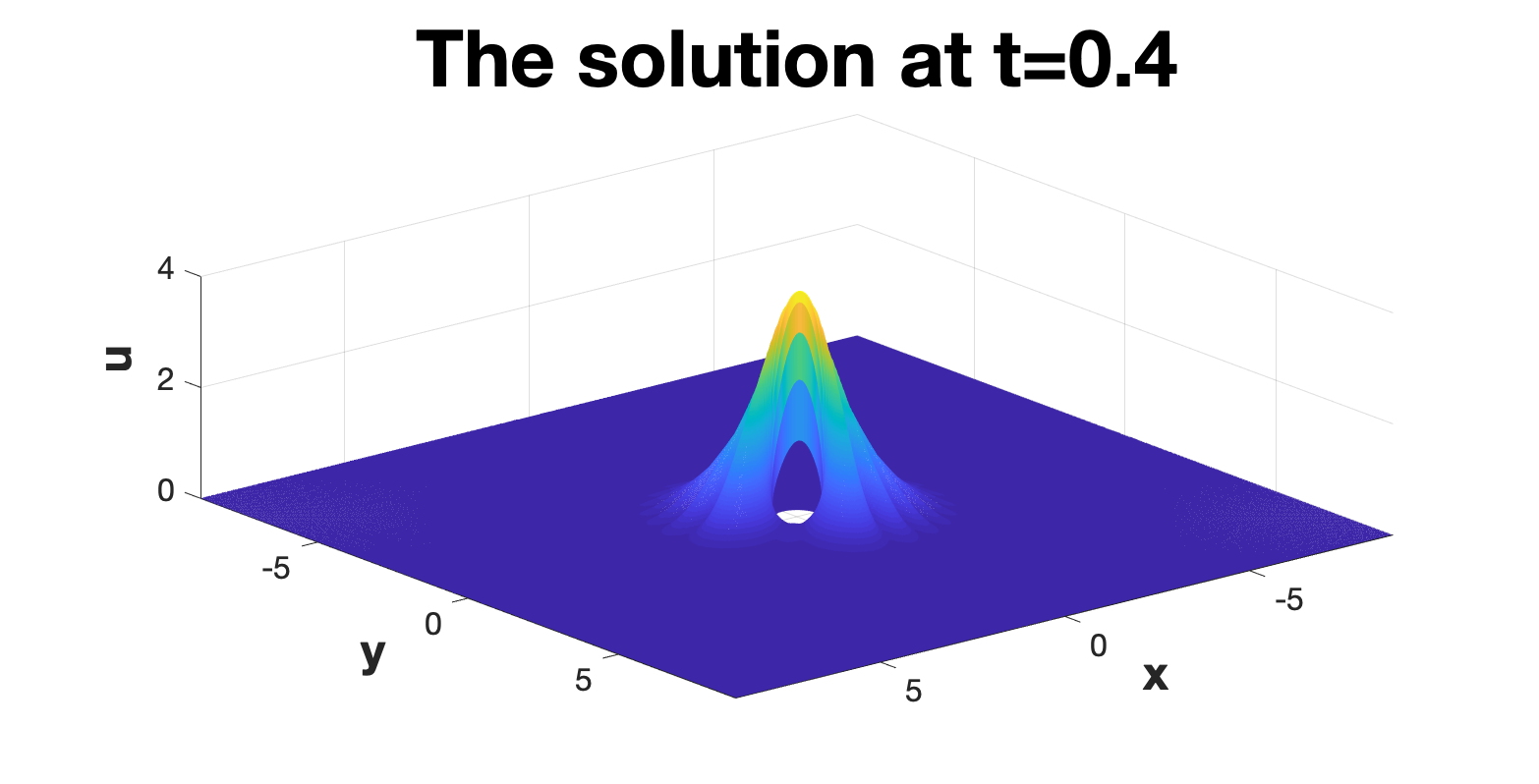}
\includegraphics[width=5.5cm,height=5.2cm]{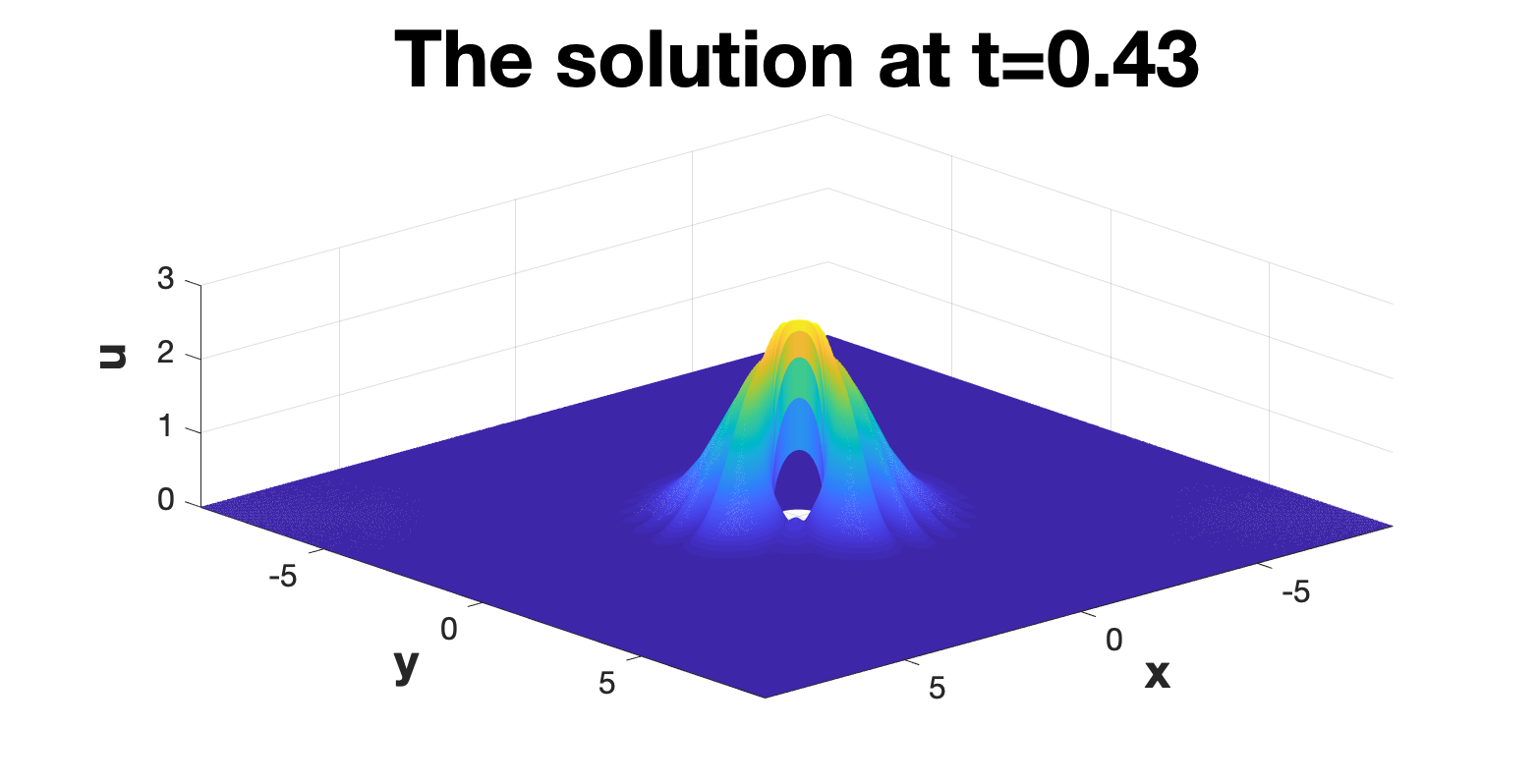}
\includegraphics[width=5.5cm,height=5.2cm]{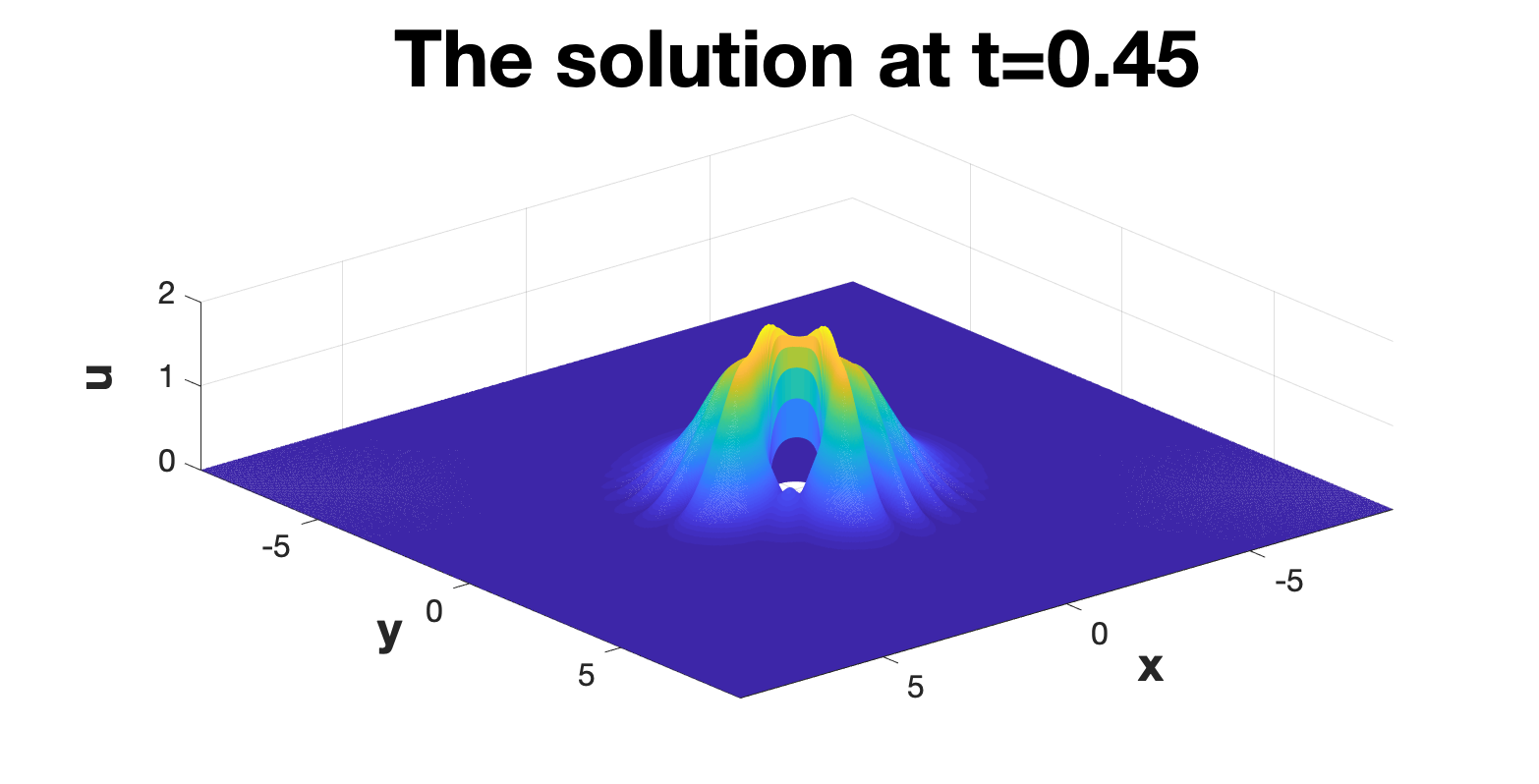}
\includegraphics[width=5.5cm,height=5.2cm]{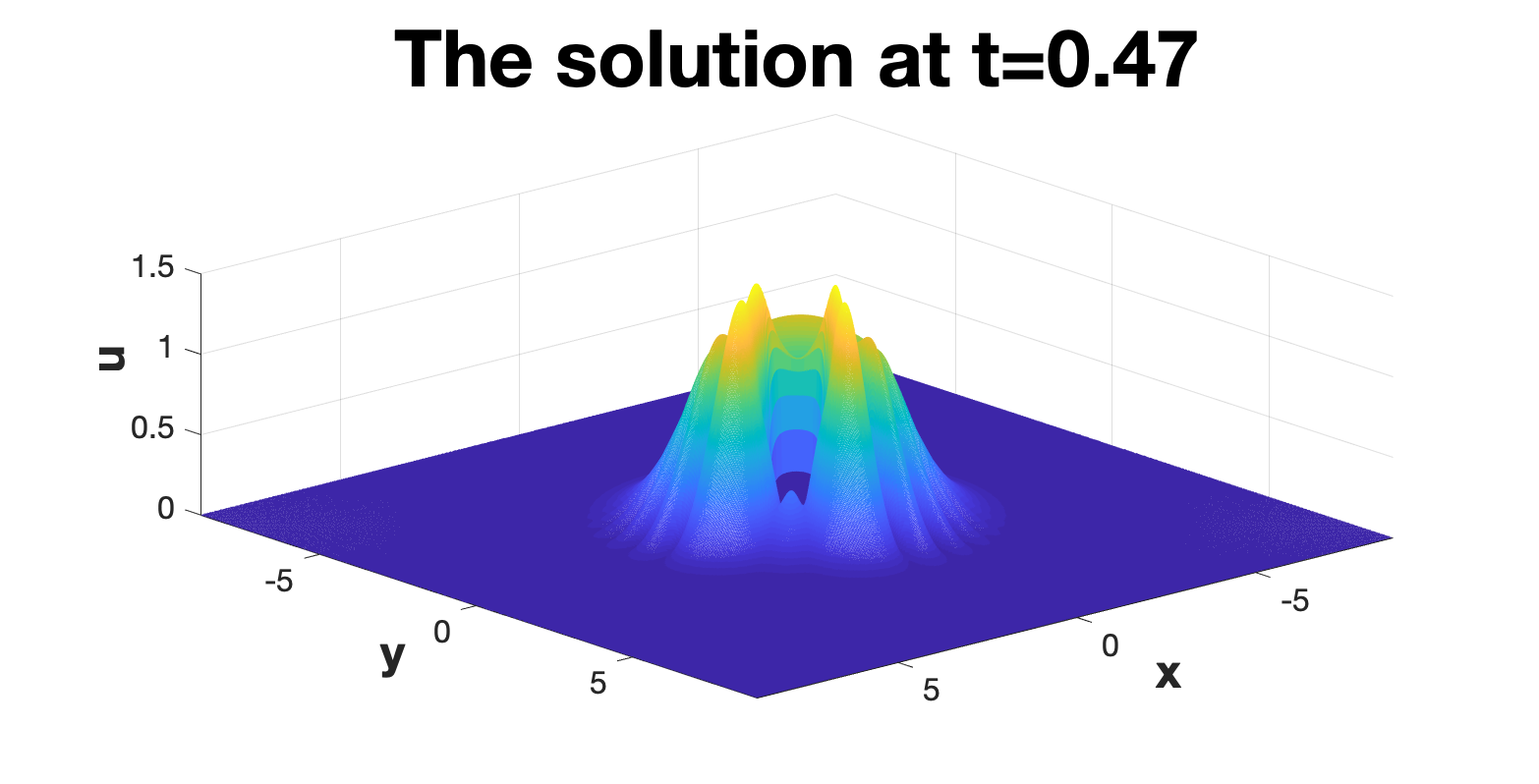}
\includegraphics[width=5.5cm,height=5.2cm]{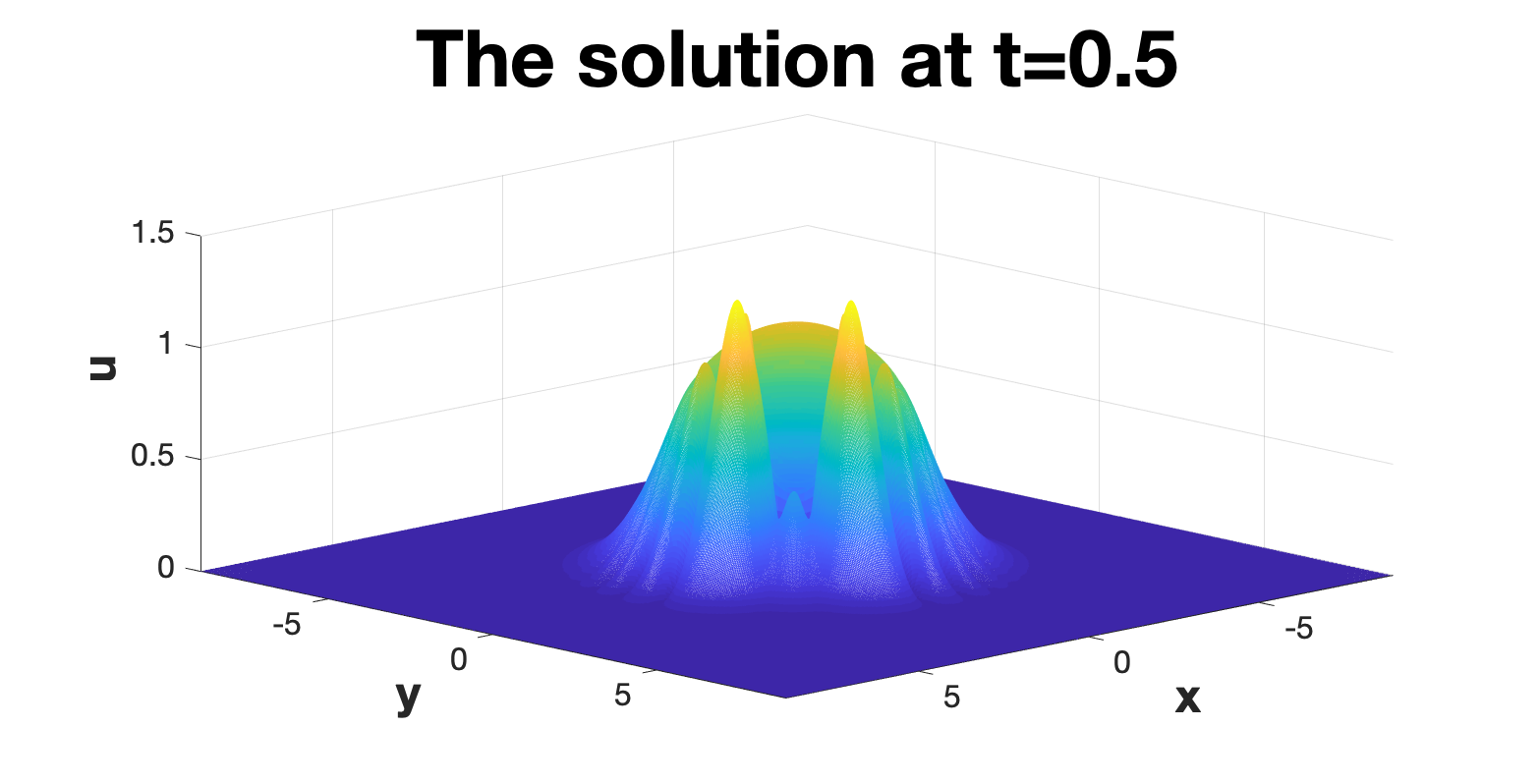}
\includegraphics[width=5.5cm,height=5.2cm]{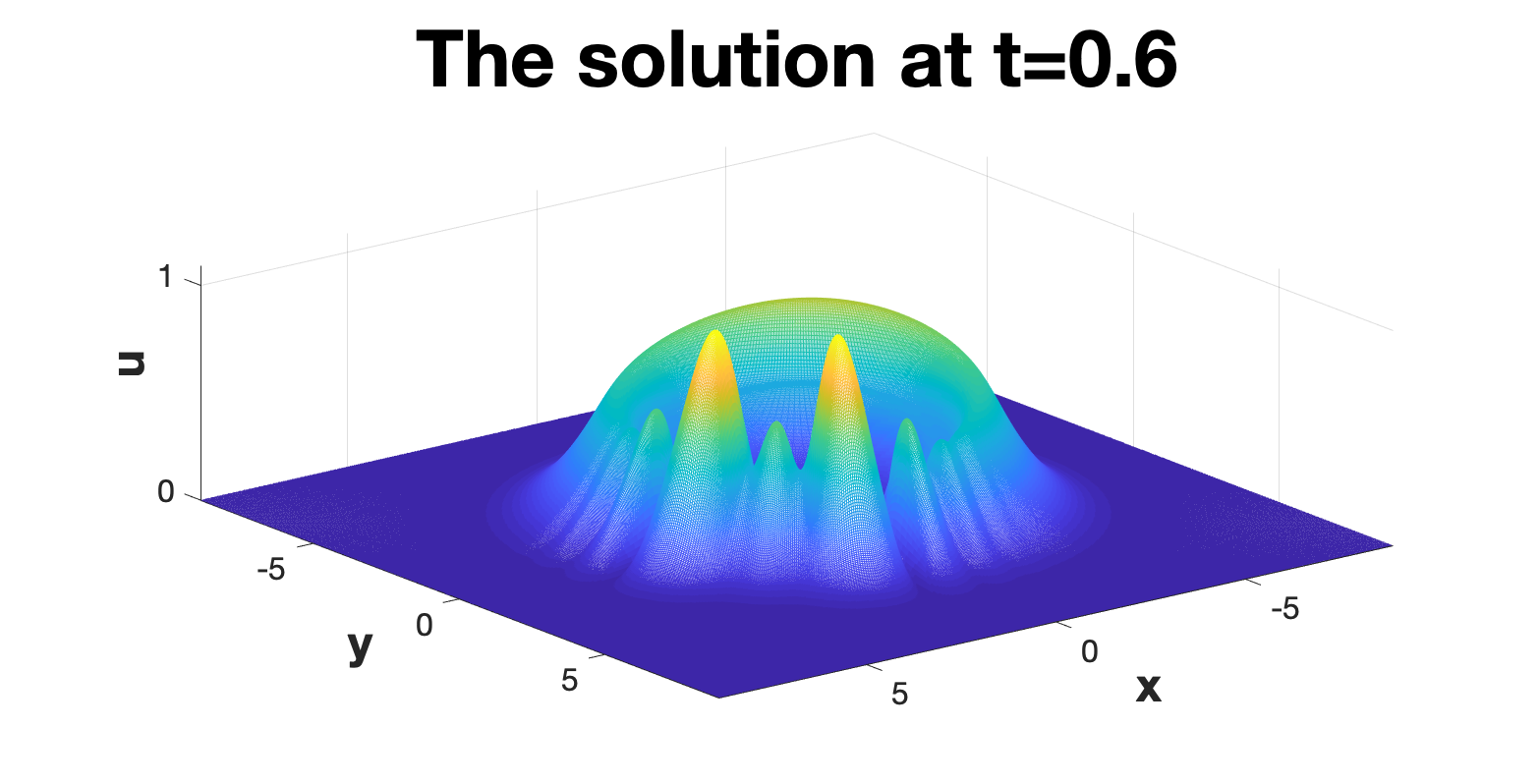}
\includegraphics[width=5.5cm,height=5.2cm]{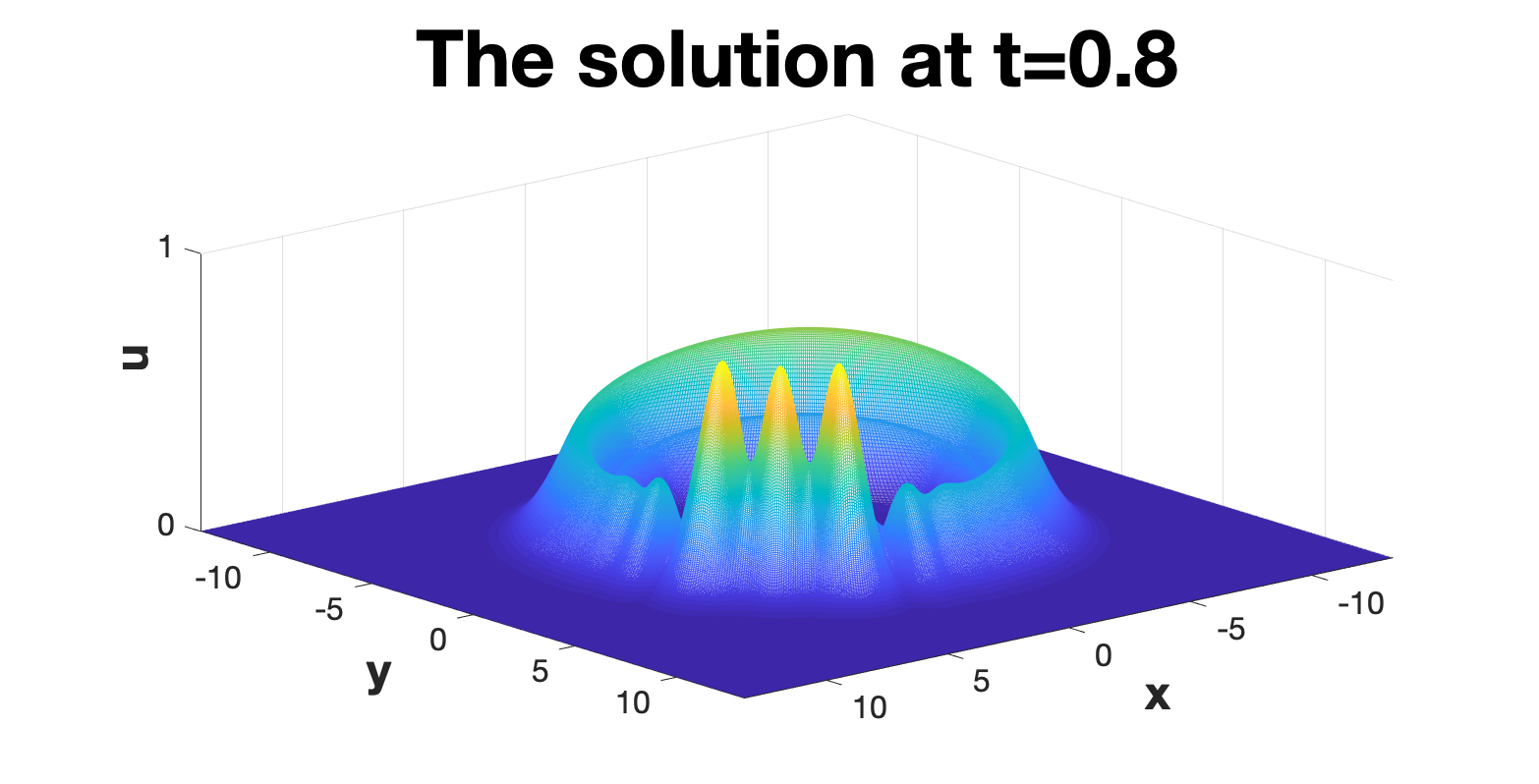}
\includegraphics[width=5.5cm,height=5.2cm]{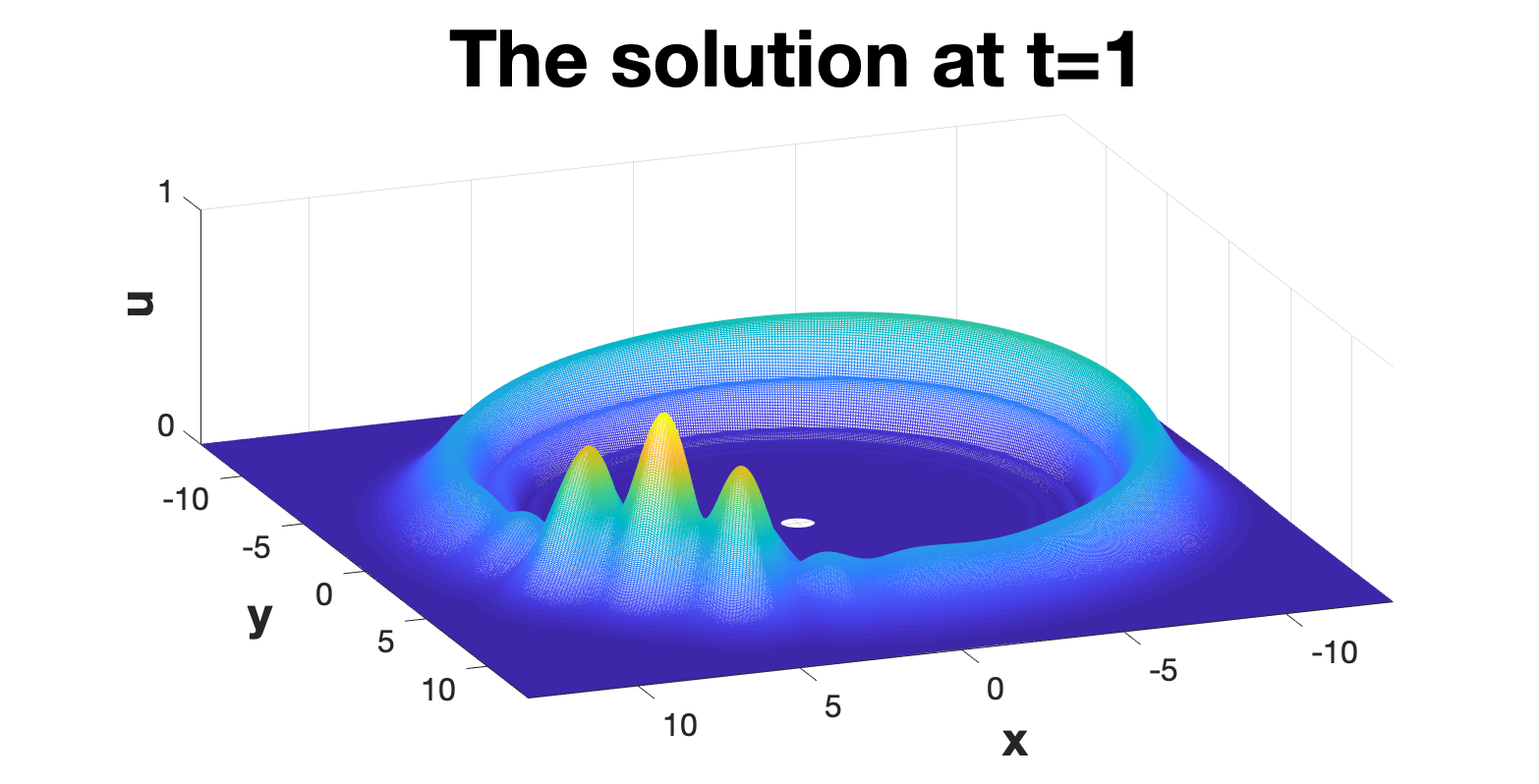}
\caption{Snapshots of the solution $u(t)$ to the $2d$ cubic \NNls equation for different time steps of the strong-interaction between the solution and the obstacle with the initial data as in Figure \ref{Solution-directionVelocityStrong}.}
\label{Snapshots-Solution-directionVelocityStrong}
\end{figure}

The obstacle  transforms the blow-up behavior into what seems to be scattering, that we investigate further.   
Before doing that, we emphasize that the strong interaction has a substantial influence on the dynamics of the solution. 
We point out that in the weak interaction case in a similar example in Section \ref{L2critWeak} ($L^2$-critical case) the solution blows up in finite time. In the considered case, 
the $L^{\infty}$-norm starts increasing, manifesting a blow-up behavior, see Figure \ref{Solution-directionVelocityStrong}, however, after the collision, the amplitude of the solution start decreasing. After that, we observe that the $L^{\infty}$-norm appears to stabilize, as shown in the right graph in Figure \ref{Solution-directionVelocityStrong}, where $\left\| u(t) \right\|_{\infty} \approx 0.8$ after the interaction, indicating that the solution might not be scattering (for example, it could approach a rescaled soliton). We check this case simulating it for a longer time and observe that the $L^{\infty}$-norm continues to decrease as shown in the left subplot of Figure \ref{norminf+log+solution}. In the right graph we show 
the solution amplitude change in time on a log scale for $t\in[2,3.32]$, which shows that it is decreasing as $\frac{1}{\sqrt{t}}$. Thus, it is pleasible to conclude that the solution scatters (however, this would have to be proved analytically, as it is possible that it might approach an asymptote at a later time).

\begin{figure}[ht]
\centering
\includegraphics[width=8.2cm,height=5.9cm]{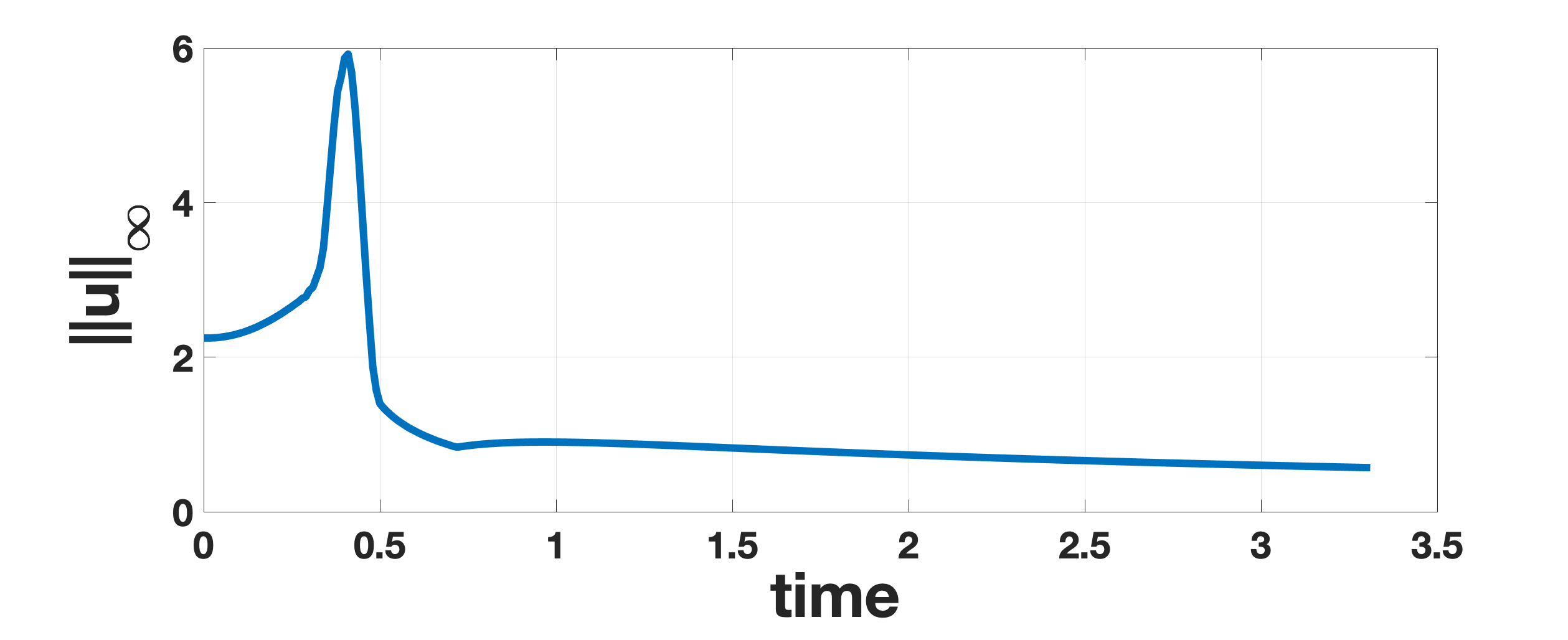}
\includegraphics[width=8.2cm,height=5.9cm]{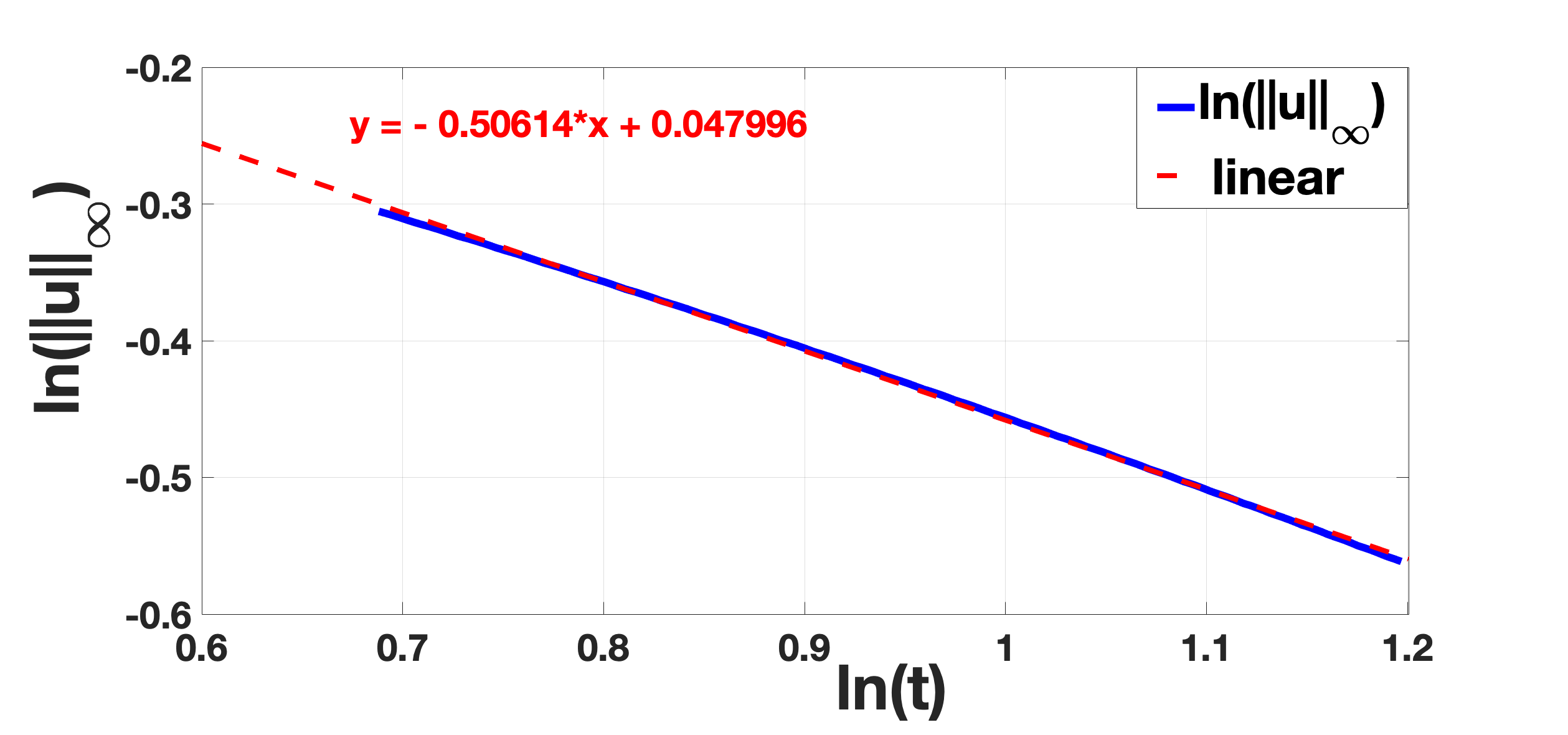}
\caption{The time dependence of $L^\infty$ norm (left); the slope of the $L^\infty$ norm from $t=2$ to $t=3.32$ on a log scale (right).}
\label{norminf+log+solution}
\end{figure}

\subsection{The $L^2$-supercritical case}
\label{L2supercritStrong}
In the $2d$ quintic \NNls equation $(p=5)$,  we also investigate the {\it strong} interaction between the obstacle and the solution, where 
the solution is moving in the same direction as the outward normal vector of the obstacle, see Figure \ref{directStrongInterac}. 

\begin{figure}[ht]         
\centering
\includegraphics[width=16cm,height=5.9cm]{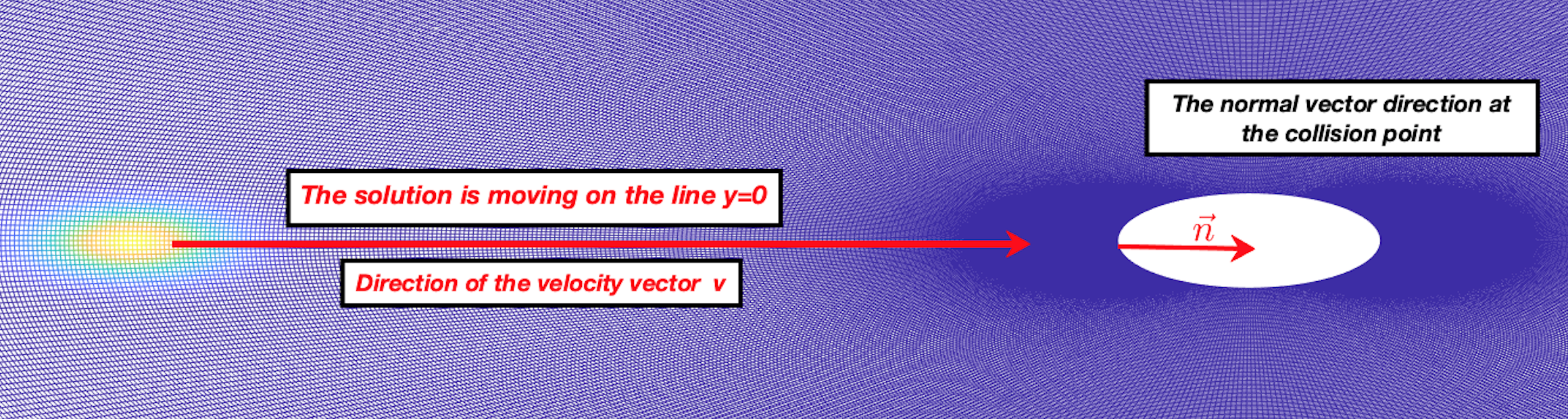}
\caption{The direction of movement of the solution, on the line $y=0$ with outward normal vector.} 
\label{directStrongInterac}
\end{figure}


We consider the same initial data \eqref{u0NLS}, with the same phase as for the quintic \NNls equation described in Section \ref{L2supercritWeak} but now with $y_c=0$ (i.e., $A_0=1.25,\, x_c=-4.5$ and $v=(v_x,0)$ are fixed parameters). In the present situation, the solution is moving on the line $y=0,$ i.e.,  in the same direction as the outward normal vector of the obstacle. The solitary wave hits the obstacle straight on, causing a strong interaction between the wave and the obstacle, see Figure \ref{F:28}.  
\begin{figure}[ht]
\centering
\includegraphics[width=5.9cm,height=6.5cm]{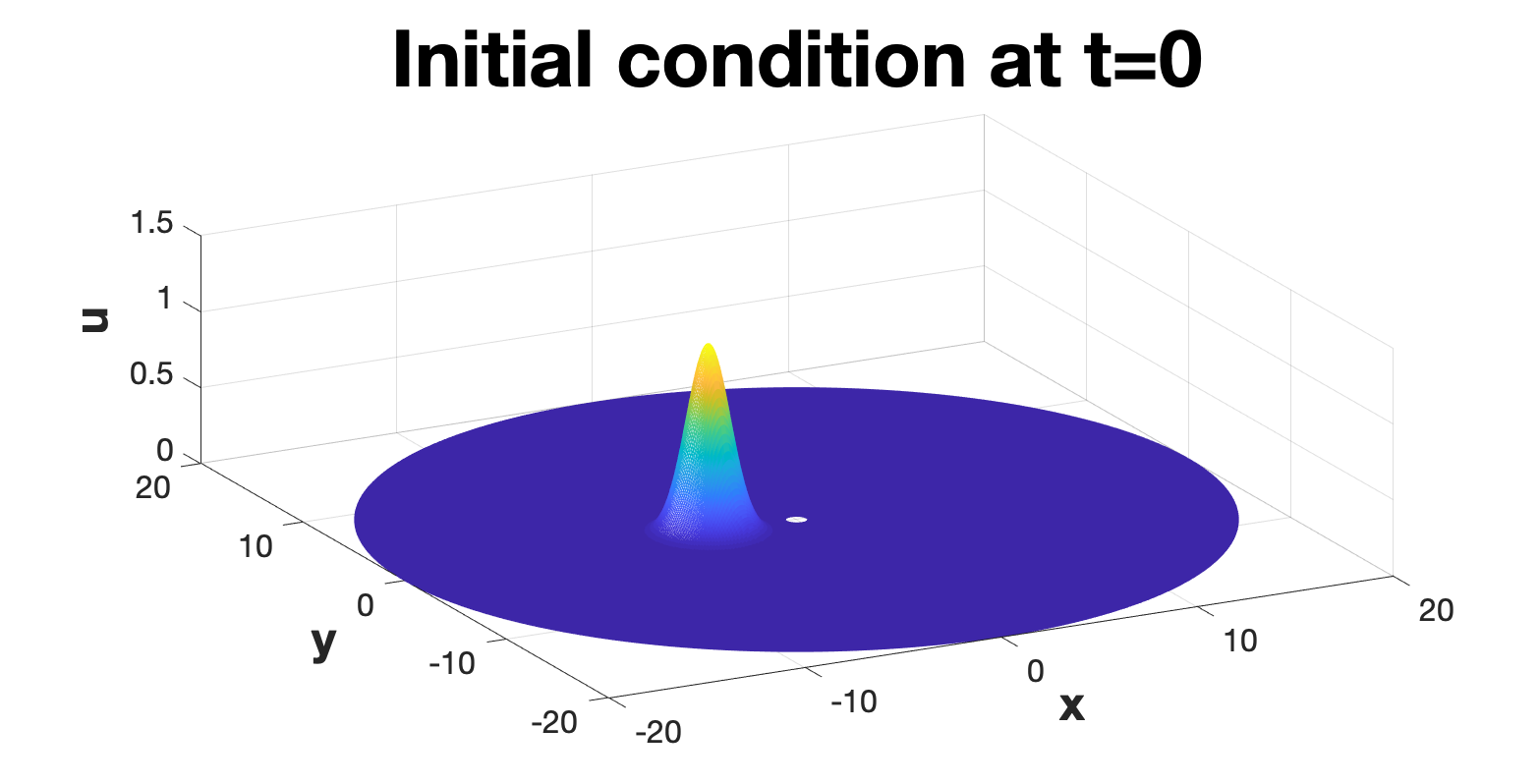}
\includegraphics[width=6.3cm,height=6.5cm]{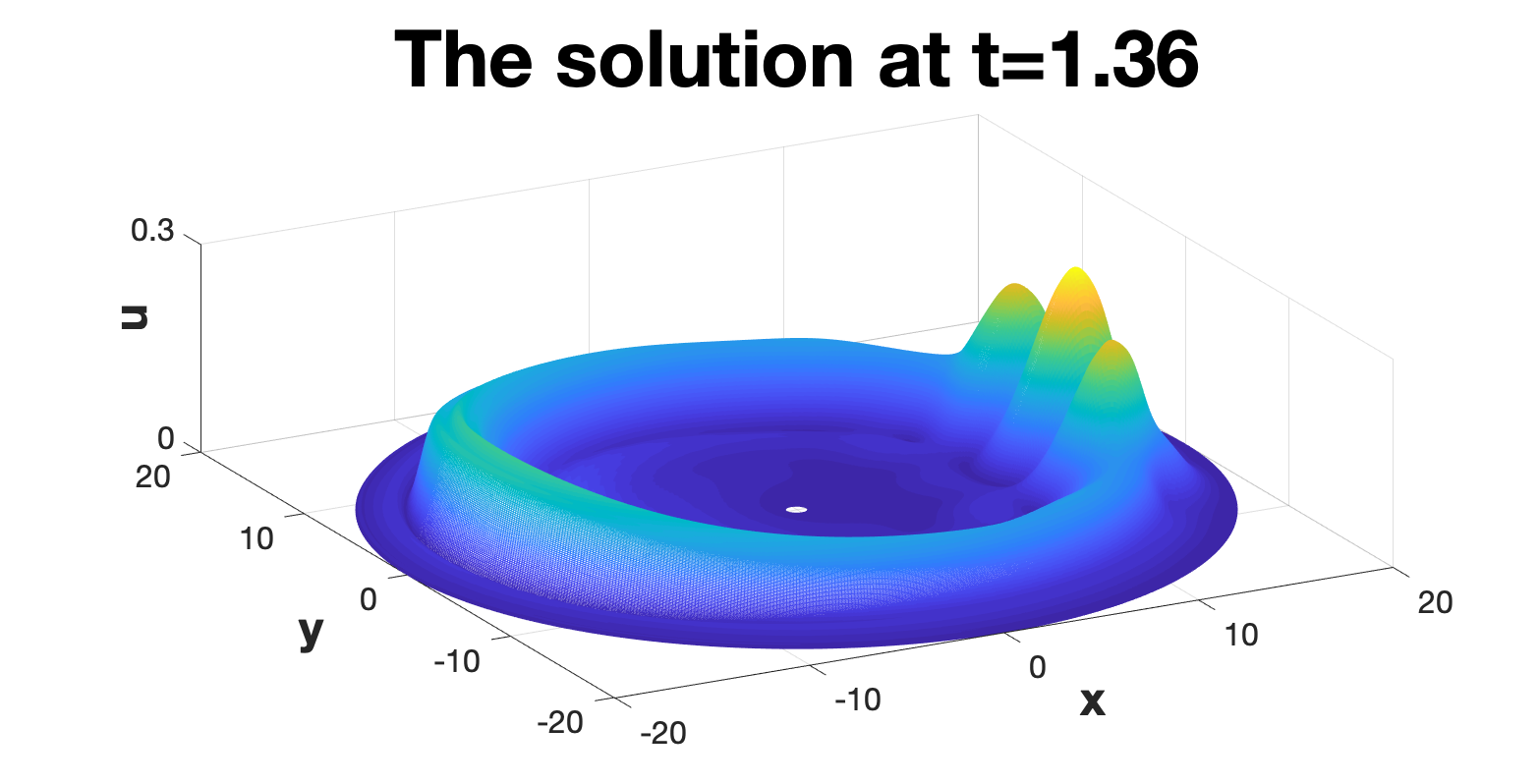}
\includegraphics[width=5.3cm,height=5.5cm]{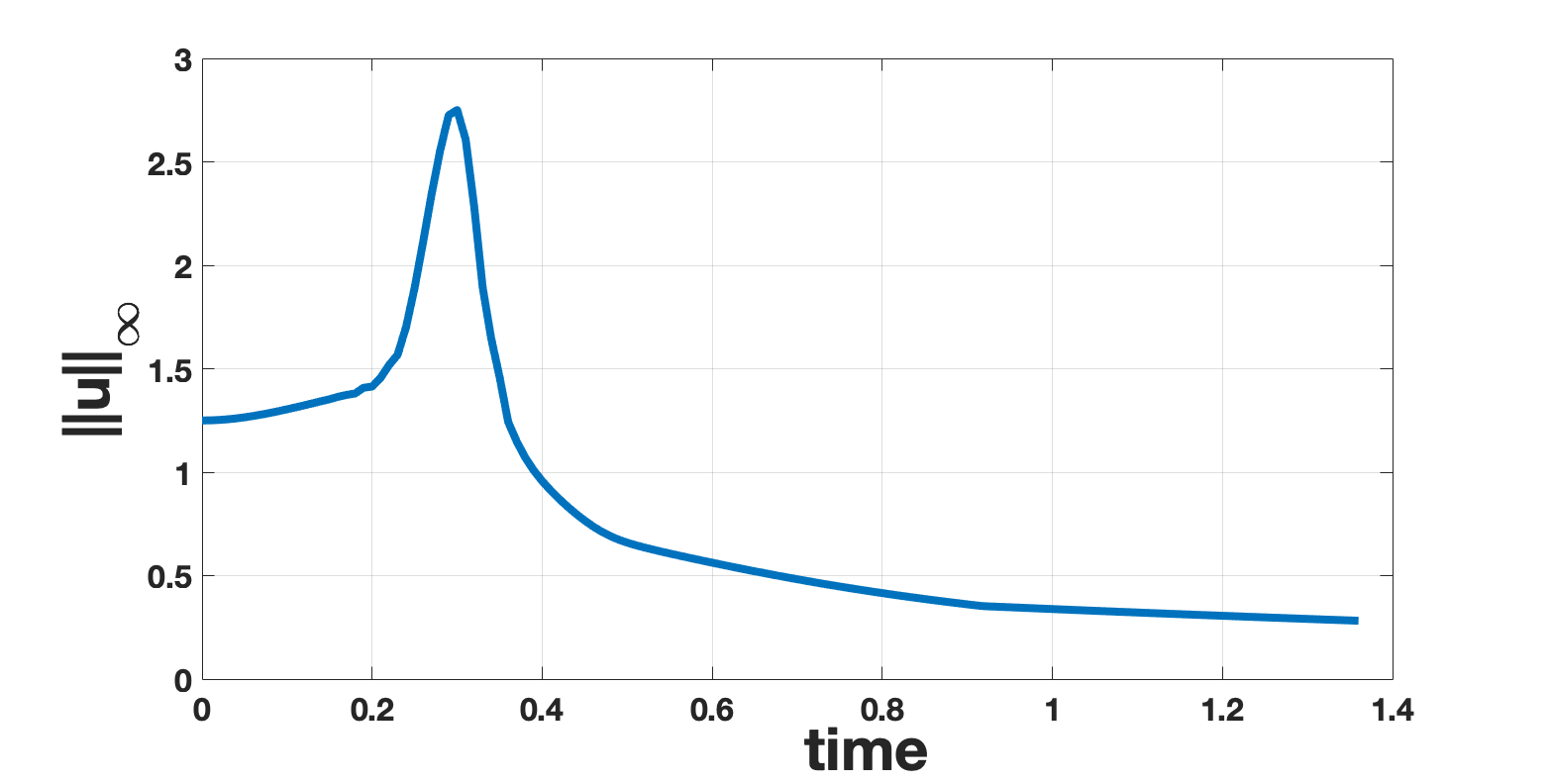}
\caption{Solution $u(t)$ to the $2d$ quintic \NNls equation with the initial condition $u_0$ from \eqref{u0NLS}, $A_0=1.25,\, (x_c,y_c)=(-4.5,0)$ and $v=(15,0)$ (left), the solution $u(t)$ moving on the line $y=0$ at time $t=1.36$ (middle); time dependence of the $L^{\infty}$-norm (right).}
\label{F:28}
\end{figure}

In this case, the solution scatters and does not preserve the shape of the original solitary wave. After the collision, the solitary wave solution forms two (then later possibly three, and eventually just one ) bumps with a circular reflecting waves, part of which reflects backward, see Figure \ref{F:29}. We observe also that the leading reflected wave has a dispersive behavior. Moreover, one can see that the presence of the obstacle  completely prevents blow-up. Before the interaction, the $L^{\infty}$-norm of the solution starts increasing, indicating a possible blow-up behavior, however, after the interaction with the obstacle, the amplitude of the solution decreases toward $0$, which confirms the dispersion of the solution in a long term, thus, scattering.


\begin{figure}[!h]      
\centering
\includegraphics[width=5.5cm,height=5cm]{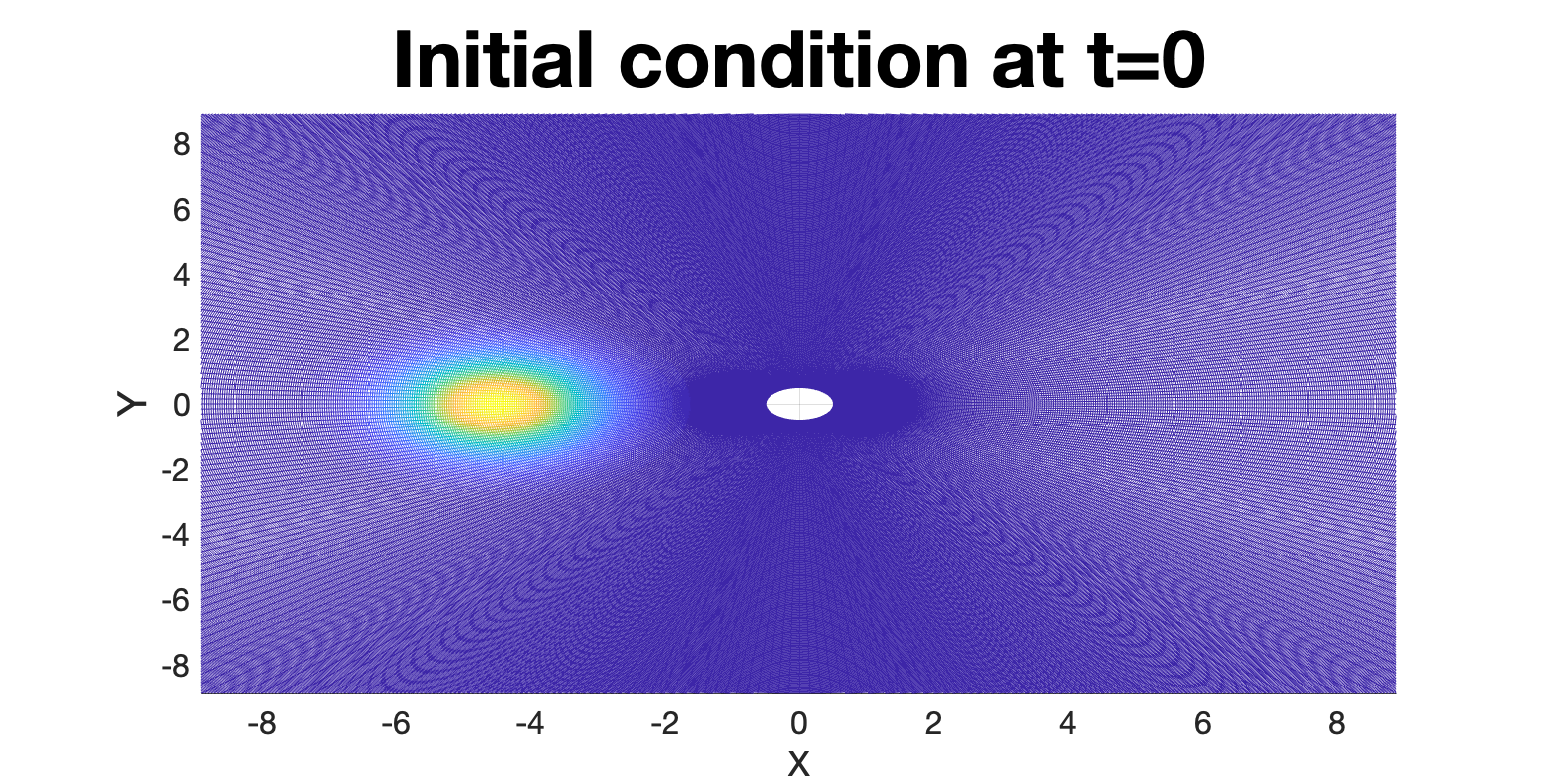}
\includegraphics[width=5.5cm,height=5cm]{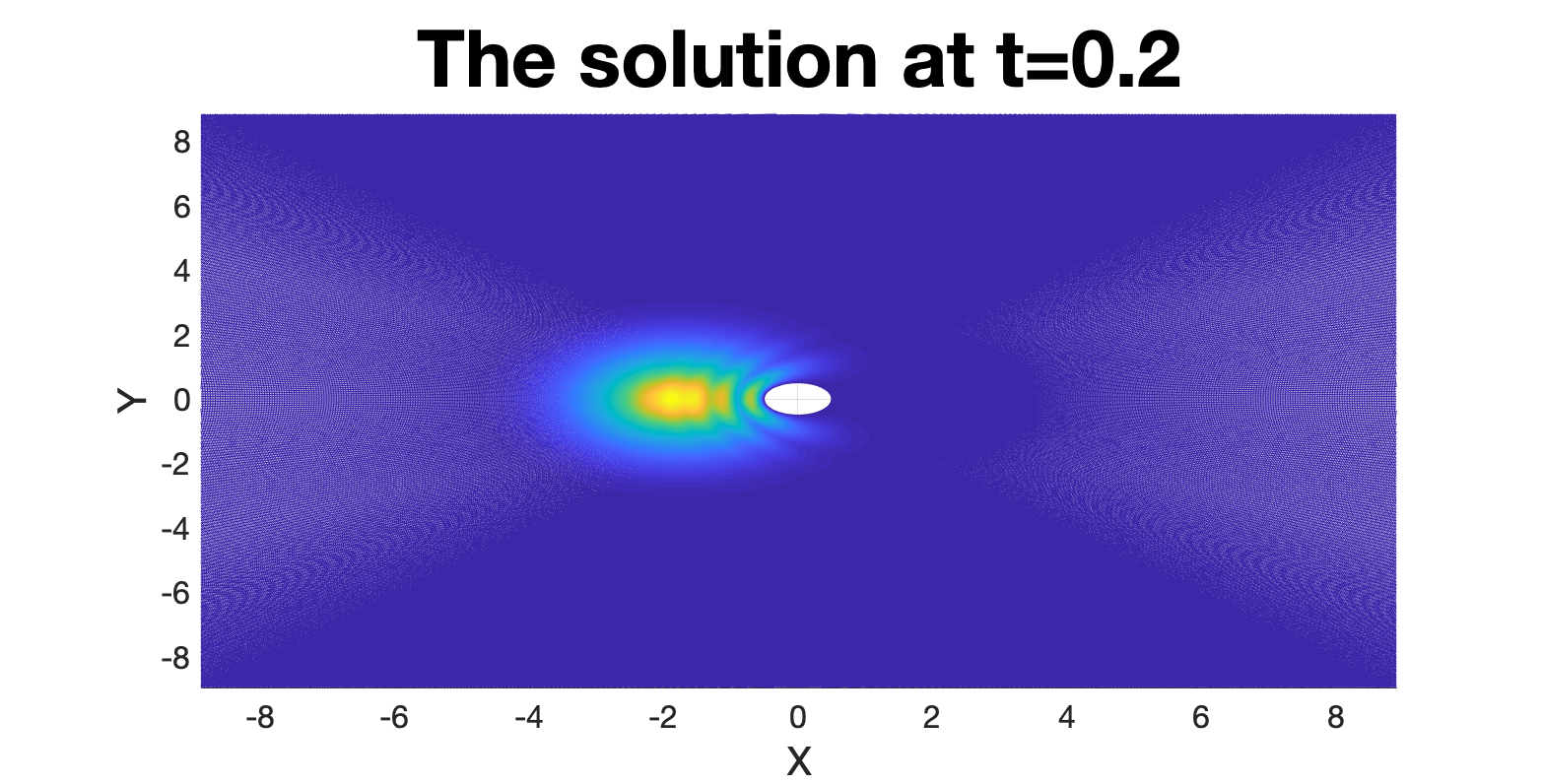}
\includegraphics[width=5.5cm,height=5cm]{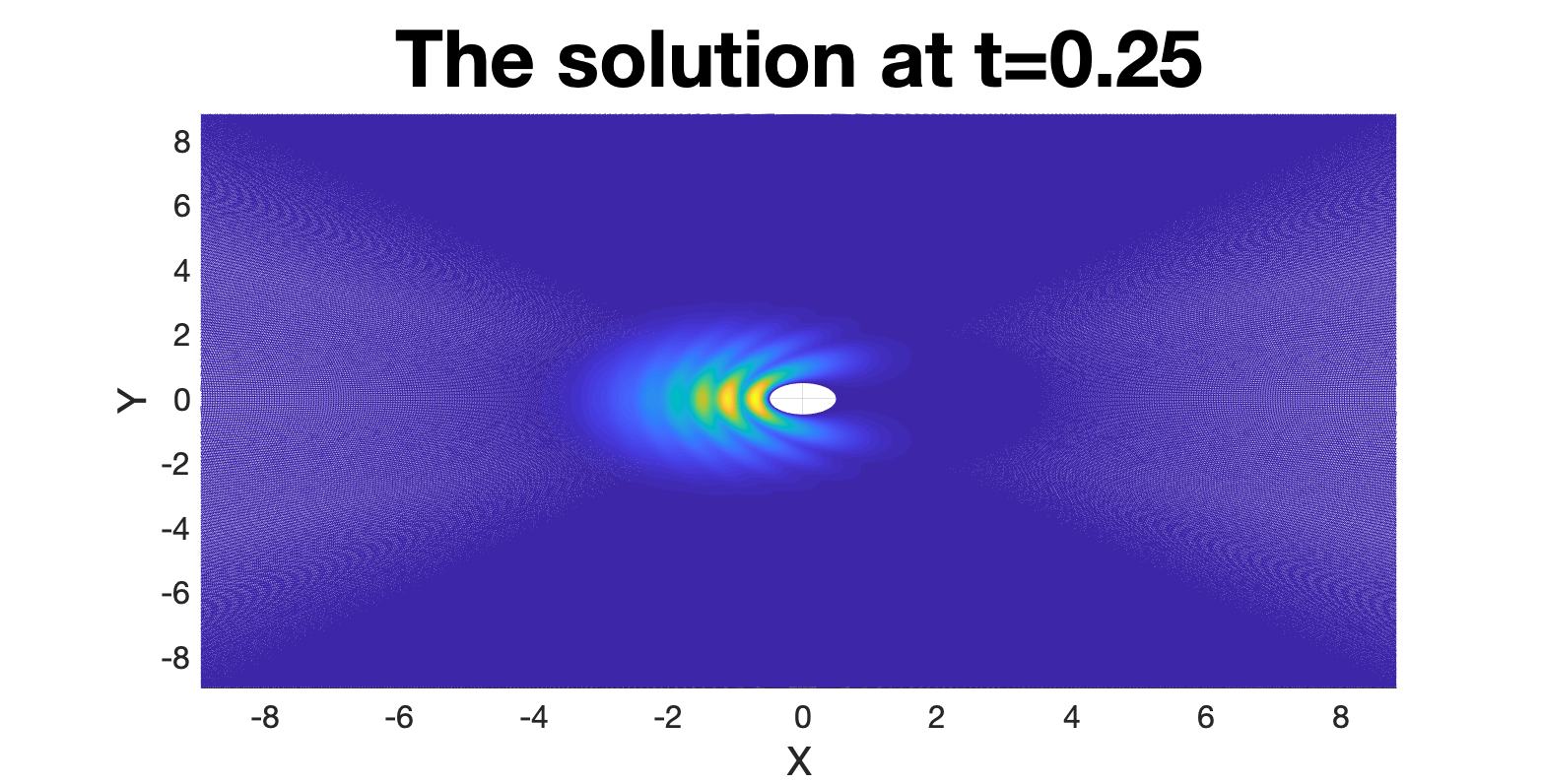}
\includegraphics[width=5.5cm,height=5cm]{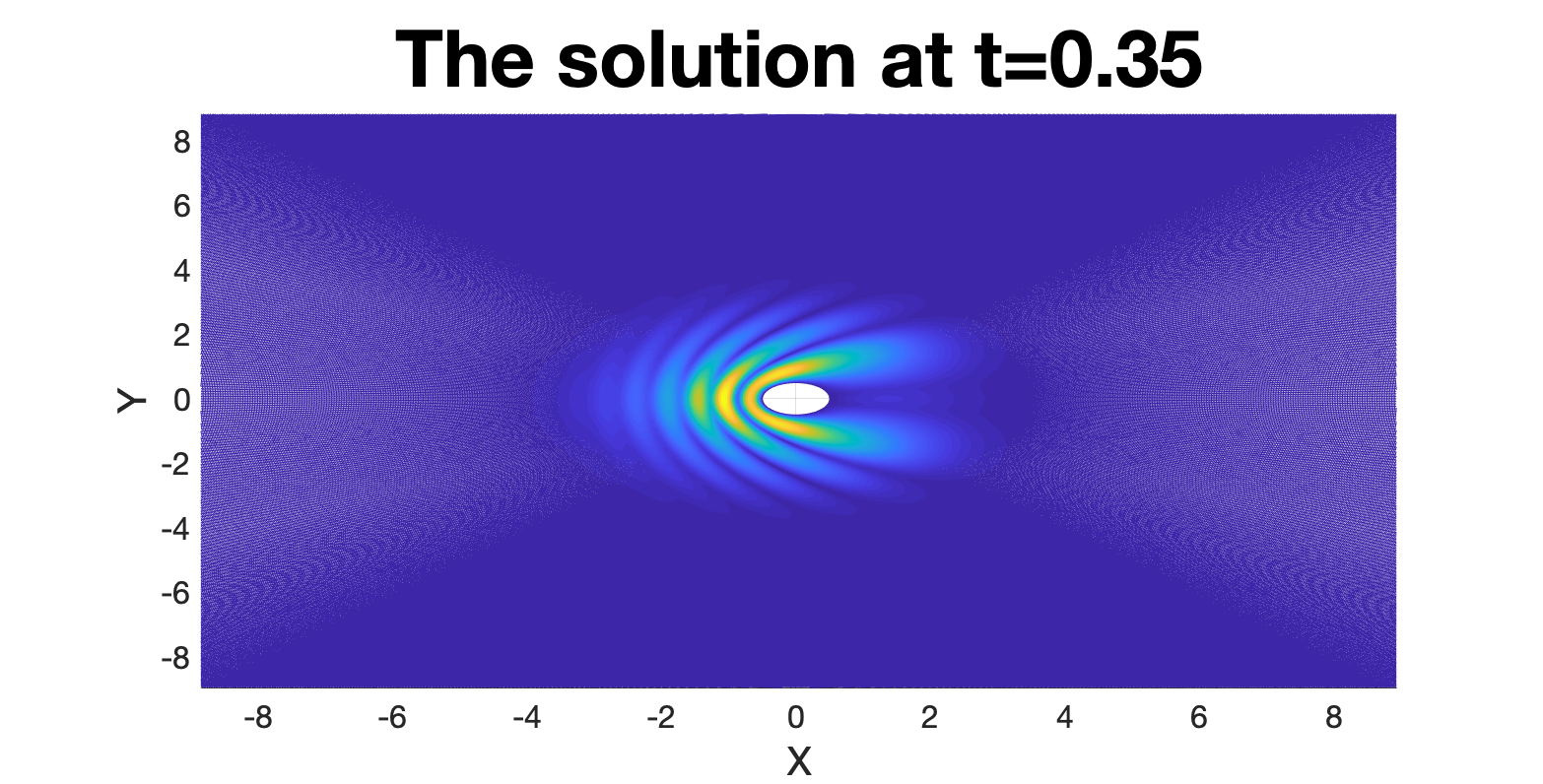}
\includegraphics[width=5.5cm,height=5cm]{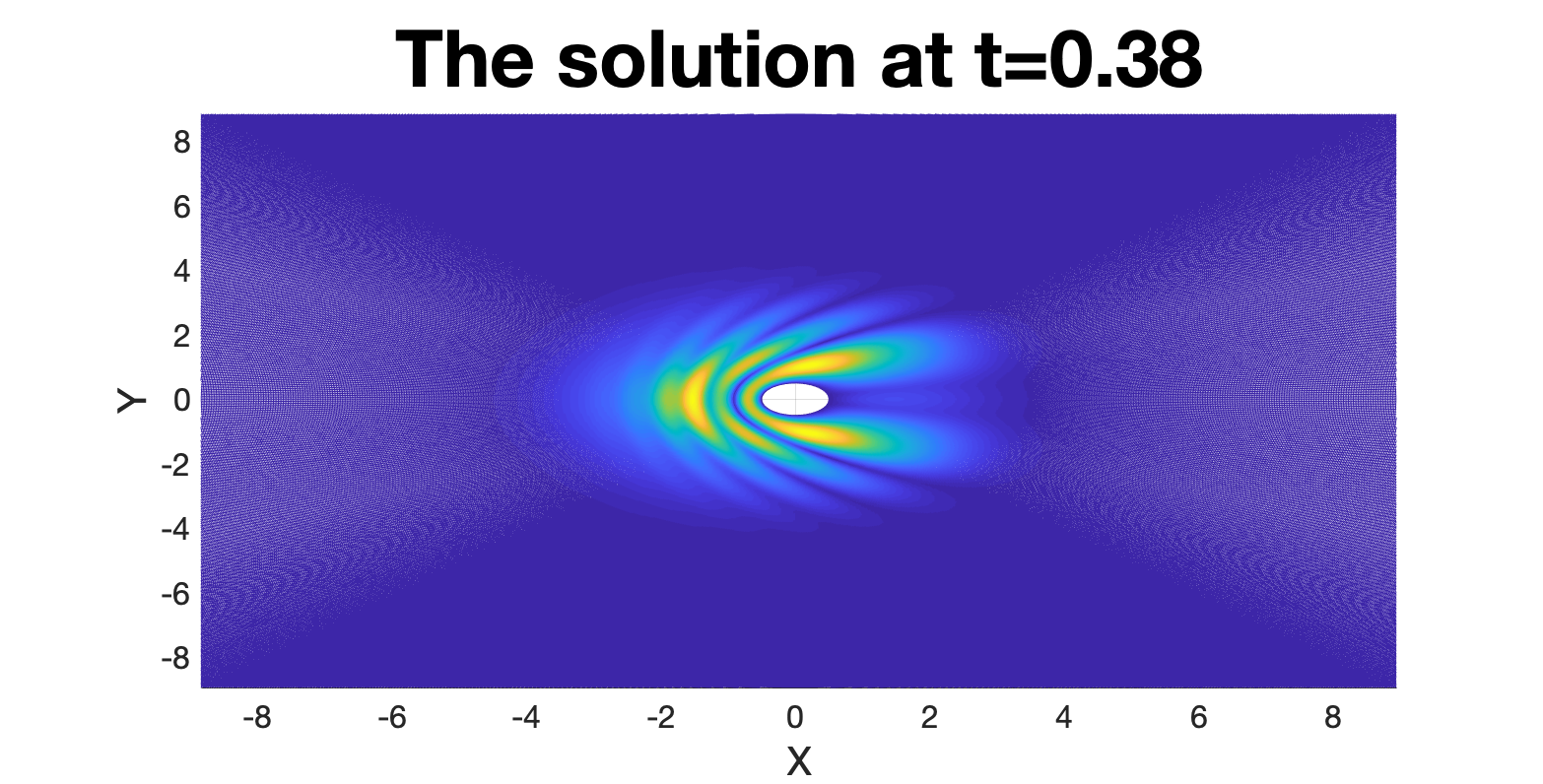}
\includegraphics[width=5.5cm,height=5cm]{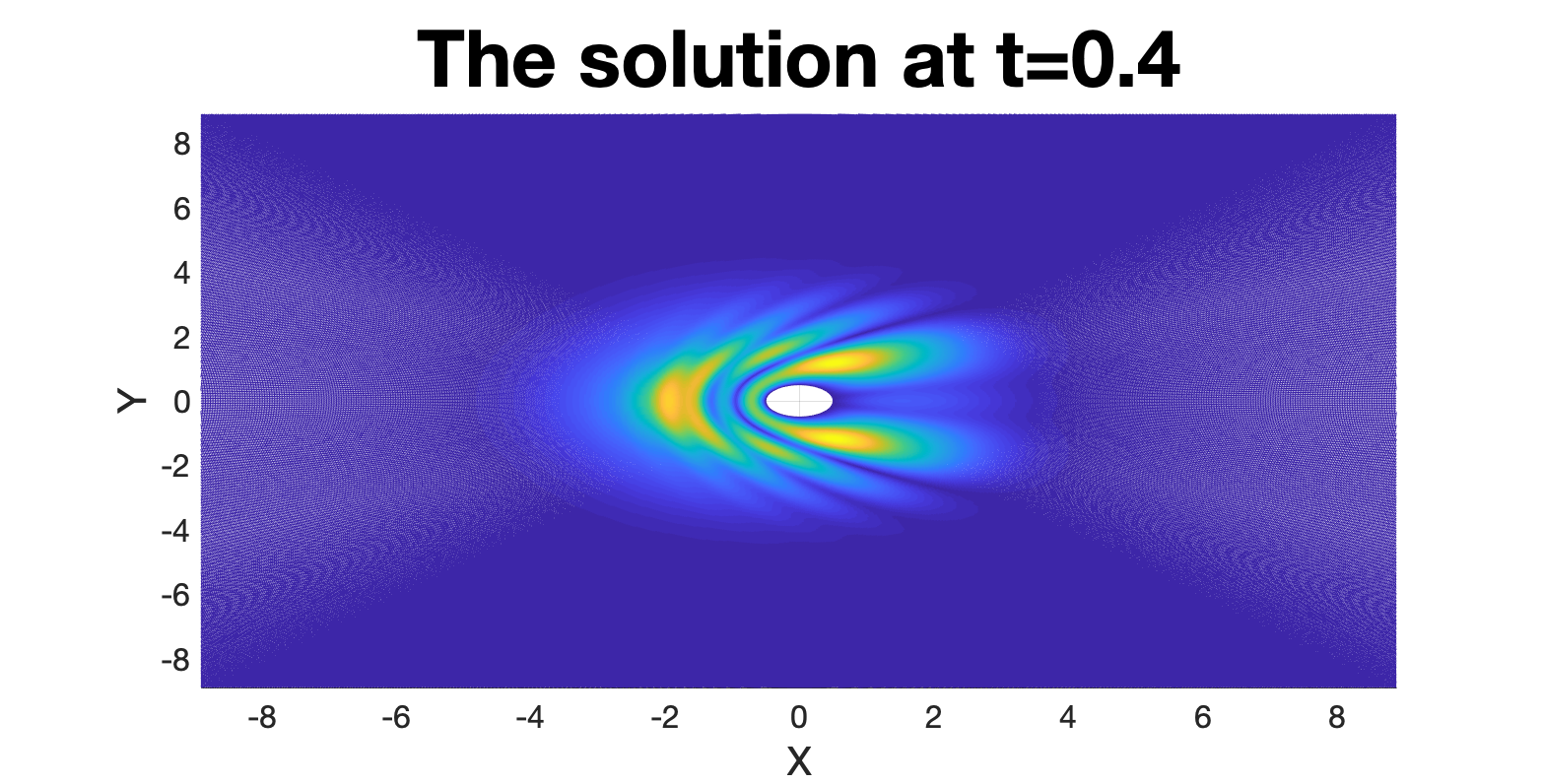}
\nonumber
\end{figure}
\begin{figure}[!h] 
\includegraphics[width=5.5cm,height=5cm]{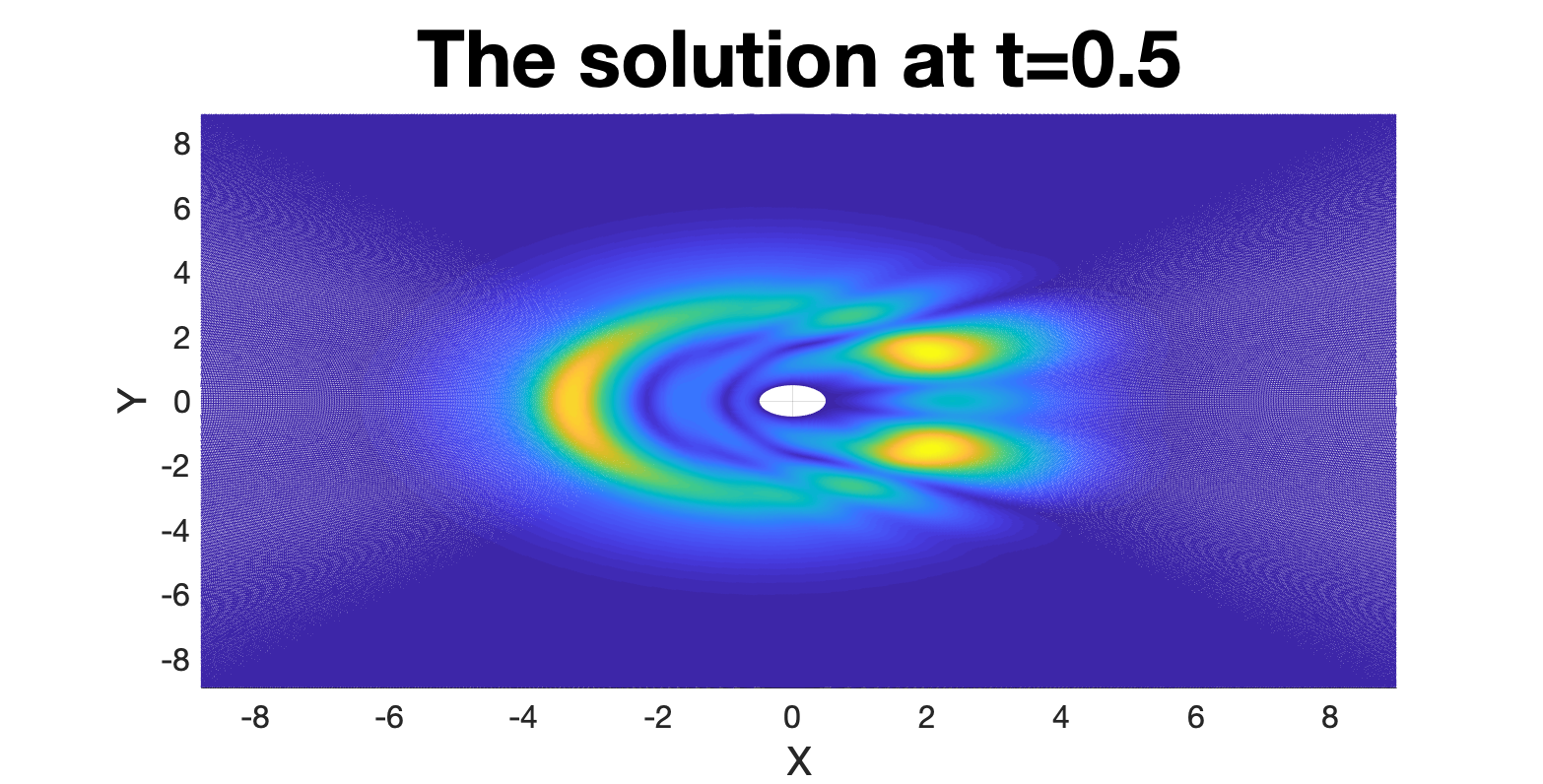}
\includegraphics[width=5.5cm,height=5cm]{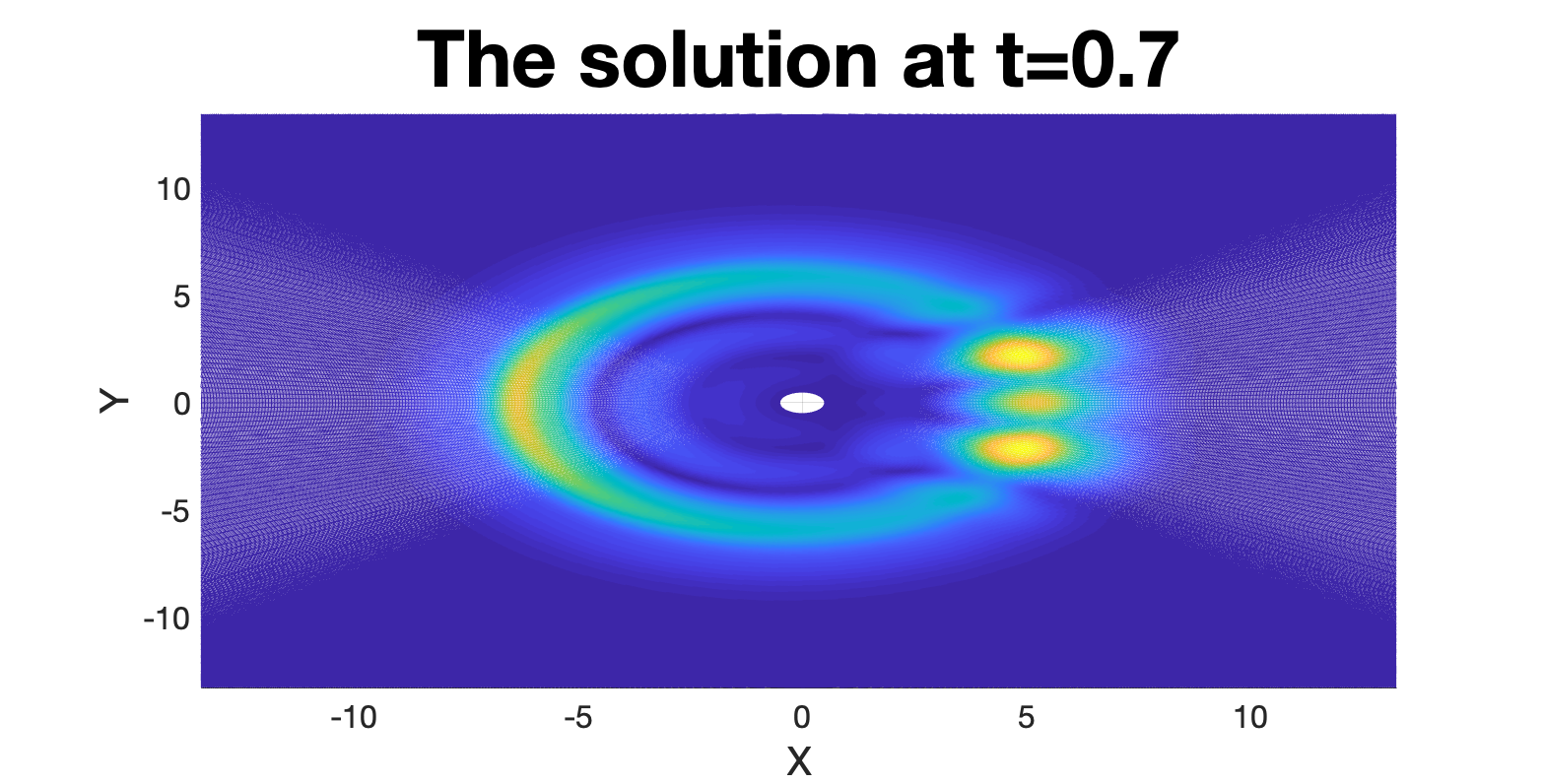}
\includegraphics[width=5.5cm,height=5cm]{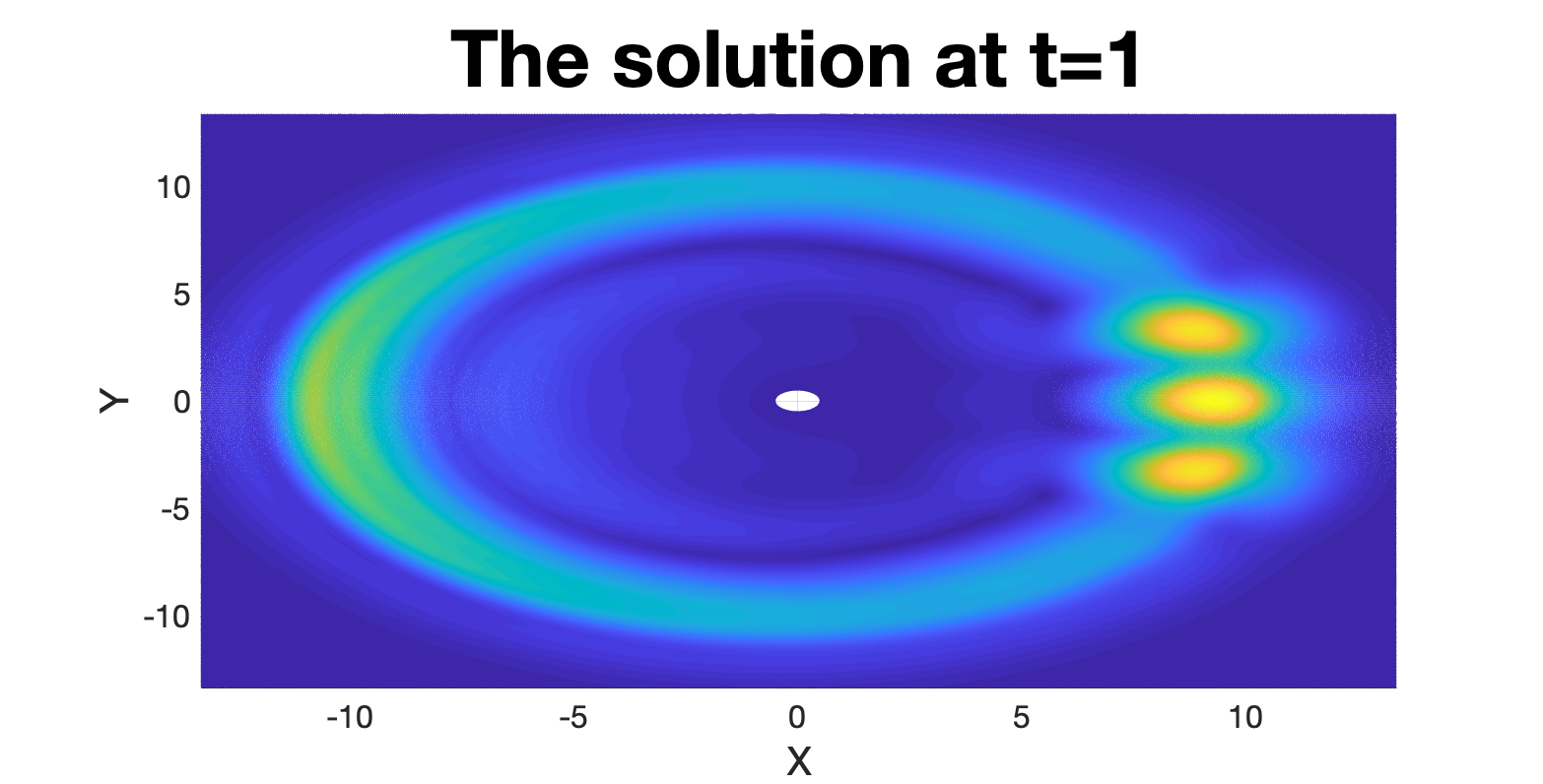}
\caption{Snapshots of the behavior of the solution $u(t)$ to the $2d$ quintic \NNls equation for different times in the strong interaction between the solution and the obstacle, with $(x,y)-$ view.}
\label{F:29}
\end{figure}

\newpage

\section{Dependence on the obstacle size}
\label{sec-Obstacle size dependance}
In this section, we describe the behavior of the solution to the cubic and quintic \NNls equations in the strong interaction case, as in Figures \ref{directionVelocityStrong} and \ref{directStrongInterac}, depending on the size of the obstacle (a disk of radius $r_{\star}$). We study the strong interaction of the solution with the obstacle in terms of the transmitted and the reflected parts of the mass: we call the transmitted mass $M_T[u^n],$ the discrete $L^2$-norm of the solution $u^n$ at time $t=t^n$ on the right-half plane
$$ 
\Omega_{+}:=\{ (r,\theta) \in \left[ r_{\star}, R \right] \times \left[ 0, 2 \pi \right] : 
 0 \leq \theta \leq \frac \pi 2 \; \text{ and } \frac{3 \pi}{2} \leq \theta \leq 2\pi \},    
$$
thus, we obtain the mass on the right-half plane $\{ (x,y)\in \Omega \; :  \; \, x\geq 0\},$
\begin{equation*} 
M_{T}[u^{n}]=    \sum_{k =0}^{N_{r}} \sum_{\substack{ 0 \leq j  \leq \frac{\pi}{ 2} \\  \\ \frac{3\pi}{2} \leq j  \leq 2\pi} }^{N_{\th}} |u^n_{k,j}|^2 \; r_k  \;  \Delta r \; \Delta  \theta, \quad \text{for} \;  n \geq 0.
\end{equation*} 
We call the reflected mass $M_{R}[u^n]$, the discrete $L^2$-norm of the solution $u^n$ at time $t=t^n$ on the left-half plane
$$ 
\Omega_{-}:=\{ (r,\theta) \in \left[ r_{\star}, R \right] \times \left[ 0, 2 \pi \right] : \;     \frac \pi 2  < \theta < \frac{3 \pi}{2} \},    
$$
so that,  we obtain the mass on the left-half plane $\{ (x,y)\in \Omega \; : \;  \,  x \leq 0\}, $ 
\begin{equation*} 
M_{R}[u^{n}]=    \sum_{k =0}^{N_{r}} \sum_{\substack{  \frac{\pi}{ 2} < j  \leq \frac{3\pi}{2} }}^{N_{\th}} |u^n_{k,j}|^2 \; r_k  \;  \Delta r \; \Delta  \theta, \quad \text{for} \;  n \geq 0.
\end{equation*}

We consider different values for the radius $r_{\star}$ of the obstacle and investigate solutions with  initial data \eqref{u0NLS} having the amplitude
$A_0$ and the velocity $v=(v_x,v_y).$ The spatial translation $(x_c,y_c)$ in the initial data depends on the radius $r_{\star}$ of the obstacle so that the initial conditions would satisfy the Dirichlet boundary condition. In \S \ref{Obs-size-Crit}, we first  study 
the cubic, $L^2$-critical, \NNls equation  and  in \S \ref{Obs-size-super-Crit}, we investigate the super-critical \NNls equation.

\subsection{The $L^2$-critical case}
\label{Obs-size-Crit}
We consider the cubic \NNls equation with the initial condition as in \eqref{u0NLS} with the following parameters:  
\begin{equation}
\label{ref-A-2.5}
 A_0=2.5, \quad v_x=15, \quad v_y=15,  \quad 
 x_c=-4-r_{\star}  \quad 
 y_c=-4-r_{\star}.
 \end{equation}
The solution $u$ moves on the line $y=x,$ i.e., in the same direction as the outward normal vector of $\Omega.$ The dependence of the solution behavior on the obstacle size in the strong interaction case is as follows (the summary is given in Table \ref{T:3}):

\begin{itemize}
\item For $ r_{\star}=0.1,$ the solution splits into two bumps with a \textit{small} backward reflection, which has a dispersive behavior. As there is a sufficient mass transmitted, the 
two bumps get back together and behave as a single solitary wave, which blows up in finite time, see Figure  \ref{Fig-critic-r0-0.1}.  

\item For $0.2 \leq r_{\star} \leq 0.4$, we observe that the solution splits into several bumps with a \textit{small} reflected part that leads to some loss of the transmitted mass and results in an overall scattering behavior of the solution (the transmitted part reconstructs back into one bump). 

\item For $0.5 \leq r_{\star} \leq 0.7$, we observe that the solution splits into several \textit{small}  bumps, which later make up the transmitted part of the solution with an  \textit{important} reflected part. 
 The solution has a similar behavior as in the previous case, however, the reflected mass is higher than the half of the total mass $\frac 12 M[u_0]=9.8175,$ which endorses the dispersive (scattering) behavior of the solution.  
 
\item For $0.8 \leq r_{\star}<3,$ the interaction surface between the solution and the obstacle is sufficiently large, thus, the solution scatters: the main part of the solution is mostly reflected back with a small dispersive transmitted part.  
 We observe a reverse behavior of the transmitted and the reflected part of the solution compared to case for $0.2 \leq r_{\star} \leq 0.4,$ see Figure \ref{Fig-critic-r0-2} (for an example with $r_{\star}=2$).
 
\item  For $r_{\star} \geq 4,$ the solution behaves quite different from the previous cases: the solution blows up in finite time at the boundary of the obstacle, as  the interaction region is larger than the contour of the solution 
 and there is almost no transmission of the solution (the transmitted mass is around $10^{-5}$).  The solution can not cross or get around the obstacle boundary, hence, the solution concentrates in its blow-up core at the obstacle's boundary, see Figure \ref{Fig-critic-r0-5} (for an example with $r_{\star}=5$). 
\end{itemize}

{\footnotesize
\begin{table}[h!]
\centering 
\begin{tabular}{|m{1.2cm}|c|c|c|c|}   
  \hline
   $r_{\star}$            & Discrete total mass     &  Behavior of the solution       & Discrete reflected mass                                               & Discrete transmitted mass        \\                                    
  \hline  
  $r_{\star}=0.1$      &  $19.6350$        & Blow-up at $t \approx 0.888 $    &   $3.7746$ at $t \approx 0.888$                     &  $15.8603$ at $t \approx 0.888$           \\
   \hline 
     $r_{\star}=0.2$      &  $19.6350$      & Scattering                                  &    $5.2641 $ at $t \approx 1.2$                        &  $14.3709$ at $t \approx 1.2$              \\
   \hline 
     $r_{\star}=0.3$      &  $19.6350$      & Scattering                                  &   $7.1144   $ at $t \approx 1.2$                        &  $12.5206 $ at $t \approx 1.2$               \\
   \hline 
     $r_{\star}=0.4$      &  $19.6350$      & Scattering                                  &  $8.8085   $ at $t \approx 1.2$                        &  $10.8264 $ at $t \approx 1.2$             \\
   \hline 
    $r_{\star}=0.5$      &  $19.6350$      & Scattering                                   &  $10.1753$ at $t \approx 1.2 $                        &  $9.4596 $ at $t \approx 1.2 $        \\
        \hline 
            $r_{\star}=0.6$      &  $19.6350$      & Scattering                                   &   $11.4460 $ at $t \approx 1.2 $              &  $8.1890 $ at $t \approx 1.2 $        \\
        \hline 
    $r_{\star}=0.7$      & $ 19.6350$      & Scattering                                   &   $12.5008 $ at $t \approx 1.2 $                       &  $7.1341 $ at $t \approx 1.2 $       \\
        \hline 
           $r_{\star}=0.8$      &  $19.6350 $     & Scattering                                   &  $13.4017 $ at $t \approx 1.2 $                 &  $6.2332 $ at $t \approx 1.2 $        \\
        \hline
              $r_{\star}=0.9$      &  $19.6350$      & Scattering                                   &   $14.1658 $ at $t \approx 1.2 $             &  $5.4691 $ at $t \approx 1.2 $        \\
        \hline
     $r_{\star}=1$      &  $19.6350$        &  Scattering                                  &  $ 14.8208 $ at $t \approx 1.2$                        &   $  4.8141$ at $t \approx 1.2 $      \\
\hline
  $r_{\star}=2$          &  $19.6350 $      &  Scattering                                  &   $18.0537$  at $t \approx 1.2$                        &   $ 1.5813$ at $t \approx 1.2 $        \\
  \hline
    $r_{\star}=3$          &  $19.6350$     & Scattering                                   &   $  19.0024$  at $t \approx 1.2$                      &    $ 0.6325  $ at $t \approx 1.2 $       \\
  \hline
   $r_{\star}=4$          &  $19.6350$     &   Blow-up at $t \approx 0.36 $                              &   $  19.6349$  at $t \approx 0.36$                      &    $  5.6838e-05 $ at $t \approx 0.36 $       \\
  \hline
   $r_{\star}=5$          &  $19.6350$     &   Blow-up at $t \approx 0.36 $                              &   $  19.6349$  at $t \approx 0.36$                      &    $  5.6838e-05 $ at $t \approx 0.36 $       \\
  \hline
\end{tabular}
  \caption{Different obstacle size $r_{\star}$ and the corresponding behavior of the solution $u(t)$ to the $2d$ cubic \NNls equation with $u_0$ from \eqref{u0NLS} with \eqref{ref-A-2.5} with the discrete total mass, reflected and transmitted discrete mass after interaction with the obstacle at time $t.$ }
  \label{T:3}
\end{table}
}
Next, we give examples of the solution to the cubic \NNls equation with a slightly smaller initial amplitude $A_0,$ depending on the size of the obstacle (the strong interaction case),  see Figure \ref{directionVelocityStrong}.
We consider the initial condition  \eqref{u0NLS}, with the following parameters: 
\begin{equation}
\label{ref-A-2.25}
 A_0=2.25, \quad v_x=15, \quad v_y=15,  \quad  x_c=-4-r_{\star}  \; \text{ and  } y_c=-4-r_{\star}.
 \end{equation} 
We record all observations in this case in Table \ref{T:3-1}.

\begin{figure}[!h]      
\centering
\includegraphics[width=5.5cm,height=5cm]{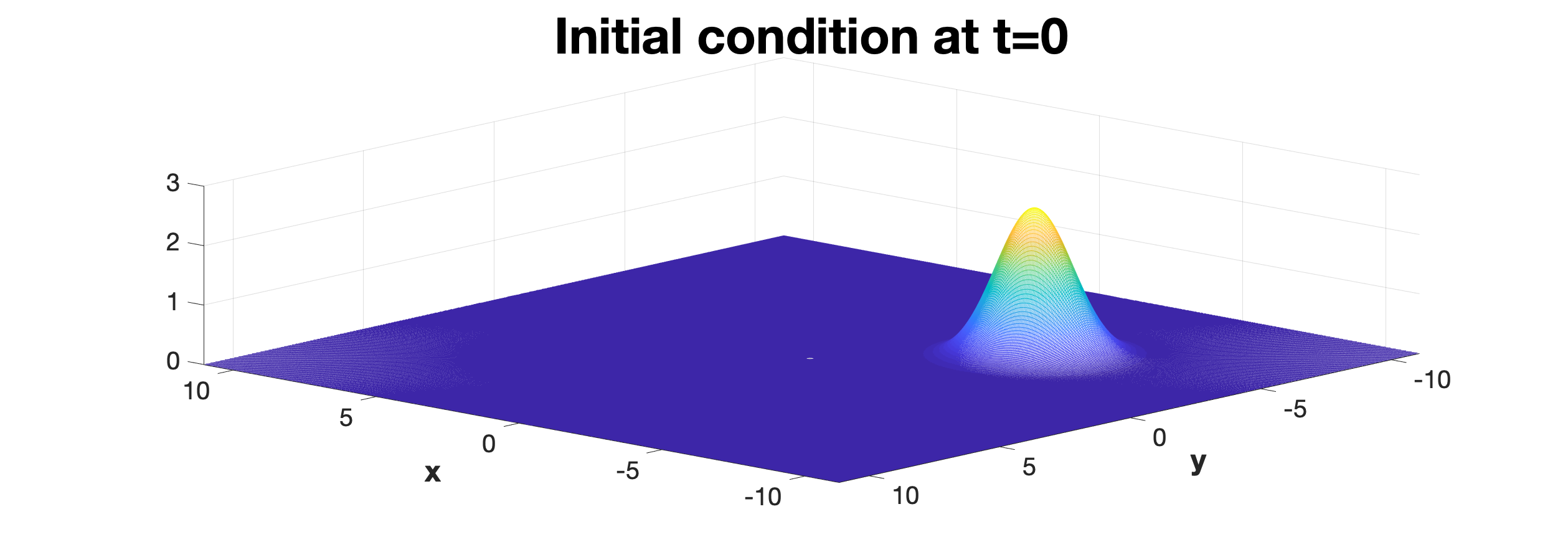}
\includegraphics[width=5.5cm,height=5cm]{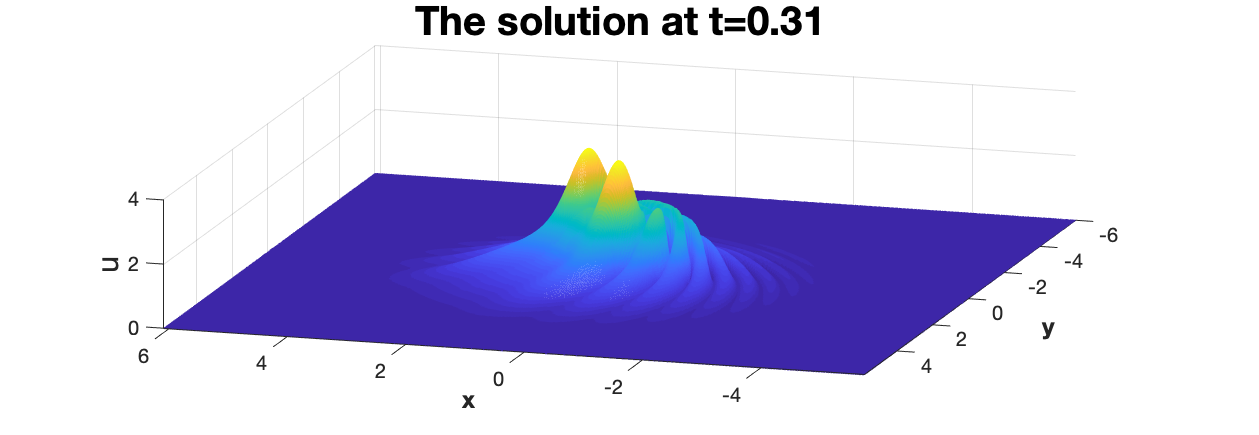}
\includegraphics[width=5.5cm,height=5cm]{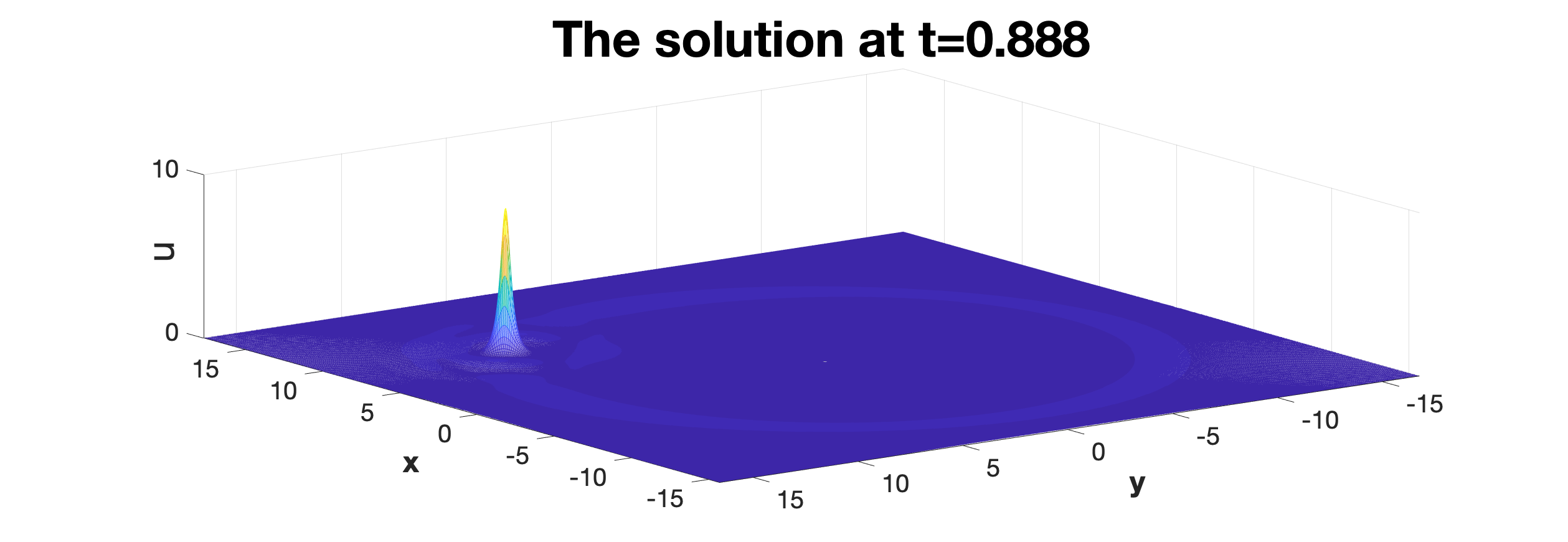}
\includegraphics[width=8.5cm,height=4cm]{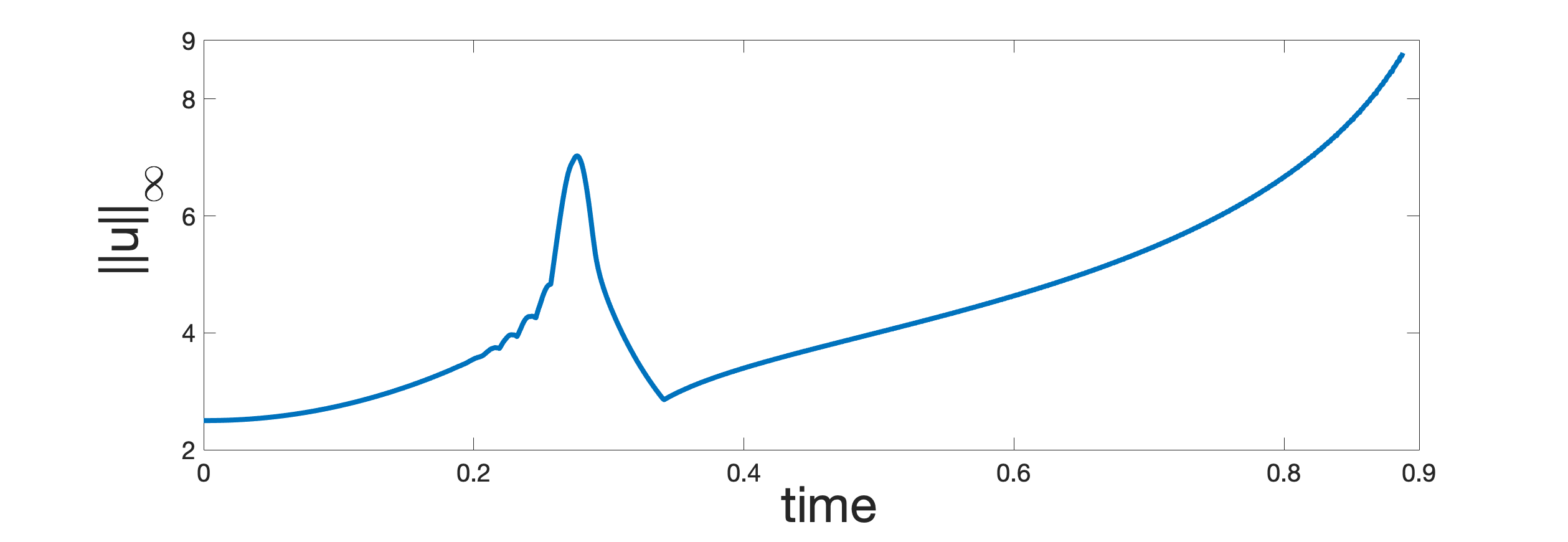}
\includegraphics[width=8.5cm,height=4cm]{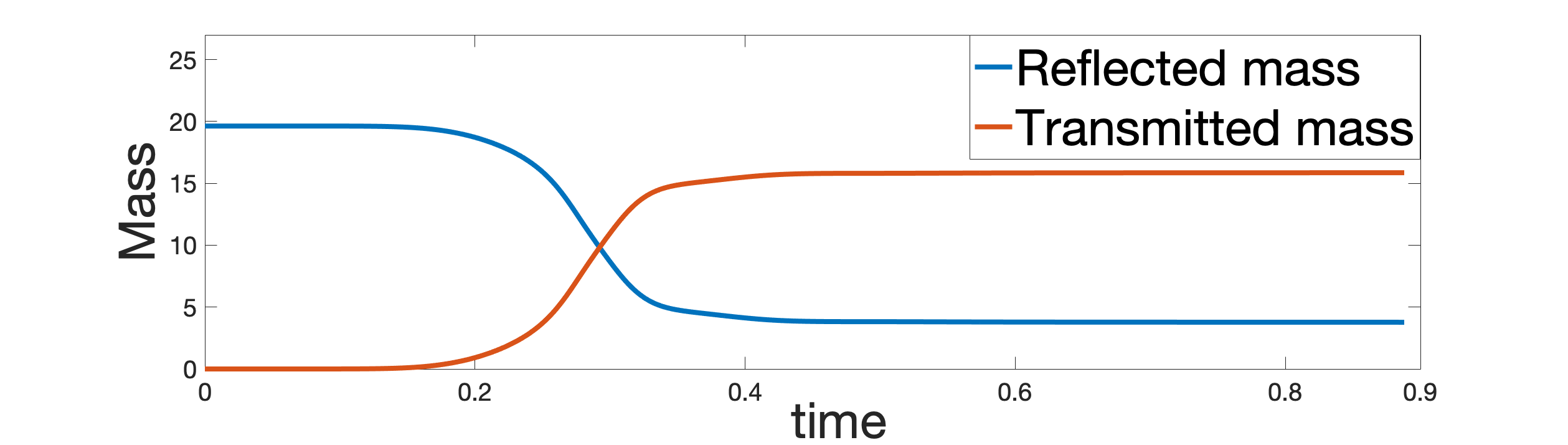}
\caption{Solution to the $2d$ cubic \NNls equation with the radius of the obstacle~$r_{\star}=0.1$ and $u_0$ from \eqref{u0NLS} with \eqref{ref-A-2.5} moving along the line $y=x$. 
Snapshots of the blow-up solution $u(t)$ at $t=0$ (top left), $t=0.31$ (middle top) and $t=0.888$ (top right). 
Time dependence of the $L^{\infty}$-norm (bottom left) and of the transmitted and the reflected mass (bottom right). }
\label{Fig-critic-r0-0.1}
\end{figure}

\begin{figure}[!ht]      
\centering
\includegraphics[width=5.5cm,height=5cm]{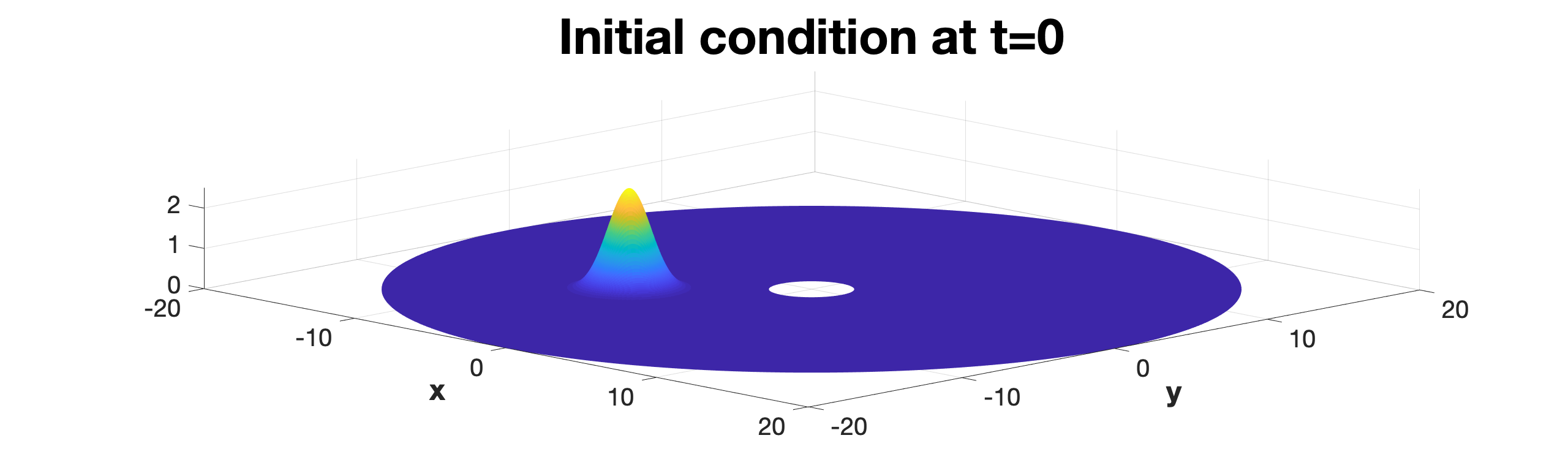}
\includegraphics[width=5.5cm,height=5cm]{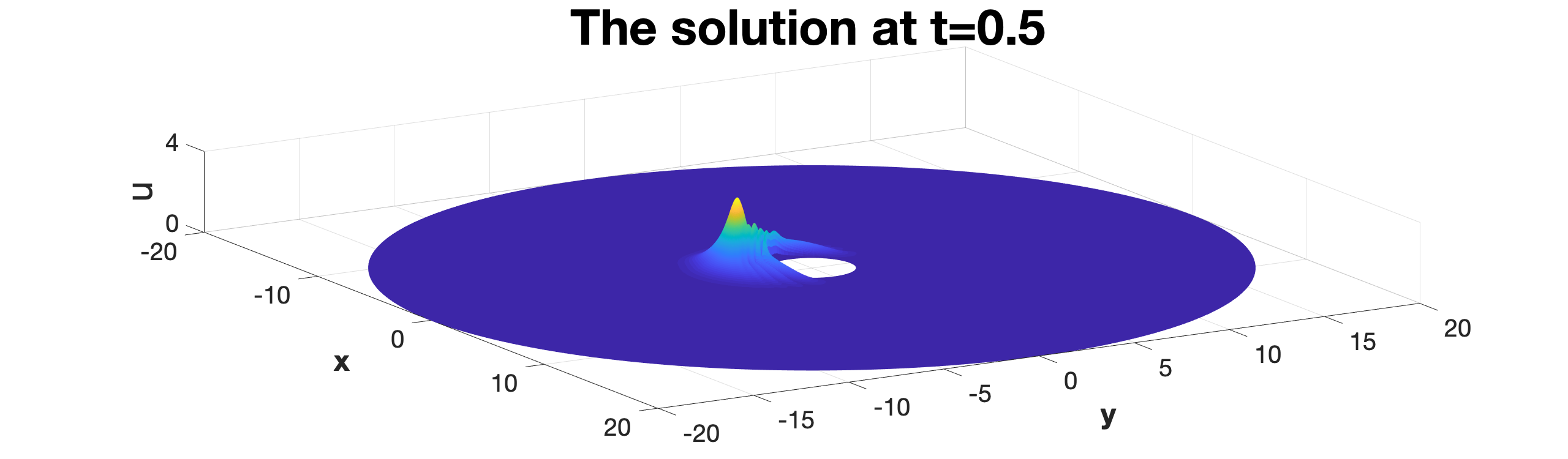}
\includegraphics[width=5.5cm,height=5cm]{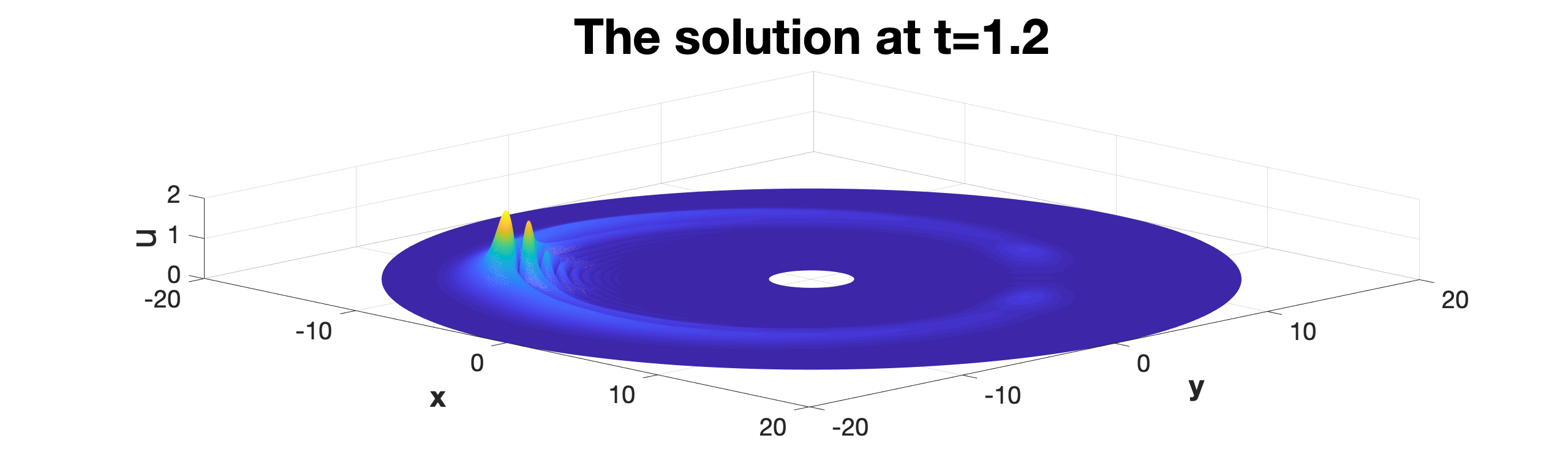}
\includegraphics[width=8.5cm,height=4cm]{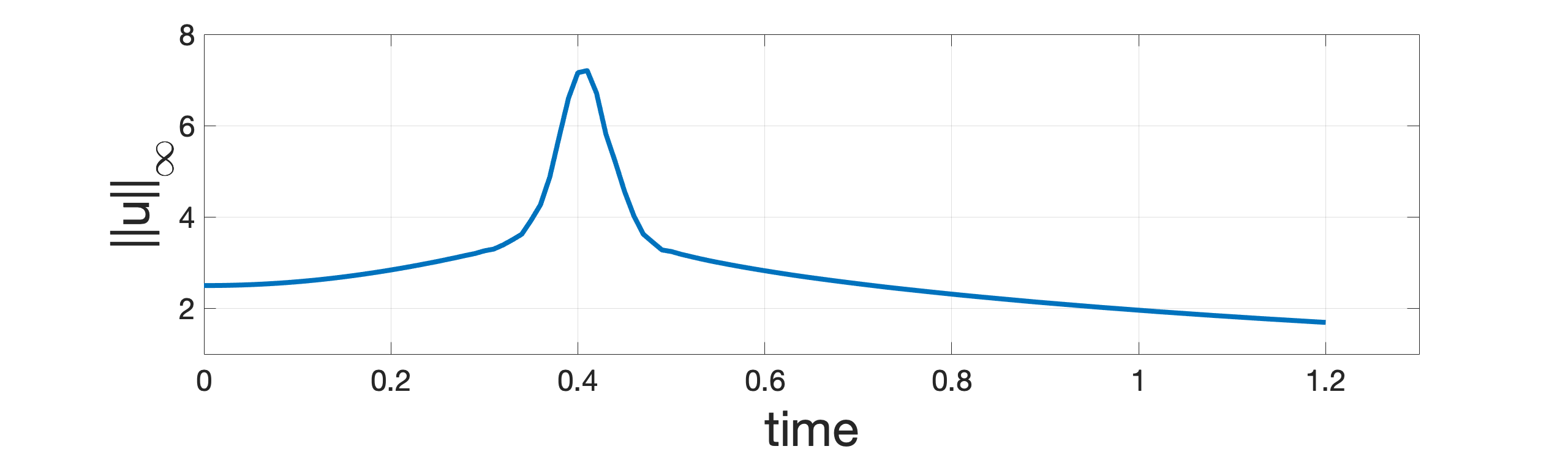}
\includegraphics[width=8.5cm,height=4cm]{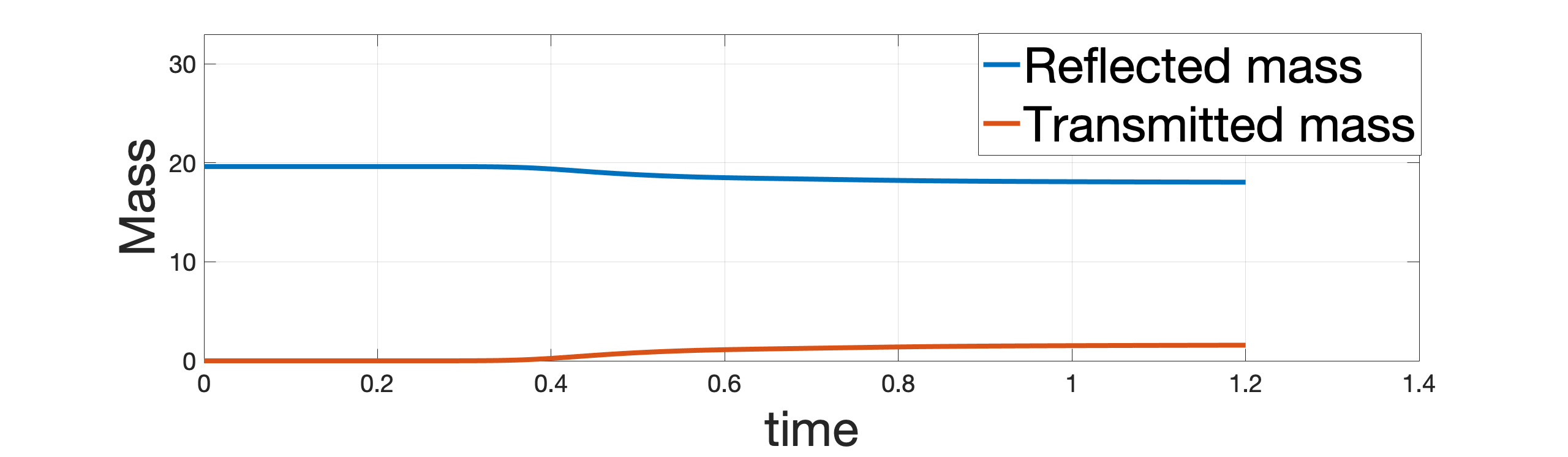}
\caption{Solution to the $2d$ cubic \NNls equation with the obstacle radius  $r_{\star}=2$ and $u_0$ from \eqref{u0NLS} with \eqref{ref-A-2.5} moving along the line $y=x$. 
Snapshots of the scattering solution $u(t)$ at $t=0$ (top left), $t=0.5$ (middle top) and $t=1.2$ (top right). 
Time dependence of the $L^{\infty}$-norm (bottom left) and of the transmitted and the reflected mass (bottom right).
}
\label{Fig-critic-r0-2}
\end{figure}

\begin{figure}[!ht]      
\centering
\includegraphics[width=5.5cm,height=5cm]{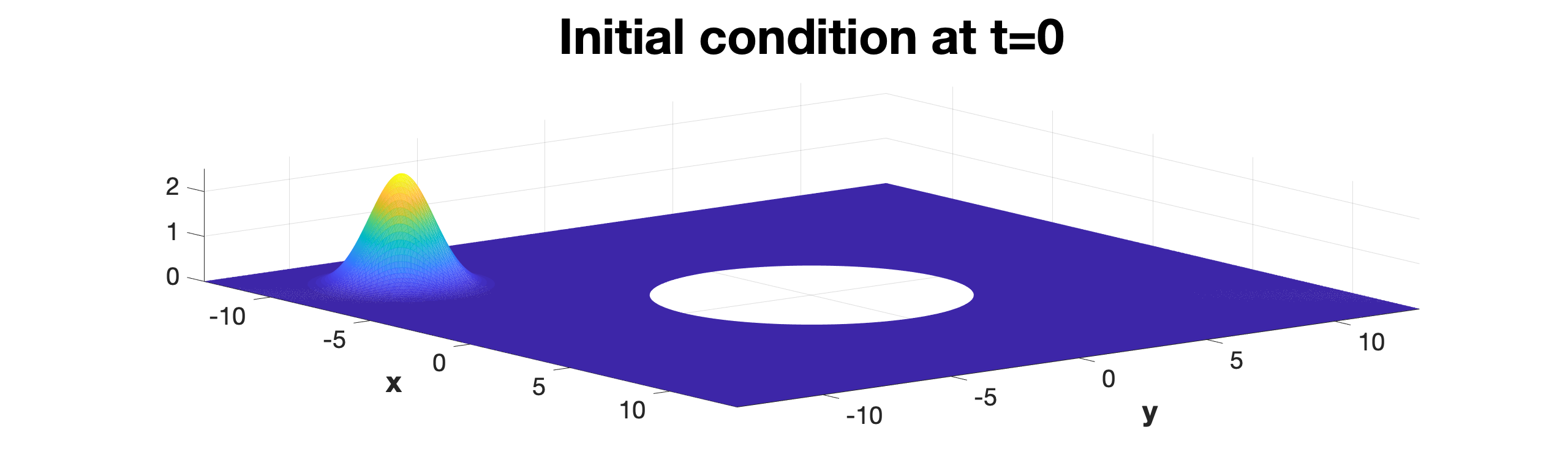}
\includegraphics[width=5.5cm,height=5cm]{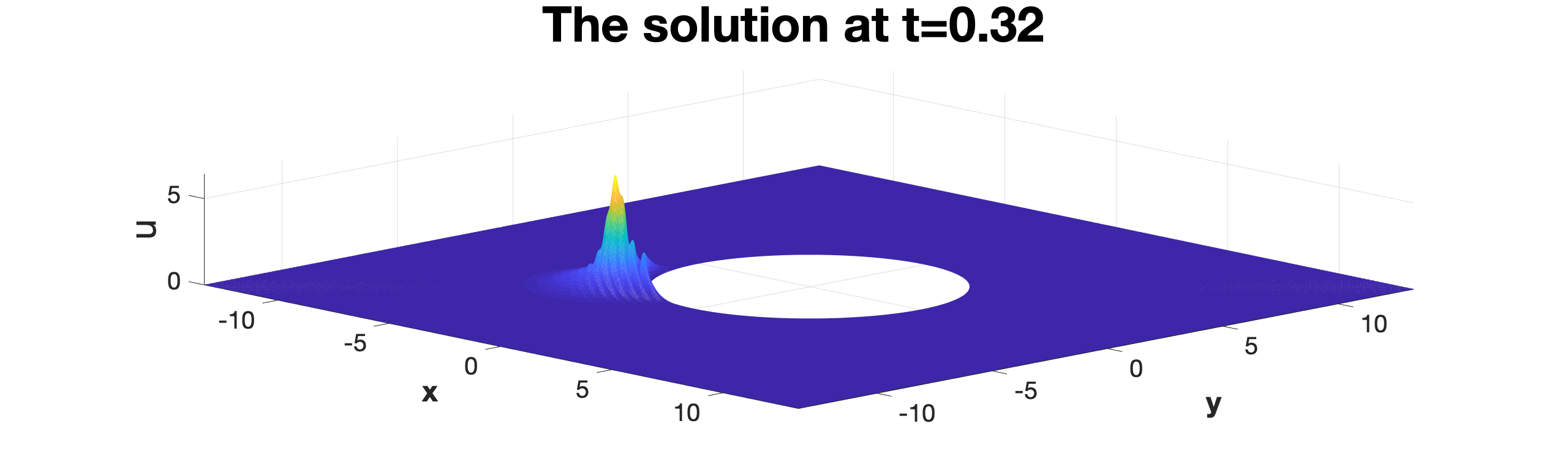}
\includegraphics[width=5.5cm,height=5cm]{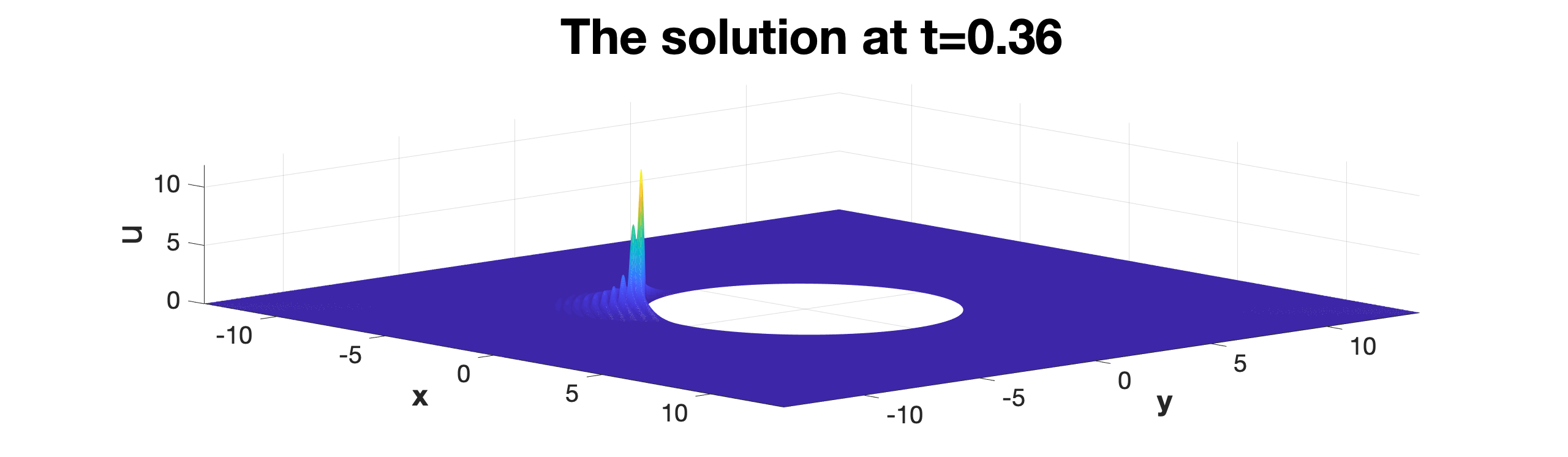}
\includegraphics[width=8.5cm,height=4cm]{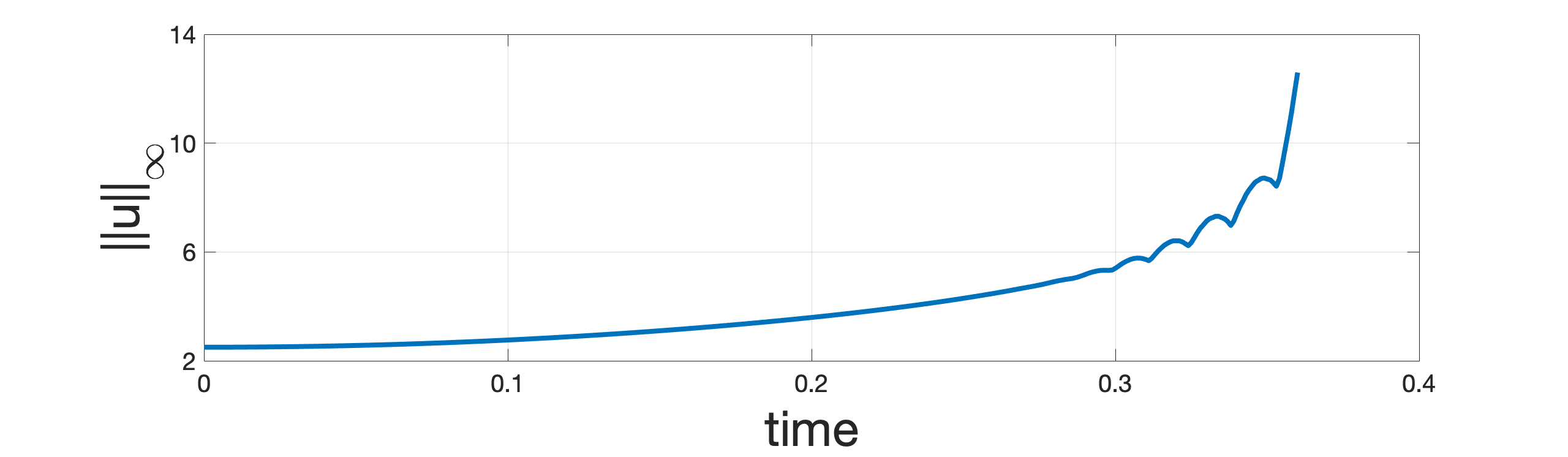}
\includegraphics[width=8.5cm,height=4cm]{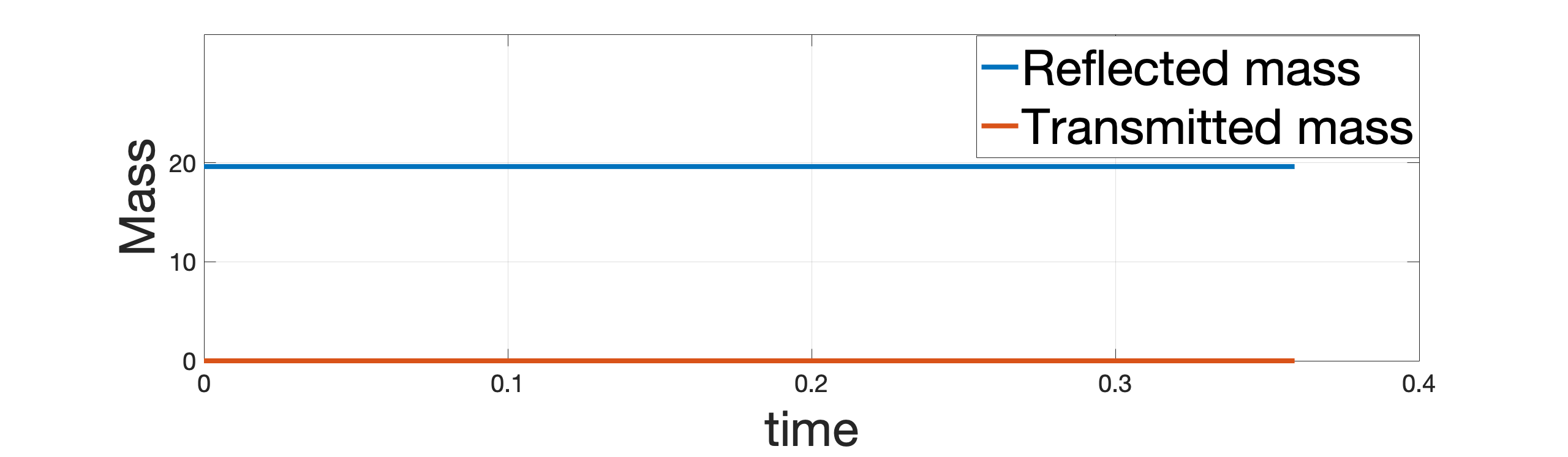}
\caption{Solution to the $2d$ cubic \NNls equation with the obstacle radius $r_{\star}=5$ and $u_0$ from \eqref{u0NLS} with \eqref{ref-A-2.5} moving along the line $y=x$. 
Snapshots of the blow-up solution $u(t)$ at $t=0$ (top left), $t=0.32$ (middle top) and $t=0.36$ (top right). 
Time dependence of the $L^{\infty}$-norm (bottom left) and of the transmitted and the reflected mass (bottom right).
}
\label{Fig-critic-r0-5}
\end{figure}

{\footnotesize
\begin{table}[h!]
\centering 
\begin{tabular}{|m{1.2cm}|c|c|c|c|}   
  \hline
   $r_{\star}$            & Discrete total mass     &  Behavior of the solution       & Discrete reflected mass                                               & Discrete transmitted mass        \\                                    
  \hline  
  $r_{\star}=0.1$      &  $15.9043$        &   Blow-up at $t \approx0.891 $                  &   $ 2.448$ at $t \approx0.891 $                      &  $13.4563$ at $t \approx0.891 $            \\
   \hline 
     $r_{\star}=0.2$      &  $15.9043$      & Scattering                                  &    $ 3.684 $ at $t \approx 1.2$                        &  $12.2203$ at $t \approx 1.2$              \\
   \hline       
     $r_{\star}=0.3$      &  $15.9043$      & Scattering                                  &   $ 5.0335 $ at $t \approx 1.2$                        &  $10.8708$ at $t \approx 1.2$               \\
   \hline 
     $r_{\star}=0.4$      &  $15.9043$      & Scattering                                  &  $6.3137  $ at $t \approx 1.2$                        &  $9.5906$ at $t \approx 1.2$             \\
   \hline 
    $r_{\star}=0.5$      &  $15.9043$      & Scattering                                   &  $7.4179$ at $t \approx 1.2 $                        &  $8.4864$ at $t \approx 1.2 $        \\
        \hline 
            $r_{\star}=0.6$      &  $15.9043$      & Scattering                           &   $8.3902$ at $t \approx 1.2 $              &  $7.5141 $ at $t \approx 1.2 $        \\
        \hline 
    $r_{\star}=0.7$      & $ 15.9043$      & Scattering                                   &   $9.2624$ at $t \approx 1.2 $                       &  $ 6.6419 $ at $t \approx 1.2 $       \\
        \hline 
           $r_{\star}=0.8$      &  $15.9043$     & Scattering                                   &  $10.0278$ at $t \approx 1.2 $                 &  $5.8765$ at $t \approx 1.2 $        \\
        \hline
              $r_{\star}=0.9$      &  $15.90430$      & Scattering                                   &   $10.6936 $ at $t \approx 1.2 $             &  $5.2107$ at $t \approx 1.2 $        \\
        \hline
     $r_{\star}=1$      &  $15.9043$        &  Scattering                                  &  $ 11.2791 $ at $t \approx 1.2$                        &   $4.6253$ at $t \approx 1.2 $      \\
\hline
  $r_{\star}=2$          &  $15.9043$      &  Scattering                                  &   $14.3843$  at $t \approx 1.2$                        &   $ 1.52$ at $t \approx 1.2 $        \\
  \hline
    $r_{\star}=3$          &  $15.9043$     & Scattering                                   &   $  15.3338$  at $t \approx 1.2$                      &    $  0.57049 $ at $t \approx 1.2 $       \\
  \hline
   $r_{\star}=4$          &  $15.9043$     &  Scattering                             &   $  15.65$  at $t \approx 1.2$                      &    $  0.2543$ at $t \approx 1.2$       \\
  \hline
   $r_{\star}=5$          &  $15.9043$     &   Blow-up at $t \approx 0.754$                              &   $  15.8291$  at $t \approx 0.36$                      &    $  0.07523 $ at $t \approx 0.36 $       \\
     \hline
   $r_{\star}=6$          &  $15.9043$     &   Blow-up at $t \approx 0.754$                              &   $  15.8291$  at $t \approx 0.36$                      &    $  0.07523 $ at $t \approx 0.36 $       \\
  \hline
\end{tabular}
  \caption{Influence of the obstacle radius $r_{\star}$ onto the behavior of the solution $u(t)$ to the $2d$ cubic \NNls equation  with $u_0$ from \eqref{u0NLS} and \eqref{ref-A-2.25} with the (initial) discrete total mass, and after the interaction the reflected and transmitted discrete mass parts at time $t$.  }
  \label{T:3-1}
\end{table}
}

We observe that in the case $r_{\star}=0.1,$ in both examples \eqref{ref-A-2.5} and \eqref{ref-A-2.25} with $A_0=2.25$ and $A_0=2.5$, the solutions blow up in finite time,  as the reflection parts of the respective solutions are small and almost all of the solution is transmitted. The time evolution proceeds as follows 
First, the solution hugs around the obstacle, splitting into two bumps and then, since  the radius of the obstacle is small,  the solution gets back together to form a single bump in a few time steps. One can observe that the solution 
has a substantial transmitted mass, which leads to a blow up in finite time. 
It seems as the solitary bump has a similar shape of the solution as if there would be no obstacle interaction near the blow-up time, in particular, it would be interesting to investigate the profile and other features of the blow-up solution after the interaction with the obstacle). 
As the radius of the obstacle increases, we see very different dynamics of the solution compared to the {\rm{NLS}$_{\R^2}$} equation in the whole space, since this solution blows up in finite time when the obstacle is absent (or has a very small radius). 
 
\subsection{The $L^2$-supercritical case}
\label{Obs-size-super-Crit}
In this section, we summarize the behavior of the solution to the quintic \NNls equation depending on the radius $r_{\star}$ of the obstacle and we study the strong interaction. 
As before, we take a set of values of the obstacle radius $r_{\star}$ and investigate the behavior of the solution moving on the line $y=0$ (in the strong interaction case) as shown in Figure \ref{directStrongInterac}. 

We take the initial data as in \eqref{u0NLS} and fix the following parameters: 
\begin{equation}
\label{ref-A-1.25}
A_0=1.25, \quad v_x=15, \quad v_y=0,    \quad  x_c=-4-r_{\star}  \; \text{ and  }  y_c=0.
\end{equation} 
We first discuss behavior of solutions for different sized of the obstacle, specifically, we show the snapshots of the time evolution of the above data for the obsctacle radii $r_{\star}=0.1$, $r_{\star}=1$ and $r_{\star}=3$. Then we provide the summary of results in Table \ref{T:4}.  \\

\begin{figure}[!ht]          
\centering
\includegraphics[width=5.5cm,height=5.0cm]{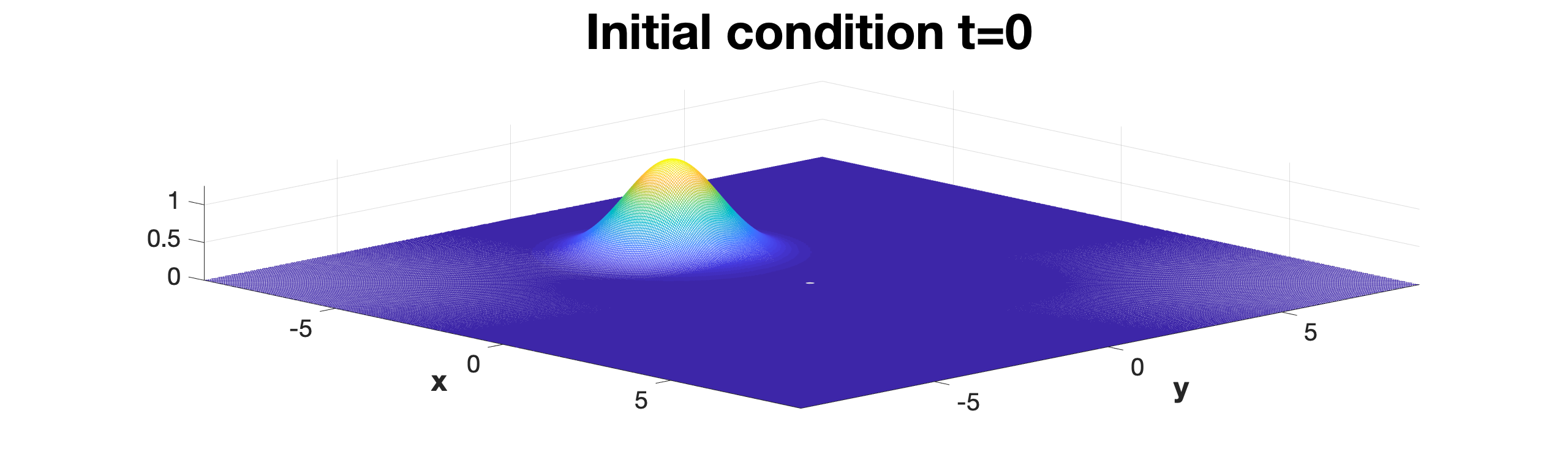}
\includegraphics[width=5.5cm,height=5.0cm]{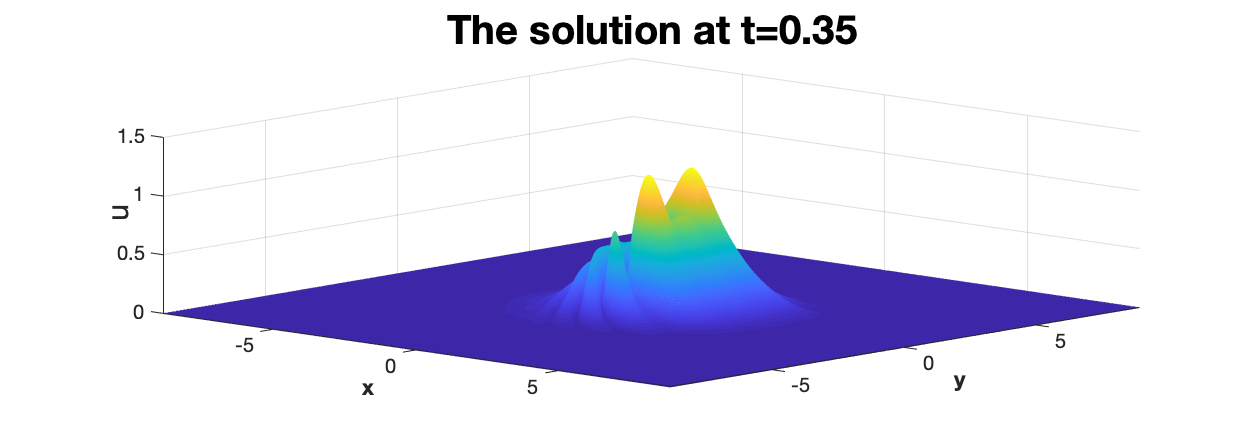}
\includegraphics[width=5.5cm,height=5.0cm]{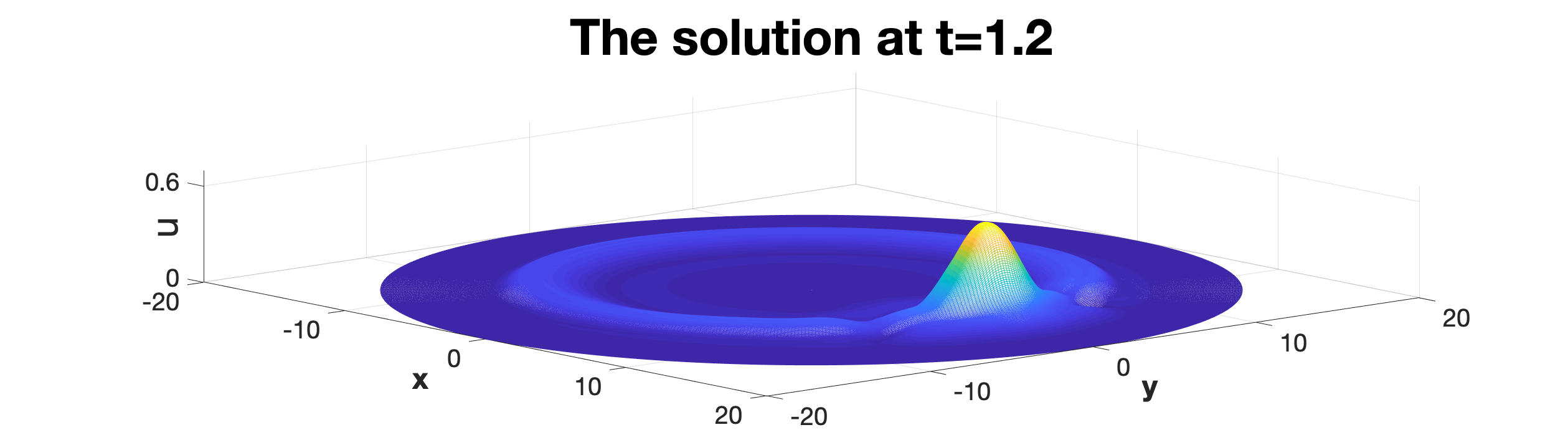}
\includegraphics[width=8.5cm,height=4.0cm]{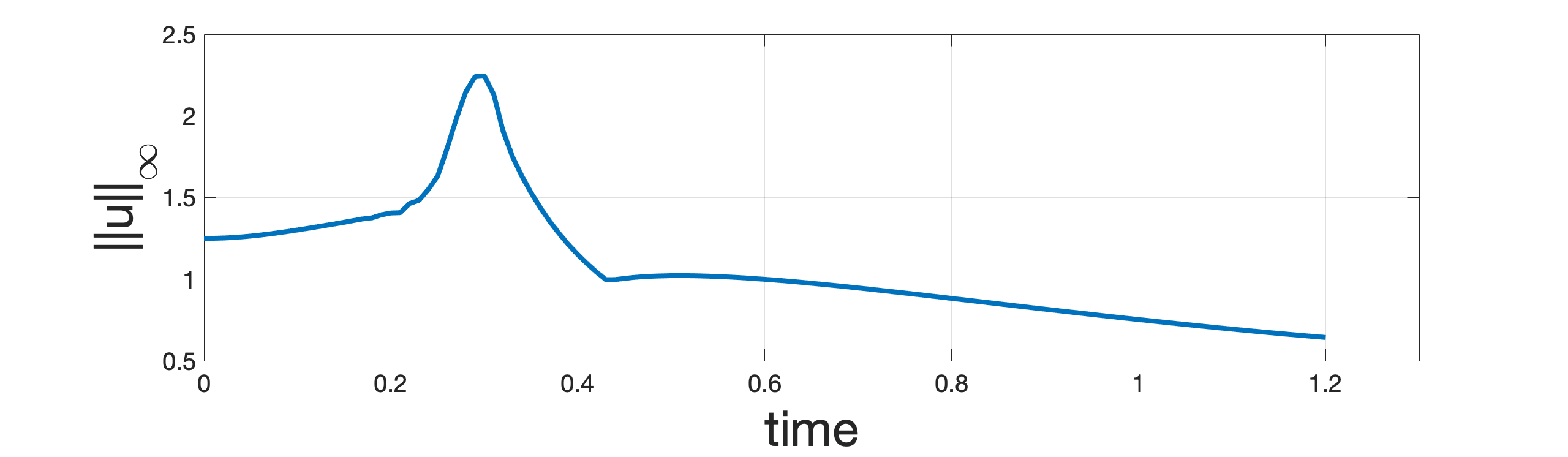}
\includegraphics[width=8.5cm,height=4.0cm]{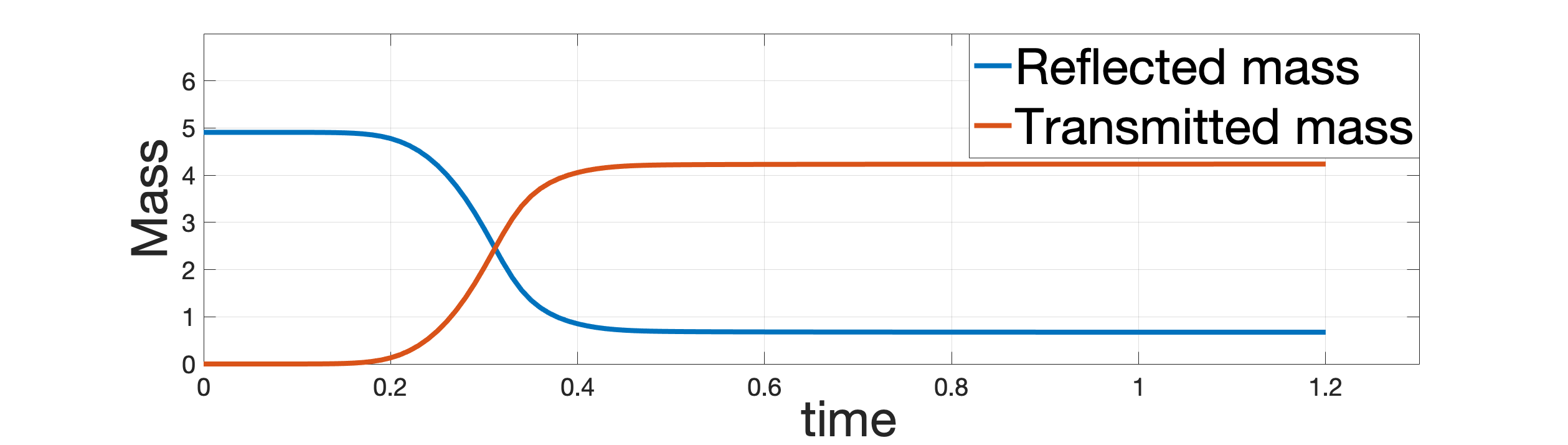}
\caption{Solution to the $2d$ quintic \NNls equation with the obstacle radius $r_{\star}=0.1$ and $u_0$ from \eqref{u0NLS} with \eqref{ref-A-1.25} moving along the line $y=0$. 
Snapshots of the scattering solution $u(t)$ at $t=0$ (top left), $t=0.35$ (middle top) and $t=1.2$ (top right). Time dependence of the $L^{\infty}$-norm (bottom left) and of the transmitted and the reflected mass (bottom right). }
\label{Fig-super-critic-r0-0.1}
\end{figure}

We observe that even for a small obstacle, $r_{\star}=0.1$, the solution scatters with a small backward reflection, see Figure \ref{Fig-super-critic-r0-0.1}. For $r_{\star}=1$ the solution has a similar dispersive behavior as in the previous examples, however, we note that the reflected backward part is more relevant and  important, 
which ensures the dispersive behavior of the solutions, see Figure  \ref{Fig-super-critic-r0-1}. On the other hand, for $r_{\star}=3$ the solution has a different behavior: the solution blows up in finite time at the boundary of the obstacle, as the interaction region becomes larger than the solution contour, and thus, the solution can not be transmitted around the obstacle. It concentrates at its blow-up core at the obstacle's boundary, see Figure  \ref{Fig-super-critic-r0-3}. \\

\begin{figure}[!h]      
\centering
\includegraphics[width=5.5cm,height=5.0cm]{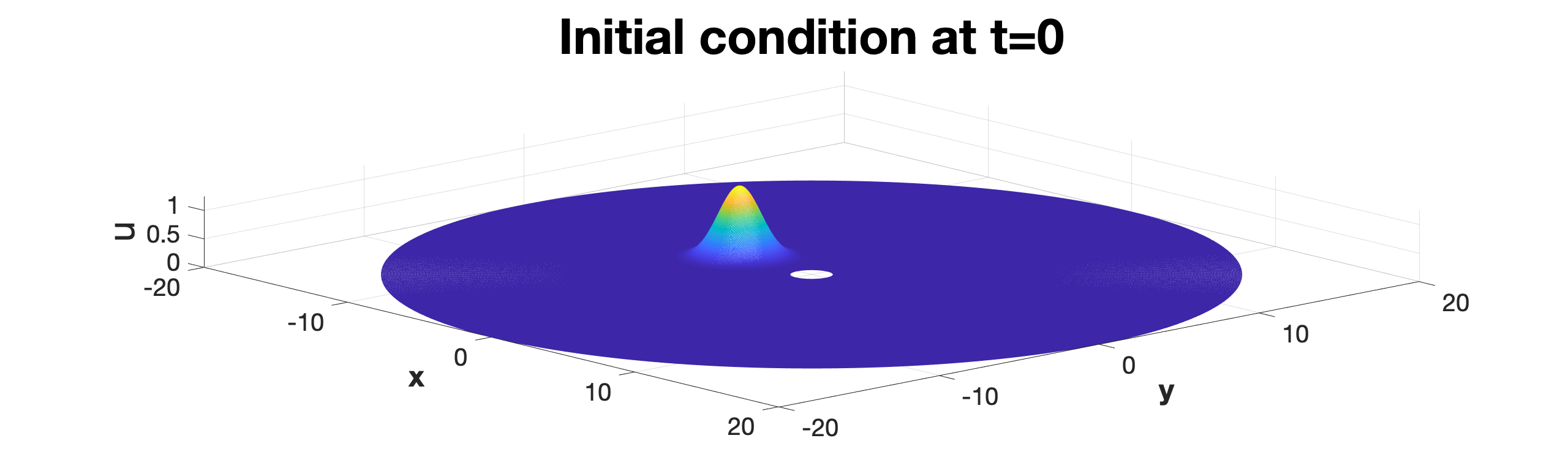}
\includegraphics[width=5.5cm,height=5.0cm]{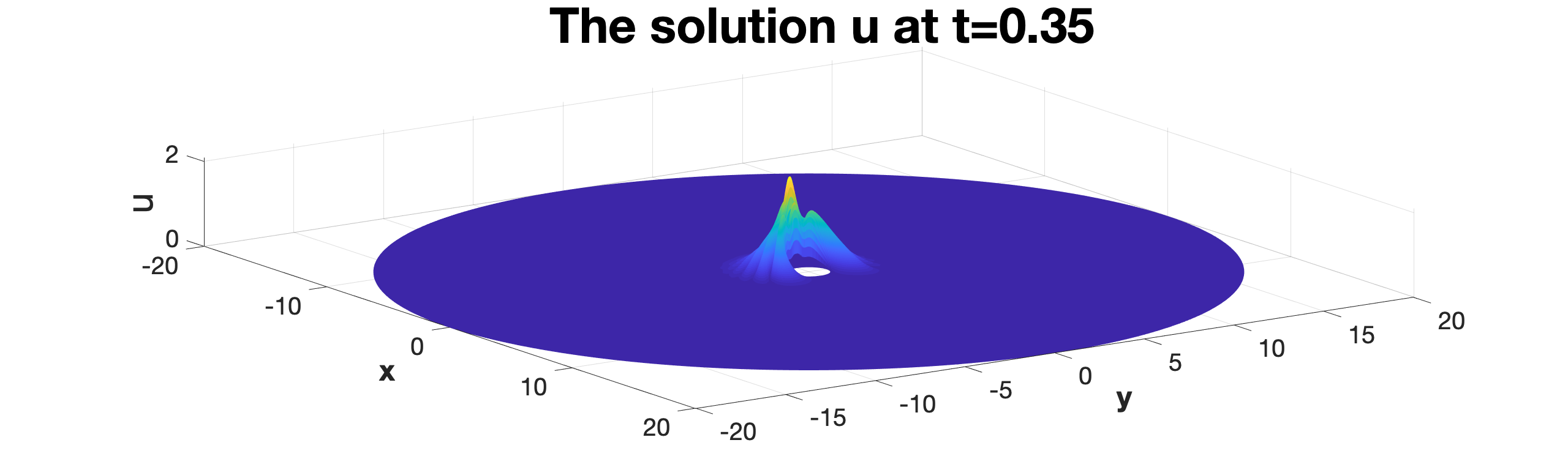}
\includegraphics[width=5.5cm,height=5.0cm]{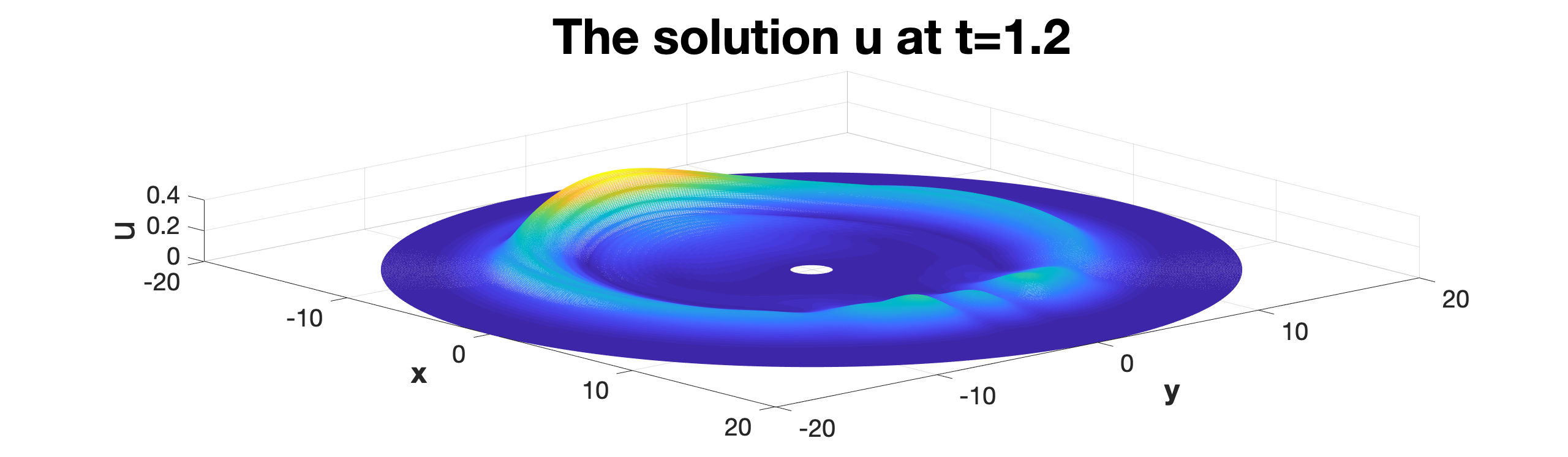}
\includegraphics[width=8.5cm,height=4cm]{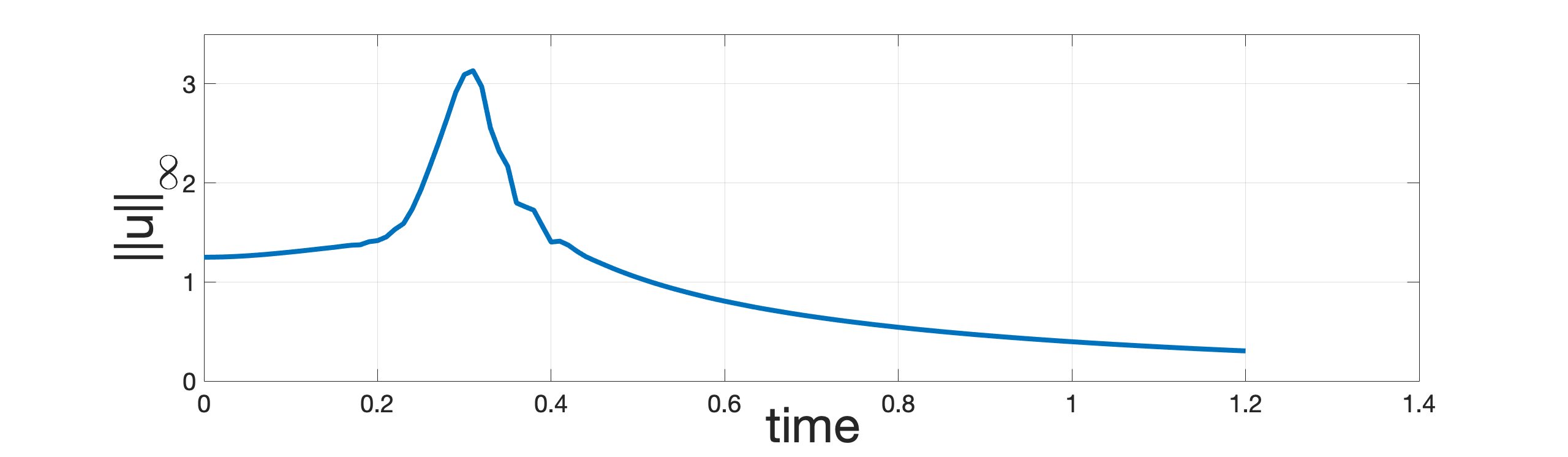}
\includegraphics[width=8.5cm,height=4cm]{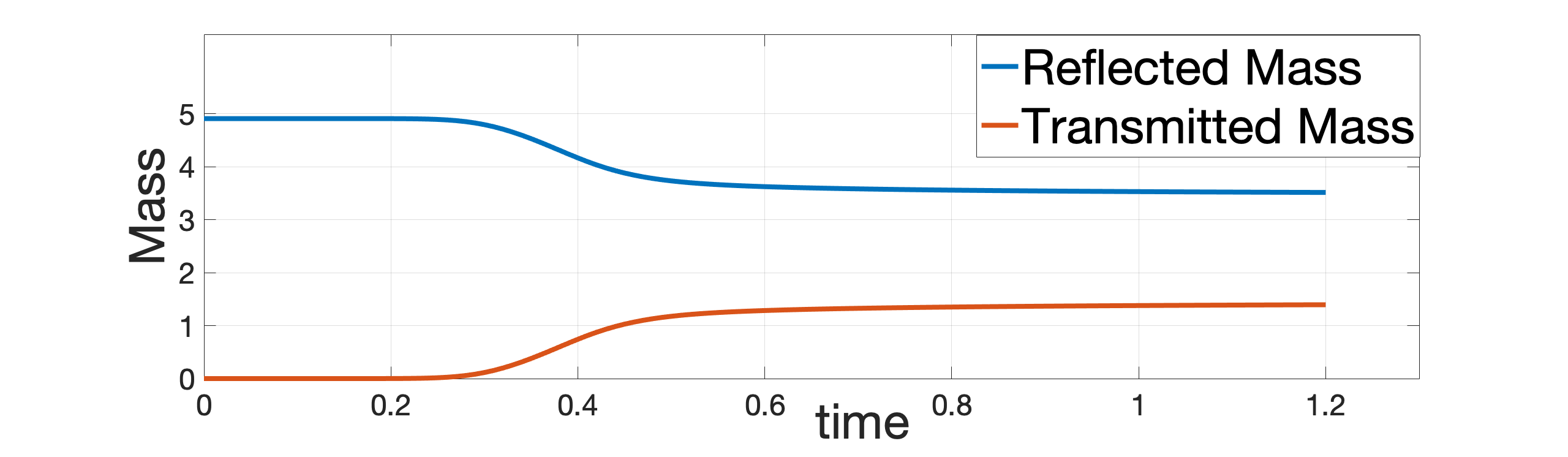}
\caption{Solution to the $2d$ quintic \NNls equation with the obstacle radius $r_{\star}=1$ and $u_0$ from \eqref{u0NLS} and \eqref{ref-A-1.25} moving along the line $y=0$. 
Snapshots of the scattering solution $u(t)$ at $t=0$ (top left), $t=0.35$ (middle top) and $t=1.2$ (top right). Time dependence of the $L^{\infty}$-norm (bottom left) and of the transmitted and the reflected mass (bottom right).
}
\label{Fig-super-critic-r0-1}
\includegraphics[width=5.5cm,height=4.3cm]{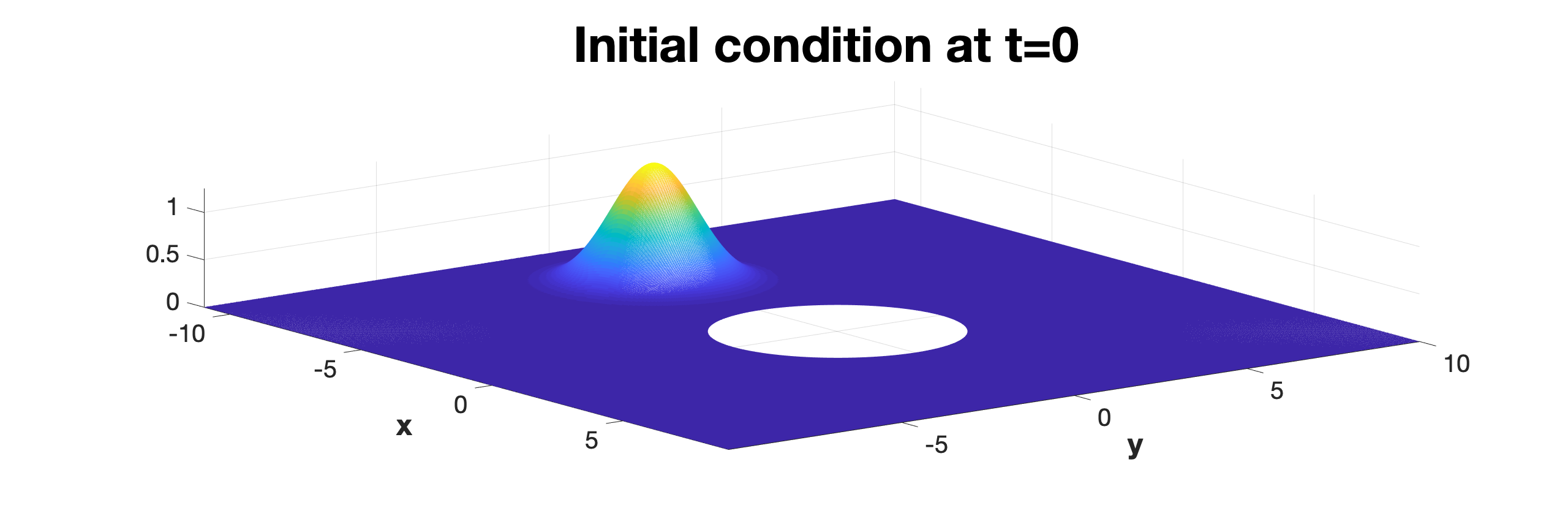}
\includegraphics[width=5.5cm,height=4.3cm]{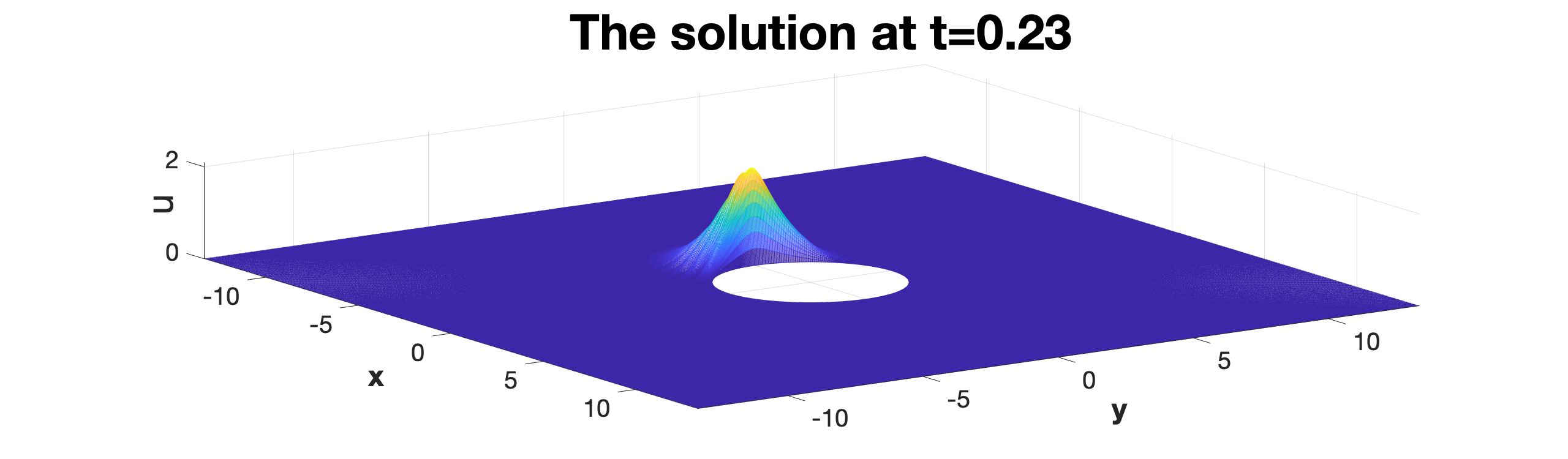}
\includegraphics[width=5.5cm,height=4.3cm]{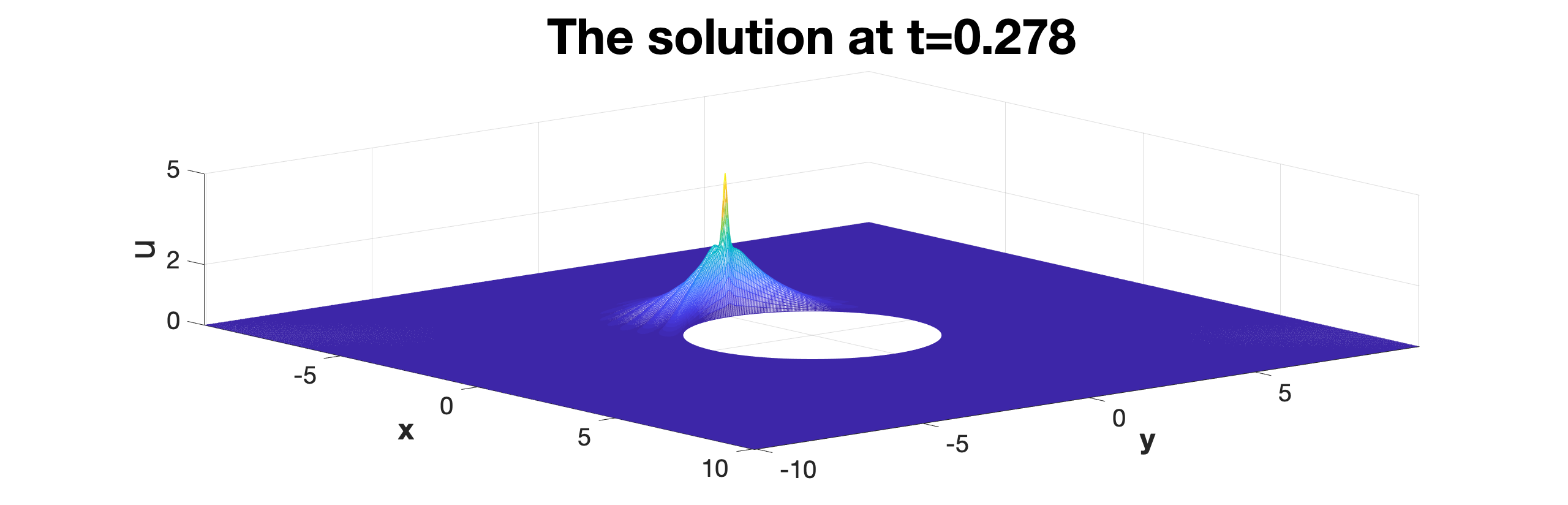}
\includegraphics[width=8.5cm,height=3.7cm]{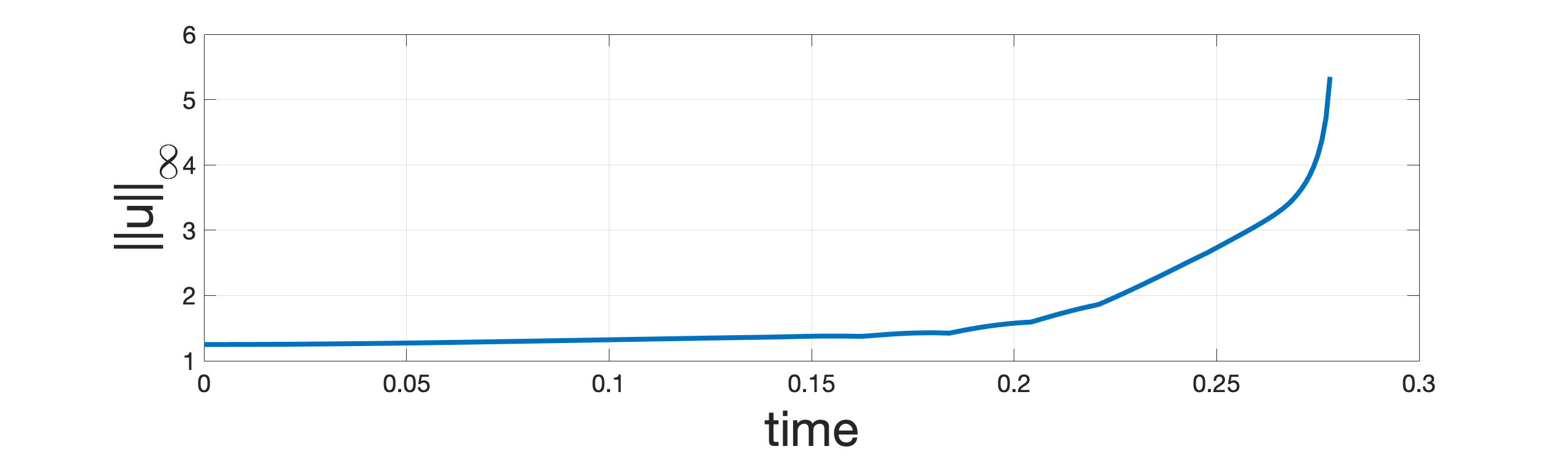}
\includegraphics[width=8.5cm,height=3.7cm]{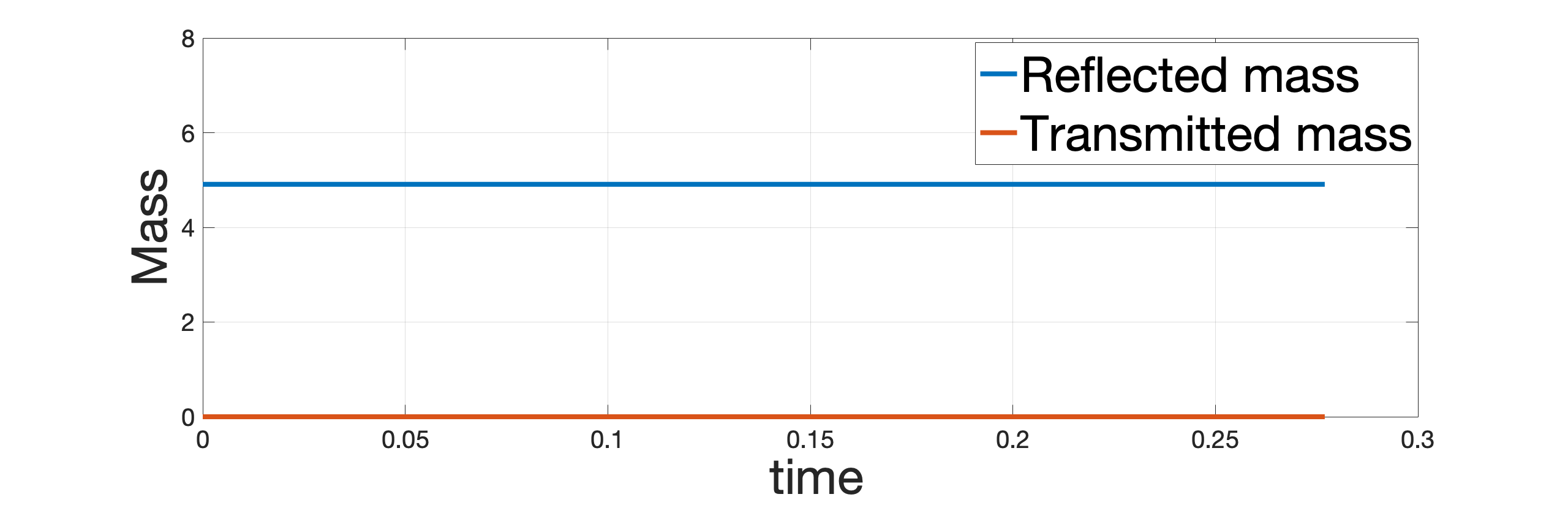}
\caption{Solution to the $2d$ quintic \NNls equation with the obstacle radius $r_{\star}=3$ and $u_0$ from \eqref{u0NLS} with \eqref{ref-A-1.25} moving along the line $y=0$. 
Snapshots of the blow-up solution $u(t)$ at $t=0$ (top left), $t=0.23$ (middle top) and $t=0.278$ (top right). Time dependence of the $L^{\infty}$-norm (bottom left) and of the transmitted and the reflected mass (bottom right).
}
\label{Fig-super-critic-r0-3}
\end{figure}
{\footnotesize
\begin{table}[h!]
\centering 
\begin{tabular}{|m{1.2cm}|c|c|c|c|m{1cm}|}   
  \hline
   $r_{\star}$                    & Discrete total mass        &  Behavior of the solution       & Discrete reflected mass                                & Discrete transmitted mass        \\                                    
  \hline  
 $r_{\star}=0.1$               & $4.9087$                & Scattering                               &  $0.67277 $ at $t \approx 1.2$                     &  $4.236 $ at $t \approx 1.2$           \\
   \hline 
 $r_{\star}=0.2$               &   $4.9087$             & Scattering                                  &  $1.1094   $ at $t \approx 1.2$                     & $ 3.7994 $ at $t \approx 1.2$              \\
   \hline 
 $r_{\star}=0.3$              &   $4.9087$               & Scattering                                  &   $1.5618  $ at $t \approx 1.2$                     &  $3.3469  $ at $t \approx 1.2$               \\
   \hline 
$r_{\star}=0.4$               &  $4.9087$               & Scattering                                  &    $1.9578 $ at $t \approx 1.2$                      & $2.9509$ at $t \approx 1.2$             \\
   \hline   
$r_{\star}=0.5$               & $4.9087$                 & Scattering                                   &   $2.3141$ at $t \approx 1.2 $                      & $2.5946 $ at $t \approx 1.2 $        \\
        \hline 
$r_{\star}=0.6$               &  $4.9087$               & Scattering                                   &   $ 2.6199$ at $t \approx 1.2 $                        &    $2.2888$ at $t \approx 1.2 $        \\
        \hline 
$r_{\star}=0.7$               &  $4.9087$               & Scattering                                   &   $2.8845 $ at $t \approx 1.2 $                       &  $2.0243$ at $t \approx 1.2 $       \\
        \hline 
$r_{\star}=0.8$               &   $4.9087$               & Scattering                                   &   $3.1181$ at $t \approx 1.2 $                      &  $1.7906 $ at $t \approx 1.2 $        \\
        \hline
$r_{\star}=0.9$                & $4.9087$                & Scattering                                   &   $3.3263$ at $t \approx 1.2 $                      &  $1.5824$ at $t \approx 1.2 $        \\
        \hline
$r_{\star}=1$                   &   $4.9087$               &  Scattering                                  &   $3.5143$ at $t \approx 1.2$                     &  $1.3945$ at $t \approx 1.2 $      \\
\hline
$r_{\star}=1.5$                   &   $4.9087$            &  Blow up at $t\approx 0.377  $         &   $4.7085$ at $t \approx 0.377$                     &  $0.20019$ at $t \approx 0.377$      \\
\hline
  $r_{\star}=2$                 &    $4.9087$             & Blow up at $t \approx 0.285$      &   $4.908$      at $t \approx 0.285$                   & $7.5778e^{-4}$ at $t \approx 0.285 $        \\
  \hline
    $r_{\star}=3$              &    $4.9087$                & Blow up at $t \approx 0.278$                                 &   $4.9087$     at $t \approx 0.278$                   &   $ 6.2761e^{-8}  $ at $t \approx 0.278 $       \\
  \hline
\end{tabular}
  \caption{Influence of the obstacle size $r_{\star}$ onto the  behavior of the solution $u(t)$ to the $2d$ quintic \NNls equation  with $u_0$ from \eqref{u0NLS} and \eqref{ref-A-1.25} with the discrete total mass, reflected and transmitted discrete mass parts after interaction with the obstacle at time $t.$}
  \label{T:4}
\end{table}
}

\section{Blow-up: Wall-type initial data}
\label{sec-special-solution}
In this section we study blow-up solutions in the strong interaction case (moving directly towards the obstacle as shown in Figure \ref{directStrongInterac}) for the large obstacle size and, in some cases, reuniting back into one single bump, which then blows up. We investigate the behavior of solutions to the cubic \NNls equation with a special round Wall-type super-Gaussian initial data. We consider the 
initial condition, which is defined by the product of a phase in terms of the angle $\Theta:=(\Theta_j)_{1\leq j  \leq N}$ with a super-Gaussian in terms of $r:=(r_i)_{1\leq i\leq N}:$ 
\begin{equation}
 \label{special-u0}
u_0 (r,\Theta):= A_0  \left( e^{-(r+r_c)^4}    \times e^{-\frac 12(\Theta-\pi)^4}  \right) e^{i\, (\frac 12 (v_x  r \cos(\Theta)+v_y  r \sin(\Theta)))} ,
\end{equation}
where  
\begin{equation}
\label{super-gauss-parameter-1}
 A_0=2.5, \quad v_x=15, \quad v_y=0,  \quad \text{ and }  \; \,  r_c=-4-r_{\star}.   
\end{equation}
Note that, due to the construction of this solution, the $L^2$-norm, or the mass, of $u_0$ depends on the radius of the obstacle $r_{\star}$, i.e., the mass increases as $r_{\star}$ becomes larger. This does not affect the conservation of the mass throughout the simulation for fixed $r_{\star}$.  
\begin{figure}[ht]                          
\centering
\includegraphics[width=5.9cm,height=5.5cm]{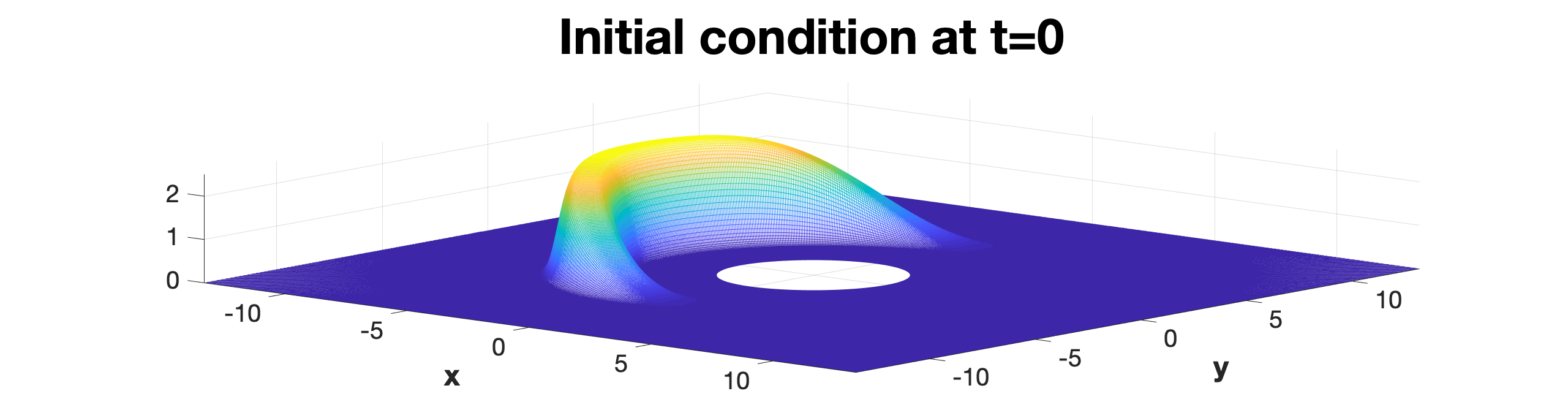}  
\includegraphics[width=5.9cm,height=5.5cm]{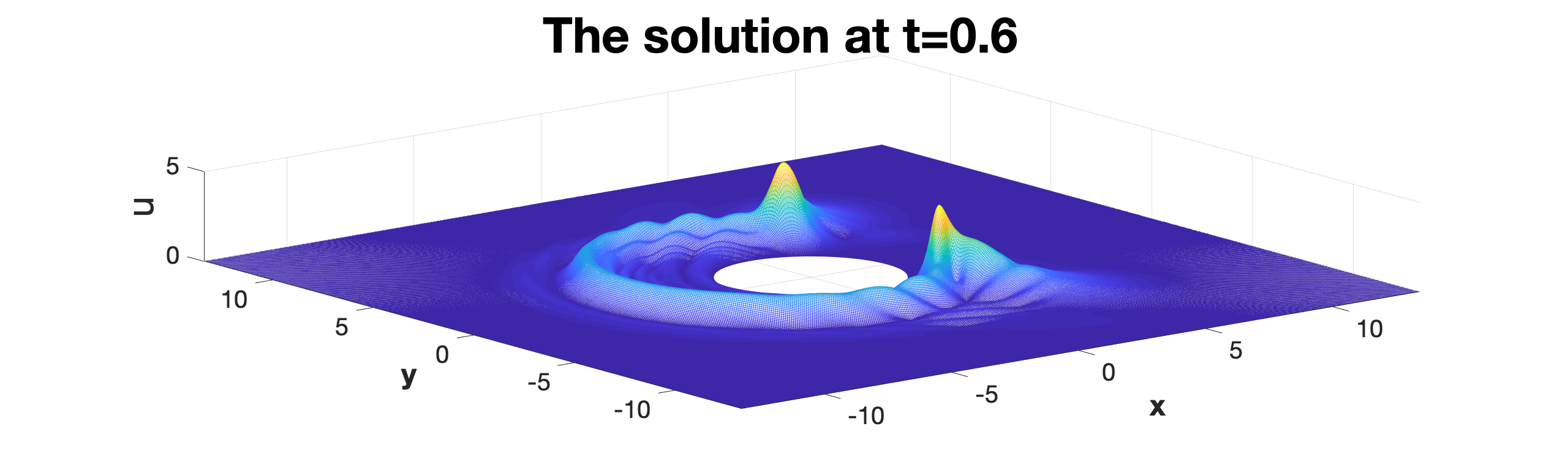} 
\includegraphics[width=6.0cm,height=5.5cm]{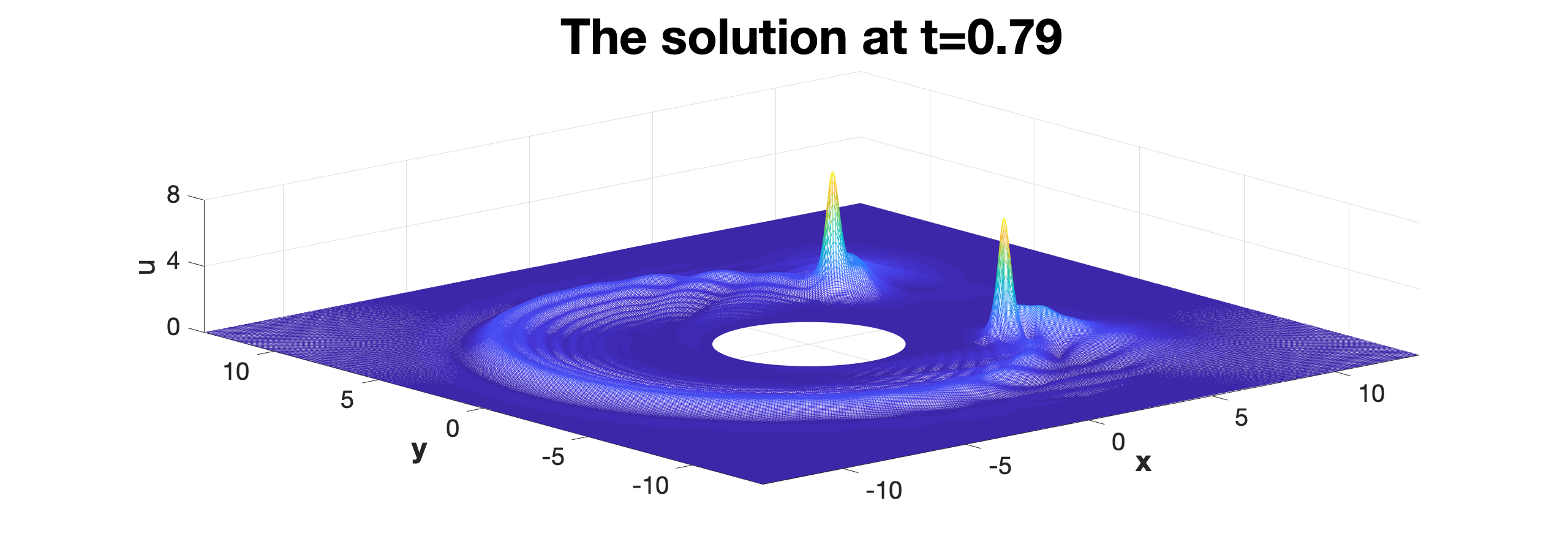}
\caption{Solution to the $2d$ cubic \NNls equation with the obstacle radius $r_{\star}=3$ and $u_0$ from \eqref{special-u0} with \eqref{super-gauss-parameter-1} moving along the line $y=0$. 
Snapshots of the blow-up solution $u(t)$ at $t=0$ (left), $t=0.6$ (middle) and $t=0.79$ (right). 
}
\label{Fig-supecial-critic-r0-3}
\end{figure}
In the following simulations, we consider $r_{\star}=3$ (Figure \ref{Fig-supecial-critic-r0-3}) and $r_{\star}=5$ (Figure \ref{Fig-supecial-critic-r0-5}). We observe that even with the large radius of the obstacle, the solution blows up in finite time. After the interaction, the solution splits into two bumps, with 
an essential backward reflection and a substantial  transmitted mass. Before the two bumps could merge together, they concentrate in their own blow-up core regions, that is, each bump blows up separately at a single point location, see the right plots in Figure \ref{Fig-supecial-critic-r0-3} for  $r_{\star}=3$ and Figure  \ref{Fig-supecial-critic-r0-5} for $r_{\star}=5$; also the growth of the $L^\infty$ norm on the right plots of Figures \ref{Fig-supecial-critic-r0-3b} and \ref{Fig-supecial-critic-r0-5b}.

\begin{figure}[!ht]                          
\centering
\includegraphics[width=8.5cm,height=4.3cm]{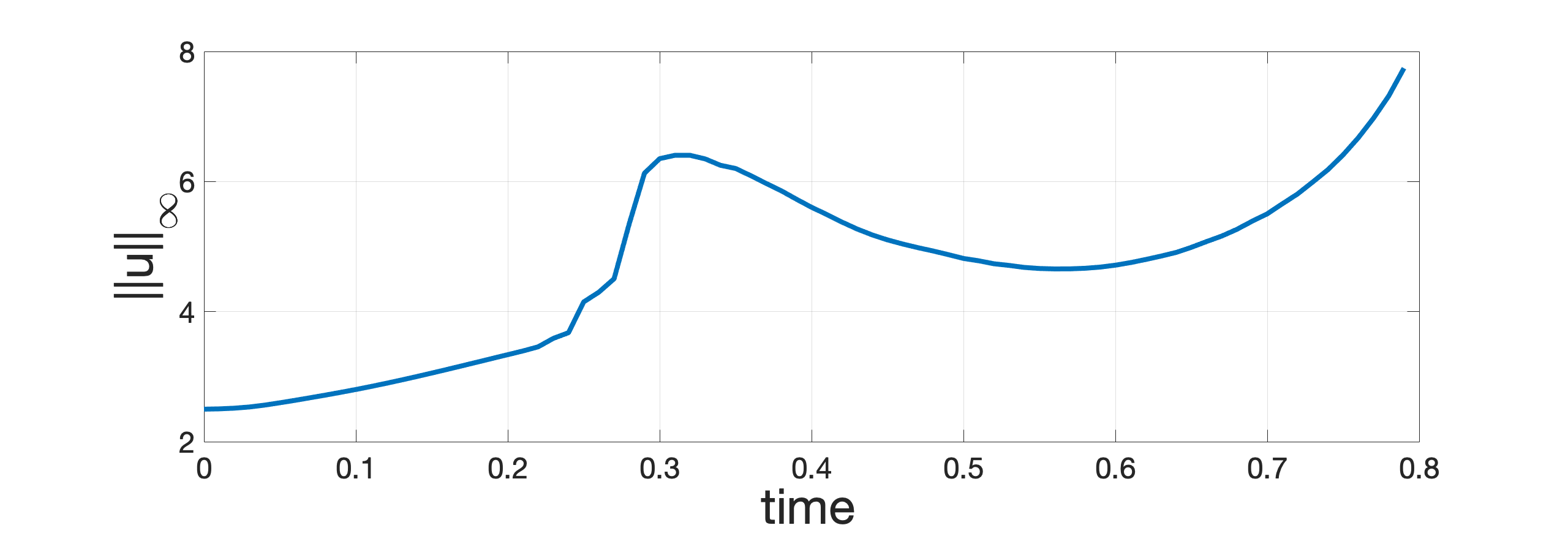}
\includegraphics[width=8.5cm,height=4.3cm]{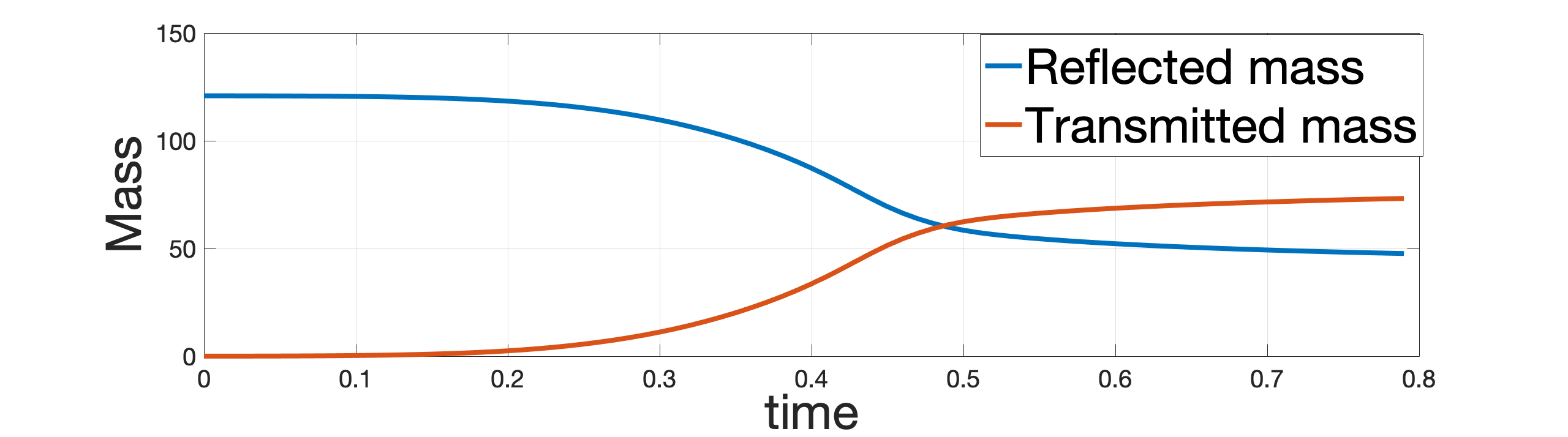}
\caption{
Time dependence of the $L^{\infty}$-norm (left) and of the transmitted and the reflected mass (right) for the solution in Figure \ref{Fig-supecial-critic-r0-3}.}
\label{Fig-supecial-critic-r0-3b}
\end{figure}

\begin{figure}[!ht]      
\centering
\includegraphics[width=5.9cm,height=5.5cm]{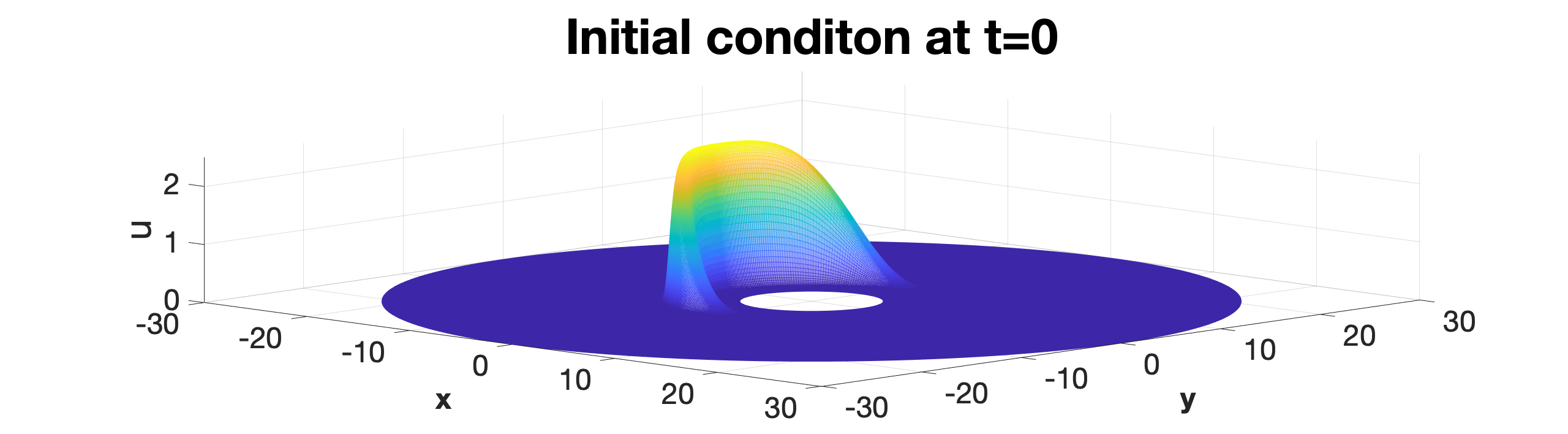}
\includegraphics[width=5.9cm,height=5.5cm]{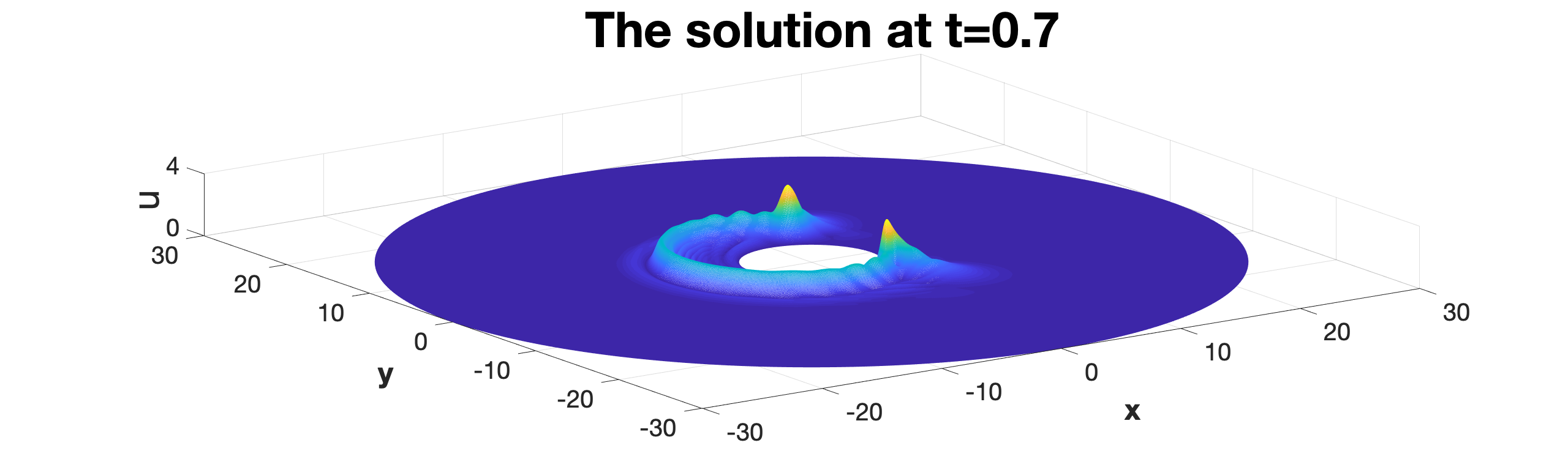}
\includegraphics[width=6.0cm,height=5.5cm]{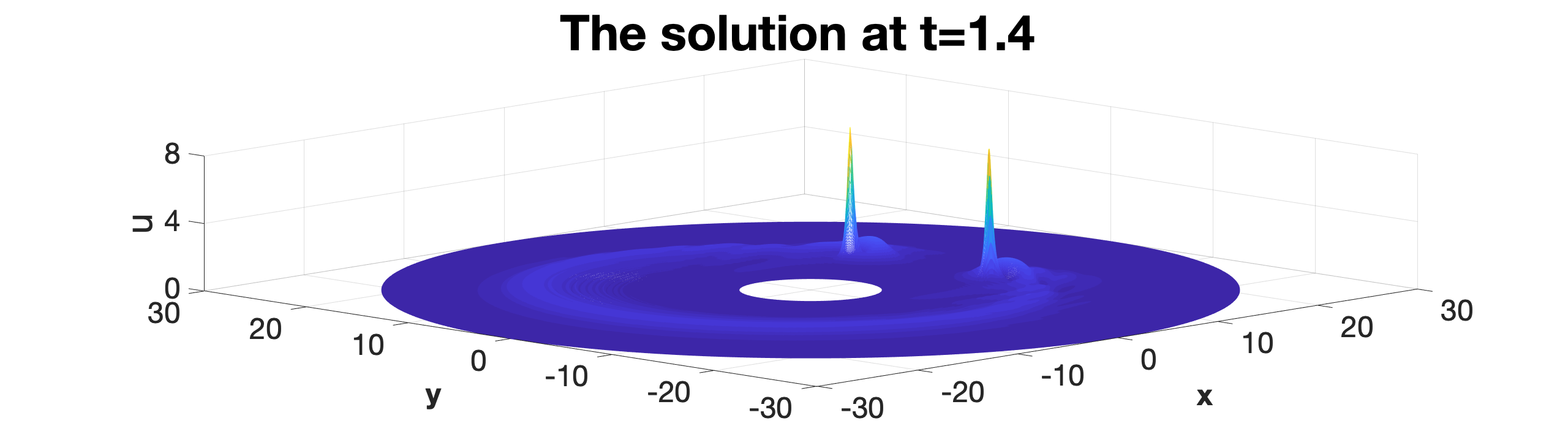}
\caption{Solution to the $2d$ cubic \NNls equation with the radius of the obstacle $r_{\star}=5$ and $u_0$ from \eqref{special-u0} with \eqref{super-gauss-parameter-1} moving along the line $y=0$. 
Snapshots of the blow-up solution $u(t)$ at $t=0$ (left), $t=0.7$ (top) and $t=1.4$ (right). 
}
\label{Fig-supecial-critic-r0-5}
\end{figure}

\begin{figure}[!ht]      
\centering
\includegraphics[width=8.5cm,height=4.3cm]{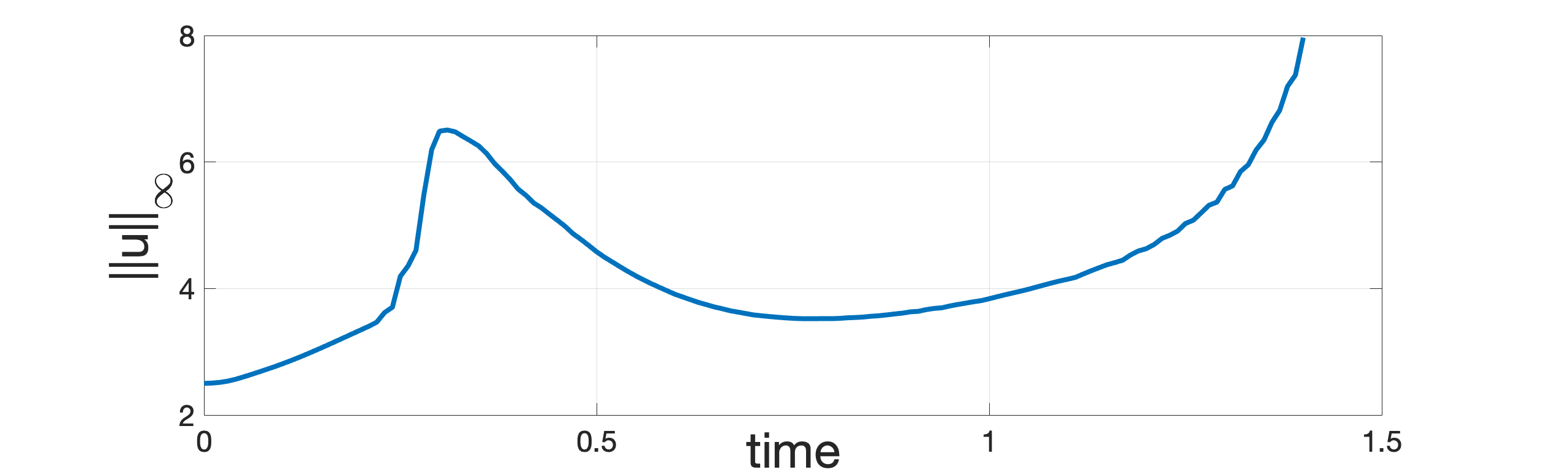}
\includegraphics[width=8.5cm,height=4.3cm]{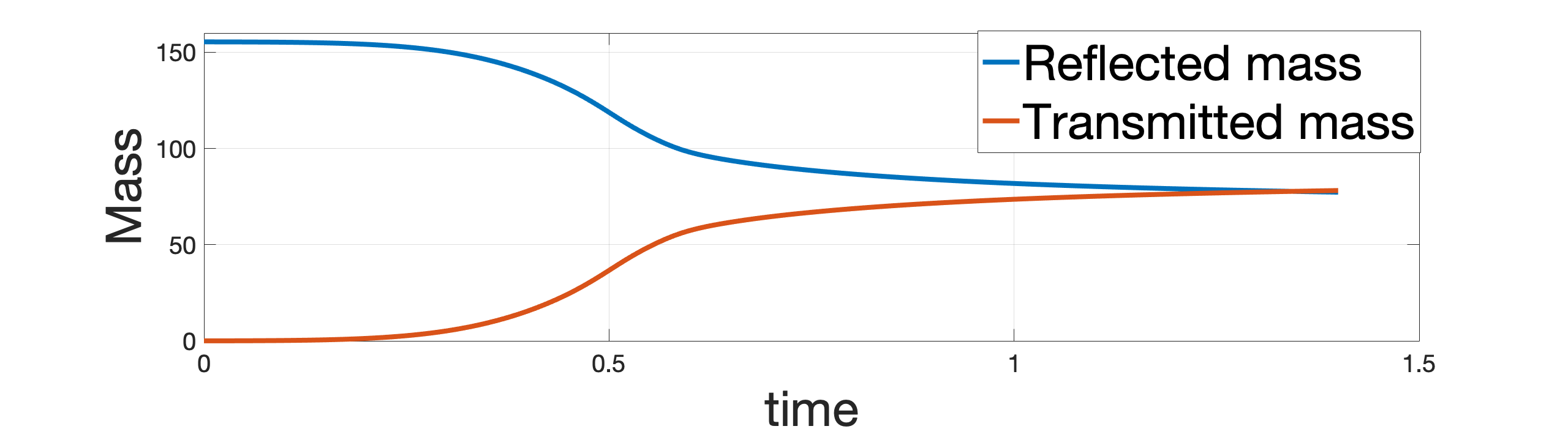}
\caption{
Time dependence of the $L^{\infty}$-norm (left) and of the transmitted and the reflected mass (right) for the solution in Figure \ref{Fig-supecial-critic-r0-5}.
}
\label{Fig-supecial-critic-r0-5b}
\end{figure}

We next consider the data \eqref{special-u0} with the following parameters (changing the amplitude):
 \begin{equation}
 \label{para-spec-A0-1.5}
A_0=1.5 , \quad v_x=15, \quad v_y=0,  \quad \text{ and  } r_c=-4-r_{\star}. 
 \end{equation} 

For the last two examples in this paper, we consider $r_{\star}=2$ and $r_{\star}=10$ with the data in \eqref{para-spec-A0-1.5}. \\

First, we observe that the solution blows up in finite time even if the radius of the obstacle is large (compared to the previous example with the amplitude $A_0=2.5$), see Figure \ref{Fig-supecial-critic-A0-1.5-r0-2}, where $r_{\star}=2$. 

\begin{figure}[ht]            
\centering          
\includegraphics[width=5.9cm,height=6.5cm]{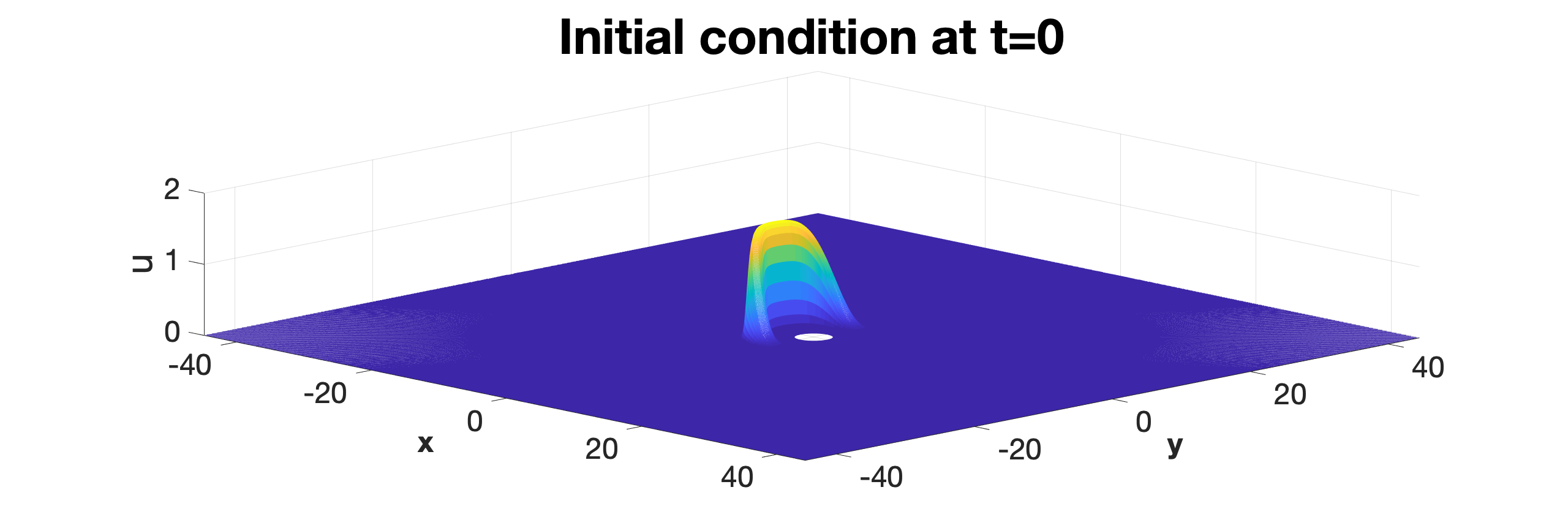}  
\includegraphics[width=5.9cm,height=6.5cm]{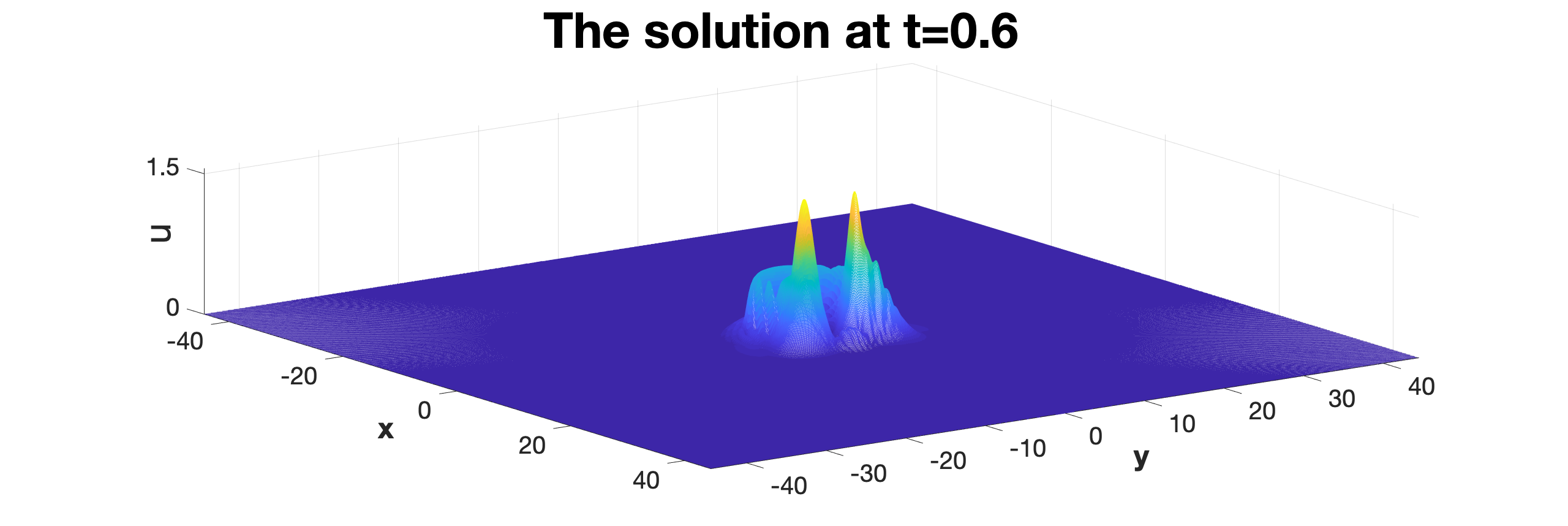}
\includegraphics[width=6.0cm,height=6.5cm]{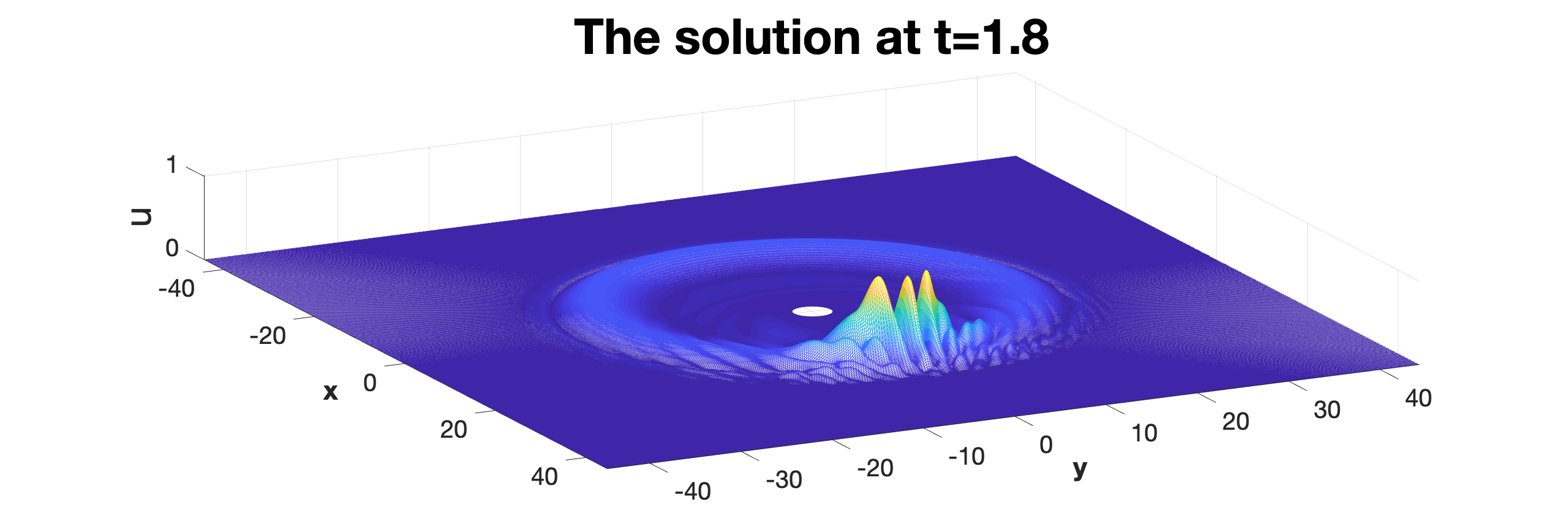}
\includegraphics[width=5.9cm,height=7.3cm]{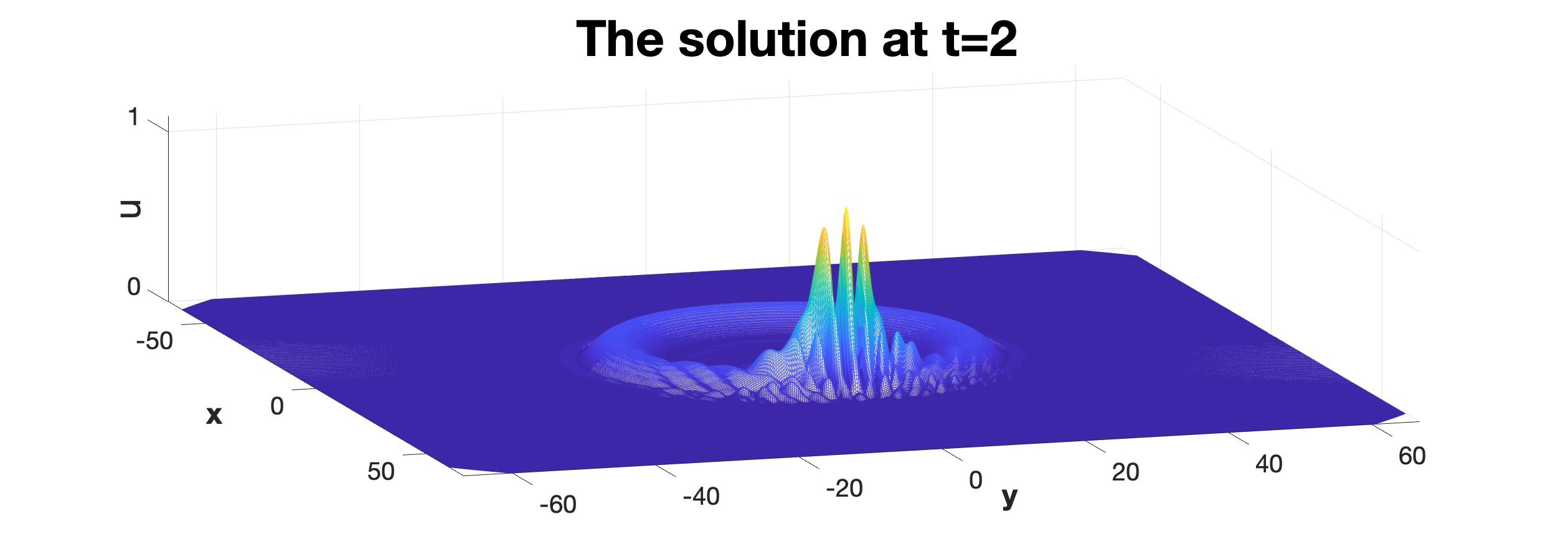}
\includegraphics[width=5.9cm,height=7.3cm]{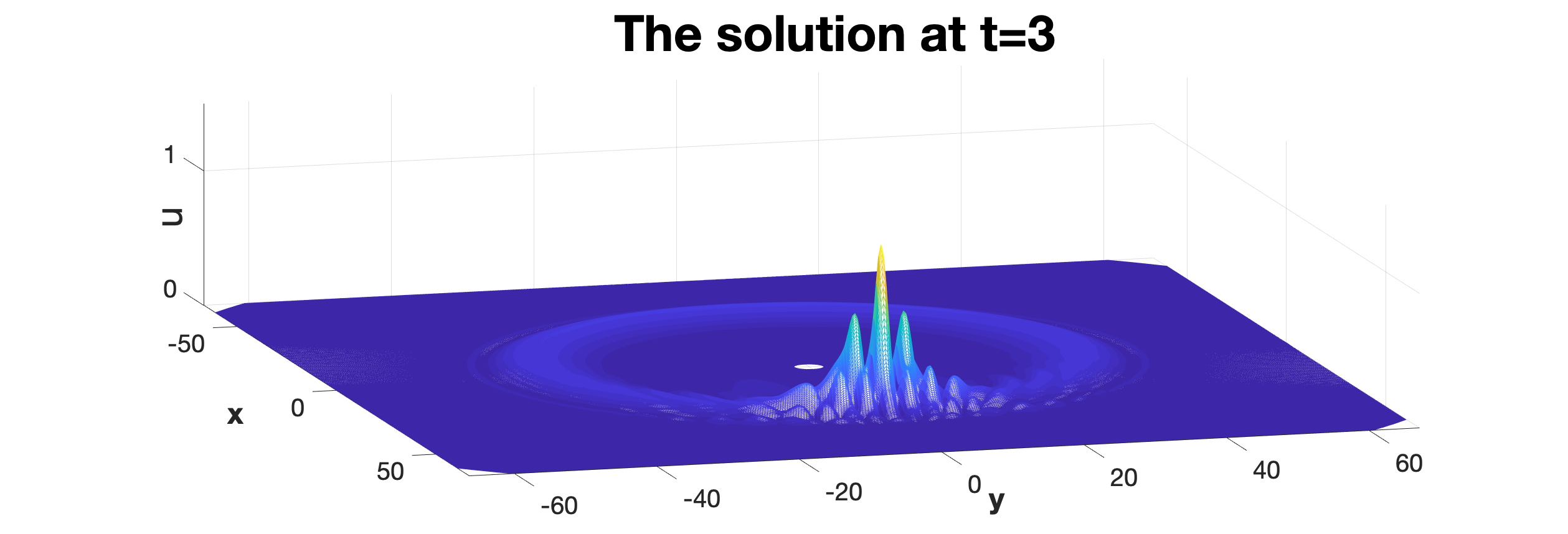}
\includegraphics[width=6.0cm,height=7.3cm]{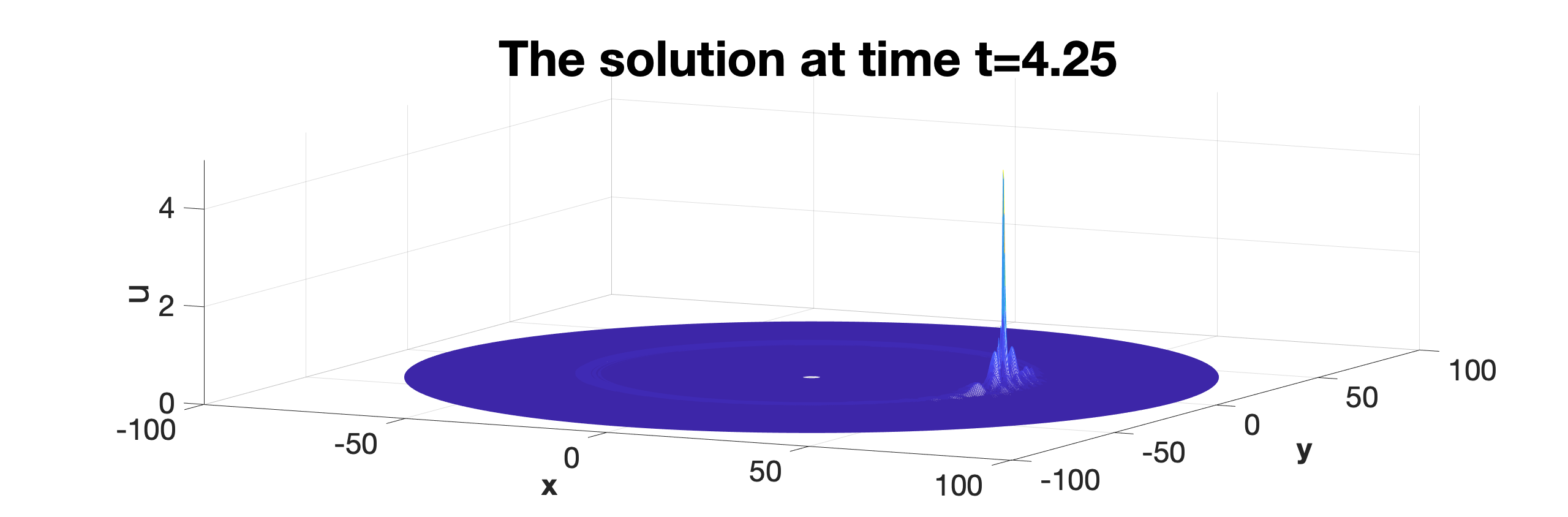}
\caption{Solution to the $2d$ cubic \NNls equation with the obstacle radius 
$r_{\star}=2$ and $u_0$ from \eqref{special-u0} with \eqref{para-spec-A0-1.5}, moving along the line $y=0$. 
Snapshots of the blow-up solution $u(t)$ at $t=0, 0.6, 1.8$ (top row), $t=2, 3, 4.25$ (bottom row). }
\label{Fig-supecial-critic-A0-1.5-r0-2}
\end{figure}

After the interaction, the solution splits into two bumps, with 
the backward reflection having a substantial amount of the transmitted mass. Then the two bumps have sufficient time to merge together and pump the mass from both lumps (as the circle expands) into a single bump, which has enough mass to concentrates in its core to blow up in finite time, 
see Figure \ref{Fig-supecial-critic-A0-1.5-r0-2} and also the $L^\infty$ norm together with the change in time in the transmitted and reflected mass in Figure \ref{Fig-supecial-critic-A0-1.5-r0-2b}. %
\\

\begin{figure}[ht]            
\centering          
\includegraphics[width=8.5cm,height=4.5cm]{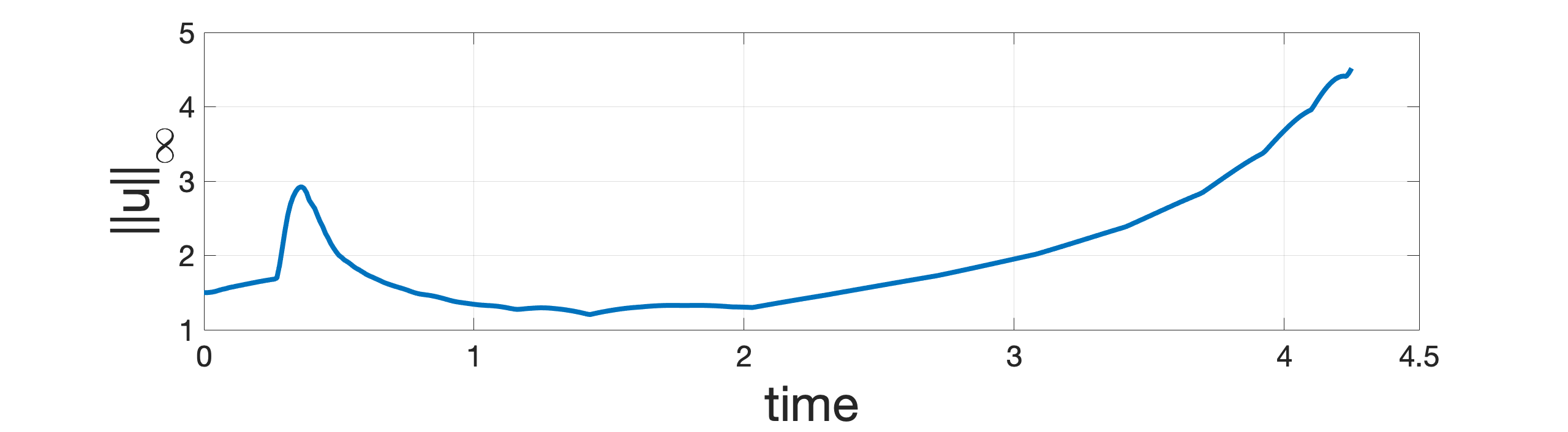}
\includegraphics[width=8.5cm,height=4.5cm]{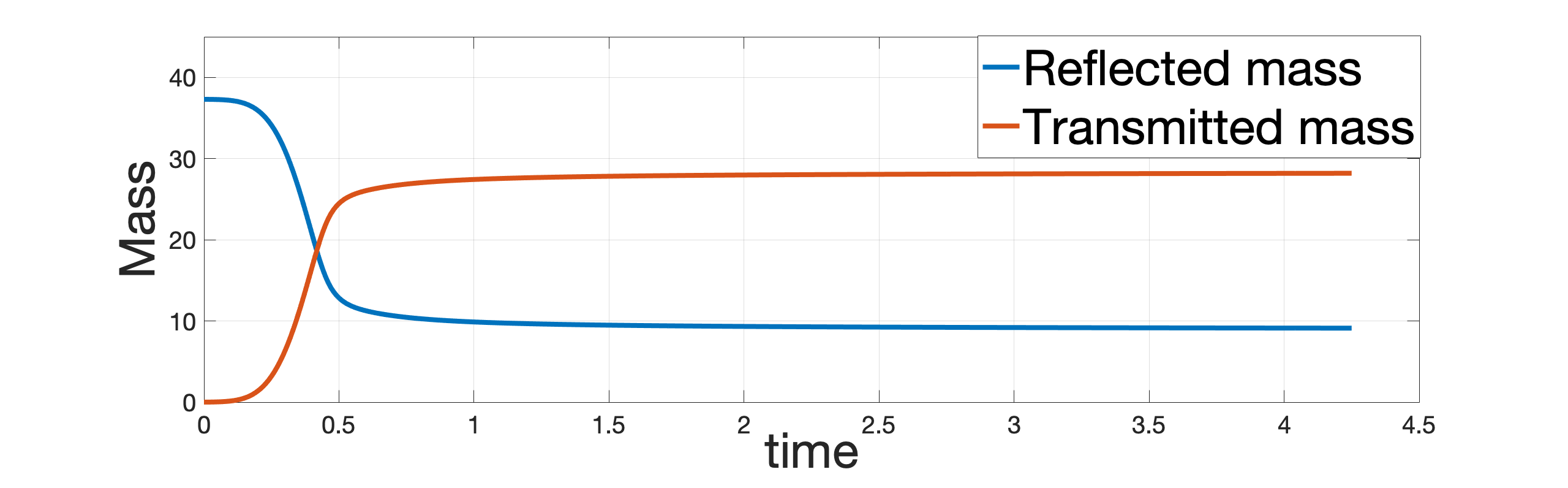}
\caption{Time dependence of the $L^{\infty}$-norm (left) and of the transmitted and the reflected mass (right) for the solution in Figure \ref{Fig-supecial-critic-A0-1.5-r0-2}.
}
\label{Fig-supecial-critic-A0-1.5-r0-2b}
\end{figure}

\newpage

However, for a larger radius, for example $r_{\star} \geq 3$, the solution has to hug around an obstacle with the bigger size, and thus, the mass gets dispersed more around, hence, less of the mass is transmitted, which concludes in the overall scattering behavior. 
See Figure \ref{Fig-supecial-critic-A0-1.5-r0-10} for an example of the obstacle size  $r_{\star}=10$.  \\

\begin{figure}[!ht]              
\centering
\includegraphics[width=5.9cm,height=6.2cm]{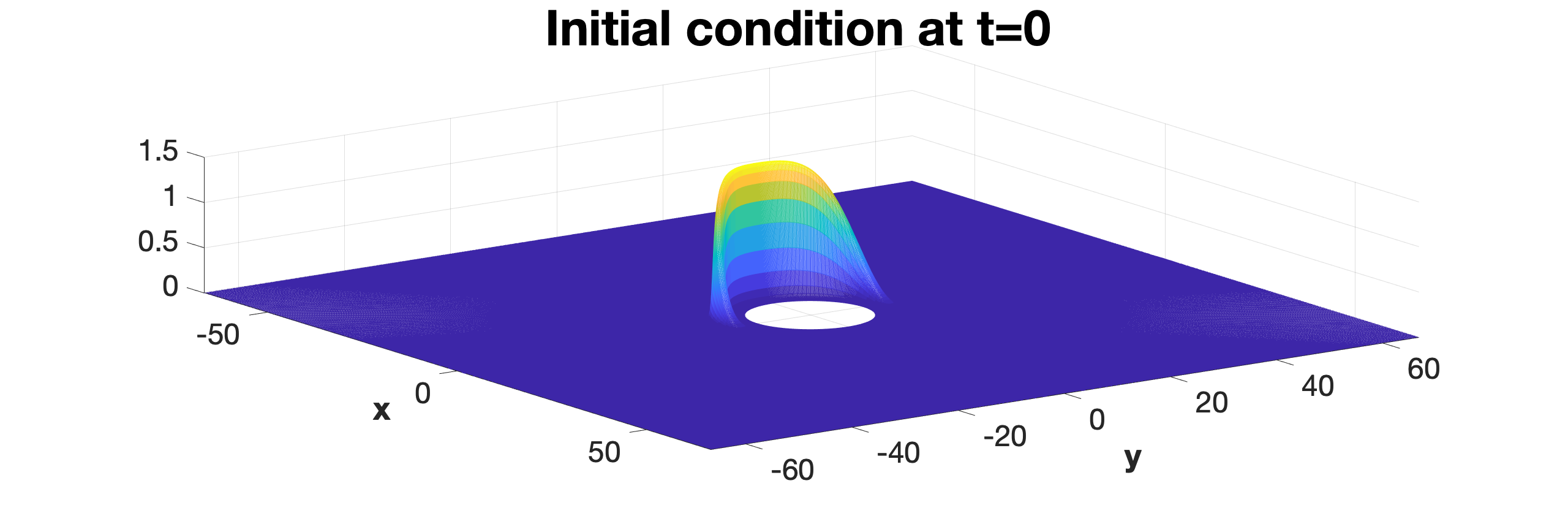}
\includegraphics[width=5.9cm,height=6.2cm]{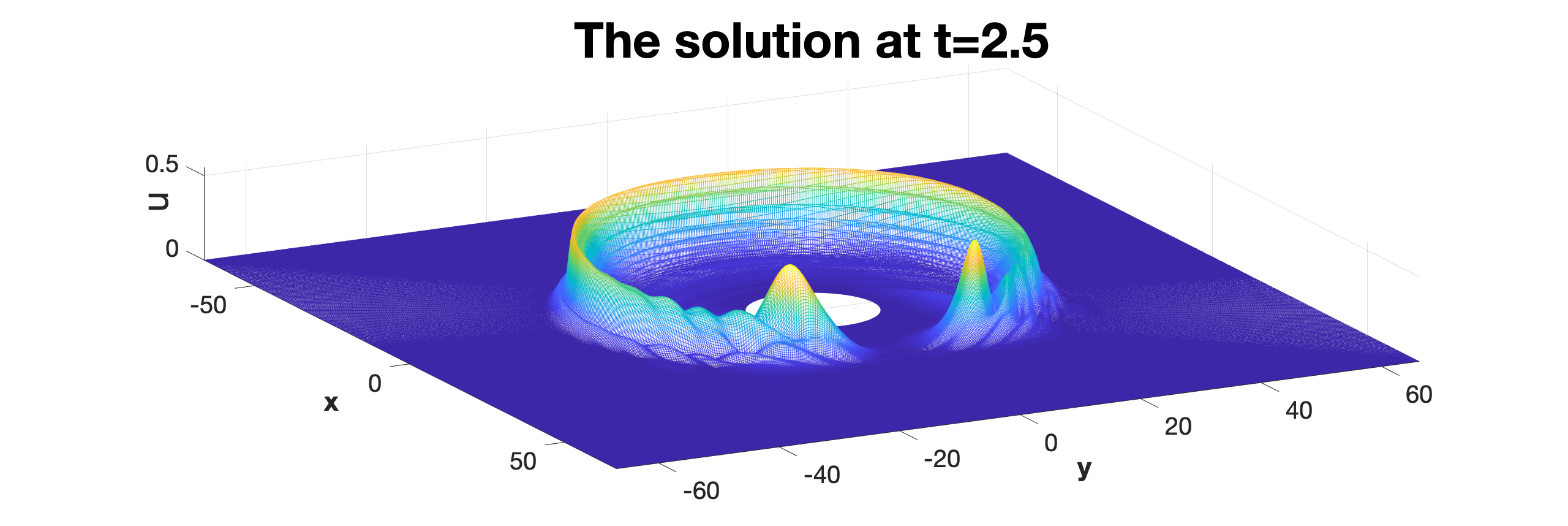}
\includegraphics[width=6.0cm,height=6.2cm]{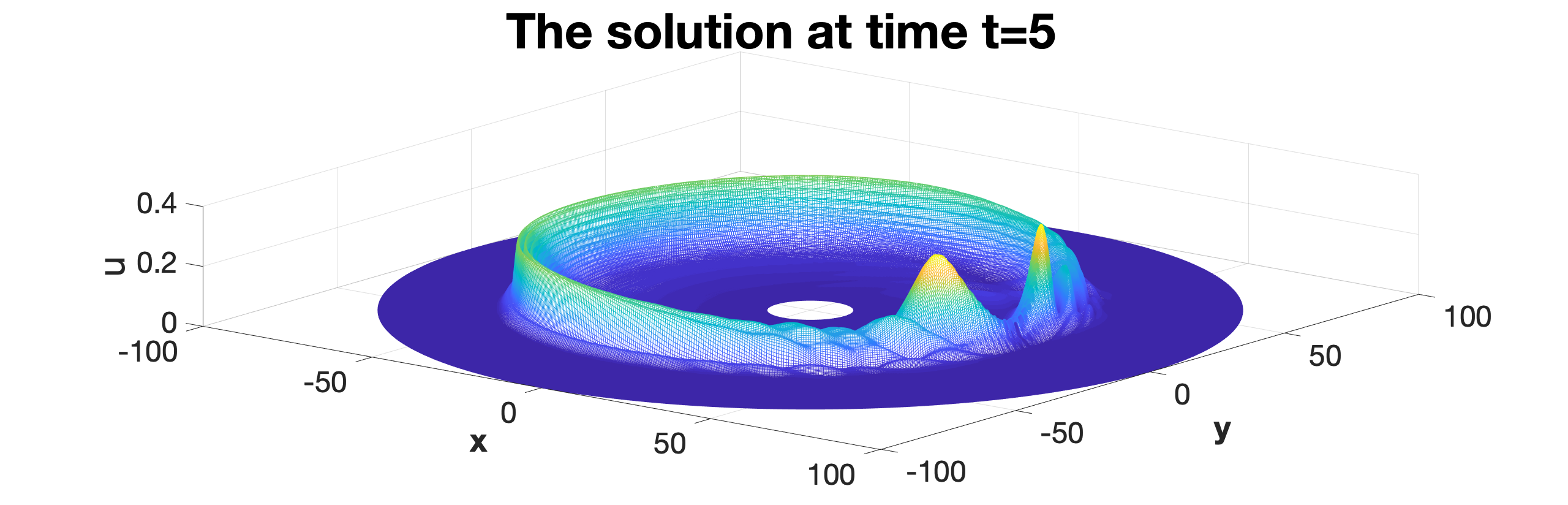}
\includegraphics[width=8.5cm,height=4.5cm]{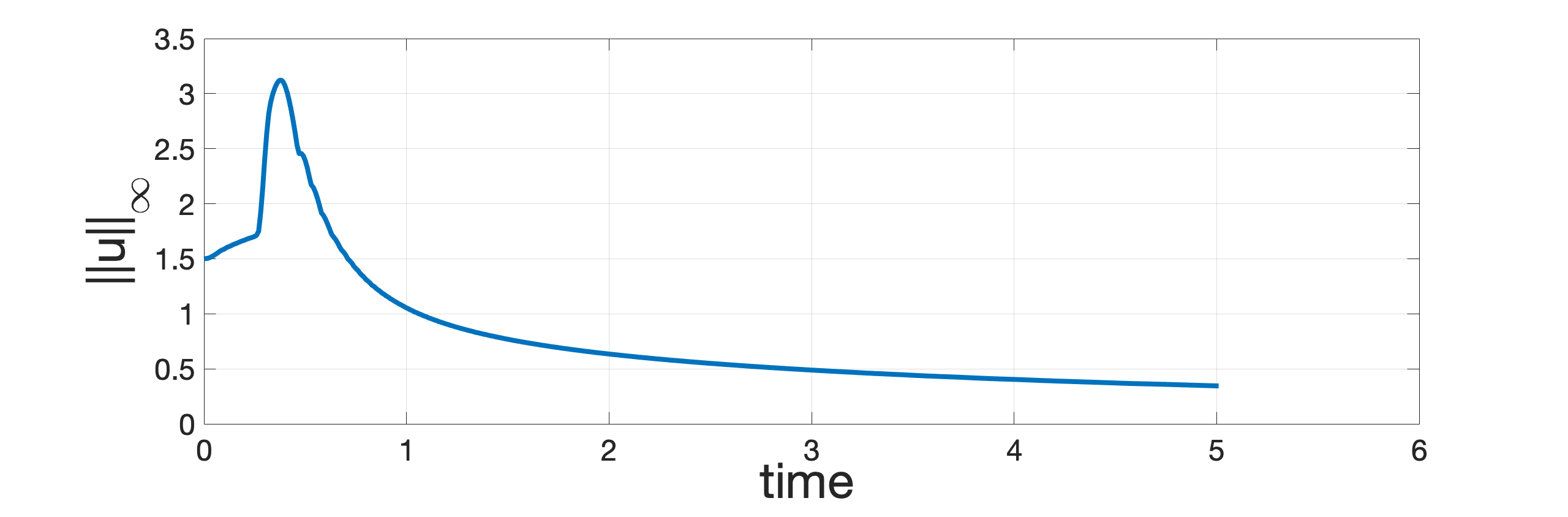}
\includegraphics[width=8.5cm,height=4.5cm]{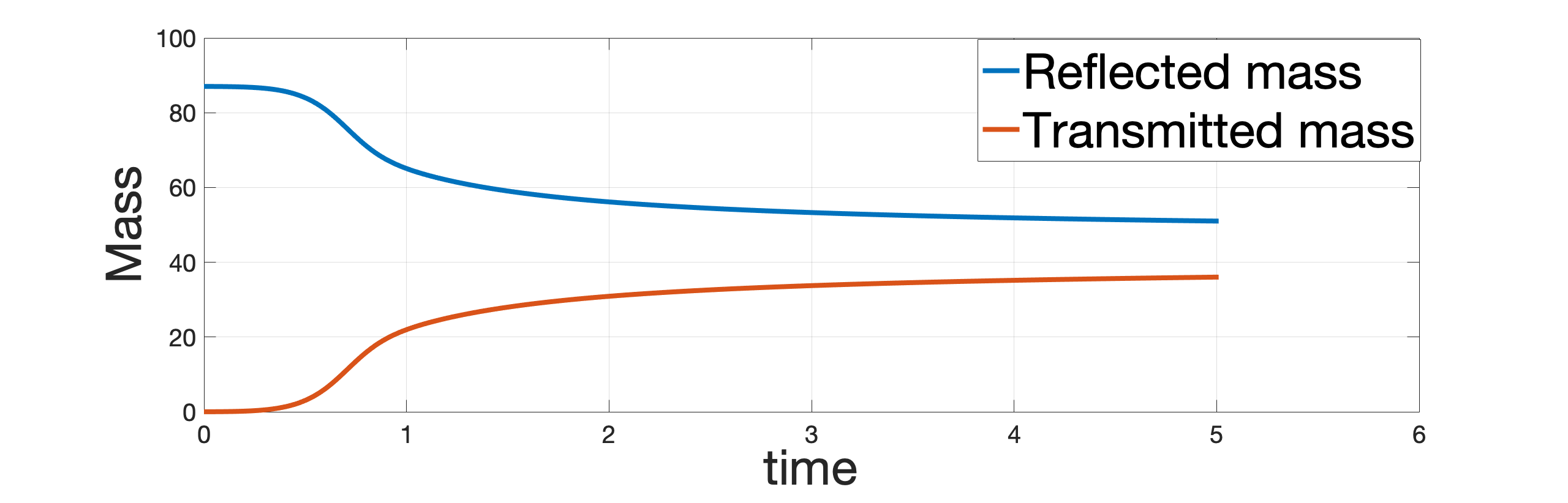}
\caption{Solution to the $2d$ cubic \NNls equation with the radius of the obstacle $r_{\star}=10$ and $u_0$ from \eqref{special-u0} with \eqref{para-spec-A0-1.5}. 
Snapshots of the scattering solution $u(t)$ at $t=0$ (top left), $t=2.5$ (middle top) and $t=5$ (top right). Time dependence of the $L^{\infty}$-norm (bottom left) and of the transmitted and the reflected mass (bottom right).
}
\label{Fig-supecial-critic-A0-1.5-r0-10}
\end{figure}

For the initial data \eqref{para-spec-A0-1.5} to track the dependence as the radius of the obstacle increases we perform numerical simulations for a variety of radii and provide the summary of the results in Table \ref{T:5}.  

{\footnotesize
\begin{table}[h!]
\centering 
\begin{tabular}{|m{1.2cm}|c|c|c|c|m{1cm}|}   
  \hline
   $r_{\star}$                    & Discrete total mass        &  Behavior of the solution                                & Discrete reflected mass                                & Discrete transmitted mass        \\                                    
  \hline  
 $r_{\star}=0.1$               & $25.4924$                & Blow up at $t\approx 1.54 $                              &  $  0.70785 $ at $t \approx 1.54$                     &  $24.7846 $ at $t \approx 1.54$           \\
   \hline 
 $r_{\star}=0.5$               &   $ 27.9795$                &  Blow up at $t\approx 1.75 $                                  &  $ 2.4016   $ at $t \approx 1.75$                     & $ 25.5779 $ at $t \approx 1.75$              \\
   \hline 
 $r_{\star}=1$              &   $31.0883$                  &  Blow up at $t\approx 2.4 $                                 &   $4.6621  $ at $t \approx 2.4$                     &  $26.4262 $ at $t \approx 2.4$               \\
   \hline 
$r_{\star}=1.5$               &  $34.1971$                  & Blow up at $t\approx 3.174 $                                &    $6.8188  $ at $t \approx 3.174 $                      & $27.3784$ at $t \approx 3.174 $             \\
   \hline   
$r_{\star}=2$               & $ 37.3060$                    & Blow up at $t\approx  4.25   $                                &   $ 9.1169$ at $t \approx 4.25  $                      & $28.1891 $ at $t \approx  4.25  $        \\
        \hline 
$r_{\star}=3$                   &  $43.5236$               &  Scattering                                                             &   $14.2976$ at $t \approx 6$                     &  $29.226$ at $t \approx 6$      \\
\hline
  $r_{\star}=4$                 &  $49.7413$               &  Scattering                                                             &   $19.0758$      at $t \approx 6$                   & $30.6655$ at $t \approx 6 $        \\
  \hline
    $r_{\star}=5$              &   $55.9590    $                  & Scattering                                                           &   $ 23.3673$     at $t \approx 5$                   &   $32.5917 $ at $t \approx 5$       \\
  \hline
   $r_{\star}=10$              &   $ 87.0473$                  & Scattering                                                           &   $ 36.0356$     at $t \approx 5$                   &   $ 51.0117$ at $t \approx 5$       \\
  \hline
\end{tabular}
  \caption{Influence of the obstacle size $r_{\star}$ onto the behavior of the solution $u(t)$ to the $2d$ cubic \NNls equation with $u_0$ from \eqref{special-u0} and \eqref{para-spec-A0-1.5}, with the (initial) discrete total mass, after the interaction the reflected and transmitted discrete mass parts at time $t$.}
  \label{T:5}
\end{table}
}

\newpage
\section{Conclusion}\label{S:Conclusions}    
In this work we initiated a numerical study of how the behavior of solutions can change in a presence of a smooth convex obstacle. We observe that the interaction between a solitary wave and the obstacle can significantly influence the overall behavior of the solution to the \NNls equation, which depends on the direction of the velocity vector $\vec{v}=(v_x,v_y)$, the size of the obstacle (radius $r_{\star}$), the initial distance ($d$ vs. $d^{*}$) to the obstacle and the translation parameters $(x_c,y_c)$. The presence of the obstacle yields strong, weak or no interaction. We observed in Sections \ref{L2critWeak} and \ref{L2supercritWeak} that even a small interaction between the obstacle and a single peak solution has some influence on the dynamics (at the least, on the blow-up time). Moreover, we conclude that the strong interaction has a significant effect on the behavior of solutions depending on the size of the obstacle, for example, instead of approaching a solitary wave solution with a single bump, the shape of the solution drastically changes after the collision, splitting it into several bumps with a backward reflected wave. The appearance of the reflection waves due to the presence of the obstacle with Dirichlet boundary conditions prevents the solution from blowing up in finite time in some cases. Furthermore, this backward reflection has always a dispersive character, this might be the reason why the solution scatters in most of the cases after a strong interaction. However, if the obstacle is very small or if the contour of the solution is significantly bigger than the radius of the obstacle, 
we observed the existence of blow-up solutions.  
In this case, the interaction surface is negligible
so that the mass of the solution is almost all transmitted, which is sufficient 
to develop a blow-up.
If the obstacle is large enough or if  there is no transmission of the solution, then, either the solution is completely reflected back or the solution concentrates in its blow-up core at the obstacle's boundary, since the interaction region is relevant and it is larger than the contour of the solution. Furthermore, we construct new (\textit{Wall-type}) initial condition, time evolution of which is characterized by its high mass transmission after a strong interaction, and then blowing up in finite time in a single point or in two separate locations (single points) after the strong interaction. 
For a weak interaction, i.e., when the solution preserves the shape as a traveling solitary wave, the solution behaves either as a solitary wave solution, constructed in \cite{Ou19} (which exists for all positive times), or as the one shown in \cite{Ouss20} (see also \cite{OL20}), a finite time (single point) blow-up solution (as if there would be no obstacle). 

\vspace{0.5cm}

\bibliographystyle{acm}
\bibliography{references}

\end{document}